\documentclass[reqno,a4paper]{siamart250211}
\usepackage{amsmath,amssymb,amsfonts,mathtools,comment,mathrsfs}
\usepackage{tikz}
\usepackage{algorithm}
\usepackage{algpseudocode}
\usepackage{graphicx,epsfig,color,scalerel,subfigure,booktabs}
\usepackage[super]{nth}
\usepackage{xparse}
\usepackage{siunitx}
\usepackage{afterpage}
\usepackage{enumitem}
\usepackage{hyperref}
\tikzset{shift entire picture/.style n args={2}{execute at end picture={
\pgfmathtruncatemacro{\tmpx}{sign(#1)}
\pgfmathtruncatemacro{\tmpy}{sign(#2)}
\ifnum\tmpx=1
  \ifnum\tmpy=1
   \path[use as bounding box] ([xshift=-#1,yshift=-#2]current bounding box.south west) rectangle 
(current bounding box.north east);
  \else
   \path[use as bounding box] ([xshift=-#1]current bounding box.south west) rectangle 
([yshift=-#2]current bounding box.north east);
  \fi
\else  
  \ifnum\tmpy=1
   \path[use as bounding box] ([yshift=-#2]current bounding box.south west) rectangle 
([xshift=-#1]current bounding box.north east);
  \else
   \path[use as bounding box] (current bounding box.south west) rectangle 
([xshift=-#1,yshift=-#2]current bounding box.north east); 
  \fi
\fi}}}

\setlength{\textwidth}{6.35in}
\setlength{\oddsidemargin}{0.in}
\setlength{\evensidemargin}{0.in}
\setlength{\textheight}{9.75in}
\setlength{\topmargin}{-0.35in}
\setlength{\parindent}{12pt}
\setlength{\parskip}{3pt}

\definecolor{lightgreen}{rgb}{0.2,0.6,0.2}
\definecolor{lightblue}{rgb}{0.15,0.15,0.85}
\definecolor{darkred}{rgb}{0.85,0.15,0.15}

\DeclareFontEncoding{OT2}{}{}
\DeclareFontSubstitution{OT2}{wncyr}{m}{n}

\DeclareSymbolFont{cyrletters}{OT2}{wncyr}{m}{n}

\DeclareMathSymbol{\CYRE}{\mathalpha}{cyrletters}{"03}

\newcommand{\bn}{\boldsymbol{n}}

\newcommand{\bu}{\boldsymbol{u}}
\newcommand{\bv}{\boldsymbol{v}}
\newcommand{\bw}{\boldsymbol{w}}

\newcommand{\bg}{\boldsymbol{g}}

\newcommand{\by}{\boldsymbol{y}}
\newcommand{\bz}{\boldsymbol{z}}

\newcommand\bxi{\boldsymbol{\xi}}

\newcommand\bdiv{\mathop{\mathbf{div}}\nolimits}

\newcommand\tr{\mathop{\mathrm{tr}}\nolimits}


\newcommand\qan{{\quad\hbox{and}\quad}}

\newcommand\qin{{\quad\hbox{in}\quad}}
\newcommand\qon{{\quad\hbox{on}\quad}}
\renewcommand\div{{\mathrm{div}}}
\newcommand\bta{{\boldsymbol\tau}}
\newcommand\bze{{\boldsymbol\zeta}}
\newcommand\bet{{\boldsymbol\eta}}
\newcommand\bsi{{\boldsymbol\sigma}}
\newcommand\brho{{\boldsymbol\rho}}

\newcommand\btet{{\boldsymbol\theta}}
\newcommand\bvatet{{\boldsymbol\vartheta}}
\newcommand\bvarr{{\boldsymbol\varrho}}

\newcommand\disp{\displaystyle}
\newcommand\Pcal{{\mathcal P}}
\newcommand{\Pcalbf}{{\boldsymbol{\mathcal P}}}

\newcommand{\Pibf}{\boldsymbol{\Pi}}
\newcommand{\Pibb}{\boldsymbol{\Pi}\hspace{-1.7ex}\boldsymbol{\Pi}}

\newcommand\bdev{{\mathbf{dev}}}

\numberwithin{equation}{section}
\numberwithin{remark}{section}
\numberwithin{assumption}{section}
\numberwithin{figure}{section}
\numberwithin{table}{section}


\allowdisplaybreaks

\headers{Mixed FEM for unsteady Brinkman-Forchheimer flow in desalination}{Gharibi, Abbaszadeh \& Dehghan}

\title{Mixed FEM 
for coupled unsteady fluid flow problems with $p$-type Brinkman-Forchheimer framework and its application for reverse-osmosis desalination\thanks{\textbf{Updated:} \today.\funding{This work has been partially supported by  Iran National Science Foundation (INSF) under project No. 4037159.}}}

\author{Zeinab Gharibi, Mostafa Abbaszadeh, Mehdi Dehghan \thanks{Department of Applied Mathematics, Faculty of Mathematics and Computer Sciences, Amirkabir University of Technology (Tehran Polytechnic), No. 424, Hafez Ave., 15914 Tehran, Iran (\email{z90gharibi@aut.ac.ir;m.abbaszadeh@aut.ac.ir;mdehghan@aut.ac.ir}).}
}
\date{\today}

\begin{document}

\maketitle
\begin{abstract}
This work analyzes a fully discrete mixed finite element method in a Banach space framework for solving nonstationary coupled fluid flow problems modeled by the Brinkman-Forchheimer equations, with applications to reverse osmosis. The model couples unsteady $p$-type convective Brinkman-Forchheimer and transport equations with nonlinear boundary conditions across a semi-permeable membrane.
A mixed formulation is used for the fluid equation (pseudostress-velocity) and for the transport equation (concentration, its gradient, and a Lagrange multiplier from the membrane condition). The continuous problem is reformulated in Banach spaces as a fixed-point problem, enabling a well-posedness analysis via differential-algebraic system theory.
Spatial discretization employs lowest-order Raviart-Thomas elements for fluxes and piecewise constants for primal variables, while linear elements are used for the Lagrange multiplier. A fully discrete Galerkin scheme with backward Euler time-stepping is proposed. Its well-posedness and stability are proven using a fixed-point argument, and optimal convergence rates are established. Numerical results confirm the theoretical error estimates and demonstrate the method's effectiveness.
\end{abstract}
\begin{keywords}Nonstationary convective Brinkman-Forchheimer, unsteady transport, fully discrete mixed FEM,  theoretical analysis, reverse-osmosis desalination.\end{keywords}
\begin{AMS}
65N30, 65N12, 65N15, 74F25.\end{AMS}

\section{Introduction}\label{sec1}
Freshwater scarcity is a critical global challenge that has drawn increasing attention from governments, the public, and the scientific community. Among the various technologies developed to address this issue, reverse osmosis (RO) has emerged as a highly effective method for water purification due to its excellent selectivity and permeability \cite{bsckkksk-D-2023}. However, RO systems are inherently energy-intensive, primarily due to the high pressures required for operation. A key technical limitation that exacerbates energy consumption and undermines performance is concentration polarization--the accumulation of solutes near the membrane surface \cite{rslc-JMS-2006}. This phenomenon is complicated by unsteady mixing and transport processes, as well as poorly understood flow regimes within RO modules.
To better understand and mitigate concentration polarization, researchers have increasingly turned to computational fluid dynamics (CFD) for modeling mass transport and flow behavior in RO systems \cite{bmtdaz-JMS-2022,cllw-IJHMT-2022,damki-A-2023}.
Although few numerical methods have been specifically developed and analyzed for RO, notable contributions exist. Song et al.~\cite{mson-JSC-2004} used a Petrov-Galerkin FEM to address steep concentration gradients in membrane processes. Li et al.~\cite{lspa-JMS-2019} applied the lattice Boltzmann method for efficient microscale simulation in spacer-filled channels. Carro et al.~\cite{cmv-IJNMF-2024} introduced a Nitsche-based FEM for RO, while Khan et al.~\cite{kmrv-A-2024} improved it using a Lagrange multiplier and $H(\div)$-conforming elements to ensure well-posedness and pressure robustness.

Mixed finite element methods have become a key approach for solving linear and nonlinear fluid problems, especially via velocity-pressure and pseudostress-velocity formulations. They allow direct approximation of variables like velocity gradients and fluxes, avoiding less accurate post-processing via numerical differentiation.
In reverse osmosis modeling, Berm{\'u}dez et al.~\cite{bbcgos-arxive-2024} proposed a mixed FEM for coupled Navier-Stokes/transport and Brinkman-Forchheimer/transport equations with nonlinear membrane conditions, though without theoretical analysis. Later, Berm{\'u}dez et al.~\cite{bcos-CMAME-2025} developed and analyzed a nonlinear mixed variational formulation using auxiliary variables and Lagrange multipliers to handle low regularity, providing stable discretizations and error estimates.
In many practical cases-especially at high flow rates or in complex porous media—classical Darcy or Brinkman models fail to capture nonlinear inertial effects. The Brinkman-Forchheimer model \cite{f-ZVDI-1901} extends these by incorporating viscous diffusion and nonlinear drag. Its $p$-type variant \cite{cmc-JFM-1988,yl-CEJ-2012}, featuring a power-law drag term, offers a more accurate representation of flow in complex geometries and has gained attention for modeling nonlinear transport phenomena \cite{aco-SIAM-2023,cd-ANM-2023,gd-IMA-2025}.

Motivated by recent advances, this work develops and analyzes a mixed FEM for unsteady convective $p$-type Brinkman-Forchheimer flow coupled with a transport equation. The formulation introduces pseudostress, concentration gradient, and a Lagrange multiplier as additional unknowns alongside velocity and concentration. Raviart-Thomas elements and standard polynomials are used for spatial discretization, with backward Euler in time. The fully discrete scheme is high-order, implicit, and unconditionally stable. Under small data assumptions, we prove existence, uniqueness, and derive optimal error estimates. This appears to be the first analysis of a lowest-order mixed FEM for time-dependent Brinkman-Forchheimer flow models.

 \paragraph{Plan of the paper} The rest of this section introduces the notation and functional spaces used throughout the paper. Section~\ref{sec.2} presents the mathematical model, auxiliary variables, and variational formulation. Section~\ref{s:3} analyzes the solvability of the continuous problem in a Banach space setting. Section~\ref{s:4} develops the fully discrete mixed finite element scheme and proves its solvability using a discrete fixed-point approach. A priori error estimates are derived in Section~\ref{sec.5}, and Section~\ref{sec.7} provides numerical results demonstrating the method’s performance.

\paragraph{Sobolev and Banach Spaces}
Consider a Lipschitz-continuous domain $\mathcal{D}$ in $\mathbb{R}^2$ with boundary $\Gamma$. We use standard notation for Lebesgue spaces $L^p(\mathcal{D})$ and Sobolev spaces $W^{r,p}(\mathcal{D})$, where $ r \geq 0 $ and $ p \in [1,+\infty) $. The  associated  norms are denoted by $ \| \cdot \|_{0,p;\mathcal{D}} $ and $ \| \cdot \|_{r,p;\mathcal{D}} $, respectively. Note that $ W^{0,p}(\mathcal{D}) = L^p(\mathcal{D}) $, and when $ p = 2 $, we write $ H^r(\mathcal{D}) $ instead of $ W^{r,2}(\mathcal{D}) $. The norm and seminorm in this case are denoted as $ \| \cdot \|_{r,\mathcal{D}} $ and $ | \cdot |_{r,\mathcal{D}} $, respectively.  
Additionally, $ H^{1/2}(\Gamma) $ represents the space of traces of $ H^1(\mathcal{D}) $, while $ H^{-1/2}(\Gamma) $ denotes its dual space, equipped with the duality pairing $ \langle \cdot , \cdot \rangle_{\Gamma} $.  
On another note, for any given scalar functional space $ M $, we denote its vector- and tensor-valued counterparts as $ \mathbf{M} $ and $\mathbb{M}$. As customary, $ \mathbb{I} $ represents the identity tensor in $ \mathbb{R}^{2\times 2} $, and the notation $ | \cdot | $ refers to the Euclidean norm in $ \mathbb{R}^2 $.  
For $ s \in (1,+\infty) $, we define the Banach space by  $	\mathbf{H}(\text{div}_s; \mathcal{D}) := \Big\{ \mathbf{v} \in \mathbf{L}^2(\mathcal{D}) : \text{div}(\mathbf{v}) \in L^s(\mathcal{D}) \Big\}$, 
which is equipped with the norm  $	\|\mathbf{v} \|_{\text{div}_s; \mathcal{D}} := \|\mathbf{v} \|_{0,\mathcal{D}} + \|\text{div}(\mathbf{v})\|_{0,s;\mathcal{D}}$.
The space \( \mathbb{H}(\bdiv_s; \mathcal{D}) \) is defined to consist of matrix-valued functions whose rows belong to \( \mathbf{H}(\text{div}_s; \mathcal{D}) \), and it is endowed with the norm \( \| \cdot \|_{\bdiv_s; \mathcal{D}} \).
Here, the divergence operator $ \bdiv $ is understood to act row-wise on tensor. 
Moreover, for any tensor field \( \boldsymbol{\tau} \), the corresponding deviatoric part is defined as
$\bdev(\bta)\,:=\, \bta-\frac{1}{2}\tr(\bta)\mathbb{I}$.

Moreover, let $V$ be a separable Banach space endowed with the norm $\|\cdot\|_V$. Given $1\le p\le \infty$, we define $L^p(0, t_F; V)$ as the space of classes of functions $f$ : $(0, t_F) \rightarrow V$ that are Bochner measurable and such that $\|f\|_{L^p(0, t_F ; V)}<\infty$, with
\begin{equation*}
	\|f\|_{L^p(0, t_F; V)}^p  :=\int_0^{t_F}\|f(t)\|_V^p \mathrm{dt}\quad\forall \,1\le p<\infty \qan 	\|f\|_{L^{\infty}(0, t_F ; V)}:=\underset{t \in[0, t_F]}{\operatorname{ess} \sup }\|f(t)\|_V \,.
\end{equation*}
We present key concepts and auxiliary results pertaining to boundary conditions and the construction of extension operators (see \cite{gs-Elec-2007}). Given $\Gamma$, let $\widetilde{\Gamma} \subseteq \Gamma$ be a subset, and denote its complement and the unit outward normal vector on $\Gamma$ by $\widetilde{\Gamma}^c$
and $\mathbf{n}$, respectively.

\paragraph{Restriction of Functionals  in $H^{-1/2}(\Gamma)$ to $\widetilde{\Gamma}$}
Consider the extension operator $E_{0, \widetilde{\Gamma}} : H^{1/2}(\widetilde{\Gamma}) \to L^2(\Gamma)$, defined by $ 	E_{0,\widetilde{\Gamma}}(v) =v$ on $\widetilde{\Gamma} $, and $ 	E_{0,\widetilde{\Gamma}}(v)=0$ on $\widetilde{\Gamma}^c$.
We define the space $H^{1/2}_{00}(\widetilde{\Gamma})$ as
$	H^{1/2}_{00}(\widetilde{\Gamma}) := \big\{ v \in H^{1/2}(\Gamma) :\,\, E_{0,\widetilde{\Gamma}}(v) \in H^{1/2}(\Gamma) \big\}$, 
equipped with the norm 
\begin{equation}\label{def.normH12}
	\| v \|_{1/2,00,\widetilde{\Gamma}}\, :=\, \|E_{0,\widetilde{\Gamma}}(v)\|_{1/2,\Gamma}\,.
\end{equation}
Its dual space is denoted by $H^{-1/2}_{00}(\widetilde{\Gamma})$.
For any $\varphi \in H^{-1/2}(\Gamma)$, its restriction to $\widetilde{\Gamma}$, denoted as $\varphi|_{\widetilde{\Gamma}}$, is given by:
\begin{equation}\label{def.E0}
	\langle \varphi|_{\widetilde{\Gamma}}, v \rangle_{\widetilde{\Gamma}} := \langle \varphi, E_{0,\widetilde{\Gamma}}(v) \rangle_{\Gamma}\qquad \forall\, v \in H^{1/2}_{00}(\widetilde{\Gamma})\,.
\end{equation}
It follows that $\varphi|_{\widetilde{\Gamma}}\in H^{-1/2}_{00}(\widetilde{\Gamma})$.  Conversely, the boundary condition $ \varphi|_{\widetilde{\Gamma}}=0$ implies that $	\langle \varphi, E_{0,\widetilde{\Gamma}}(v) \rangle_{\Gamma} = 0$ for all $ v \in H^{1/2}_{00}(\widetilde{\Gamma})$.

\paragraph{Continuous Extension of $H^{1/2}(\widetilde{\Gamma})$-Functions}
Following \cite[Lemma 2.1]{gs-Elec-2007}, 
the restriction of \( \varphi \in H^{-1/2}(\Gamma) \) to the subset \( \widetilde{\Gamma} \) corresponds to an element in \( H^{-1/2}(\widetilde{\Gamma}) \), defined via $	\langle \varphi, v \rangle_{\widetilde{\Gamma}} := \langle \varphi, E_{\widetilde{\Gamma}}(v) \rangle_{\Gamma} $ for all $v \in H^{1/2}(\widetilde{\Gamma})$, 
where $E_{\widetilde{\Gamma}} : H^{1/2}(\widetilde{\Gamma}) \to H^{1/2}(\Gamma)$ is a bounded extension operator. For instance, if $v \in H^{1/2}(\widetilde{\Gamma})$, we define its extension as $E_{\widetilde{\Gamma}}(v) := z|_{\Gamma}$, where $z$ is the unique solution to
\[
	\Delta z = 0 \quad \text{in } \mathcal{D}\,, \quad
z = v \quad \text{on } \widetilde{\Gamma}\,,\quad
\nabla z \cdot \mathbf{n} = 0 \quad \text{on } \widetilde{\Gamma}^c \,,
\]
which satisfies $\| z \|_{1, \mathcal{D}} \leq C \| v \|_{1/2, \widetilde{\Gamma}}$, for some constant $C > 0$. This, in turn, ensures the continuity of $ E_{\widetilde{\Gamma}}$, meaning that $	\Vert E_{\widetilde{\Gamma}}(v)\Vert_{1/2,\Gamma}\,\le \, C \Vert v\Vert_{1/2,\widetilde{\Gamma}}$.
%

\paragraph{Decomposition of $H^{1/2}(\Gamma)$-Functions}
According to \cite[Lemma 2.2]{gs-Elec-2007}, for any $\zeta \in H^{1/2}(\Gamma)$, there exist unique $\zeta_{\widetilde{\Gamma}} \in H^{1/2}(\widetilde{\Gamma})$ and $\zeta_{\widetilde{\Gamma}^c} \in H^{1/2}_{00}(\widetilde{\Gamma}^c)$ satisfying $	\zeta = E_{\widetilde{\Gamma}}(\zeta_{\widetilde{\Gamma}}) + E_{0, \widetilde{\Gamma}^c}(\zeta_{\widetilde{\Gamma}^c})$. 
This yields the decomposition:
\begin{equation}
	\langle \varphi, \zeta \rangle_{\Gamma} = \langle \varphi, E_{\widetilde{\Gamma}}(\zeta_{\widetilde{\Gamma}}) \rangle_{\Gamma} + \langle \varphi, E_{0, \widetilde{\Gamma}^c}(\zeta_{\widetilde{\Gamma}^c}) \rangle_{\Gamma} \quad \forall \varphi \in H^{-1/2}(\Gamma)\,.
\end{equation}

\textbf{Remark.}
We use \(\mathbf{E}_{\widetilde{\Gamma}}\) and \(\mathbf{E}_{0, \widetilde{\Gamma}^c}\) for extension operators on vector-valued functions. The decomposition follows by applying them component-wise.
%
\section{Model Problem and Continuous Formulation}\label{sec.2}
Let \(\Omega \subset \mathbb{R}^2\) be a bounded polygonal domain with boundary \(\Gamma\) and outward unit normal \(\mathbf{n}\) defined almost everywhere. For the time interval \(\mathrm{J} := (0, t_F]\), we study the nonstationary incompressible convective Darcy-Brinkman-Forchheimer equation coupled with a transport equation via a convective term (see, e.g., \cite{kmm-A-2023}). The problem is to find the velocity \(\mathbf{u} : \Omega \times \mathrm{J} \to \mathbb{R}^2\), pressure \(p : \Omega \times \mathrm{J} \to \mathbb{R}\), and salt concentration \(\tilde{\varphi} : \Omega \times \mathrm{J} \to \mathbb{R}\) such that
\begin{subequations}\label{eq1}
	\begin{align}
		\partial_t\bu	-\operatorname{\textbf{div}}(\nu\,\boldsymbol{\nabla}\bu) + (\boldsymbol{\nabla}\mathbf{u})\mathbf{u}+\mathtt{F}\,|\bu|^{p-2}\bu+\nabla p &\,=\,\mathbf{0}\quad \qin\Omega\times \mathrm{J}\,,\label{eq:prob1}\\[1mm]
		\operatorname{div}(\mathbf{u})&\,=\,0\quad \qin\Omega\times \mathrm{J}\,,\label{eq:prob2}\\[1mm]
		\partial_t\tilde{\varphi}	-\operatorname{div}(\kappa\,\nabla\tilde{\varphi}) +\mathbf{u}\cdot\nabla\tilde{\varphi} &\,=\,0\quad \qin\Omega\times \mathrm{J}\,,\label{eq:prob3}
	\end{align}
\end{subequations}
where \(\nu\) and \(\kappa\) are the fluid viscosity and solute diffusivity coefficients, respectively, assumed bounded above and below.
Let \(\Gamma_{\mathtt{i}}\), \(\Gamma_{\mathtt{w}}\), and \(\Gamma_{\mathtt{o}}\) denote the inlet, wall, and outlet boundaries, respectively, with \(\bar{\Gamma}_{\mathtt{i}} \cup \bar{\Gamma}_{\mathtt{w}} \cup \bar{\Gamma}_{\mathtt{o}} = \Gamma\) and pairwise disjoint. Then problem \eqref{eq1} is equipped with the boundary conditions:
\begin{subequations}
	\begin{align}
		\mathbf{u}&\,=\,\mathbf{u}_{\mathtt{in}}\qan \tilde{\varphi}\,=\,\tilde{\varphi}_{\mathtt{in}}~~\quad \text{on}~\Gamma_{\mathtt{i}}\times \mathrm{J}\,,\label{eq:bc1}\\[1mm]
		\mathbf{u}&\,=\,(A\Delta P-AiRT \,\tilde{\varphi})\,\mathbf{n}\qan \kappa\nabla\tilde{\varphi}\cdot\bn\,=\,\tilde{\varphi}\,\bu\cdot\bn~~\quad \text{on}~\Gamma_{\mathtt{w}}\times \mathrm{J}\,,\label{eq:bc2}	\\[1mm]
		(\nu\,&\boldsymbol{\nabla}\bu-p\mathbb{I})\,\bn\,=\,\mathbf{0}\qan \kappa\nabla\tilde{\varphi}\cdot\bn\,=\,0~~\quad \text{on}~\Gamma_{\mathtt{o}}\times \mathrm{J}\,, \label{eq:bc3}	
	\end{align}
\end{subequations}
where \(A\), \(\Delta P\), \(i\), \(R\), and \(T\) are positive physical parameters, and \(\mathbf{u}_{\mathtt{in}}(t) \in \mathbf{H}^{1/2}(\Gamma_{\mathtt{i}})\), \(\tilde{\varphi}_{\mathtt{in}}(t) \in H^{1/2}(\Gamma_{\mathtt{i}})\) are given for almost all \(t \in \mathrm{J}\).  
Additionally, given \(\mathbf{u}_0 \in \mathbf{H}^1(\Omega)\) and \(\tilde{\varphi}_0 \in H^1(\Omega)\), the initial condition is defined as follows:
\begin{equation}\label{eq:ini}
	\bu(\cdot,0)\,=\, \bu_0 \qan \tilde{\varphi}(\cdot,0)\,=\, \tilde{\varphi}_0 \quad\qin \Omega\,.
\end{equation}
For convenience, we set \(\varphi := \tilde{\varphi} - \tilde{\varphi}_{\mathtt{in}}\) in \(\Omega \times \mathrm{J}\), introducing a new concentration variable \(\varphi\) that satisfies a homogeneous Dirichlet condition on \(\Gamma_{\mathtt{i}}\). Instead of \(\tilde{\varphi}\), we focus on \(\varphi\), so the boundary conditions \eqref{eq:bc1}–\eqref{eq:bc1} can be rewritten as
\begin{subequations}\label{eq:bc}
	\begin{align}
		\mathbf{u}&\,=\,\mathbf{u}_{\mathtt{in}}\qan \varphi\,=\,0~~\quad \text{on}~\Gamma_{\mathtt{i}}\times \mathrm{J}\,,\label{eq:bc1a}\\[1mm]
		\mathbf{u}&\,=\,(a_0-a_1 \,\varphi)\,\mathbf{n}\qan \kappa\nabla\varphi\cdot\bn\,=\,(\varphi+\tilde{\varphi}_{\mathtt{in}})\,\bu\cdot\bn~~\quad \text{on}~\Gamma_{\mathtt{w}}\times \mathrm{J}\,,\label{eq:bc2a}	\\[1mm]
		(\nu\,&\boldsymbol{\nabla}\bu-p\mathbb{I})\,\bn\,=\,\mathbf{0}\qan \kappa\nabla\varphi\cdot\bn\,=\,0~~\quad \text{on}~\Gamma_{\mathtt{o}}\times \mathrm{J}\,, \label{eq:bc3a}	
	\end{align}
\end{subequations}
where $a_0 = A\Delta P- AiRT\tilde{\varphi}_{\mathtt{in}}$ and $a_1 = AiRT$.

Next, to formulate the mixed variational approach via integration by parts, we introduce auxiliary unknowns corresponding to Neumann-type boundary conditions:
\begin{equation}\label{eq:defNewV2}
	\bsi\,:=\,\nu\, \boldsymbol{\nabla}\bu-p \,\mathbb{I}\qan \rho\,:=\, \kappa\nabla\varphi\qquad\qin \Omega\times\mathrm{J}	\,.
\end{equation}
By taking the matrix trace and using \(\div(\mathbf{u})=0\), the first equation in \eqref{eq:defNewV2} is equivalent to
\begin{equation}\label{eq:postpr}
	\bdev(\bsi)\,=\,\nu\boldsymbol{\nabla}\bu \qan 	p \,=\,-\dfrac{1}{2}\,\tr\big( \boldsymbol{\sigma}\big)
	\qin \Omega\times\mathrm{J} \,,
\end{equation}
Gathering \eqref{eq:defNewV2} and \eqref{eq:postpr}, problem \eqref{eq1} with initial-boundary conditions \eqref{eq:ini}, \eqref{eq:bc} is equivalently reformulated as:  
Find \((\bsi, \bu)\) and \((\brho, \varphi)\) in suitable spaces such that
\begin{equation}\label{sys1}
	\begin{array}{rcl}
		\dfrac{1}{\nu}\bdev(\bsi)&=&\boldsymbol{\nabla}\bu\quad \qin \Omega\times \mathrm{J}\,,\\
		\partial_t\bu-\bdiv(\bsi)-\dfrac{1}{\nu}\bdev(\bsi)\,\bu+\mathtt{F}|\bu|^{p-2}\bu&=&\mathbf{0}\quad\qin \Omega\times \mathrm{J}\,,\\
		\bu = \bu_{\mathtt{in}} \qon \Gamma_{\mathtt{i}}\times \mathrm{J}\,,\,\,\,
		\bu = (a_0 - a_1 \, \varphi)\bn\qon \Gamma_{\mathtt{w}}\times \mathrm{J}\,,\,\,\,
		\bsi\bn &=& \mathbf{0} \quad\qon \Gamma_{\mathtt{o}}\times \mathrm{J}\,,\\
		\bu (\cdot,0)&=&  \bu_0 \quad\qin\Omega\,,\qan
	\end{array}	
\end{equation}
\begin{equation}\label{sys2}
	\begin{array}{rcl}
		\dfrac{1}{\kappa}\,\brho &=& \nabla\varphi  \qin \Omega\times \mathrm{J}\,,\\
		\partial_t \varphi	-\div(\brho)-\dfrac{1}{\kappa}\,\brho \cdot\bu&=& 0 \qin \Omega\times \mathrm{J}\,,\\
		\varphi =0\,\,\,\,\text{on}\,\,\,\, \Gamma_{\mathtt{i}}\times \mathrm{J}\,,\,\,\,
		\brho\cdot\bn = \tilde{a}_0+(a_2 - a_1 \,\varphi)\,\varphi \,\,\,\,\text{on}\,\,\,\, \Gamma_{\mathtt{w}}\times \mathrm{J}\,,\,\,\,
		\brho\cdot\bn &=& 0  \qon \Gamma_{\mathtt{o}}\times \mathrm{J}\,,\\
		\varphi(\cdot,0) &=& \varphi_0 \qin \Omega\,, 
	\end{array}
\end{equation}
where $\tilde{a}_0 = a_0 \tilde{\varphi}_{\mathtt{in}}$ and $a_2 = a_0 - a_1 \tilde{\varphi}_{\mathtt{in}}$.

\subsection{Variational Formulation}
Let \(\Gamma_{\mathtt{i}}^{c} := \partial\Omega \setminus \Gamma_{\mathtt{i}}\) and \(\Gamma_{\mathtt{o}}^{c} := \partial\Omega \setminus \Gamma_{\mathtt{o}}\).  
Testing the first equations in \eqref{sys1} and \eqref{sys2} with tensor \(\bta\) and vector \(\bet\) yields:
\begin{equation}\label{eq2}
	\dfrac{1}{\nu}\int_{\Omega}\bdev(\bsi):\bta\,=\,\int_{\Omega}\boldsymbol{\nabla}\bu:\bta\qan \dfrac{1}{\kappa}\int_{\Omega}\brho\cdot\bet  \,=\,\int_{\Omega}\nabla\varphi\cdot\bet \quad\text{for a.e}\,\, t\in \mathrm{J}\,.
\end{equation}
The terms on the left-hand side are well-defined for \(\bsi(t), \bta \in \mathbb{L}^2(\Omega)\) and \(\brho(t), \bet \in \mathbf{L}^2(\Omega)\) for almost all \(t \in \mathrm{J}\).  
Using vector- and scalar-valued test functions for the second rows of \eqref{sys1} and \eqref{sys2} yields
\begin{subequations}\label{eq2a}
	\begin{alignat}{2}
		\int_{\Omega} \partial_t \bu \cdot \bv	-\int_{\Omega}\bdiv (\bsi)\cdot\bv	-\frac{1}{\nu}\int_{\Omega}\bdev(\bsi)\,\bu\cdot\bv
		+\mathtt{F}\int_{\Omega}|\bu|^{p-2}\bu\cdot \textbf{v}\,&=\,0 &\quad\text{for a.e}\,\, t\in \mathrm{J}\,,\label{o1}\\
		\int_{\Omega}	\partial_t \varphi\,\psi	-\int_{\Omega}\div(\brho) \,\psi -\frac{1}{\kappa}\,\int_{\Omega}(\brho \cdot\bu)\, \psi\,&=\,0&\quad\text{for a.e}\,\, t\in \mathrm{J}\,.\label{o2}
	\end{alignat}
\end{subequations}
Given that $\bsi$ and $\brho$ belong to the space of $L^2$-functions, and applying the Cauchy--Schwarz and H\"older inequalities, we obtain that for all $r,s\in (1,\infty)$ satisfying $\frac{1}{r}+\frac{1}{s}=1$, the following holds
\begin{subequations}
	\begin{alignat}{2}
		\Big|\int_{\Omega}\bdev(\bsi)\,\bu\cdot\bv\Big|&\,\le\, \Vert\bsi\Vert_{0,\Omega}\,\Vert\bu\Vert_{0,2r;\Omega}\,\Vert\bv\Vert_{0,2s;\Omega}\,,\label{wel1}\\
		\Big|\int_{\Omega}|\bu|^{p-2}\bu\cdot \textbf{v}\Big|&\,\le \, \Vert \bu\Vert_{0,2(p-2);\Omega}^{p-2}\Vert\bu\Vert_{0,2r;\Omega}\,\Vert\bv\Vert_{0,2s;\Omega}\,,\label{wel2}\\
		\Big|\int_{\Omega}(\brho \cdot\bu)\, \psi\Big|&\,\le\, \Vert\brho\Vert_{0,\Omega}\,\Vert\bu\Vert_{0,2r;\Omega}\,\Vert\psi\Vert_{0,2s;\Omega}\,,\label{wel3}
	\end{alignat}
\end{subequations}
From \eqref{wel1} and \eqref{wel3}, it follows that the third terms on the left-hand sides of \eqref{o1} and \eqref{o2} are well-defined for $\bu(t)\in \mathbf{L}^{2r}(\Omega)$, $\bv\in\mathbf{L}^{2s}(\Omega)$ and $\psi\in\mathrm{L}^{2s}(\Omega)$. 
In turn, noting that for \( p \in [3,4] \), we have \( p - 2 \leq r \), we use the embedding \( \mathbf{L}^{2r}(\Omega) \hookrightarrow \mathbf{L}^{2(p-2)}(\Omega) \), which yields an upper bound of \( |\Omega|^{(r - p + 2)/2r(p - 2)} \), in \eqref{wel2} to obtain
\begin{equation}\label{eq:bound.tr2}
	\Big|\int_{\Omega}|\bu|^{p-2}\bu\cdot \textbf{v}\Big|\,\leq \, |\Omega|^{(r-p+2)/2r}\Vert \bu\Vert_{0,2r;\Omega}^{p-2}\,\Vert\bu\Vert_{0,2r;\Omega}\,\Vert\bv\Vert_{0,2s;\Omega}\,,
\end{equation}
which shows the fourth term in \eqref{o1} is well-defined. To ensure the second terms in \eqref{o1} and \eqref{o2} are well-defined, we require \(\bdiv(\bsi) \in \mathbf{L}^{(2s)'}(\Omega)\) and \(\div(\brho) \in L^{(2s)'}(\Omega)\), where \((2s)' = \frac{2s}{2s - 1}\).  
Next, to handle the right-hand side of the second equation in \eqref{eq2}, we introduce the Lagrange multiplier \(\lambda := -\varphi|_{\Gamma_{\mathtt{i}}^c} \in H_{00}^{1/2}(\Gamma_{\mathtt{i}}^{c})\). By integration by parts, we have
\begin{equation}\label{eq3a}
	\dfrac{1}{\kappa}\int_{\Omega} \brho\cdot\bet +\int_{\Omega} \varphi\,\div(\bet)\,=\,-\langle \bet\cdot\bn ,\, \lambda \rangle_{\Gamma_{\mathtt{i}}^{c}}\,.
\end{equation}
This step uses that \(\varphi = 0\) on \(\Gamma_{\mathtt{i}}\) and that \(\bet \cdot \bn\) is well-defined since the embedding \(H^1(\Omega) \hookrightarrow L^{2s}(\Omega)\) is continuous (see \cite[Section 3.1]{cgm-ESAIM-2020}), which holds in 2D for all \(2s \in [1, \infty)\).  
Similarly, to apply integration by parts to the first equation of \eqref{eq2} and derive
 \begin{equation}\label{eq3}
	\dfrac{1}{\nu}\int_{\Omega}\bdev(\bsi):\bta +\int_{\Omega}\bu\cdot\bdiv(\bta)\,=\,\langle \bta\bn, \, \bu\rangle_{\Gamma}\,,
\end{equation}
we require \(\bdiv(\bsi) \in \mathbf{L}^{(2r)'}(\Omega)\) and the continuous embedding \(\mathbf{H}^1(\Omega) \hookrightarrow \mathbf{L}^{2r}(\Omega)\), where \((2r)' = \frac{2r}{2r - 1}\). This ensures \(\bta \bn\) is well defined. In 2D, the embedding holds for all \(2r \in [1, \infty)\).  
The term \(\langle \bta \bn, \bu \rangle_{\Gamma}\) will be handled using the boundary condition from the third row of \eqref{sys1}.
More specifically, the following auxiliary functions are introduced.
\begin{equation}
	\mathbf{g}\,:=\,\left\{\begin{matrix}
		\bu_{\mathtt{in}} & \qon & \Gamma_{\mathtt{i}}\,,\\
		a_{0}\bn & \qon & \Gamma_{\mathtt{w}}\,,
	\end{matrix}\right.
	\quad\qan \quad
	\mathbf{g}^{\lambda}\,:=\,\left\{\begin{matrix}
		\mathbf{0} & \qon & \Gamma_{\mathtt{in}}\,,\\
		a_1 \lambda \bn & \qon & \Gamma_{\mathtt{w}}\,,
	\end{matrix}\right.
\end{equation}
Note that \(\mathbf{g} \in \mathbf{H}^{1/2}(\Gamma_{\mathtt{o}}^{c})\) when \(\bu_{\mathtt{in}}\) satisfies the compatibility condition \(\bu_{\mathtt{in}} = \mathbf{0}\) on \(\bar{\Gamma}_{\mathtt{i}} \cap \bar{\Gamma}_{\mathtt{w}}\).  
Also, \(\mathbf{g}^\lambda \in \mathbf{H}^{1/2}(\Gamma_{\mathtt{o}}^{c})\) since \(\lambda \in H_{00}^{1/2}(\Gamma_{\mathtt{i}}^{c})\).  
Using the third boundary condition in \eqref{sys1} and testing with \(\bta \in \mathbb{H}_{\Gamma_{\mathtt{o}}}(\bdiv_{(2r)^{'}};\Omega)\,:=\, \big\{\bta\in\mathbb{H}(\bdiv_{(2r)^{'}};\Omega):\ \quad \bta\mathbf{n}\,=\,\mathbf{0}\qon\Gamma_{\mathtt{o}}   \big\}\),
 we obtain from \eqref{eq3} that
\begin{equation}\label{eq4a}
	\dfrac{1}{\nu}\int_{\Omega}\bdev(\bsi):\bta +\int_{\Omega}\bu\cdot\bdiv(\bta)\,=\,\langle \bta\bn, \, \mathbf{g}+\mathbf{g}^{\lambda}\rangle_{\Gamma_{\mathtt{o}}^{c}}\qquad\forall\,\bta\in\mathbb{H}_{\Gamma_{\mathtt{o}}}(\bdiv_{(2r)^{'}};\Omega) \,.
\end{equation}
%
Seeking \(\bsi\) and \(\bta\) in the same space leads to \(s = r = 2\) and \((2s)' = (2r)' = \frac{4}{3}\) from now on.  Using the second equation in the third row of \eqref{sys2} and \(\lambda \in H_{00}^{1/2}(\Gamma_{\mathtt{i}}^{c})\), we conclude that
\begin{equation}\label{eq5a}
	\langle \brho\cdot \bn,\, \xi\rangle_{\Gamma_{\mathtt{i}}^{c}}\,=\,\tilde{a}_0 \int_{\Gamma_{\mathtt{w}}}\xi \,+\, a_2 \int_{\Gamma_{\mathtt{w}}}\lambda\, \xi \,+\,a_1 \int_{\Gamma_{\mathtt{w}}}\lambda^2 \, \xi\qquad\forall\, \xi\in H_{00}^{1/2}(\Gamma_{\mathtt{i}}^{c})\,.
\end{equation}

According to the foregoing discussion, and aiming to conveniently rewrite the system of equations \eqref{eq2a},\eqref{eq3a} and \eqref{eq4a}, we now introduce the spaces
$\mathbb{H}\,:=\, \mathbb{H}_{\Gamma_{\mathtt{o}}}(\bdiv_{4/3};\Omega)\,,\,\,\,\mathbf{V}\,:=\,\mathbf{L}^{4}(\Omega)\,,\,\,\,\mathbf{H}\,:=\, \mathbf{H}(\div_{4/3};\Omega)\,,\,\,\, \mathrm{V}\,:=\, \mathrm{L}^{4}(\Omega)\,,\,\,\, Q \,:=\,H_{00}^{1/2}(\Gamma_{\mathtt{i}}^{c})\,,$
which are endowed, respectively, with the norms
$\Vert\bta\Vert_{\mathbb{H}}\,:=\,\Vert\bta\Vert_{\bdiv_{4/3};\Omega}\,,\,\,\, \Vert\bv\Vert_{\mathbf{V}}\,=\,\Vert\bv\Vert_{0,4;\Omega}\,,\,\,\,\Vert\bet\Vert_{\mathbf{H}}\,:=\, \Vert\bet\Vert_{\div_{4/3};\Omega}\,,\,\,\,\Vert\psi\Vert_{\mathrm{V}}\,:=\, \Vert\psi\Vert_{0,4;\Omega}\,,\,\,\,\Vert\xi\Vert_{Q}\,:=\, \Vert\xi\Vert_{1/2,00,\Gamma_{\mathtt{i}}^{c}}\,.$

In this way, defining the global space $\mathcal{V}\,:=\, \mathrm{V}\times Q$, setting notation
\[\vec{\varphi}\,:=\, (\varphi,\lambda)\in\mathcal{V}\,,\quad
\vec{\psi}\,:=\, (\psi,\xi)\in\mathcal{V}\,,\quad \vec{\phi}\,:=\,(\phi,\chi)\in\mathcal{V}\,,\]
and equipping with the norm
$\Vert\vec{\psi}\Vert_{\mathcal{V}}\,:=\,\Vert\psi\Vert_{\mathrm{V}}+\Vert\xi\Vert_{Q}\,,$
Eqs. \eqref{eq2a},\eqref{eq3a} and \eqref{eq4a} can be reformulated as: Find $(\bsi,\bv)\in L^{2}(\mathrm{J};\mathbb{H})\times L^{2}(\mathrm{J};\mathbf{V})$ and $(\brho,\vec{\varphi})\in L^{2}(\mathrm{J};\mathbf{H})\times L^{2}(\mathrm{J};\mathcal{V})$ such that
\begin{equation}\label{eq:v1a-eq:v4}
	\begin{split}
		\mathscr{A}^{F}(\bsi , \bta)+\mathscr{B}^{F}(\bta, \bu)&= \mathscr{F}_{\lambda}^{F}(\bta)\quad \text{for a.e.}\,  t\in \mathrm{J} \,,\\
		\disp \int_{\Omega}\partial_t \bu \cdot\bv-\mathscr{B}^{F}(\bsi,\bv)-\mathscr{O}^{F}_1(\bu;\bsi, \bv)+\mathscr{O}^{F}_2(\bu;\bu, \bv)&=0  \qquad \text{for a.e.}\,  t\in \mathrm{J} \,,\\
		\mathscr{A}^{C}(\brho , \bet)+\mathscr{B}^{C}(\bet,\vec{\varphi})&= 0\quad \text{for a.e.}\,  t\in \mathrm{J} \,,\\
		\hspace{-.25cm}\disp \int_{\Omega}\partial_t \varphi \, \psi-	\mathscr{B}^{C}(\brho,\vec{\psi})+\mathscr{C}^{C}(\vec{\varphi}, \vec{\psi})-\mathscr{O}^{C}_1 (\bu;\brho,\vec{\psi})+\mathscr{O}^{C}_2(\lambda;\vec{\varphi}, \vec{\psi})&=\mathscr{F}^C(\vec{\psi}) \quad \text{for a.e.}\,  t\in \mathrm{J}  \,,
	\end{split}
\end{equation}
for any $ (\bta, \bv)\in\mathbb{H}\times\mathbf{V} $ and $ (\bet, \vec{\psi})\in \mathbf{H}\times\mathcal{V}$, 
where the bilinear forms 
$ \mathscr{A}^F: \mathbb{H}\times \mathbb{H}\rightarrow \mathrm{R} $, 
$ \mathscr{B}^F: \mathbb{H}\times \mathbf{V}\rightarrow \mathrm{R} $,  
$\mathscr{A}^{C} : \mathbf{H}\times \mathbf{H}\rightarrow \mathrm{R}$,  
$ \mathscr{B}^T: \mathbf{H}\times \mathcal{V}\rightarrow \mathrm{R} $  and
$\mathscr{C}^{C} : \mathrm{Q}\times\mathrm{Q}\rightarrow \mathrm{R}$ are defined as
\begin{equation}\label{def-a-b-tilde-a-tilde-b}
	\begin{array}{rcll}
		\mathscr{A}^F( \bze , \bta )&:=& \disp\frac{1}{\nu}\int_{\Omega}\bdev(\bze) : \bdev(\bta)  \,,\quad &	\mathscr{A}^{C}( \bxi , \bet ):= \disp\frac{1}{\kappa}\int_{\Omega}\bxi \cdot \bet  \,, \\
		\mathscr{B}^F(\bta,\bv)&:=& \disp  \int_{\Omega}\textbf{v}\cdot\bdiv(\bta)\,,   	\quad &
		\mathscr{B}^{C}(\bet, \vec{\psi}):=
		\disp\int_{\Omega}\psi\, \div(\bet)+\langle\bet\cdot\bn,\, \xi\rangle_{\Gamma_{\mathtt{i}}^{c}}  \,,\\
		\mathscr{C}^{C}(\vec{\phi}, \vec{\psi})&:=&a_2\disp\int_{\Gamma_{\mathtt{w}}}\chi\,\xi\,,
	\end{array}
\end{equation}

whereas, for each $ (\bz,\chi)\in\mathbf{V}\times\mathrm{Q} $, $\mathscr{O}_1^{F}(\bz;\cdot,\cdot):\mathbb{H}\times\mathbf{V}\rightarrow\mathbb{R}$, $\mathscr{O}_2^{F}(\bz;\cdot,\cdot):\mathbf{V}\times\mathbf{V}\rightarrow\mathbb{R} $, $\mathscr{O}_1^{C}(\bz;\cdot,\cdot):\mathbf{H}\times\mathrm{Q}\rightarrow\mathbb{R}$ and $\mathscr{O}_2^{C}(\chi;\cdot,\cdot):\mathrm{Q}\times\mathrm{Q}\rightarrow\mathbb{R}$ are the bilinear forms given by
\begin{equation}\label{def-c-tilde-c}
	\begin{array}{rcll}
		\mathscr{O}_1^F(\bz;\bta,\bv) &:=& \disp\frac{1}{\nu} \int_{\Omega}
		(\bdev(\bta)\,\bz)\cdot \bv \,,\quad & 	\mathscr{O}^{C}_1(\bz; \bet , \psi) := \disp\dfrac{1}{\kappa} \int_{\Omega}(\bet\,\bz)\, \psi\,,\\
		\mathscr{O}_2^F(\bz;\bw,\bv)&:=& \disp \mathtt{F}\int_{\Omega}
		|\bz|^{p-2}\,\bw\, \cdot\bv \,,\quad & \mathscr{O}^{C}_2(\chi;\vec{\varphi}, \vec{\psi})\,:=\, a_1 \disp\int_{\Gamma_{\mathtt{w}}}\chi\,\lambda\,\xi \,.
	\end{array}
\end{equation}
Finally, 
given $ \chi\in \mathrm{Q} $, the linear functionals $ \mathscr{F}^F_{\chi}\in \mathbb{H}^{'} $ and 
$\mathscr{F}^C \in  \mathrm{Q}^{'} $ are  defied by
\begin{equation}\label{def-f-tilde-f}
	\begin{array}{c}
		\mathscr{F}_{\chi}^F(\bta) \,:=\,  \langle \bta\bn\,, \, \mathbf{g}+\mathbf{g}^{\chi} \rangle_{\Gamma_{\mathtt{o}}^{c}}\quad\forall\,\bta \in \mathbb{H}\qan
		\mathscr{F}^C(\xi) \,:=\,-\tilde{a}_0 \disp\int_{\Gamma_{\mathtt{w}}}\xi\quad\forall\, \xi\in \mathrm{Q}\,.
	\end{array}
\end{equation}

\section{Continuous Solvability Analysis}\label{s:3}
We analyze the solvability of \eqref{eq:v1a-eq:v4} using \cite[Theorem 2.1]{gg-JCAM-2024}, \cite[Theorem 3.2]{cov-Calcolo-2020}, \cite[Lemma 2.4]{hjjkr-SIAMNA-2013}, and the Banach theorem.

%
\subsection{Preliminaries}

We first state the boundedness of the variational forms in \eqref{eq:v1a-eq:v4}. Using Cauchy-Schwarz, the bilinear forms \(\mathscr{A}^F\), \(\mathscr{A}^C\), and \(\mathscr{B}^F\) are bounded with constants \(c_{\mathscr{A}^F} = \nu^{-1}\), \(c_{\mathscr{A}^C} = \kappa^{-1}\), and \(c_{\mathscr{B}^F} = 1\).  
For any \(\bz \in \mathbf{V}\), from the definitions of \(\mathscr{O}_1^F\), \(\mathscr{O}_2^F\), and \(\mathscr{O}_1^C\) (cf. \eqref{def-c-tilde-c}) and \eqref{eq:bound.tr2}, we have
\begin{subequations}
	\begin{alignat}{2}
		\big| \mathscr{O}_1^F(\bz; \bze, \bv)\big| &\,\leq\, \dfrac{1}{\nu}\Vert \bz\Vert_{\mathbf{V}}\, \Vert\bze\Vert_{\mathbb{H}}\, \Vert\bv\Vert_{\mathbf{V}}\qquad\forall\, (\bze, \bv)\in \mathbb{H}\times\mathbf{V}\,,\label{eq1:bound.c.ct}\\	
		\big| \mathscr{O}_2^F(\bz; \bw, \bv)\big| &\,\leq\, |\Omega|^{(4-p)/4}\Vert \bz\Vert_{\mathbf{V}}^{p-2}\, \Vert\bw\Vert_{\mathbf{V}}\, \Vert\bv\Vert_{\mathbf{V}}\qquad\forall\, \bw, \bv\in \mathbf{V}\,,\label{eq2:bound.c.ct}\\	
		\big|\mathscr{O}^{C}_1(\bz; \bxi, \psi)\big|& \, \leq \, \dfrac{1}{\kappa}\Vert \bz\Vert_{\mathbf{V}}\, \Vert \bxi\Vert_{\mathbf{H}}\, \Vert\psi\Vert_{\mathrm{V}}\qquad\forall\, (\bxi, \psi)\in \mathbf{H}\times\mathrm{V}\,.\label{eq3:bound.c.ct}
	\end{alignat}
\end{subequations}
The boundedness of $\mathscr{B}^{C}$, $\mathscr{C}^{C}$, $\mathscr{O}_2^{C}$, and the functionals $\mathscr{F}_{\lambda}^F$, $\mathscr{F}^C$ is shown in the next lemma.
\begin{lemma}
	There exist positive constants $c_{\mathscr{B}^C}$, $c_{\mathscr{C}^{C}}$, $c_{\mathscr{O}_2^{C}}$, $c_{\mathscr{F}^F}$ and $c_{\mathscr{F}^C}$ such that
	\begin{subequations}\label{bund.BCO2}
		\begin{alignat}{2}
			\big|\mathscr{B}^{C}(\bet,\vec{\psi})\big|\,&\le\, c_{\mathscr{B}^{C}}\,\Vert \bet\Vert_{\mathbf{H}}\,\Vert\vec{\psi}\Vert_{\mathcal{V}}\qquad & \forall\, (\bet,\vec{\psi})\in\mathbf{H}\times\mathcal{V}\,,\label{bund.Bc}\\
			\big|\mathscr{C}^C(\vec{\phi},\vec{\psi})\big|\,&\le\, c_{\mathscr{C}^C}\,\Vert\vec{\psi}\Vert_{\mathcal{V}}\Vert\vec{\psi}\Vert_{\mathbf{V}}\qquad & \forall\, \vec{\phi},\vec{\psi}\in\mathcal{V}\,,\label{bund.Cc}\\
			\big|\mathscr{O}_2^C(\chi;\vec{\lambda},\vec{\xi})\big|\,&\le\, c_{\mathscr{O}_2^C}\,\Vert\chi\Vert_{\mathrm{Q}}\,\Vert\vec{\lambda}\Vert_{\mathcal{V}}\,\Vert\vec{\xi}\Vert_{\mathcal{V}} \qquad & \forall\,\vec{\chi},\vec{\lambda},\vec{\xi}\in\mathcal{V}\,,\label{bund.O2c}
		\end{alignat}
	\end{subequations}
	and
	\begin{subequations}\label{bund.F}
		\begin{alignat}{2}
			\big|\mathscr{F}_{\chi}^F(\bta)\big|\,&\le\, c_{\mathscr{F}^F}\,\Big(\Vert\mathbf{g}\Vert_{1/2,\Gamma_{\mathtt{o}}^c}+a_1\,\Vert\chi\Vert_{\mathrm{Q}}\Big)\Vert\bta\Vert_{\mathbb{H}}\qquad & \forall\, \bta\in\mathbb{H}\,,\label{bund.Ff}\\
			\big|\mathscr{F}^C(\vec{\psi})\big|\,&\le\, c_{\mathscr{F}^C}\,\Vert\vec{\psi}\Vert_{\mathcal{V}}& \forall\, \vec{\psi}\in\mathcal{V}\,.\label{bund.Fc}
		\end{alignat}
	\end{subequations}
\end{lemma}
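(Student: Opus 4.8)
The plan is to establish each of the five bounds in \eqref{bund.BCO2}--\eqref{bund.F} separately by a direct application of the Cauchy--Schwarz and H\"older inequalities together with the trace and Sobolev embedding facts recorded in the introduction and in Section~\ref{sec.2}. The key preliminary observation, used repeatedly, is that in 2D the trace operator $H^1(\Omega)\to L^t(\Gamma_{\mathtt{w}})$ is bounded for every finite $t$, and that the restriction/extension machinery for $H^{1/2}_{00}(\Gamma_{\mathtt{i}}^c)$ developed before Section~\ref{sec.2} gives, in particular, a continuous embedding $Q=H^{1/2}_{00}(\Gamma_{\mathtt{i}}^c)\hookrightarrow L^t(\Gamma_{\mathtt{w}})$ for all finite $t\ge 1$ (since $\Gamma_{\mathtt{w}}\subseteq\Gamma_{\mathtt{i}}^c$, the restriction of an $H^{1/2}(\Gamma)$-function lies in $H^{1/2}(\Gamma_{\mathtt{w}})\hookrightarrow L^t(\Gamma_{\mathtt{w}})$ in one space dimension); denote the corresponding embedding constant by $c_{\mathtt{w}}$.

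For \eqref{bund.Bc}: split $\mathscr{B}^C(\bet,\vec\psi)=\int_\Omega\psi\,\div(\bet)+\langle\bet\cdot\bn,\xi\rangle_{\Gamma_{\mathtt{i}}^c}$. For the volume term apply H\"older with exponents $4$ and $4/3$ to get $\|\psi\|_{0,4;\Omega}\|\div(\bet)\|_{0,4/3;\Omega}\le\|\psi\|_{\mathrm V}\|\bet\|_{\mathbf H}$; for the boundary term use that $\langle\cdot,\cdot\rangle_{\Gamma_{\mathtt{i}}^c}$ is the duality pairing between $H^{-1/2}_{00}(\Gamma_{\mathtt{i}}^c)$ and $Q$, together with the normal-trace bound $\|\bet\cdot\bn\|_{-1/2,00,\Gamma_{\mathtt{i}}^c}\le C\,\|\bet\|_{\mathbf H}$ (this is the analogue of \eqref{def.E0}--\eqref{eq3a}, i.e. the normal trace on $H(\div_s;\Omega)$ restricted to a subset of the boundary lands in the appropriate dual of $H^{1/2}_{00}$). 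Adding the two contributions and using $\|\vec\psi\|_{\mathcal V}=\|\psi\|_{\mathrm V}+\|\xi\|_{\mathrm Q}$ yields \eqref{bund.Bc} with $c_{\mathscr{B}^C}=\max\{1,C\}$.

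For \eqref{bund.Cc} and \eqref{bund.O2c}, which are boundary integrals over $\Gamma_{\mathtt{w}}$: in $\mathscr{C}^C(\vec\phi,\vec\psi)=a_2\int_{\Gamma_{\mathtt{w}}}\chi\,\xi$ apply Cauchy--Schwarz on $\Gamma_{\mathtt{w}}$ and then $\|\chi\|_{0,\Gamma_{\mathtt{w}}}\le c_{\mathtt{w}}\|\chi\|_{\mathrm Q}$, $\|\xi\|_{0,\Gamma_{\mathtt{w}}}\le c_{\mathtt{w}}\|\xi\|_{\mathrm Q}\le c_{\mathtt{w}}\|\vec\psi\|_{\mathcal V}$, giving $c_{\mathscr{C}^C}=|a_2|\,c_{\mathtt{w}}^2$ (note the second factor in \eqref{bund.Cc} should read $\|\vec\phi\|_{\mathcal V}\|\vec\psi\|_{\mathcal V}$). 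In $\mathscr{O}_2^C(\chi;\vec\lambda,\vec\xi)=a_1\int_{\Gamma_{\mathtt{w}}}\chi\,\lambda\,\xi$, which is trilinear, apply the generalized H\"older inequality on $\Gamma_{\mathtt{w}}$ with three exponents $3,3,3$ to bound it by $\|\chi\|_{0,3;\Gamma_{\mathtt{w}}}\|\lambda\|_{0,3;\Gamma_{\mathtt{w}}}\|\xi\|_{0,3;\Gamma_{\mathtt{w}}}$, and then use the embedding $Q\hookrightarrow L^3(\Gamma_{\mathtt{w}})$ on each factor to obtain \eqref{bund.O2c} with $c_{\mathscr{O}_2^C}=|a_1|\,c_{\mathtt{w},3}^3$. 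Finally, for \eqref{bund.F}: $\mathscr{F}^C(\vec\psi)=-\tilde a_0\int_{\Gamma_{\mathtt{w}}}\xi$ is bounded by $|\tilde a_0|\,|\Gamma_{\mathtt{w}}|^{1/2}\|\xi\|_{0,\Gamma_{\mathtt{w}}}\le|\tilde a_0|\,|\Gamma_{\mathtt{w}}|^{1/2}c_{\mathtt{w}}\|\vec\psi\|_{\mathcal V}$, giving $c_{\mathscr{F}^C}$; and $\mathscr{F}_\chi^F(\bta)=\langle\bta\bn,\mathbf g+\mathbf g^\chi\rangle_{\Gamma_{\mathtt{o}}^c}$ is estimated using the normal-trace bound $\|\bta\bn\|_{-1/2,\Gamma_{\mathtt{o}}^c}\le C\|\bta\|_{\mathbb H}$ on $\mathbb H=\mathbb H_{\Gamma_{\mathtt{o}}}(\bdiv_{4/3};\Omega)$ (analogue of \eqref{eq4a}) and the triangle inequality $\|\mathbf g+\mathbf g^\chi\|_{1/2,\Gamma_{\mathtt{o}}^c}\le\|\mathbf g\|_{1/2,\Gamma_{\mathtt{o}}^c}+\|\mathbf g^\chi\|_{1/2,\Gamma_{\mathtt{o}}^c}$, combined with $\|\mathbf g^\chi\|_{1/2,\Gamma_{\mathtt{o}}^c}\le C'a_1\|\chi\|_{\mathrm Q}$ (this follows from $\mathbf g^\chi=a_1\chi\bn$ on $\Gamma_{\mathtt{w}}$, vanishing on $\Gamma_{\mathtt{i}}$, and the boundedness of the zero-extension operator from $H^{1/2}_{00}(\Gamma_{\mathtt{i}}^c)$), which yields \eqref{bund.Ff} with $c_{\mathscr{F}^F}=\max\{C,CC'\}$.

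The routine computations above present no real obstacle; the only point requiring a little care is the justification that the normal trace $\bet\cdot\bn$ (resp. $\bta\bn$) of an $\mathbf H(\div_{4/3};\Omega)$- (resp. $\mathbb H(\bdiv_{4/3};\Omega)$-) function, restricted to $\Gamma_{\mathtt{i}}^c$ (resp. $\Gamma_{\mathtt{o}}^c$), defines a bounded functional on $H^{1/2}_{00}$ of that piece of the boundary --- this is exactly the content of the extension/restriction discussion preceding Section~\ref{sec.2} (cf. \eqref{def.E0}) applied with $s=4/3$, and of the integration-by-parts identities \eqref{eq3a}--\eqref{eq4a}, so we simply invoke those. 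Everything else is Cauchy--Schwarz, H\"older, and the 2D Sobolev trace embeddings already cited in Section~\ref{sec.2}.
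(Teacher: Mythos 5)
Your proof is correct and follows essentially the same route as the paper's: Hölder/Cauchy--Schwarz for the volume term of $\mathscr{B}^C$, the $H^{1/2}(\Gamma)\hookrightarrow L^2(\Gamma)$ and $H^{1/2}(\Gamma)\hookrightarrow L^3(\Gamma)$ embeddings for the boundary integrals in $\mathscr{C}^C$, $\mathscr{O}_2^C$, $\mathscr{F}^C$, and the normal-trace/extension machinery from the preliminaries for the two duality pairings in $\mathscr{B}^C$ and $\mathscr{F}_\chi^F$. The one place where the two presentations diverge is in how those duality pairings are bounded: you invoke the abstract statement that the normal trace of an $\mathbf{H}(\div_{4/3};\Omega)$- (resp.\ $\mathbb{H}(\bdiv_{4/3};\Omega)$-) function restricted to a piece of $\Gamma$ is a bounded functional on the appropriate $H^{1/2}$- or $H^{1/2}_{00}$-space, whereas the paper actually \emph{derives} that fact there, via the right inverse $\tilde\gamma_0^{-1}$ of the trace, integration by parts, and the 2D embedding $H^1(\Omega)\hookrightarrow L^4(\Omega)$ to handle the $L^{4/3}$ divergence (and, for $\mathscr{F}_\chi^F$, the further decomposition of $\mathbf{E}_{\Gamma_{\mathtt{o}}^c}(\mathbf{g})$ into its $\Gamma_{\mathtt{o}}^c$- and $\Gamma_{\mathtt{o}}$-parts to identify $\langle\bta\bn,\mathbf{E}_{\Gamma_{\mathtt{o}}^c}(\mathbf{g})\rangle_\Gamma=\langle\bta\bn,\mathbf{g}\rangle_{\Gamma_{\mathtt{o}}^c}$). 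Since you point explicitly to the paper's own preliminary paragraphs (the analogue of~\eqref{def.E0} and~\eqref{eq3a}--\eqref{eq4a}) as the source of that boundedness, no gap remains; you are just absorbing the lifting step into a citation rather than repeating it. You also correctly observe that the second factor in~\eqref{bund.Cc} is a typo and should be $\Vert\vec\phi\Vert_{\mathcal V}\Vert\vec\psi\Vert_{\mathcal V}$, which matches what the paper's proof actually produces.
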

\begin{proof}
Let \(\bet \in \mathbf{H}\) and \(\vec{\psi} = (\psi, \xi) \in \mathcal{V}\). Using \eqref{def.E0}, the second term in \(\mathscr{B}^C\) rewrites as
	\begin{equation*}
		\begin{array}{c}
			\langle \bet\cdot\bn,\,\xi\rangle_{\Gamma_{\mathtt{i}}^c}\,=\,\langle \bet\cdot\bn,\, E_{0,\Gamma_{\mathtt{i}}^c}(\xi)\rangle_{\Gamma}
			\,=\,\disp\int_{\Omega}\bet\cdot\nabla\tilde{\gamma}_{0}^{-1}\big(E_{0,\Gamma_{\mathtt{i}}^c}(\xi)\big)\,+\, \int_{\Omega}\tilde{\gamma}_{0}^{-1}\big(E_{0,\Gamma_{\mathtt{i}}^c}(\xi)\big)\,\div(\bet)\,,
		\end{array}
	\end{equation*}
where \(\tilde{\gamma}_{0}^{-1}: H^{1/2}(\Gamma) \to [H_0^1(\Omega)]^{\perp}\) is the right inverse of the trace operator \(\gamma_0: H^{1}(\Omega) \to H^{1/2}(\Gamma)\) (see \cite[Section 1.3.4]{g-springer-2014}).  
Using H\"older’s inequality, the continuous embedding \(i_4: H^1(\Omega) \hookrightarrow L^4(\Omega)\) (cf. \cite[Theorem 4.12]{af-book}), and the continuity of \(\tilde{\gamma}_0^{-1}\) (cf. \cite[Lemma 1.3]{g-springer-2014}) yields
	\begin{equation*}
		\begin{array}{c}
			\big|\langle \bet\cdot\bn,\,\xi\rangle_{\Gamma_{\mathtt{i}}^c}\big|
			\,\le\, \Vert\bet\Vert_{0,\Omega}\,\Vert\nabla\tilde{\gamma}_{0}^{-1}\big(E_{0,\Gamma_{\mathtt{i}}^c}(\xi)\big)\Vert_{0,\Omega}\,+\,\Vert\tilde{\gamma}_{0}^{-1}\big(E_{0,\Gamma_{\mathtt{i}}^c}(\xi)\big)\Vert_{0,4;\Omega}\,\Vert\div(\bet)\Vert_{0,4/3;\Omega}\\[1ex]
			\,\le\, \Vert\bet\Vert_{0,\Omega}\,\Vert\nabla\tilde{\gamma}_{0}^{-1}\big(E_{0,\Gamma_{\mathtt{i}}^c}(\xi)\big)\Vert_{0,\Omega}\,+\,\Vert i_{4}\Vert\, \Vert\tilde{\gamma}_{0}^{-1}\big(E_{0,\Gamma_{\mathtt{i}}^c}(\xi)\big)\Vert_{1,\Omega}\,\Vert\div(\bet)\Vert_{0,4/3;\Omega}\\[1ex]
			\,\le\, \Vert\bet\Vert_{0,\Omega}\,\Vert\, \Vert E_{0,\Gamma_{\mathtt{i}}^c}(\xi)\Vert_{1/2,\Gamma}\,+\,\Vert i_{4}\Vert\, \Vert E_{0,\Gamma_{\mathtt{i}}^c}(\xi)\Vert_{1/2,\Gamma}\,\Vert\div(\bet)\Vert_{0,4/3;\Omega}\,,
		\end{array}
	\end{equation*}
	which thanks to \eqref{def.normH12} concludes \eqref{bund.Bc} with $c_{\mathscr{B}^C}\,:=\,\max\{1,\Vert i_4\Vert\}$. 
	
For \(\mathscr{C}^C\), taking \(\vec{\phi}, \vec{\psi} \in \mathcal{V}\) and applying Cauchy-Schwarz and the Sobolev embedding \(H^{1/2}(\Gamma) \hookrightarrow L^2(\Gamma)\) with constant \(c_2\) (see \cite[Theorem B.46]{eg-book-2004}), we get
	\begin{equation}\label{kk}
		\begin{array}{c}
			\Big|a_{2}\disp\int_{\Gamma_{\mathtt{w}}}\chi\,\xi\Big|\,\le\, |a_2|\,\Vert\chi\Vert_{0,\Gamma_{\mathtt{i}}^c}\,\Vert\xi\Vert_{0,\Gamma_{\mathtt{i}}^c}\,=\, |a_2|\, \Vert E_{0,\Gamma_{\mathtt{i}}^{c}}(\chi)\Vert_{0,\Gamma}\, \Vert E_{0,\Gamma_{\mathtt{i}}^{c}}(\xi)\Vert_{0,\Gamma} \\
			\,\le\, |a_2|c_2^2\, \Vert E_{0,\Gamma_{\mathtt{i}}^{c}}(\chi)\Vert_{1/2,\Gamma}\, \Vert E_{0,\Gamma_{\mathtt{i}}^{c}}(\xi)\Vert_{1/2,\Gamma}\,.
		\end{array}
%
\end{equation}
This, together with \eqref{def.normH12}, yields \eqref{bund.Cc} with \( c_{\mathscr{C}^c} := |a_2| c_2^2 \).  
Similarly, applying H\"older’s inequality and the Sobolev embedding \( H^{1/2}(\Gamma) \hookrightarrow L^{3}(\Gamma) \) with constant \( c_3 \) gives \eqref{bund.O2c} with \( c_{\mathscr{O}_2^C} := a_1 c_3^3 \).
To prove \eqref{bund.Ff}, from the definition of \( \mathscr{F}_{\lambda}^F \) (see second column of \eqref{def-f-tilde-f}), we have
	\begin{equation}\label{lk}
		\mathscr{F}_{\lambda}^F(\bta) \,=\,  \langle \bta\bn\,, \, \mathbf{g} \rangle_{\Gamma_{\mathtt{o}}^{c}}\,+\, \langle \bta\bn\,, \, \mathbf{g}^{\chi} \rangle_{\Gamma_{\mathtt{o}}^{c}}\,.
	\end{equation}
Regarding the first term on the right hand side of \eqref{lk}, we define the extension $\mathbf{E}_{\Gamma_{\mathtt{o}}^{c}}(\mathbf{g})\,:=\,\bw|_{\Gamma}$, where $\bw\in\mathbf{H}^{1}(\Omega)$ is the unique solution to $\Delta \bw \,=\, \mathbf{0}  $ in $\Omega$, $\bw \,= \,\mathbf{g}$ on $ \Gamma_{\mathtt{o}}^{c}$, $\bw \cdot \mathbf{n} \,=\, 0$ on $\Gamma_{\mathtt{o}}$,
	which satisfies
	\begin{equation}\label{gg}
		\Vert \mathbf{E}_{\Gamma_{\mathtt{o}}^{c}}(\mathbf{g})\Vert_{1/2,\Gamma}\,\le\, C_1 \, \Vert\mathbf{g}\Vert_{1/2,\Gamma_{\mathtt{o}}^c}\,.
	\end{equation}
Recalling from the end of Section~\ref{sec1} that \( \mathbf{E}_{\Gamma_{\mathtt{o}}^{c}}(\mathbf{g}) \in \mathbf{H}^{1/2}(\Gamma) \), we note that there exist unique elements \( \bvarr_{\Gamma_{\mathtt{o}}^c} \in \mathbf{H}^{1/2}(\Gamma_{\mathtt{o}}^c) \) and \( \bvarr_{\Gamma_{\mathtt{o}}} \in \mathbf{H}^{1/2}(\Gamma_{\mathtt{o}}) \) such that $	\langle \bta\bn,\,\mathbf{E}_{\Gamma_{\mathtt{o}}^c}(\mathbf{g})\rangle_{\Gamma}:=\langle \bta\bn,\,\mathbf{E}_{\Gamma_{\mathtt{o}}^c}(\bvarr_{\Gamma_{\mathtt{o}}^c})\rangle_{\Gamma}+\langle \bta\bn,\,\mathbf{E}_{0,\Gamma_{\mathtt{o}}}(\bvarr_{\Gamma_{\mathtt{o}}})\rangle_{\Gamma}$ and $	\mathbf{E}_{\Gamma_{\mathtt{o}}^c}(\mathbf{g})|_{\Gamma_{\mathtt{o}}^c}=\bvarr_{\Gamma_{\mathtt{o}}^c}$, $\mathbf{E}_{\Gamma_{\mathtt{o}}^c}(\mathbf{g})|_{\Gamma_{\mathtt{o}}}=\bvarr_{\Gamma_{\mathtt{o}}}$.
	Additionally, knowing that  $\mathbf{g}\in\mathbf{H}^{1/2}(\Gamma_{\mathtt{o}}^c)$ and using uniqueness, we deduce that $\bvarr_{\Gamma_{\mathtt{o}}^c}\,=\,\mathbf{g}$, which means that
	\begin{equation}\label{ll}
		\langle \bta\bn,\,\mathbf{E}_{\Gamma_{\mathtt{o}}^c}(\mathbf{g})\rangle_{\Gamma}\,=\,\langle \bta\bn,\,\mathbf{g}\rangle_{\Gamma_{\mathtt{o}}^c}\qquad\forall\,\bta\in\mathbb{H}\,.
	\end{equation}
As a result, applying the previous reasoning to bound $\mathscr{B}^C$, now using the continuous embedding $\mathbf{i}_4\colon \mathbf{H}^1(\Omega) \hookrightarrow \mathbf{L}^4(\Omega)$, the right inverse $\boldsymbol{\tilde{\gamma}}_0^{-1}$ of the trace operator $\boldsymbol{\gamma}_0$, and its continuity, we obtain
	\begin{equation}\label{gf}
		\begin{array}{c}
			\big|\langle \bta\bn,\,\mathbf{g}\rangle_{\Gamma_{\mathtt{o}}^c}\big|\,=\, \big|\langle \bta\bn,\,\mathbf{E}_{\Gamma_{\mathtt{o}}^c}(\mathbf{g})\rangle_{\Gamma}\big|
			\,=\, \Big|\disp\int_{\Omega}\bta:\nabla\boldsymbol{\tilde{\gamma}}_0^{-1}\big(\mathbf{E}_{\Gamma_{\mathtt{o}}^c}(\mathbf{g})\big)\Big|\,+\, \Big|\int_{\Omega}\boldsymbol{\tilde{\gamma}}_0^{-1}\big(\mathbf{E}_{\Gamma_{\mathtt{o}}^c}(\mathbf{g})\big)\cdot\bdiv(\bta)\Big|\\
			\,\le\, \Vert\bta\Vert_{0,\Omega}\,\Vert\nabla\boldsymbol{\tilde{\gamma}}_0^{-1}\big(\mathbf{E}_{\Gamma_{\mathtt{o}}^c}(\mathbf{g})\big)\Vert_{0,\Omega}\,+\,\Vert\boldsymbol{\tilde{\gamma}}_0^{-1}\big(\mathbf{E}_{\Gamma_{\mathtt{o}}^c}(\mathbf{g})\big)\Vert_{0,4;\Omega}\,\Vert\bdiv(\bta)\Vert_{0,4/3;\Omega}\\[1ex]
			\,\le\, \max\{1,\Vert \mathbf{i}_4\Vert\}\,\Vert\mathbf{E}_{\Gamma_{\mathtt{o}}^c}(\mathbf{g})\Vert_{1/2,\Gamma}\,\Vert\bta\Vert_{\mathbb{H}}\,,
		\end{array}
	\end{equation}
	which, thanks to the boundedness of $\mathbf{E}_{\Gamma_{\mathtt{o}}^c}$ given in \eqref{gg}, yields 
	\begin{equation}\label{dd}
		\big|\langle \bta\bn,\,\mathbf{g}\rangle_{\Gamma_{\mathtt{o}}^c}\big|\,\le\,\max\{1,\Vert \mathbf{i}_4\Vert\}\,C_1 \,\Vert\mathbf{g}\Vert_{1/2,\Gamma_{\mathtt{o}}^c}\,\Vert\bta\Vert_{\mathbb{H}}\,.
	\end{equation}
To handle the second term on the right-hand side of \eqref{lk}, we define $\Xi_{\lambda} := E_{0,\Gamma_{\mathtt{i}}^c}(\lambda)a_1 \bn \in \mathbf{H}^{1/2}(\Gamma)$ so that its restriction to $\Gamma_{\mathtt{o}}^c$ equals $\mathbf{g}^{\lambda} \in \mathbf{H}^{1/2}(\Gamma_{\mathtt{o}}^c)$. As with \eqref{ll}, we find $\langle \bta\bn,\, \Xi_\lambda\rangle_{\Gamma} = \langle \bta\bn,\, \mathbf{g}^{\lambda} \rangle_{\Gamma_{\mathtt{o}}^c}$. Then, applying the $H_{00}^{1/2}(\Gamma_{\mathtt{i}}^c)$-norm from \eqref{def.normH12} and reasoning as in \eqref{gf}, we obtain
	\begin{equation}\label{df}
		\big|\langle \bta\bn,\, \mathbf{g}^{\lambda}\rangle_{\Gamma_{\mathtt{o}}^c}\big|\,\le\, a_1 \, \max\{1,\Vert\mathbf{i}_4\Vert\}\,\Vert\vec{\lambda}\Vert_{\mathcal{V}}\Vert\bta\Vert_{\mathbb{H}}\,.
	\end{equation}
Substituting \eqref{dd} and \eqref{df} into \eqref{lk} yields \eqref{bund.Ff} with $c_{\mathscr{F}} := \max\{1,\|\mathbf{i}_4\|\} \max\{C_1,1\}$.  
Similarly, we obtain the final bound with $c_{\mathscr{F}^C} := |\tilde{a}_0||\Gamma|^{1/2}c_2$ by following the same steps as in \eqref{kk}.
\end{proof}

Moreover, adapting the proofs of \cite[Lemmas 2.3 and 2.5]{g-springer-2014}, one shows that there exist constants $c_{1,\Omega}, c_{2,\Omega} > 0$ depending only on $\Omega$ and $\Gamma_{\mathtt{o}}$ such that
\begin{subequations}\label{kkk}
	\begin{alignat}{2}
		c_{1,\Omega}\Vert\bta_0\Vert_{0,\Omega}^2\,\le\, \Vert\bdev(\bta_0)\Vert_{0,\Omega}^2+\Vert\bdiv(\bta_0)\Vert_{0,4/3;\Omega}^2\qquad\forall\,\bta_0\in\mathbb{H}_{0}(\bdiv_{4/3};\Omega)\,,\label{kll}\\[1ex]
		c_{2,\Omega}\Vert\bta\Vert_{\mathbb{H}}^2\,\le\,\Vert\bta_0\Vert_{\mathbb{H}}^2\quad\forall\,\bta=\bta_0+d\mathbb{I}\in \mathbb{H}=\mathbb{H}_{\mathtt{o}}(\bdiv_{4/3};\Omega)\,.\label{kjj}
	\end{alignat}
\end{subequations}
\subsection{A fixed point strategy}\label{s:fix}
We begin by  reformulating the variational formulation \eqref{eq:v1a-eq:v4} as a fixed point problem.  For this, we define the operator  $ \mathscr{L}^F:  \mathbf{V} \times \mathrm{Q}\rightarrow \mathbb{H}\times\mathbf{V}$ by
\begin{equation}\label{eq:opr1}
	\mathscr{L}^F(\bz(t), \chi(t))\, =\,\big(\mathscr{L}^F_1(\bz(t), \chi(t)), \mathscr{L}^F_2(\bz(t), \chi(t))\big)\, :=\,
	(\bsi_{\star}(t), \bu_{\star}(t))\,\quad\text{for a.e.}\,  t\in \mathrm{J} \,,
\end{equation}
for each $ (\bz(t), \chi(t))\in \mathbf{V} \times \mathrm{Q} $, where $ (\bsi_{\star}(t), \bu_{\star}(t))\in \mathbb{H}\times\mathbf{V} $ 
satisfying
\begin{equation}\label{eq:subp1}
	\begin{array}{rcll}
		\hspace{-.5cm}	\mathscr{A}^F(\bsi_\star , \bta)+\mathscr{B}^F(\bta, \bu_\star)&=& \mathscr{F}_{\chi}^F(\bta)\quad \text{for a.e.}\,  t\in \mathrm{J}\,,\\
		\hspace{-.5cm}	\disp \int_{\Omega}\partial_t \bu_\star \cdot\bv-\mathscr{B}^F(\bsi_\star,\bv)-\mathscr{O}_1^F(\bz;\bsi_\star, \bv)+\mathscr{O}_2^F(\bz;\bu_\star, \bv)&=&0\quad \text{for a.e.}\,  t\in \mathrm{J}
		\,, 
	\end{array}
\end{equation}
with initial condition $ \bu_{\star}(\cdot,0)=\bu_0 $, for all $(\bta,\bv)\in\mathbb{H}\times\mathbf{V}$.

\smallskip

In turn, we also introduce the operator $ \mathscr{L}^C: \mathbf{V}\times \mathrm{Q}\rightarrow  \mathbf{H}\times\mathcal{V} $ as
\begin{equation}\label{eq:opr2}
	\mathscr{L}^C(\bz(t) , \chi(t))\, =\, \big(\mathscr{L}^C_1(\bz(t) , \chi(t)), \mathscr{L}^C_2(\bz(t) , \chi(t)), \mathscr{L}^C_3(\bz(t) , \chi(t))\big)\,:=\,
	(\brho_{\star}(t), \vec{\varphi}_{\star}(t))\,, 
\end{equation}
for a.e. $ t\in \mathrm{J} $ and each $ (\bz(t), \chi(t))\in\mathbf{V}\times\mathrm{Q} $,	where $ (\brho_{\star}(t), \vec{\varphi}_{\star}(t))=(\brho_{\star}(t), (\varphi_{\star}(t), \lambda_{\star}(t)))\in  \mathbf{H}\times\mathcal{V} $ satisfying
\begin{equation}\label{eq:subp2}
	\begin{array}{rcll}
		\hspace{-.3cm}	\mathscr{A}^{C}(\brho_\star , \bet)+\mathscr{B}^{C}(\bet,\vec{\varphi}_\star)&=&0\quad \text{for a.e.}\,  t\in \mathrm{J}\,,\\
			\hspace{-.3cm}	\disp \int_{\Omega}\partial_t \varphi_\star \, \psi-	\mathscr{B}^{C}(\brho_\star,\vec{\psi})+\mathscr{C}^{C}(\vec{\varphi}_\star, \vec{\psi})-\mathscr{O}^{C}_1 (\bz;\brho_\star,\vec{\psi})
		+\mathscr{O}^{C}_2(\chi;\lambda_\star, \vec{\psi})&=&\mathscr{F}^C(\vec{\psi})\quad \text{for a.e.}\,  t\in \mathrm{J}  \,,
	\end{array}
\end{equation}
with initial condition $ \varphi_{\star}(\cdot,0)=\varphi_{0} $ and each $(\bet,\vec{\psi})\in\mathbf{H}\times\mathcal{V}$. 

Thus, for a.e. $ t\in \mathrm{J} $, defining the operator $ \mathscr{T} :\mathbf{V}\times \mathrm{Q}\rightarrow \mathbf{V}\times \mathrm{Q} $ as
\begin{equation}\label{eq:def.opr.E}
	\mathscr{T}(\bz(t) , \chi(t))\, :=\,  \big( \mathscr{L}_2^F(\bz(t) , \mathscr{L}_3^T(\bz(t), \chi(t))),\, \mathscr{L}_3^T(\bz(t), \chi(t))\big)\quad\forall\, (\bz(t), \chi(t))\in \mathbf{V}\times \mathrm{Q} \,,
\end{equation}

we find out that solving  \eqref{eq:v1a-eq:v4} is equivalent to finding a fixed point of $ \mathscr{T} $, that is, given initial data
$ (\bu_{0}, \varphi_{0})\in \mathbf{V}\times\mathrm{V} $, find $ (\bu(t) , \lambda(t))\in \mathbf{V}\times \mathrm{Q} $ such that
\begin{equation}\label{eq:fixp}
	\begin{array}{l}
		\mathscr{T}(\bu , \lambda)\, =\,(\bu , \lambda)  \qquad \text{for a.e.}\,  t\in \mathrm{J}\,.
	\end{array}
\end{equation}
\subsection{Well-posedness of the uncoupled problems}\label{sec.2.2}
We aim to show that \( \mathscr{T} \) has a unique fixed point by first proving the well-posedness of the uncoupled problems \eqref{eq:subp1} and \eqref{eq:subp2}, i.e., verifying that \( \mathscr{L}^F \) and \( \mathscr{L}^C \) are well-defined. From the definitions of \( \mathscr{A}^F \) and \( \mathscr{A}^C \), we note that both are elliptic with constants \( \alpha_F := 1/\nu \) and \( \alpha_C := 1/\kappa \), respectively:
\begin{equation}\label{eq:ell.a.at}
		\mathscr{A}^F(\bta , \bta)\, \geq\, \alpha_F \, \Vert \bdev(\bta)\Vert_{0,\Omega}^{2}\quad\forall\,\bta\in \mathbb{H}\qan
		\mathscr{A}^{C}(\bet , \bet)\, \geq\, \alpha_C \, \Vert \bet\Vert_{0,\Omega}^{2}\quad\forall\,\bet\in \mathbf{H}\,. 
\end{equation}

Thus, it remains to verify the continuous inf-sup condition for \( \mathscr{B}^F \) and \( \mathscr{B}^C \), adapting the argument from \cite[Lemma 3.3]{cgo-NMPDE-2021} to our setting.
\begin{lemma}\label{l_inf.b.bt}
	There exist constants $ \beta_F >0 $ and $ \beta_{C}>0 $ such that there hold
	\begin{subequations}
		\begin{alignat}{2}
			\sup_{\mathbf{0}\neq \bta\in \mathbb{H}}\dfrac{\mathscr{B}^F(\bta, \bv)}{\Vert \bta\Vert_{\mathbb{H}}}\,\geq\,\beta_F\,\Vert \bv\Vert_{\mathbf{V}}\qquad\forall\, \bv\in \mathbf{V} \,,\label{eq:inf.bi}\\
			\sup_{\mathbf{0}\neq \bet\in \mathbf{H}}\dfrac{\mathscr{B}^C(\bet, \vec{\psi})}{\Vert \bet\Vert_{\mathbf{H}}}\,\geq\,\beta_{C}\,\Vert \vec{\psi}\Vert_{\mathcal{V}}\qquad\forall\, \psi\in \mathcal{V} \,,\label{eq:inf.bit}
		\end{alignat}
	\end{subequations}
\end{lemma}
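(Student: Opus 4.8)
The plan is to prove each inf-sup condition by the classical device of exhibiting, for the given right-hand element, an explicit ``Fortin-type'' test function built from an auxiliary mixed boundary value problem for the Laplacian, exactly in the spirit of the proof of \cite[Lemma 3.3]{cgo-NMPDE-2021}. The single analytic fact I will lean on repeatedly is that, in two dimensions, $H^1(\Omega)\hookrightarrow L^4(\Omega)$, so that $L^{4/3}(\Omega)$ embeds continuously in the dual of $H^1_D(\Omega):=\{v\in H^1(\Omega):\ v|_{\Gamma_D}=0\}$ for any boundary piece $\Gamma_D\subset\Gamma$ of positive surface measure; combined with the Poincar\'e inequality on $H^1_D(\Omega)$, this makes the auxiliary problems uniquely solvable with $\|z\|_{1,\Omega}$ controlled by the $L^{4/3}$-norm of the source, the constant depending only on $\Omega$ and $\Gamma_D$.

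For \eqref{eq:inf.bi}: given $\bv\in\mathbf{V}\setminus\{\bzero\}$, I will take the normalized duality element $\bw:=|\bv|^2\bv\,\|\bv\|_{0,4;\Omega}^{-3}\in\mathbf{L}^{4/3}(\Omega)$, for which $\|\bw\|_{0,4/3;\Omega}=1$ and $\int_\Omega\bv\cdot\bw=\|\bv\|_{0,4;\Omega}$. Then, for $i=1,2$, I solve $\Delta\zeta_i=w_i$ in $\Omega$ with $\zeta_i=0$ on $\Gamma_{\mathtt{o}}^c$ and $\nabla\zeta_i\cdot\bn=0$ on $\Gamma_{\mathtt{o}}$ (well posed since $\Gamma_{\mathtt{o}}^c\supseteq\Gamma_{\mathtt{i}}$ has positive measure), and let $\bta$ be the tensor whose $i$-th row is $\nabla\zeta_i$. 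Then $\bdiv(\bta)=\bw$, the weak form forces $\bta\bn=\bzero$ on $\Gamma_{\mathtt{o}}$ so that $\bta\in\mathbb{H}$, and $\|\bta\|_{\mathbb{H}}=\|\bta\|_{0,\Omega}+\|\bw\|_{0,4/3;\Omega}\le\widehat{C}$ uniformly in $\bv$. Since $\mathscr{B}^F(\bta,\bv)=\int_\Omega\bv\cdot\bdiv(\bta)=\|\bv\|_{0,4;\Omega}$, bounding the supremum below by this particular $\bta$ gives \eqref{eq:inf.bi} with $\beta_F=\widehat{C}^{-1}$ (the case $\bv=\bzero$ being trivial).

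For \eqref{eq:inf.bit}: given $\vec{\psi}=(\psi,\xi)\in\mathcal{V}$, I will combine two test functions. To capture $\psi$: with $g_\psi:=|\psi|^2\psi\,\|\psi\|_{0,4;\Omega}^{-3}$ (when $\psi\neq\bzero$) I solve $\Delta z_1=g_\psi$ in $\Omega$ with $z_1=0$ on $\Gamma_{\mathtt{i}}$ and $\nabla z_1\cdot\bn=0$ on $\Gamma_{\mathtt{i}}^c$, and set $\bet_1:=\nabla z_1\in\mathbf{H}$; then $\div(\bet_1)=g_\psi$, $\|\bet_1\|_{\mathbf{H}}\le C_1$, and testing the weak form of $z_1$ against $\tilde{\gamma}_0^{-1}\big(E_{0,\Gamma_{\mathtt{i}}^c}(\xi)\big)$ (which lies in $H^1_{\Gamma_{\mathtt{i}}}(\Omega)$) yields $\langle\bet_1\cdot\bn,\xi\rangle_{\Gamma_{\mathtt{i}}^c}=0$, hence $\mathscr{B}^C(\bet_1,\vec{\psi})=\|\psi\|_{0,4;\Omega}$. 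To capture $\xi$: I take $z_2\in H^1(\Omega)$ the harmonic extension of $E_{0,\Gamma_{\mathtt{i}}^c}(\xi)\in H^{1/2}(\Gamma)$ and set $\bet_2:=\nabla z_2\in\mathbf{H}$; then $\div(\bet_2)=0$, $\|\bet_2\|_{\mathbf{H}}=\|\nabla z_2\|_{0,\Omega}\le C_2\|\xi\|_Q$, integration by parts against $z_2$ gives $\langle\bet_2\cdot\bn,\xi\rangle_{\Gamma_{\mathtt{i}}^c}=\|\nabla z_2\|_{0,\Omega}^2$, and the trace inequality plus Poincar\'e (using $z_2=0$ on $\Gamma_{\mathtt{i}}$) give $\|\nabla z_2\|_{0,\Omega}\ge c'\|\xi\|_Q$. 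Finally I test against $\bet:=\bet_1+\|\nabla z_2\|_{0,\Omega}^{-1}\bet_2$, which satisfies $\|\bet\|_{\mathbf{H}}\le C_1+1$ and $\mathscr{B}^C(\bet,\vec{\psi})=\|\psi\|_{0,4;\Omega}+\|\nabla z_2\|_{0,\Omega}\ge\min\{1,c'\}\,\|\vec{\psi}\|_{\mathcal{V}}$; the degenerate cases $\psi=\bzero$ or $\xi=0$ are covered by keeping only the relevant single piece. This yields \eqref{eq:inf.bit} with $\beta_C=\min\{1,c'\}/(C_1+1)$.

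The main obstacle I anticipate is making the auxiliary problems rigorous: checking that the mixed Dirichlet--Neumann problems for $-\Delta$ are well posed in $H^1$ when the data lie only in $L^{4/3}$, with the solution operator bounded $L^{4/3}(\Omega)\to H^1(\Omega)$ with a constant uniform in the data, and verifying that in this low-regularity setting the normal traces $\bta\bn$ and $\bet\cdot\bn$ are correctly understood in $H^{-1/2}(\Gamma)$ (and in $H^{-1/2}_{00}(\Gamma_{\mathtt{i}}^c)$ after restriction, via \eqref{def.E0}) so that every integration-by-parts identity used above---in particular $\langle\bet_1\cdot\bn,\xi\rangle_{\Gamma_{\mathtt{i}}^c}=0$ and $\langle\bet_2\cdot\bn,\xi\rangle_{\Gamma_{\mathtt{i}}^c}=\|\nabla z_2\|_{0,\Omega}^2$---is licit on the polygonal domain $\Omega$. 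The remaining bookkeeping (the duality choices, the homogeneity matching when summing the two pieces for $\mathscr{B}^C$, and the normalizations) is routine.
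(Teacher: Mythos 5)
Your proposal is correct and follows the same fundamental strategy as the paper: both exhibit explicit Fortin-type test functions via auxiliary mixed boundary value problems for the Laplacian with $L^{4/3}$ source data, using the duality element $|\bv|^2\bv$ for the $L^4/L^{4/3}$ pairing and $H^1\hookrightarrow L^4$ together with Poincar\'e to get $L^{4/3}\to H^1$ solvability with uniform constants.

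Where you genuinely depart from the paper is in the treatment of the Lagrange-multiplier component of \eqref{eq:inf.bit} and, especially, in how the two pieces are combined. The paper establishes two \emph{separate} lower bounds, $\sup_\bet \mathscr{B}^C(\bet,(\psi,0))/\|\bet\|_{\mathbf{H}}\ge\beta_{1,C}\|\psi\|_{\mathrm{V}}$ and $\sup_\bet \mathscr{B}^C(\bet,(0,\xi))/\|\bet\|_{\mathbf{H}}\ge\beta_{2,C}\|\xi\|_{\mathrm{Q}}$ (via the arbitrary-Neumann-data argument of \cite{bg-NMPDE-2003}), and then asserts $\beta_C=\tfrac12\min\{\beta_{1,C},\beta_{2,C}\}$; as written this step implicitly relies on the maximizing test functions having nonnegative (in fact vanishing) cross terms, which the paper does not verify explicitly. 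Your construction handles this head-on: $\bet_1$ is built with $\langle\bet_1\cdot\bn,\xi\rangle_{\Gamma_{\mathtt{i}}^c}=0$ (because $\nabla z_1\cdot\bn=0$ on $\Gamma_{\mathtt{i}}^c$ and $\tilde\gamma_0^{-1}(E_{0,\Gamma_{\mathtt{i}}^c}(\xi))$ is an admissible test function for $z_1$), while $\bet_2=\nabla z_2$ with $z_2$ the harmonic extension has $\div\bet_2=0$; so in $\bet:=\bet_1+\|\nabla z_2\|_{0,\Omega}^{-1}\bet_2$ the cross terms vanish identically and the lower bound on $\mathscr{B}^C(\bet,\vec\psi)$ is immediate. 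This buys you a self-contained, fully justified combination step at the cost of one extra integration-by-parts identity; it is a cleaner route to the same constant. The bookkeeping you flag as the main obstacle—well-posedness of the mixed problems with $L^{4/3}$ data and interpretation of the normal traces in $H^{-1/2}(\Gamma)$ and $H^{-1/2}_{00}(\Gamma_{\mathtt{i}}^c)$—is indeed the delicate part, and your sketch of why it works (the embedding $H^1\hookrightarrow L^4$ plus Green's formula defining $\bta\bn$ as a bounded functional on $H^{1/2}(\Gamma)$ for $\bta\in\mathbb{H}(\bdiv_{4/3};\Omega)$) is the right one and matches what the paper already assembled in the proof of the preceding boundedness lemma.
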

\begin{proof}
	Give $\bv\in \mathbf{V}$, we set $ \widetilde{\bv}:=|\bv|^{2}\bv$, and let $\bw\in \mathbf{H}^{1}(\Omega) $ be the unique solution to the following problem
	\[
	\Delta\bw \,=\, \widetilde{\bv}\qin\Omega\,,\quad(\nabla\bw)\bn\,=\, \mathbf{0}\qon\Gamma_{\mathtt{o}}\,,\qan \bw\,=\,\mathbf{0}\qon\Gamma_{\mathtt{o}}^c \,.
	\]
Defining $\widetilde{\bta}:=-\nabla\bw \in \mathbb{L}^2(\Omega)$ with $\widetilde{\bta}\in \mathbb{H}$ and using an argument as in \cite[Lemma 3.3]{cgo-NMPDE-2021}, we obtain \eqref{eq:inf.bi} with 
$\beta_F := 1/(c_p \|\mathbf{i}_4\| + 1)$, where $c_p$ is a Poincaré constant depending on $|\Omega|$.  
For the second part, letting $(\bet, \vec{\psi}) \in \mathbf{H} \times \mathcal{V}$, we proceed similarly to $\mathscr{B}^F$, yielding
	\begin{equation}\label{fd}
		\sup_{\mathbf{0}\neq \bet\in \mathbf{H}}\dfrac{\mathscr{B}^C(\bet, \vec{\psi})}{\Vert \bet\Vert_{\mathbf{H}}}\,\geq\,	\sup_{\mathbf{0}\neq \bet\in \mathbf{H}}\dfrac{\mathscr{B}^C(\bet, (\psi,0))}{\Vert \bet\Vert_{\mathbf{H}}}\,\ge\, \beta_{1,C}\Vert\psi\Vert_{\mathrm{V}}\,,
	\end{equation}
	where $\beta_{1,C}\,=\,1/(c_p\Vert\mathrm{i}_4\Vert+1)$. 
Given $\upsilon \in \mathrm{H}_{00}^{-1/2}(\Gamma_{\mathtt{o}})$,  we let $z\in\mathrm{H}^{1}(\Omega)$ be the solution to
	\[
	\Delta z\,=\, 0\qin\Omega\,,\quad z \,=\,0\qon\Gamma_{\mathtt{o}}^{c}\qan \nabla z\cdot\bn\,=\,\upsilon \qon\Gamma_{\mathtt{o}}\,,
	\]
Set $\tilde{\bet} := \nabla z \in \mathbf{H}$. By a similar argument to \cite[Theorem 2.1]{bg-NMPDE-2003}, there exists $\beta_{2,C}$ such that
	\begin{equation}\label{part2.difBc}
		\sup_{\mathbf{0}\neq \bet\in \mathbf{H}}\dfrac{\mathscr{B}^C(\bet, \vec{\psi})}{\Vert \bet\Vert_{\mathbf{H}}}\,\geq\,	\sup_{\mathbf{0}\neq \bet\in \mathbf{H}}\dfrac{\mathscr{B}^C(\bet, (0,\xi))}{\Vert \bet\Vert_{\mathbf{H}}}\,\geq\,	\beta_{2,C}\Vert\xi\Vert_{\mathrm{Q}}\,.
	\end{equation}
	The preceding result, together with \eqref{fd}, leads to \eqref{eq:inf.bit} with $\beta_{C}\,:=\,\frac{1}{2}\min\big\{\beta_{1,C},\beta_{2,C}\big\}$.
\end{proof}

Next, we adopted the approach taken from \cite{hjjkr-SIAMNA-2013} to the uncoupled problems \eqref{eq:subp1} and \eqref{eq:subp2}. 
We first need to introduce some notations:  Let
\begin{equation}\label{n1}
	\Big\{\big(\bta_k,\bv_k\big):\quad k\in \mathbb{N}  \Big\} \qan \Big\{\big(\bet_k,(\psi_k,\xi_k)\big):\quad k\in \mathbb{N}  \Big\}\,, 
\end{equation}
be a Hilbert bases
of $ \mathbb{H}\times\mathbf{V}$ and $ \mathbf{H}\times(\mathrm{V}\times\mathrm{Q}) $, respectively and denote $ \mathbb{H}_n \subseteq\mathbb{H} $, $\mathbf{V}_m \subseteq \mathbf{V} $,
and
$ \mathbf{H}_{\tilde{n}} \subseteq\mathbf{H} $, $\mathrm{V}_{\tilde{m}} \subseteq \mathrm{V} $, $ \mathrm{Q}_{\tilde{s}} \subseteq\mathrm{Q}$ 
as finite dimensional subspaces spanned by $ \{\bta_i\}_{i=1}^{n} $, $ \{\bv_j\}_{j=1}^{m} $
and $ \{\bet_k\}_{k=1}^{\tilde{n}} $, $ \{\psi_l\}_{l=1}^{\tilde{m}} $, $ \{\xi_p\}_{p=1}^{\tilde{s}} $ respectively. For each positive integers $ n, m, s, \tilde{n}, \tilde{m}, \tilde{s}  $, and $ (\bz , \chi)\in \mathbf{V}_m \times\mathrm{Q}_{\tilde{s}} $ we let $ \bsi_{\star,n}: \mathrm{J}\rightarrow \mathbb{H}_{n} $, $ \bu_{\star,m}: \mathrm{J}\rightarrow \mathbf{V}_{m} $ and $ \brho_{\star,s}: \mathrm{J}\rightarrow \mathbf{H}_{s} $, $ \vec{\varphi}_{\star,\tilde{m}\tilde{s}}=(\varphi_{\star,\tilde{m}},\lambda_{\star,\tilde{s}}): \mathrm{J}\rightarrow\mathcal{V}_{\tilde{m}\tilde{s}} =\mathrm{V}_{\tilde{m}}\times\mathrm{Q}_{\tilde{s}} $ be the solution of the following problems, respectively,
\begin{equation}\label{eq:subp1.a}
	\begin{array}{rcll}
		\hspace{-.3cm}	\mathscr{A}^F(\bsi_{\star,n} , \bta_i)+\mathscr{B}^F(\bta_i, \bu_{\star,m})&=& \mathscr{F}_{\chi}^F(\bta_i)\quad \text{for a.e.}\,  t\in \mathrm{J}
		\,,
		\\
	\hspace{-.5cm}	\disp \int_{\Omega}\partial_t \bu_{\star,m} \cdot\bv_j-\mathscr{B}^F(\bsi_{\star,n},\bv_{j})-\mathscr{O}_1^F(\bz;\bsi_{\star,n}, \bv_j)
		\,+\,\mathscr{O}_2^F(\bz;\bu_{\star,m}, \bv_j)&=&0  \quad \text{for a.e.}\,  t\in \mathrm{J} \\
		\hspace{-.3cm}	\disp \int_{\Omega}\bu_{\star,m}^{0}\cdot \bv_j&=&\disp \int_{\Omega}\bu_{0}\cdot\bv_j \quad \text{for a.e.}\,  t\in \mathrm{J}\,,
	\end{array}	
\end{equation}
for any $ i=1,\cdots,n $, $ j=1,\cdots,m $ and
\begin{equation}\label{eq:subp2.a}
	\begin{array}{rcll}
		\mathscr{A}^{C}(\brho_{\star,\tilde{n}} , \bet_i)+\mathscr{B}^{C}(\bet_i,\vec{\varphi}_{\star,\tilde{m}\tilde{s}})&=& 0\quad \text{for a.e.}\,  t\in \mathrm{J}
		\,,\\
		\disp \int_{\Omega}\partial_t \varphi_{\star,\tilde{m}} \, \psi_j-	\mathscr{B}^{C}(\brho_{\star,\tilde{n}},\vec{\psi}_{j,k})+\mathscr{C}^{C}(\vec{\varphi}_{\star,\tilde{m}\tilde{s}}, \vec{\psi}_{j,k})&&\\
		-\mathscr{O}^{C}_1 (\bz;\brho_{\star,\tilde{n}},\vec{\psi}_{j,k})+\mathscr{O}^{C}_2(\chi;\lambda_{\star,\tilde{s}}, \vec{\psi}_{j,k})&=&\mathscr{F}^C(\vec{\psi}_{j,k})\quad \text{for a.e.}\,  t\in \mathrm{J}  \,,\\
		\disp \int_{\Omega}\varphi_{\star,\tilde{m}}^{0}\, \psi_j&=&\disp \int_{\Omega} \varphi_{0}\, \psi_j	\,,
	\end{array}	
\end{equation}
for any $ i=1,\cdots,\tilde{n} $, $ j=1,\cdots,\tilde{m} $, $ k=1,\cdots,\tilde{s} $.
\begin{lemma}
Given \( (\bz, \chi) \in \mathbf{V}_m \times \mathrm{Q}_{\tilde{s}} \), and for each \( n, m, s \in \mathbb{N} \), there exist unique solutions \( (\bsi_{\star,n}, \bu_{\star,m}) \) and \( (\brho_{\star,\tilde{n}}, \vec{\varphi}_{\star,\tilde{m}\tilde{s}}) \) to problems~\eqref{eq:subp1.a} and~\eqref{eq:subp2.a}, respectively.
\end{lemma}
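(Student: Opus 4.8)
The plan is to run a Faedo--Galerkin argument combined with the abstract solvability result for differential--algebraic systems from \cite[Lemma 2.4]{hjjkr-SIAMNA-2013}. First I would expand the unknowns in the prescribed Hilbert bases, writing $\bsi_{\star,n}(t)=\sum_{i=1}^{n}\alpha_i(t)\,\bta_i$, $\bu_{\star,m}(t)=\sum_{j=1}^{m}\gamma_j(t)\,\bv_j$, and analogously $\brho_{\star,\tilde n}$, $\varphi_{\star,\tilde m}$, $\lambda_{\star,\tilde s}$ in terms of $\{\bet_i\}$, $\{\psi_j\}$, $\{\xi_k\}$. Substituting these expansions into \eqref{eq:subp1.a} and \eqref{eq:subp2.a} and testing against the basis functions converts each uncoupled problem into a finite-dimensional system for the coefficient vectors: the first row, which carries no time derivative, is an \emph{algebraic} linear relation tying the flux coefficients ($\bsi_{\star,n}$, resp.\ $\brho_{\star,\tilde n}$) to the primal/Lagrange-multiplier coefficients; the row containing $\partial_t\bu_{\star,m}$ (resp.\ $\partial_t\varphi_{\star,\tilde m}$) becomes, after inverting the symmetric positive definite $\mathbf L^2$-Gram matrix of $\{\bv_j\}$ (resp.\ $\{\psi_j\}$), a linear ordinary differential equation; and the last row simply fixes the initial value of the ODE part through the $\mathbf L^2$-projections of $\bu_0$ (resp.\ $\varphi_0$). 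Since the pair $(\bz,\chi)$ is frozen, the forms $\mathscr{O}_1^F(\bz;\cdot,\cdot)$, $\mathscr{O}_2^F(\bz;\cdot,\cdot)$, $\mathscr{O}_1^C(\bz;\cdot,\cdot)$ and $\mathscr{O}_2^C(\chi;\cdot,\cdot)$ are bilinear, so what we obtain is a \emph{linear} differential--algebraic system with constant coefficients.

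I would then recast this system in the abstract format of \cite[Lemma 2.4]{hjjkr-SIAMNA-2013} and verify its hypotheses one by one. Boundedness of every form involved is already at hand (cf.\ \eqref{bund.BCO2}--\eqref{bund.F} together with \eqref{eq1:bound.c.ct}--\eqref{eq3:bound.c.ct}). The required coercivity is supplied by the $\mathscr{A}$-forms: $\mathscr{A}^C$ is $\mathbf L^2$-elliptic by \eqref{eq:ell.a.at}, whereas $\mathscr{A}^F$, elliptic only on the deviatoric part by \eqref{eq:ell.a.at}, becomes elliptic on $\ker\mathscr{B}^F=\{\bta\in\mathbb H:\ \bdiv(\bta)=\mathbf 0\}$ once \eqref{kll}--\eqref{kjj} are used to control $\|\bta\|_{\mathbb H}$ by $\|\bdev(\bta)\|_{0,\Omega}$ on that kernel. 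The needed surjectivity is the inf-sup property of $\mathscr{B}^F$ and $\mathscr{B}^C$ from Lemma~\ref{l_inf.b.bt}. Finally, the zeroth-order terms $\mathscr{C}^C$, $\mathscr{O}_1^F$, $\mathscr{O}_1^C$ and $\mathscr{O}_2^C$ enter only as bounded lower-order perturbations of the reduced ODE, and $\mathscr{O}_2^F(\bz;\cdot,\cdot)$ is moreover positive semidefinite because $\mathtt{F}>0$; hence, once the algebraic block has been eliminated, one is left with a genuine linear ODE on a subspace, for which global existence on $\mathrm J$ and uniqueness follow from the Picard--Lindel\"of theorem. Uniqueness can also be seen directly: the homogeneous version of \eqref{eq:subp1.a} (resp.\ \eqref{eq:subp2.a}), tested with its own solution and combined with the coercivity of $\mathscr{A}^F$/$\mathscr{A}^C$, the sign of $\mathscr{O}_2^F$ and Gr\"onwall's inequality, forces the solution to vanish.

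The step I expect to be the real obstacle is the \emph{elimination of the algebraic (flux) variable}. Because $\mathscr{A}^F$ degenerates on $\{d\,\mathbb I\}\cap\mathbb H$, the matrix associated with the first row of \eqref{eq:subp1.a} need not be invertible, so one cannot simply solve $\bsi_{\star,n}$ in terms of $\bu_{\star,m}$; instead one must exploit the full saddle-point structure --- ellipticity on the (discrete) kernel of $\mathscr{B}^F$ plus the discrete inf-sup inherited from Lemma~\ref{l_inf.b.bt} --- to show that the matrix pencil of the differential--algebraic system is regular, i.e.\ that the system has index one. This is exactly the conclusion of \cite[Lemma 2.4]{hjjkr-SIAMNA-2013}, and so the proof reduces to checking that its structural assumptions (boundedness, kernel-ellipticity, inf-sup) hold in the present finite-dimensional setting. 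For the transport problem \eqref{eq:subp2.a} this difficulty is milder: $\mathscr{A}^C$ is positive definite, so the flux block $\brho_{\star,\tilde n}$ is eliminated at once through its Gram matrix, and only the Lagrange-multiplier block $\lambda_{\star,\tilde s}$ has to be resolved, via the inf-sup of $\mathscr{B}^C$ and the boundedness of $\mathscr{C}^C$ and $\mathscr{O}_2^C$; from there the argument is the same as for \eqref{eq:subp1.a}.
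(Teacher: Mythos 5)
Your proposal follows the same overall route as the paper's proof: expand in the Hilbert bases, write the finite-dimensional system in matrix form, eliminate the algebraic (flux) block, and appeal to standard linear-ODE theory for global existence and uniqueness on $\mathrm{J}$. The one place where you genuinely diverge is the elimination step for the Brinkman--Forchheimer sub-problem, and your caution there is actually well-placed. The paper asserts that the Gram matrix $\mathbf{A}_n$ of $\mathscr{A}^F$ is symmetric positive definite by \eqref{eq:ell.a.at} and inverts it directly to obtain $\boldsymbol{\Sigma}_n$ in terms of $\mathbf{U}_m$. But \eqref{eq:ell.a.at} only yields $\mathscr{A}^F(\bta,\bta)\ge \alpha_F\|\bdev(\bta)\|_{0,\Omega}^2$, which is merely positive semidefiniteness: $\mathbb{H}=\mathbb{H}_{\Gamma_{\mathtt{o}}}(\bdiv_{4/3};\Omega)$ does contain nontrivial tensors of the form $d\,\mathbb{I}$ (with $d$ vanishing on $\Gamma_{\mathtt{o}}$) on which $\mathscr{A}^F$ degenerates, and a generic Hilbert basis of $\mathbb{H}$ has no reason to avoid them, so $\mathbf{A}_n$ need not be invertible. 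Your alternative --- treating the system as a differential--algebraic system, invoking kernel-ellipticity of $\mathscr{A}^F$ via \eqref{kll}--\eqref{kjj} together with the inf-sup of $\mathscr{B}^F$ from Lemma~\ref{l_inf.b.bt} to argue the pencil is regular (index one), and then citing \cite[Lemma 2.4]{hjjkr-SIAMNA-2013}, which the paper itself references for exactly this purpose --- correctly handles the degeneracy and is the more robust version of the argument. For the transport sub-problem the two proofs coincide, since $\mathscr{A}^C$ is genuinely $\mathbf{L}^2$-elliptic and $\widetilde{\mathbf{A}}_{\tilde n}$ really is invertible.
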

\begin{proof}
	First, we introduce the following notations
	\begin{equation}\label{eq:not1}
		\begin{array}{c}
(\mathbf{A}_n)_{ij} \, :=\, \mathscr{A}^F(\bsi_j, \bta_i),\,\,
(\mathbf{B}_{nm})_{ik}\,:=\, \mathscr{B}^F(\bta_i, \bu_{k}),\,\,(\mathbf{O}_{1,mn})_{ik}\,:=\, \mathscr{O}_1^F(\bz;\bsi_{k},\bv_i)\ \,,\\
(\mathbf{D}_m)_{kl}:= \disp\int_{\Omega}\bu_l \cdot \bv_k ,\,\,
(\mathbf{O}_{2,m})_{kl}:= \mathscr{O}_2^F(\bz;\bu_{l},\bv_k)\,,\,\,
(\mathbf{F}_n(t))_i := \mathscr{F}_{\chi}^F(\bta_i),\,\, (\mathbf{U}_0)_k  := \disp\int_{\Omega}\bu_{0}\cdot \bv_k \,\,,
		\end{array}
	\end{equation}
for $i,j=1,\cdots, n,\,\,$  $k,l=1,\cdots,m$	and
	\begin{equation}\label{eq:not2}
		\begin{array}{c}
(\widetilde{\mathbf{A}}_{\tilde{n}})_{ij} \, :=\, \mathscr{A}^{C}(\brho_j, \bet_i),\,\,
(\widetilde{\mathbf{B}}_{1,\tilde{n}\tilde{m}})_{ik}\,:=\, \mathscr{B}^{C}_1(\bet_i, \varphi_k),
\,\,
(\widetilde{\mathbf{D}}_{\tilde{m}})_{kl}\,:=\, \disp\int_{\Omega}\varphi_l \, \psi_k ,\quad
\\[1ex]
(\widetilde{\mathbf{O}}_{1,\tilde{m}\tilde{n}})_{ki}\,:=\, \mathscr{O}^{C}_{1}(\bz;\brho_i,\psi_k),\quad(\widetilde{\mathbf{C}}_{\tilde{s}})_{pq}\,=\, \mathscr{C}^{C}(\lambda_{q}, \xi_p),\,\,\,
(\widetilde{\mathbf{O}}_{2,\tilde{s}})_{pq}\,=\, \mathscr{O}^{C}_2(\chi;\lambda_{q}, \xi_p)\,,\\
(\mathbf{F}_{1,\tilde{n}}(t))_i \,:=\, \mathscr{F}_1^C(\bet_i),\,\,			 \,\,
(\widetilde{\mathbf{B}}_{2,\tilde{n}\tilde{s}})_{iq}\,:=\, \mathscr{B}^{C}_2(\bet_i, \lambda_q),\,\,
(\mathbf{F}_{2,\tilde{s}}(t))_p \,:=\,\mathscr{F}_2^C(\xi_p),\,\,(\boldsymbol{\Phi}_0)_l \, :=\, \disp\int_{\Omega}\varphi_{0}\, \psi_l \,,
		\end{array}
	\end{equation}
	for $i,j = 1,\cdots,\tilde{n},\,\,$ $k,l=1,\cdots,\tilde{m},\,\,$ $p,q=1,\cdots,\tilde{s} $, 
	also we denote by $ \boldsymbol{\Sigma}_{n}(t) $, $ \mathbf{U}_m(t) $, and $ \boldsymbol{\Upsilon}_{\tilde{n}}(t) $, $ \boldsymbol{\Phi}_{\tilde{m}}(t) $, $ \boldsymbol{\Lambda}_{\tilde{s}}(t) $ the vectors of degrees of freedom of $ \bsi_{\star,n}(t) $, $ \bu_{\star,m}(t) $ and $ \brho_{\star,\tilde{n}}(t) $, $ \varphi_{\star,\tilde{m}}(t) $, $ \lambda_{\star,\tilde{s}}(t) $, respectively,  with respect to the bases $ \{\bta_i\}_{i=1}^{n} $, $ \{\bv_j\}_{j=1}^{m} $ and $ \{\bet_k\}_{k=1}^{\tilde{n}} $, $ \{\psi_l\}_{l=1}^{\tilde{m}} $, $ \{\xi_p\}_{p=1}^{\tilde{s}} $, respectively.
	Hence, thanks to the above notations, the matrix form of problems \eqref{eq:subp1.a} and \eqref{eq:subp2.a} may be rewritten as
	\begin{subequations}
		\begin{align}
\mathbf{A}_n\,	\boldsymbol{\Sigma}_{n}(t)+\mathbf{B}_{nm}\,\mathbf{U}_{m}(t)&\,=\, \mathbf{F}_n(t)\,,\label{eq:sub.1a}\\
\mathbf{D}_m\,\dfrac{d\mathbf{U}_m (t)}{dt} -\left(\mathbf{B}_{1,nm}^{\mathtt{t}}+\mathbf{O}_{1,mn}\right)\,\boldsymbol{\Sigma}_{n}(t)-\mathbf{O}_{2,ms}\boldsymbol{\vartheta}_{s}(t)+\mathbf{O}_{3,m}\mathbf{U}_m(t)
&\,=\,	0	\,,\label{eq:sub.1b}\\
\mathbf{U}_m(0) &\,=\, \mathbf{U}_0\,,\label{eq:sub.1c}
		\end{align}
	\end{subequations}
	and
	\begin{subequations}
		\begin{align}
\widetilde{\mathbf{A}}_{\tilde{n}}\,	\boldsymbol{\Upsilon}_{\tilde{n}}(t)+\widetilde{\mathbf{B}}_{1,\tilde{n}\tilde{m}}\,\boldsymbol{\Phi}_{\tilde{m}}(t)+\widetilde{\mathbf{B}}_{2,\tilde{n}\tilde{s}}\,\boldsymbol{\Lambda}_{\tilde{s}}(t)&\,=\, \mathbf{0}\,,\label{eq:sub.2a}\\
\widetilde{\mathbf{D}}_{\tilde{m}}\,
\dfrac{d\boldsymbol{\Phi}_{\tilde{m}} (t)}{dt} -\left(\widetilde{\mathbf{B}}_{1,\tilde{n}\tilde{m}}^{\mathtt{t}}+\widetilde{\mathbf{O}}_{1,\tilde{m}\tilde{n}}\right)\,\boldsymbol{\Upsilon}_{\tilde{n}}(t)&\,=\,\textbf{0}\,,	\label{eq:sub.2b}\\
\widetilde{\mathbf{B}}_{2,\tilde{n}\tilde{s}}^{\mathtt{t}}\, \boldsymbol{\Upsilon}_{\tilde{n}}(t)+
\left(\widetilde{\mathbf{O}}_{2,\tilde{s}}+\widetilde{\mathbf{C}}_{\tilde{s}}\right)\,\boldsymbol{\Lambda}_{\tilde{s}}(t)
&\,=\,	\widetilde{\mathbf{F}}_{\tilde{s}}(t)\label{eq:sub.2d}\\
\boldsymbol{\Phi}_r(0) &\,=\, \boldsymbol{\Phi}_0\,.\label{eq:sub.2c}
		\end{align}
	\end{subequations}
Since $ \mathbf{A}_n $ and $ \widetilde{\mathbf{A}}_{\tilde{n}} $ are symmetric positive definite (by \eqref{eq:ell.a.at}), they are invertible. Hence, \eqref{eq:sub.1a} and \eqref{eq:sub.2a} imply
	\begin{subequations}
		\begin{alignat}{2}
		\boldsymbol{\Sigma}_{n}(t)
		\,=\,\mathbf{A}_n^{-1}\Big(\mathbf{F}_n(t)-\mathbf{B}_{nm}\,\mathbf{U}_{m}(t)
		\Big)\label{eq:sig}	\qan\\	
		\boldsymbol{\Upsilon}_{\tilde{n}}(t)\,=\,\widetilde{\mathbf{A}}_{\tilde{n}}^{-1}\,\left( \widetilde{\mathbf{F}}_{1,\tilde{n}}(t)-\widetilde{\mathbf{B}}_{1,\tilde{n}\tilde{m}}\,\boldsymbol{\Phi}_{\tilde{m}}(t)-\widetilde{\mathbf{B}}_{2,\tilde{n}\tilde{s}}\,\boldsymbol{\Lambda}_{\tilde{s}}(t)\right)\,,\label{eq:rho}
		\end{alignat}
	\end{subequations}
	
	Now, substituting \eqref{eq:sig} and \eqref{eq:rho} into \eqref{eq:sub.1b} and \eqref{eq:sub.2b}-\eqref{eq:sub.2d}, respectively, we deduce that there exist square matrices $\widehat{\mathbf{A}}_m^F$, $\widehat{\mathbf{A}}_{\tilde{m}}^C$ and vectors $\widehat{\mathbf{F}}_m^F$, $\widehat{\mathbf{F}}_{\tilde{m}}^C$ satisfying
	\begin{subequations}
		\begin{alignat}{2}
			\dfrac{d\mathbf{U}_m (t)}{dt}+\widehat{\mathbf{A}}_{m}^F\mathbf{U}_{m}(t)\,=\,\widehat{\mathbf{F}}_{m}^F(t)\,,\label{eq:sub.1b.}\\
			\dfrac{d\boldsymbol{\Phi}_{\tilde{m}} (t)}{dt}+\widehat{\mathbf{A}}_{\tilde{m}}^C\boldsymbol{\Phi}_{\tilde{m}}(t)\,+\,\widehat{\mathbf{B}}_{\tilde{m}\tilde{s}}^C\boldsymbol{\Lambda}_{\tilde{s}}(t)\,=\,\mathbf{0}\,.\label{eq:sub.2b.}
		\end{alignat}
	\end{subequations}	
Substituting \eqref{eq:rho} into \eqref{eq:sub.2d} gives the matrix $\widehat{\mathbf{D}}_{\tilde{s}}^C$ and vector $\widehat{\mathbf{G}}_{\tilde{s}}^C$ such that
	\begin{equation}\label{eq:sub.2b-}
		\boldsymbol{\Lambda}_{\tilde{s}}(t)\,=\, \widehat{\mathbf{G}}_{\tilde{s}}^C (t)+\widehat{\mathbf{D}}_{\tilde{s}}^C\,\boldsymbol{\Phi}_{\tilde{m}}(t)\,.
	\end{equation}
	
	Thus, using \eqref{eq:sub.1b.} and substituting \eqref{eq:sub.2b-} into \eqref{eq:sub.2b.}, we obtain for a.e. $t \in \mathrm{J}$
	\begin{equation}\label{n2}
			\dfrac{d\mathbf{U}_m (t)}{dt}+\widehat{\mathbf{A}}_{m}^F\mathbf{U}_{m}(t)\,=\,\widehat{\mathbf{F}}_{m}^F(t)\qan
			\dfrac{d\boldsymbol{\Phi}_{\tilde{m}} (t)}{dt}+\big(\widehat{\mathbf{A}}_{\tilde{m}}^C+\widehat{\mathbf{B}}_{\tilde{m}\tilde{s}}^C\widehat{\mathbf{D}}_{\tilde{s}}^C\big)\boldsymbol{\Phi}_{\tilde{m}}(t)\,=\,-\widehat{\mathbf{G}}_{\tilde{s}}^C (t)
	\end{equation}
	The two systems above are linear first-order ODEs with initial conditions \eqref{eq:sub.1c} and \eqref{eq:sub.2c}, respectively, ensuring the existence of unique solutions $ \mathbf{U}_m(t)\in (C(\mathrm{J}))^{m} $ and $ \boldsymbol{\Phi}_{\tilde{m}}(t)\in (C(\mathrm{J}))^{\tilde{m}} $ with $ \partial_t\mathbf{U}_m \in (L^{2}(\mathrm{J}))^{m}$ and $ \partial_t\boldsymbol{\Phi}_{\tilde{m}}\in (L^{2}(\mathrm{J}))^{\tilde{m}} $ (see e.g. \cite[Theorem 3.8-2]{c-SIAM-2025}). From \eqref{eq:sig}, \eqref{eq:rho} and \eqref{eq:sub.2b-}, the functions $ \boldsymbol{\Sigma}_{n}\in (C(\mathrm{J}))^{n} $, $ \boldsymbol{\Upsilon}_{\tilde{n}}\in (C(\mathrm{J}))^{\tilde{n}} $ and $\boldsymbol{\Lambda}_{\tilde{s}}\in (C(\mathrm{J}))^{\tilde{s}}$ are obtained, with their time derivatives in $(L^{2}(\mathrm{J}))^{n} $,  $(L^{2}(\mathrm{J}))^{\tilde{n}} $ and $(L^{2}(\mathrm{J}))^{\tilde{s}} $, respectively. Consequently, the pairs 	$ (\bsi_{\star,n},\bu_{\star,m}) $ and $ (\brho_{\star,\tilde{n}},\vec{\varphi}_{\star,\tilde{m}\tilde{s}}) $ uniquely solve
	\eqref{eq:subp1.a} and \eqref{eq:subp2.a}.
\end{proof}
In the next step, we derive suitable a {\it priori} estimates for the solution of \eqref{eq:subp1.a}.
\begin{lemma}\label{l_help}
	Assume that for a.e $ t\in \mathrm{J} $,  $ (\bz , \chi)\in \mathbf{V}_m \times\mathrm{Q}_{\tilde{s}} $ be given and satisfy
	\begin{equation}\label{eq:as.z.var}
		\Vert\bz(t)\Vert_{\mathbf{V}}\,\le\,\max\Big\{ \left(\dfrac{\beta_F^2\nu}{8}|\Omega|^{(p-4)/4}\right)^{1/(p-2)},\,\dfrac{\beta_F\nu}{2\sqrt{2}} \Big\}\,,
	\end{equation}
	Then, there exists a constant $ \mathcal{C}_1 $, depending on $\nu$, $\beta_F$, $\Omega$, $c_{\mathscr{F}^F}$ but independent of $ n $ and $ m $, such that 
	\begin{equation}\label{ky}
		\begin{array}{c}
\hspace{-.2cm}\Vert\bsi_{\star,n}\Vert_{L^{2}(\mathrm{J};\mathbb{H})}+\Vert\bdev(\bsi_{\star,n})\Vert_{L^{\infty}(\mathrm{J};\mathbb{L}^{2})}+\Vert\bu_{\star,m}\Vert_{L^{\infty}(\mathrm{J};\mathbf{L}^{2}(\Omega))}+\Vert\partial_t\bu_{\star,m}\Vert_{L^{2}(\mathrm{J};\mathbf{L}^{2}(\Omega))}	+\Vert\bu_{\star,m}\Vert_{L^{2}(\mathrm{J};\mathbf{V})}\\[1ex]
		\,\leq\, \mathcal{C}_1\,
\Big\{\Vert \bu_{\star}(0)\Vert_{\mathbf{H}^{1}(\Omega)}+\Vert\partial_t\mathbf{g}\Vert_{L^{2}(\mathrm{J};H^{1/2}(\Gamma_{\mathtt{o}}^c))}+\Vert\mathbf{g}\Vert_{L^{2}(\mathrm{J};H^{1/2}(\Gamma_{\mathtt{o}}^c))}\\[1ex]
\,+\, \Vert\chi\Vert_{L^{2}(\mathrm{J};\mathrm{Q})} +\Vert\partial_t\chi\Vert_{L^{2}(\mathrm{J};\mathrm{Q})} \Big\}\,:=\, \mathcal{C}_1\mathcal{N}_F(\bu_{\star}(0),\mathbf{g},\chi)
		\end{array}
	\end{equation}
\end{lemma}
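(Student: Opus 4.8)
The plan is to derive the estimate \eqref{ky} by the standard energy technique applied to the semidiscrete system \eqref{eq:subp1.a}, combined with the ellipticity \eqref{eq:ell.a.at}, the inf-sup bound \eqref{eq:inf.bi}, the Brezzi-type inequalities \eqref{kkk}, the boundedness estimates \eqref{eq1:bound.c.ct}–\eqref{eq2:bound.c.ct} and \eqref{bund.Ff}, and finally the smallness assumption \eqref{eq:as.z.var} to absorb the nonlinear contributions and Grönwall's inequality (or plain integration in time, since for frozen $\bz,\chi$ the problem is linear) to close. First I would test the first equation of \eqref{eq:subp1.a} with $\bta=\bsi_{\star,n}(t)$ and the second with $\bv=\bu_{\star,m}(t)$ and add: the terms $\pm\mathscr{B}^F(\bsi_{\star,n},\bu_{\star,m})$ cancel, and since $\mathscr{O}_2^F(\bz;\bu_{\star,m},\bu_{\star,m})=\mathtt{F}\int_\Omega|\bz|^{p-2}|\bu_{\star,m}|^2\ge 0$, the ellipticity of $\mathscr{A}^F$ yields the differential inequality
\[
\tfrac12\tfrac{d}{dt}\Vert\bu_{\star,m}\Vert_{0,\Omega}^2+\tfrac1\nu\Vert\bdev(\bsi_{\star,n})\Vert_{0,\Omega}^2\,\le\,\big|\mathscr{F}_{\chi}^F(\bsi_{\star,n})\big|+\big|\mathscr{O}_1^F(\bz;\bsi_{\star,n},\bu_{\star,m})\big|\,,
\]
whose right-hand side is bounded, via \eqref{bund.Ff} and \eqref{eq1:bound.c.ct}, by $c_{\mathscr{F}^F}(\Vert\mathbf{g}\Vert_{1/2,\Gamma_{\mathtt{o}}^c}+a_1\Vert\chi\Vert_{\mathrm{Q}})\Vert\bsi_{\star,n}\Vert_{\mathbb{H}}+\tfrac1\nu\Vert\bz\Vert_{\mathbf{V}}\Vert\bsi_{\star,n}\Vert_{\mathbb{H}}\Vert\bu_{\star,m}\Vert_{\mathbf{V}}$.

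The next step is to recover the $\mathbf{V}$-norm of $\bu_{\star,m}$ and the $\mathbb{H}$-norm of $\bsi_{\star,n}$, which appear on the right above. From the first equation of \eqref{eq:subp1.a} one has $\mathscr{B}^F(\bta,\bu_{\star,m})=\mathscr{F}_{\chi}^F(\bta)-\mathscr{A}^F(\bsi_{\star,n},\bta)$, so the inf-sup bound \eqref{eq:inf.bi} gives $\beta_F\Vert\bu_{\star,m}\Vert_{\mathbf{V}}\le c_{\mathscr{F}^F}(\Vert\mathbf{g}\Vert_{1/2,\Gamma_{\mathtt{o}}^c}+a_1\Vert\chi\Vert_{\mathrm{Q}})+\tfrac1\nu\Vert\bdev(\bsi_{\star,n})\Vert_{0,\Omega}$. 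For the stress, I would split $\bsi_{\star,n}=\bsi_{\star,n}^0+d\,\mathbb{I}$ and apply \eqref{kkk} to get $\Vert\bsi_{\star,n}\Vert_{\mathbb{H}}\le C(\Vert\bdev(\bsi_{\star,n})\Vert_{0,\Omega}+\Vert\bdiv(\bsi_{\star,n})\Vert_{0,4/3;\Omega})$; then the second equation of \eqref{eq:subp1.a} controls the divergence through $\Vert\bdiv(\bsi_{\star,n})\Vert_{0,4/3;\Omega}\le C(\Vert\partial_t\bu_{\star,m}\Vert_{0,\Omega}+\tfrac1\nu\Vert\bz\Vert_{\mathbf{V}}\Vert\bsi_{\star,n}\Vert_{\mathbb{H}}+|\Omega|^{(4-p)/4}\Vert\bz\Vert_{\mathbf{V}}^{p-2}\Vert\bu_{\star,m}\Vert_{\mathbf{V}})$, using \eqref{eq1:bound.c.ct}–\eqref{eq2:bound.c.ct}; under \eqref{eq:as.z.var} the $\Vert\bsi_{\star,n}\Vert_{\mathbb{H}}$-term on the right is absorbed on the left. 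To supply $\Vert\partial_t\bu_{\star,m}\Vert_{0,\Omega}$ and the $L^\infty$-in-time control of $\bdev(\bsi_{\star,n})$ that are required both here and in \eqref{ky}, I would differentiate the first equation of \eqref{eq:subp1.a} in time, test it with $\bta=\bsi_{\star,n}(t)$ (so that $\mathscr{A}^F(\partial_t\bsi_{\star,n},\bsi_{\star,n})=\tfrac1{2\nu}\tfrac{d}{dt}\Vert\bdev(\bsi_{\star,n})\Vert_{0,\Omega}^2$), test the second equation with $\bv=\partial_t\bu_{\star,m}$, and add, obtaining
\[
\Vert\partial_t\bu_{\star,m}\Vert_{0,\Omega}^2+\tfrac1{2\nu}\tfrac{d}{dt}\Vert\bdev(\bsi_{\star,n})\Vert_{0,\Omega}^2\,=\,\big[\partial_t\mathscr{F}_{\chi}^F\big](\bsi_{\star,n})+\mathscr{O}_1^F(\bz;\bsi_{\star,n},\partial_t\bu_{\star,m})-\mathscr{O}_2^F(\bz;\bu_{\star,m},\partial_t\bu_{\star,m})\,,
\]
where $\partial_t\mathscr{F}_{\chi}^F$ involves $\partial_t\mathbf{g}$ and $\mathbf{g}^{\partial_t\chi}$ (hence the $\Vert\partial_t\mathbf{g}\Vert$ and $\Vert\partial_t\chi\Vert$ terms in $\mathcal{N}_F$); the nonlinear terms are controlled by \eqref{eq1:bound.c.ct}–\eqref{eq2:bound.c.ct} and Young's inequality, with \eqref{eq:as.z.var} calibrated precisely so that the coefficients of $\Vert\partial_t\bu_{\star,m}\Vert_{0,\Omega}^2$ and $\Vert\bdev(\bsi_{\star,n})\Vert_{0,\Omega}^2$ on the right stay strictly below those on the left.

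To close, I would integrate both identities on $(0,t)$, insert the bounds just obtained, and handle the initial data: $\bu_{\star,m}(0)$ is the projection of $\bu_0$ so $\Vert\bu_{\star,m}(0)\Vert_{0,\Omega}\le\Vert\bu_0\Vert_{\mathbf{H}^1(\Omega)}$, and solving the first equation of \eqref{eq:subp1.a} at $t=0$ together with \eqref{kkk} bounds $\Vert\bsi_{\star,n}(0)\Vert_{\mathbb{H}}$ (and thus $\Vert\bdev(\bsi_{\star,n})(0)\Vert_{0,\Omega}$) by $\Vert\bu_0\Vert_{\mathbf{H}^1(\Omega)}+\Vert\mathbf{g}(0)\Vert_{1/2,\Gamma_{\mathtt{o}}^c}$, which is where $\bu_0\in\mathbf{H}^1(\Omega)$ is used. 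A Grönwall (or direct) argument then yields simultaneously $\Vert\bu_{\star,m}\Vert_{L^\infty(\mathrm{J};\mathbf{L}^2(\Omega))}$, $\Vert\bdev(\bsi_{\star,n})\Vert_{L^\infty(\mathrm{J};\mathbb{L}^2)}$, $\Vert\partial_t\bu_{\star,m}\Vert_{L^2(\mathrm{J};\mathbf{L}^2(\Omega))}$, $\Vert\bu_{\star,m}\Vert_{L^2(\mathrm{J};\mathbf{V})}$, and $\Vert\bsi_{\star,n}\Vert_{L^2(\mathrm{J};\mathbb{H})}$, with all constants depending only on $\nu$, $\beta_F$, $\Omega$, $c_{\mathscr{F}^F}$ and not on $n,m$, giving \eqref{ky} with the stated $\mathcal{C}_1\mathcal{N}_F$.

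The main obstacle is the tight coupling created by the low regularity of the flux: $\Vert\bsi_{\star,n}\Vert_{\mathbb{H}}$ — which is forced onto the right-hand side of the basic energy identity by $\mathscr{F}_{\chi}^F$ and $\mathscr{O}_1^F$ — cannot be controlled by $\Vert\bdev(\bsi_{\star,n})\Vert_{0,\Omega}$ alone, and recovering it through \eqref{kkk} brings in $\Vert\bdiv(\bsi_{\star,n})\Vert_{0,4/3;\Omega}$, which in turn couples back to $\Vert\partial_t\bu_{\star,m}\Vert$ and to the quadratic convective form $\mathscr{O}_1^F$ and the power Forchheimer form $\mathscr{O}_2^F$. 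Untangling this system of inequalities uniformly in the discretization parameters — so that every nonlinear contribution can be absorbed into the coercive and inf-sup terms — is exactly what the smallness assumption \eqref{eq:as.z.var} is tailored to permit, and it is the delicate bookkeeping in this step (rather than any single estimate) that constitutes the heart of the proof.
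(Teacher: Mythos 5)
Your proposal follows essentially the same path as the paper's own proof: basic energy identity by summing the two tested equations, the inf-sup bound \eqref{eq:inf.bi} to control $\Vert\bu_{\star,m}\Vert_{\mathbf{V}}$ in terms of $\Vert\bdev(\bsi_{\star,n})\Vert_{0,\Omega}$ and the data, the second equation of \eqref{eq:subp1.a} together with \eqref{kkk} to recover $\Vert\bsi_{\star,n}\Vert_{\mathbb{H}}$ via $\Vert\bdiv(\bsi_{\star,n})\Vert_{0,4/3;\Omega}$, time-differentiation of the first equation tested against $(\bsi_{\star,n},\partial_t\bu_{\star,m})$ to produce $\Vert\partial_t\bu_{\star,m}\Vert_{L^2(\mathrm{J};\mathbf{L}^2)}$ and the $L^\infty$-in-time control of $\bdev(\bsi_{\star,n})$, and finally Young's inequality with \eqref{eq:as.z.var} to absorb the nonlinear contributions. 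You correctly identify the bookkeeping that closes the chain of coupled inequalities as the heart of the argument. Your observation that $\mathscr{O}_2^F(\bz;\bu_{\star,m},\bu_{\star,m})\ge 0$ and could simply be dropped from the left-hand side is a legitimate small improvement over the paper (which instead moves it to the right and absorbs it through the smallness of $\Vert\bz\Vert_{\mathbf{V}}$); it does not change the structure.

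One detail is handled less sharply than in the paper: when bounding $\Vert\bdev(\bsi_{\star,n}(0))\Vert_{0,\Omega}$ you claim a bound by $\Vert\bu_0\Vert_{\mathbf{H}^1(\Omega)}+\Vert\mathbf{g}(0)\Vert_{1/2,\Gamma_{\mathtt{o}}^c}$, but the target $\mathcal{N}_F$ contains no pointwise-in-time evaluation of $\mathbf{g}$. The paper instead applies Green's formula after passing $m\to\infty$, uses $\div(\bu_0)=0$ (so $\bsi_{\star,n}(0):\boldsymbol{\nabla}\bu_0=\bdev(\bsi_{\star,n}(0)):\boldsymbol{\nabla}\bu_0$) and the compatibility of $\bu_0|_{\Gamma_{\mathtt{o}}^c}$ with $\mathbf{g}(0)+\mathbf{g}^{\chi(0)}$, so that the boundary term $-\langle\bsi_{\star,n}(0)\bn,\bu_0\rangle_{\Gamma}$ cancels exactly with $\mathscr{F}_{\chi}^F(\bsi_{\star,n}(0))$; this yields the clean estimate $\Vert\bdev(\bsi_{\star,n}(0))\Vert_{0,\Omega}\le\nu\Vert\boldsymbol{\nabla}\bu_0\Vert_{0,\Omega}$ with no $\mathbf{g}(0)$-dependence. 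You should incorporate this cancellation (or, alternatively, invoke the embedding $H^1(\mathrm{J};H^{1/2}(\Gamma_{\mathtt{o}}^c))\hookrightarrow C(\bar{\mathrm{J}};H^{1/2}(\Gamma_{\mathtt{o}}^c))$ to dominate $\Vert\mathbf{g}(0)\Vert$ by the $L^2$-in-time norms already present in $\mathcal{N}_F$, at the price of a $t_F$-dependent constant) so that the final constant genuinely matches \eqref{ky}.
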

\begin{proof}
	%
	Letting $n,m \ge 1$ and taking $(\bta_i, \bv_j) := (\bsi_{\star,n}(t), \bu_{\star,m}(t)) \in \mathbb{H}_n \times \mathbf{V}_m$ in \eqref{eq:subp1.a}, then summing the first two rows and applying the coercivity (cf. \eqref{eq:ell.a.at}) and continuity bounds for $\mathscr{O}_1^F$, $\mathscr{O}_2^F$ (cf. \eqref{eq1:bound.c.ct}, \eqref{eq2:bound.c.ct}) and $\mathscr{F}_{\chi}^F$ (cf. \eqref{bund.Ff}), we get
	\begin{equation}\label{eq:j1}
		\begin{array}{l}
			\dfrac{1}{2}\partial_t \Vert \bu_{\star,m}\Vert_{0,\Omega}^{2}\,+\,\dfrac{1}{\nu}\, \Vert \bdev(\bsi_{\star,n})\Vert_{0,\Omega}^{2}\,\leq\, c_{\mathscr{F}^F}\left(\Vert\mathbf{g}\Vert_{1/2,\Gamma_{\mathtt{o}}^c}+a_1\Vert\chi\Vert_{\mathrm{Q}}\right)\,\Vert\bsi_{\star,n}\Vert_{\mathbb{H}}\\
			\qquad\qquad\qquad	\qquad\qquad\qquad\,+\, \left\{\dfrac{\Vert\bz\Vert_{\mathbf{V}}}{\nu}\Vert\bdev(\bsi_{\star,n})\Vert_{0,\Omega}+|\Omega|^{(4-p)/4}\, \Vert\bz\Vert_{\mathbf{V}}^{p-2}\Vert\bu_{\star,m}\Vert_{\mathbf{V}}\right\}\Vert\bu_{\star,m}\Vert_{\mathbf{V}}\,.
		\end{array}
	\end{equation}
	The inf-sup condition of $ \mathscr{B}^F $ (cf. \eqref{eq:inf.bi}), along with the first row of equation \eqref{eq:subp1.a} and the continuity bounds of  $ \mathscr{F}_{\chi}^F $ and $ \mathscr{A}^F $, allows us to derive
	\begin{equation}\label{eq:h3}
		\dfrac{\beta_F^{2}\nu}{2}\,\Vert \bu_{\star,m}\Vert_{\mathbf{V}}^{2}\,\leq \, 2\nu c_{\mathscr{F}^F}^2\left(\Vert\mathbf{g}\Vert_{1/2,\Gamma_{\mathtt{o}}^c}+a_1\Vert\chi\Vert_{\mathrm{Q}}\right)^{2}+\dfrac{1}{2\nu}\Vert\bdev(\bsi_{\star,n})\Vert_{0,\Omega}^{2}\,.
	\end{equation}	
	Combining with \eqref{eq:j1}, applying Young's inequality and assumption \eqref{eq:as.z.var}, then integrating over $[0,t]$, we deduce a constant $\mathcal{C}_1 > 0$ depending on $c_{\mathscr{F}^F}$, $\nu$, $\beta_F$, and $C(\epsilon)$ such that
	\begin{equation}\label{eq:f4}
		\begin{array}{c}
			\Vert\bu_{\star,m}(t)\Vert_{0,\Omega}^{2}+\disp\int_{0}^{t}\Vert \bdev(\bsi_{\star,n}(s))\Vert_{0,\Omega}^{2}\, ds +\disp\int_{0}^{t}\Vert \bu_{\star,m}(s)\Vert_{\mathbf{V}}^{2}\, ds
			\,\leq\,\mathcal{C}_1 \, \bigg\{\Vert\bu_{\star,m}(0)\Vert_{0,\Omega}^{2}\\
			\,+\,\disp\int_{0}^{t}\left(\Vert\mathbf{g}(s)\Vert_{1/2,\Gamma_{\mathtt{o}}^c}+a_1\Vert\chi(s)\Vert_{\mathrm{Q}}\right)^2 \,ds  +\epsilon\disp\int_{0}^{t}\Vert\bsi_{\star,n}(s)\Vert_{\mathbb{H}}^{2}\,ds \bigg\}\,,
		\end{array}
	\end{equation}
	
	On the other hand, using $ \bdiv(\mathbb{H}) = \mathbf{V}' $ and the second row of \eqref{eq:subp1.a}, we deduce that
	\begin{equation*}
		\begin{array}{c}
			\big\Vert\bdiv(\bsi_{\star,n})\big\Vert_{0,4/3;\Omega}
	\, \leq\, |\Omega|^{1/4}\Vert\partial_t \bu_{\star,m}\Vert_{0,\Omega}+\dfrac{1}{\nu}\Vert\bz\Vert_{\mathbf{V}}\Vert\bdev(\bsi_{\star,n})\Vert_{0,\Omega}+|\Omega|^{(4-p)/4}\, \Vert\bz\Vert_{\mathbf{V}}^{p-2}\Vert\bu_{\star,m}\Vert_{\mathbf{V}}\,.
		\end{array}
	\end{equation*}
	
	The above inequality and \eqref{eq:as.z.var} ensure the existence of a constant
	$ \widetilde{C} $ depending on $ \Omega $, $ \nu $, $ \beta_F $, such that 
	\begin{equation}\label{eq:f7}
		\begin{array}{c}
			\disp\int_{0}^{t}\big\Vert\bdiv(\bsi_{\star,n}(s))\big\Vert_{0,4/3;\Omega}^{2}\, ds \, \leq\, \widetilde{C}\, \bigg\{\disp\int_{0}^{t}\Big(\Vert \bdev(\bsi_{\star,n}(s))\Vert_{0,\Omega}^{2} +\Vert \bu_{\star,m}(s)\Vert_{\mathbf{V}}^{2}+\Vert\partial_t  \bu_{\star,m}(s)\Vert_{0,\Omega}^{2}\Big)\, ds\bigg\}\,.
		\end{array}
	\end{equation}

To bound the last term in \eqref{eq:f7}, we differentiate the first equation of \eqref{eq:subp1.a} in time, test with $(\bsi_{\star,n}, \partial_t \bu_{\star,m})$, and apply Young's and inverse inequalities, yielding
	\begin{equation*}
		\begin{array}{c}
			\Vert\partial_t \bu_{\star,n}\Vert_{0,\Omega}^{2}+\dfrac{1}{2\nu}\partial_t\Vert\bdev(\bsi_{\star,n})\Vert_{0,\Omega}^{2}\,\leq\,
			\widetilde{C}_1 \, \Big\{
			\left(\Vert\partial_t\mathbf{g}\Vert_{1/2,\Gamma_{\mathtt{o}}^c}+a_1 \Vert\partial_t\chi\Vert_{\mathrm{Q}}\right)\,\Vert\bsi_{\star,n}\Vert_{\mathbb{H}}\\
			\,+\,  \, \left(\Vert\bdev(\bsi_{\star,n})\Vert_{0,\Omega}+\Vert\bu_{\star,m}\Vert_{\mathbf{V}}\right)\Vert\partial_t\bu_{\star,m}\Vert_{\mathbf{V}}\Big\}\\
			\,\leq\, \widetilde{C}_1^{2}(\epsilon_1)\left(\Vert\partial_t\mathbf{g}\Vert_{1/2,\Gamma_{\mathtt{o}}^c}+a_1 \Vert\partial_t\chi\Vert_{\mathrm{Q}}\right)^{2}\,+\dfrac{\epsilon_1}{2}\Vert\bsi_{\star,n}\Vert_{\mathbb{H}}^{2}\\
			\,+\, \widetilde{C}_1 ^{2}(\epsilon_2)
			\left(\Vert\bdev(\bsi_{\star,n})\Vert_{0,\Omega}^{2}+\Vert\bu_{\star,m}\Vert_{\mathbf{V}}\right)^{2}+\dfrac{\epsilon_2}{2}|\Omega|^{-1/4}\Vert\partial_t\bu_{\star,m}\Vert_{0,\Omega}^{2}\,,
		\end{array}
	\end{equation*}
	
	from which, by choosing $ \epsilon_2\leq |\Omega|^{1/4} $ and integrating from 
	$ 0 $ to $ t $,  we arrive at 
	\begin{equation}\label{eq:f6}
		\begin{array}{c}
			\disp\int_{0}^{t}\Vert\partial_t \bu_{\star,n}\Vert_{0,\Omega}^{2}\, ds +\Vert\bdev(\bsi_{\star,n}(t))\Vert_{0,\Omega}^2 \\\,\leq\,\mathcal{C}_2 \, \bigg\{ \Vert \bdev(\bsi_{\star,n}(0))\Vert_{0,\Omega}^{2}
			+\disp\int_{0}^{t}\left(\Vert\bdev(\bsi_{\star,n}(s))\Vert_{0,\Omega}^{2}+\Vert\bu_{\star,m}(s)\Vert_{\mathbf{V}}^{2}\right)\, ds
			\\
			\,+\,
			\disp\int_{0}^{t}\left(\Vert\partial_t\mathbf{g}(s)\Vert_{1/2,\Gamma_{\mathtt{o}}^c}^2+a_1^2 \Vert\partial_t\chi(s)\Vert_{\mathrm{Q}}^2+ \epsilon_1 \Vert\bsi_{\star,n}(s)\Vert_{\mathbb{H}}^{2}\right)
			\bigg\}\,.
		\end{array}
	\end{equation}
Substituting \eqref{eq:f6} into \eqref{eq:f7} and combining with \eqref{eq:f4}, \eqref{kll}, and \eqref{kjj}, we get
	\begin{equation*}
		\begin{array}{c}
			c_{2,\Omega}^2\min\big\{\dfrac{c_{1,\Omega}^2}{2},\dfrac{1}{2}\big\}	\disp\int_{0}^{t}\Vert\bsi_{\star,n}(s)\Vert_{\mathbb{H}}^{2}\, ds \,\\ \leq\,\widetilde{C}\,\mathcal{C}_2\, \left(\Vert \bdev(\bsi_{\star,n}(0))\Vert_{0,\Omega}^{2}+\disp\int_{0}^{t}\big(\Vert\partial_t\mathbf{g}(s)\Vert_{1/2,\Gamma_{\mathtt{o}}^c}^2+a_1^2 \Vert\partial_t\chi(s)\Vert_{\mathrm{Q}}^2\big) \, ds\right)\\
			\,+\, \mathcal{C}_1\big(1+ \,\widetilde{C}(1+\mathcal{C}_2)\big)\, \bigg\{\Vert\bu_{\star,m}(0)\Vert_{0,\Omega}^{2}+\disp\int_{0}^{t}\left(\Vert\mathbf{g}(s)\Vert_{1/2,\Gamma_{\mathtt{o}}^c}+a_1\Vert\chi(s)\Vert_{\mathrm{Q}}\right)^2\,ds \bigg\}\\
			 \,+\,\left(\epsilon\,\mathcal{C}_1 \,\big(1+\widetilde{C}(1+\mathcal{C}_2)\big)+\epsilon_1\widetilde{C}\,\mathcal{C}_2\right)\,\disp\int_{0}^{t}\Vert\bsi_{\star,n}(s)\Vert_{\mathbb{H}}^{2}\,ds \,,
		\end{array}
	\end{equation*}
	which, by choosing the appropriate  $ \epsilon $ and $ \epsilon_1  $ yields
	\begin{equation}\label{ke}
		\begin{array}{c}
			\disp\int_{0}^{t}\Vert\bsi_{\star,n}(s)\Vert_{\mathbb{H}}^{2}\, ds \, \leq\,\mathcal{C}_3\,\bigg\{\Vert \bdev(\bsi_{\star,n}(0))\Vert_{0,\Omega}^{2}+\Vert\bu_{\star,m}(0)\Vert_{0,\Omega}^{2}\\
			\,+\,  \,\disp\int_{0}^{t}\big(\Vert\partial_t\mathbf{g}(s)\Vert_{1/2,\Gamma_{\mathtt{o}}^c}^2+a_1^2 \Vert\partial_t\chi(s)\Vert_{\mathrm{Q}}^2\big)\,ds+\disp\int_{0}^{t}\left(\Vert\mathbf{g}(s)\Vert_{1/2,\Gamma_{\mathtt{o}}^c}+a_1\Vert\chi(s)\Vert_{\mathrm{Q}}\right)^2 \,ds \bigg\}\,.
		\end{array}
	\end{equation}
	To bound $ \Vert \bdev(\bsi_{\star,n}(0))\Vert_{0,\Omega} $, we set $ \bta_i:=\bsi_{\star,n} $ at the first equation of \eqref{eq:subp1.a} and let $ t=0 $, leading to
	\begin{equation}\label{eq:a3}
		\begin{array}{c}
			\mathscr{A}^F(\bsi_{\star,n}(0), \bsi_{\star,n}(0))\,=\, -\disp\int_{\Omega}\bdiv(\bsi_{\star,n}(0))\cdot \bu_{\star,m}(0)+\mathscr{F}_{\chi}^F(\bsi_{\star,n}(0))\,.
		\end{array}
	\end{equation}	
	Noting that \eqref{eq:a3} holds for all $ n,m\geq 1 $, we bound the right-hand side as before, let $ m\rightarrow \infty $, and use Green's formula, given that $ \bu_{\star,m}(0)\rightarrow \bu_{0} $ in $ \mathbf{L}^{2}(\Omega) $ with $ \bu_{0} \in \mathbf{H}^{1}(\Omega) $, which yields
	\begin{equation*}
		\begin{array}{c}
			\dfrac{1}{\nu}\,\Vert \bdev(\bsi_{\star,n}(0))\Vert_{0,\Omega}^{2}\, \leq \, 
			\Big|-\disp\int_{\Omega}\bdiv(\bsi_{\star,n}(0))\cdot \bu_{0}+\mathscr{F}_{\chi}^F(\bsi_{\star,n}(0))\Big|\\
			\qquad\qquad\qquad	\,=\, \Big|\disp\int_{\Omega}\bdev(\bsi_{\star,n}(0))\cdot\boldsymbol{\nabla}\bu_{0}\Big|\,\leq\, \Vert \bdev(\bsi_{\star,n}(0))\Vert_{0,\Omega}\, \Vert \boldsymbol{\nabla}\bu_{0}\Vert_{0,\Omega}\,.
		\end{array}
	\end{equation*}
	Thus
	\[
	\Vert \bdev(\bsi_{\star,n}(0))\Vert_{0,\Omega}\,\leq\, \nu \, \Vert \boldsymbol{\nabla}\bu_{0}\Vert_{0,\Omega}\,,
	\]
	which finishes the proof.
\end{proof}
We are now in a position to establish the well-definedness of $ \mathscr{L}^F $.
\begin{lemma}\label{l_wel.S}
	For each $ \bz\in L^{\infty}(\mathrm{J};\mathbf{V}) $ and $ \chi\in L^{\infty}(\mathrm{J};\mathrm{Q}) $ satisfying  \eqref{eq:as.z.var}, there exists a unique $ (\bsi_{\star}, \bu_\star) $ solution of \eqref{eq:subp1} such that
$	\bsi_{\star}\in L^{2}(\mathrm{J};\mathbb{H}),\,\,\bdev(\bsi_{\star})\in L^{\infty}(\mathrm{J};\mathbb{L}^2(\Omega)),\,\,\bu_{\star}\in L^{\infty}(\mathrm{J};\mathbf{L}^{2}(\Omega))\cap L^{2}(\mathrm{J};\mathbf{V}),\,\,\partial_t \bu_{\star}\in L^{2}(\mathrm{J};\mathbf{L}^{2}(\Omega))$.
	Moreover, there exists positive constant $ \mathcal{C}_{\mathscr{L}^F} $, depending on $\nu$, $\beta_F$, $\Omega$, $c_{\mathscr{F}^F}$, such that there hold
	\begin{equation}\label{eq:apr1}
		\begin{array}{c}
			\hspace{-.1cm}\big\Vert\mathscr{L}^F_1(\bz,\chi)\big\Vert_{L^{2}(\mathrm{J};\mathbb{H})}
			+\big\Vert\bdev(\mathscr{L}^F_1(\bz,\chi))\big\Vert_{L^{\infty}(\mathrm{J};\mathbb{L}^2)}
			+\big\Vert\mathscr{L}^F_2(\bz,\chi)\big\Vert_{L^{\infty}(\mathrm{J};\mathbf{L}^{2})}+\big\Vert\partial_t\mathscr{L}^F_2(\bz,\chi)\big\Vert_{L^{2}(\mathrm{J};\mathbf{L}^{2})}\\[1ex]
			+\big\Vert\mathscr{L}^F_{2}(\bz,\chi)\big\Vert_{L^{2}(\mathrm{J};\mathbf{V})}\,\leq\, \mathcal{C}_{\mathscr{L}^F}\mathcal{N}_F(\bu_{0},\mathbf{g},\chi) \,.
		\end{array}
	\end{equation}
\end{lemma}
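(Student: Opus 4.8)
The proof is a Galerkin limit argument resting on the two preceding lemmas: for every $n,m$ the discrete problem \eqref{eq:subp1.a} has a unique solution, and Lemma~\ref{l_help} bounds that solution uniformly in $n$ and $m$ provided $\bz$ satisfies \eqref{eq:as.z.var}. The plan is to extract weakly (and weakly-$*$) convergent subsequences, pass to the limit to obtain a solution of \eqref{eq:subp1}, prove uniqueness by an energy estimate, and read off \eqref{eq:apr1} from weak lower semicontinuity of the norms applied to \eqref{ky}. The essential simplification is that, for $\bz$ and $\chi$ \emph{fixed}, problem \eqref{eq:subp1} is \emph{linear} in $(\bsi_\star,\bu_\star)$: the forms $\mathscr{O}_1^F(\bz;\cdot,\cdot)$ and $\mathscr{O}_2^F(\bz;\cdot,\cdot)$ are bilinear and bounded by \eqref{eq1:bound.c.ct}--\eqref{eq2:bound.c.ct}, so no compactness in the nonlinear terms is required and mere weak convergence identifies the limit.

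From \eqref{ky}, reflexivity of the $L^2$-Bochner spaces and the Banach--Alaoglu theorem for $L^\infty(\mathrm{J};\cdot)$ (seen as duals of separable $L^1$-spaces), one extracts a subsequence along which $\bsi_{\star,n}\rightharpoonup\bsi_\star$ in $L^2(\mathrm{J};\mathbb{H})$, $\bdev(\bsi_{\star,n})\overset{*}{\rightharpoonup}\bdev(\bsi_\star)$ in $L^\infty(\mathrm{J};\mathbb{L}^2)$, $\bu_{\star,m}\rightharpoonup\bu_\star$ in $L^2(\mathrm{J};\mathbf{V})$, $\bu_{\star,m}\overset{*}{\rightharpoonup}\bu_\star$ in $L^\infty(\mathrm{J};\mathbf{L}^2(\Omega))$, and $\partial_t\bu_{\star,m}\rightharpoonup\partial_t\bu_\star$ in $L^2(\mathrm{J};\mathbf{L}^2(\Omega))$ (the last limit being the distributional time derivative by definition). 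Fixing basis functions $\bta_i,\bv_j$, multiplying the $i$-th and $j$-th rows of \eqref{eq:subp1.a} (valid for all $n\ge i$, $m\ge j$) by an arbitrary $\phi\in L^2(\mathrm{J})$ and integrating in time, the boundedness and linearity of $\mathscr{A}^F$, $\mathscr{B}^F$, $\mathscr{O}_1^F(\bz;\cdot,\cdot)$, $\mathscr{O}_2^F(\bz;\cdot,\cdot)$, $\mathscr{F}_\chi^F$ allow passage to the limit; since $\phi$, $i$, $j$ are arbitrary and $\bigcup_n\mathbb{H}_n$, $\bigcup_m\mathbf{V}_m$ are dense in $\mathbb{H}$, $\mathbf{V}$, the limit $(\bsi_\star,\bu_\star)$ satisfies \eqref{eq:subp1} for all $(\bta,\bv)\in\mathbb{H}\times\mathbf{V}$ and a.e.\ $t\in\mathrm{J}$. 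For the initial condition, the third row of \eqref{eq:subp1.a} makes $\bu_{\star,m}(0)$ the $\mathbf{L}^2(\Omega)$-projection of $\bu_0$, so $\bu_{\star,m}(0)\to\bu_0$ in $\mathbf{L}^2(\Omega)$; as $\bu_\star\in L^2(\mathrm{J};\mathbf{V})$ with $\partial_t\bu_\star\in L^2(\mathrm{J};\mathbf{L}^2(\Omega))$ it has a continuous $\mathbf{L}^2$-representative on $\overline{\mathrm{J}}$, and integrating the second equation against a time-test function vanishing at $t_F$ and comparing with the discrete identities gives $\bu_\star(\cdot,0)=\bu_0$.

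Uniqueness follows from linearity: if $(\widehat\bsi,\widehat\bu)$ is the difference of two solutions for the same $(\bz,\chi)$, it solves \eqref{eq:subp1} with zero data and $\widehat\bu(\cdot,0)=\mathbf{0}$. Testing the first equation with $\bta=\widehat\bsi$ and the second with $\bv=\widehat\bu$, adding, and using the ellipticity \eqref{eq:ell.a.at} yields
\[
\tfrac{1}{2}\,\partial_t\Vert\widehat\bu\Vert_{0,\Omega}^2+\tfrac{1}{\nu}\,\Vert\bdev(\widehat\bsi)\Vert_{0,\Omega}^2\,\le\,\big|\mathscr{O}_1^F(\bz;\widehat\bsi,\widehat\bu)\big|+\big|\mathscr{O}_2^F(\bz;\widehat\bu,\widehat\bu)\big|\,.
\]
Controlling $\Vert\widehat\bu\Vert_{\mathbf{V}}$ by $\Vert\bdev(\widehat\bsi)\Vert_{0,\Omega}$ through the inf-sup bound \eqref{eq:inf.bi} applied to the first equation (exactly as in \eqref{eq:h3}), bounding the right-hand side with \eqref{eq1:bound.c.ct}--\eqref{eq2:bound.c.ct}, the smallness \eqref{eq:as.z.var} and Young's inequality, and integrating in time, one absorbs all terms into the left side and obtains $\Vert\widehat\bu(t)\Vert_{0,\Omega}=0$ for all $t$; hence $\widehat\bu\equiv\mathbf{0}$, and then \eqref{eq:inf.bi} with the first equation forces $\widehat\bsi\equiv\mathbf{0}$. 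In particular the whole sequences converge, so nothing is lost in the subsequence extraction.

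Finally, \eqref{eq:apr1} is obtained by taking the (weak, resp.\ weak-$*$) lower limit in \eqref{ky}: lower semicontinuity of the norms of $L^2(\mathrm{J};\mathbb{H})$, $L^\infty(\mathrm{J};\mathbb{L}^2)$, $L^\infty(\mathrm{J};\mathbf{L}^2)$, $L^2(\mathrm{J};\mathbf{L}^2)$ and $L^2(\mathrm{J};\mathbf{V})$ bounds the left-hand side of \eqref{eq:apr1} by $\mathcal{C}_1\,\mathcal{N}_F(\bu_\star(0),\mathbf{g},\chi)$, and since $\bu_\star(0)=\bu_0\in\mathbf{H}^1(\Omega)$ this equals $\mathcal{C}_1\,\mathcal{N}_F(\bu_0,\mathbf{g},\chi)$; set $\mathcal{C}_{\mathscr{L}^F}:=\mathcal{C}_1$. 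The step I expect to be most delicate is the handling of the time-derivative term together with the recovery of the initial condition --- showing the weak limit lies in $C(\overline{\mathrm{J}};\mathbf{L}^2(\Omega))$ and that its value at $t=0$ is $\bu_0$ --- which is precisely where the uniform bound $\partial_t\bu_{\star,m}\in L^2(\mathrm{J};\mathbf{L}^2(\Omega))$ from Lemma~\ref{l_help} enters; the bookkeeping of the simultaneous $n,m\to\infty$ limit (via a diagonal subsequence) is a lesser technicality.
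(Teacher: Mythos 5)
Your proof is correct and follows essentially the same route as the paper: invoke Lemma~\ref{l_help} for uniform bounds on the Galerkin approximations, extract weakly (and weak-$*$) convergent subsequences, pass to the limit using linearity of \eqref{eq:subp1} in $(\bsi_\star,\bu_\star)$ and density of the basis spaces, recover $\bu_\star(0)=\bu_0$ by comparing time-integrated identities against test functions vanishing at $t_F$, establish uniqueness by testing the homogeneous difference problem with itself and absorbing the transport terms via the inf-sup bound and the smallness condition \eqref{eq:as.z.var}, and obtain \eqref{eq:apr1} from \eqref{ky} by lower semicontinuity. Your remarks on the weak-$*$ topology for the $L^\infty$ bounds, the continuous $\mathbf{L}^2$-representative of $\bu_\star$, and the diagonal argument for the joint $n,m\to\infty$ limit are accurate technical details that the paper treats more tersely but does not alter the structure.
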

\begin{proof}
	To prove existence, we invoke Lemma~\ref{l_help}, which shows that the sequences $ \{\bsi_{\star,n}\} $ and $ \{\bu_{\star,m}\} $ from \eqref{eq:subp1.a} are bounded in $ L^{2}(\mathrm{J};\mathbb{H}) $ and $ L^{\infty}(\mathrm{J};\mathbf{L}^2(\Omega))\cap L^2(\mathrm{J};\mathbf{V}) $, respectively, with $\{\partial_t\bu_{\star,m}\}$ bounded in $ L^2(\mathrm{J};\mathbf{L}^2(\Omega)) $. Hence, there exist subsequences (still denoted the same) and limit functions $\bsi_\star\in L^2(\mathrm{J};\mathbb{H})$ and $\bu_\star\in L^{\infty}(\mathrm{J};\mathbf{L}^2(\Omega))\cap L^2(\mathrm{J};\mathbf{V})$ such that
	\begin{equation}\label{eq:conv2}
			\bu_{\star,m}\rightharpoonup \bu_\star  \qin L^{2}(\mathrm{J};\mathbf{V}),\quad
			\partial_t\bu_{\star,m}\rightharpoonup \partial_t\bu_\star  \qin L^{2}(\mathrm{J};\mathbf{L}^{2}(\Omega)),\quad
			\bsi_{\star,n}\rightharpoonup \bsi_\star  \qin L^{2}(\mathrm{J};\mathbb{H})\,.	
	\end{equation}
Next, let $n_0, m_0 \geq 1$, and take $\bta \in C^1(\mathrm{J}; \mathbb{H}_{n_0})$, $\bv \in C^1(\mathrm{J}; \mathbf{V}_{m_0})$. For $n \geq n_0$, $m \geq m_0$, testing \eqref{eq:subp1.a} with $\bta$ and $\bv$ and integrating in time yields
	\begin{equation}\label{eq:a4.}
		\begin{array}{rcll}
\disp\int_{0}^{t_F}\Big\{	\mathscr{A}^F(\bsi_{\star,n} , \bta)+\mathscr{B}^F(\bta, \bu_{\star,m})\Big\}\, dt&=&\disp\int_{0}^{t_F} \mathscr{F}_{\chi}^F(\bta)\, dt \,,\\
\hspace{-.2cm}\disp\int_{0}^{t_F}\Big\{	\disp\int_{\Omega}\partial_t \bu_{\star,m} \cdot\bv-\mathscr{B}^F(\bsi_{\star,n},\bv)-\mathscr{O}_1^F(\bz;\bsi_{\star,n}, \bv)
			\,+\, \mathscr{O}_2^F(\bz;\bu_{\star,m}, \bv)\Big\}\, dt&=&0\,.
		\end{array}	
	\end{equation}
	Because of the weak convergence in 
	\eqref{eq:conv2}, we also have
	\begin{equation}\label{eq:a4}
		\begin{array}{rcll}
\disp\int_{0}^{t_F}\Big\{	\mathscr{A}^F(\bsi_{\star} , \bta)+\mathscr{B}^F(\bta, \bu_{\star})\Big\}\, dt&=&\disp\int_{0}^{t_F} \mathscr{F}_{\chi}^F(\bta)\, dt\,,\\
\hspace{-.2cm}\disp\int_{0}^{t_F}\Big\{	\disp\int_{\Omega}\partial_t \bu_{\star} \cdot\bv-\mathscr{B}^F(\bsi_{\star},\bv)-\mathscr{O}_1^F(\bz;\bsi_{\star}, \bv)
\,+\, \mathscr{O}_2^F(\bz;\bu_{\star}, \bv)\Big\}\, dt&=&0\,. 
		\end{array}
	\end{equation}
	Since $\bta$ and $\bv$ are dense in $L^2(\mathrm{J};\mathbb{H})$ and $L^2(\mathrm{J};\mathbf{V})$, respectively, \eqref{eq:a4} implies that \eqref{eq:subp1} holds for a.e.\ $t\in\mathrm{J}$. It remains to show $\bu_{\star}(0) = \bu_0$. To this end, take $\bv \in C^1(\mathrm{J}; \mathbf{V}_{n_0})$ with $\bv(t_F) = 0$ and use the second equation in \eqref{eq:subp1}, yielding
	\begin{equation}\label{eq:a5}
		-\disp\int_{0}^{t_F}\Big\{	\disp\int_{\Omega} \bu_\star\cdot\partial_t\bv+\mathscr{B}^F(\bsi_\star,\bv)+\mathscr{O}_1^F(\bz;\bsi_\star, \bv)-\mathscr{O}_2^F(\bz;\bu_\star, \bv)\Big\}\, dt \,=\, \disp\int_{\Omega} \bu_\star(0)\cdot\bv(0)\,.
	\end{equation}
	Similarly, from the second equation of \eqref{eq:a4.} we deduce
	\begin{equation*}
		-\disp\int_{0}^{t_F}\Big\{	\disp\int_{\Omega} \bu_{\star,m}\cdot\partial_t\bv+\mathscr{B}^F(\bsi_{\star,n},\bv)+\mathscr{O}_1^F(\bz;\bsi_{\star,n}, \bv)-\mathscr{O}_2^F(\bz;\bu_{\star,m}, \bv)\Big\}\, dt\,=\, \disp\int_{\Omega} \bu_{\star,m}(0)\cdot\bv(0)\,,
	\end{equation*}
	from which, an application of \eqref{eq:conv2} and fact that $ \bu_{\star,m}(0)\rightarrow \bu_0 $, yields
	\begin{equation}\label{eq:a6}
		-\disp\int_{0}^{t_F}\Big\{	\disp\int_{\Omega} \bu_\star \cdot\partial_t\bv+\mathscr{B}^F(\bsi_\star,\bv)+\mathscr{O}_1^F(\bz;\bsi_\star, \bv)-\mathscr{O}_2^F(\bz;\bu_\star, \bv)\Big\}\, dt \,=\, \disp\int_{\Omega} \bu_0\cdot\bv(0)\,.
	\end{equation}
	Since $\bv(0)$ is arbitrary, comparing \eqref{eq:a5} and \eqref{eq:a6} yields $\bu_\star(0) = \bu_0$.  
	For uniqueness, note that \eqref{eq:subp1} is linear for given $(\bz,\chi) \in L^\infty(\mathrm{J};\mathbf{V}) \times L^\infty(\mathrm{J};\mathrm{Q})$. Hence, it suffices to show $\bu_\star = \mathbf{0}$ and $\bsi_\star = \mathbf{0}$ when $\mathscr{F}_{\chi}^F(\bta) = \mathbf{0}$ and $\bu_0 = \mathbf{0}$.  
	Taking $(\bta,\bv) = (\bsi_\star,\bu_\star)$ in \eqref{eq:subp1} and proceeding as in \eqref{eq:j1}, we obtain
	\begin{equation*}
		\begin{array}{c}
			\dfrac{1}{2}\partial_t \Vert \bu_{\star}\Vert_{0,\Omega}^{2}\,+\,\dfrac{1}{2\nu}\, \Vert \bdev(\bsi_{\star})\Vert_{0,\Omega}^{2}+\dfrac{\beta_F^{2}\nu}{2}\Vert \bu_{\star}\Vert_{\mathbf{V}}^{2} \\
			\,\leq\,
			\dfrac{1}{\nu}\left(\dfrac{1}{4}\Vert\bdev(\bsi_{\star})\Vert_{0,\Omega}^{2}+\Vert\bz\Vert_{\mathbf{V}}^{2}\Vert \bu_{\star}\Vert_{\mathbf{V}}^{2}\right)
			\,+\,|\Omega|^{(4-p)/4}\Vert\bz\Vert_{\mathbf{V}}^{p-2}\Vert\bu_{\star,m}\Vert_{\mathbf{V}}^{2}\,,
		\end{array}
	\end{equation*}
from which, using \eqref{eq:as.z.var}, integrating in time, and noting $\bu_{\star}(0) = \bu_0$, we conclude that
	\begin{equation*}
		\begin{array}{c}
			\Vert\bu_{\star}(t)\Vert_{0,\Omega}^{2}+\dfrac{1}{\nu}\disp\int_{0}^{t}\Vert \bdev(\bsi_{\star}(s))\Vert_{0,\Omega}^{2}\, ds +\beta_F^{2}\nu\disp\int_{0}^{t}\Vert \bu_{\star}(s)\Vert_{\mathbf{V}}^{2}\, ds
			\,\leq\,0 \qquad \text{for a.e.}\, t\in \mathrm{J}\,.
		\end{array}
	\end{equation*}
Thus, $\bu_\star = \mathbf{0}$ and $\bsi_\star = \mathbf{0}$ a.e. in $\mathrm{J}$. The {\it a priori} estimate \eqref{eq:apr1} then follows from \eqref{ky}.
\end{proof}
To establish the well-posedness of $\mathscr{L}^C$, we first derive a {\it priori} estimates for the solution of \eqref{eq:subp2.a}.
\begin{lemma}\label{l_wel.con.pr2}
	Assume that $ (\bz,\chi)\in \mathbf{V}_m \times \mathrm{Q}_{\tilde{s}} $ be given and satisfy
	\begin{equation}\label{eq:as2}
		\big\Vert(\bz(t),\chi(t))\big\Vert_{\mathbf{V}\times\mathrm{Q}}\,\le\,\max\Big\{\dfrac{\beta_C\kappa}{2\sqrt{3}},\,\frac{\beta_C^2\kappa}{12c_{\mathscr{O}_2^C}}\Big\} \,.
	\end{equation}
	Then, there exists a constant $ \widetilde{\mathcal{C}}_1 $, depending on $\kappa$, $\beta_C$, $c_{\mathscr{O}_2^C}$ but independent of $ \tilde{n} $, $ \tilde{m} $ and $ \tilde{s} $, such that
	\begin{equation*}
		\begin{array}{c}
			\big\Vert \varphi_{\star,\tilde{m}}\big\Vert_{L^{\infty}(\mathrm{J};L^{2})}^{2}+\big\Vert\brho_{\star,\tilde{n}}\big\Vert_{L^{\infty}(\mathrm{J};\mathbf{L}^2)}^{2}+\big\Vert\partial_t \vec{\varphi}_{\star,\tilde{m}\tilde{s}}\big\Vert_{L^{2}(\mathrm{J};L^2\times\mathrm{Q})}^{2} +\big\Vert\brho_{\star,\tilde{n}}\big\Vert_{L^{2}(\mathrm{J};\mathbf{H})}^{2} + \big\Vert \vec{\varphi}_{\star,\tilde{m}\tilde{s}}\big\Vert_{L^{2}(\mathrm{J};\mathcal{V})}^{2}\\
			\, \leq\,  \widetilde{\mathcal{C}}_1 \left(\Vert \varphi_{\star}(0)\Vert_{\mathrm{H}^{1}(\Omega)}^{2}+\disp\int_{\Omega}c_{\mathscr{F}^C}^2\, ds\right)\,:=\,\widetilde{\mathcal{C}}_1\mathcal{N}_C(\varphi_{\star}(0),c_{\mathscr{F}^C})\,.
		\end{array}
	\end{equation*}
\end{lemma}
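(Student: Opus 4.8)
The plan is to transcribe the argument of Lemma~\ref{l_help} to the transport Galerkin system \eqref{eq:subp2.a}, combining an energy identity, the inf-sup property \eqref{eq:inf.bit} of $\mathscr{B}^C$, the boundedness estimates \eqref{eq3:bound.c.ct} and \eqref{bund.BCO2}, the smallness condition \eqref{eq:as2}, and a Gr\"onwall argument. The one recurring constraint is bookkeeping: every constant generated must stay independent of $\tilde{n},\tilde{m},\tilde{s}$, so the only absorptions permitted are those paid for by the fixed coercivity/inf-sup constants $\alpha_C=\kappa^{-1}$, $\beta_C$, or by the two fixed thresholds in \eqref{eq:as2}.

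First I would take $\bet_i:=\brho_{\star,\tilde{n}}(t)$ in the first row of \eqref{eq:subp2.a} and $\vec{\psi}_{j,k}:=\vec{\varphi}_{\star,\tilde{m}\tilde{s}}(t)$ in the second row and add; the $\mathscr{B}^C$-terms cancel, leaving
\[
\tfrac12\partial_t\|\varphi_{\star,\tilde{m}}\|_{0,\Omega}^2+\tfrac1\kappa\|\brho_{\star,\tilde{n}}\|_{0,\Omega}^2+\mathscr{C}^C(\vec{\varphi}_{\star,\tilde{m}\tilde{s}},\vec{\varphi}_{\star,\tilde{m}\tilde{s}})=\mathscr{O}_1^C(\bz;\brho_{\star,\tilde{n}},\vec{\varphi}_{\star,\tilde{m}\tilde{s}})-\mathscr{O}_2^C(\chi;\lambda_{\star,\tilde{s}},\vec{\varphi}_{\star,\tilde{m}\tilde{s}})+\mathscr{F}^C(\vec{\varphi}_{\star,\tilde{m}\tilde{s}}).
\]
Next, from the first row of \eqref{eq:subp2.a} together with \eqref{eq:inf.bit} and $c_{\mathscr{A}^C}=\kappa^{-1}$ one obtains $\beta_C\|\vec{\varphi}_{\star,\tilde{m}\tilde{s}}\|_{\mathcal{V}}\le\kappa^{-1}\|\brho_{\star,\tilde{n}}\|_{0,\Omega}$ (as \eqref{eq:h3} was obtained for $\mathscr{B}^F$), which lets one reinstate a coercive $\|\vec{\varphi}_{\star,\tilde{m}\tilde{s}}\|_{\mathcal{V}}^2$ on the left at the cost of a fraction of $\tfrac1\kappa\|\brho_{\star,\tilde{n}}\|_{0,\Omega}^2$. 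Estimating the right-hand side with \eqref{eq3:bound.c.ct}, \eqref{bund.O2c}, \eqref{bund.Fc} and Young's inequality, using \eqref{eq:as2} to absorb the $\mathscr{O}_1^C$- and $\mathscr{O}_2^C$-contributions, integrating over $[0,t]$ and applying Gr\"onwall then yields the $L^\infty(\mathrm{J};L^2(\Omega))$-bound for $\varphi_{\star,\tilde{m}}$ and the $L^2(\mathrm{J};\mathbf{L}^2(\Omega))$- and $L^2(\mathrm{J};\mathcal{V})$-bounds for $\brho_{\star,\tilde{n}}$ and $\vec{\varphi}_{\star,\tilde{m}\tilde{s}}$. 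The boundary term $\mathscr{C}^C(\vec{\varphi}_{\star,\tilde{m}\tilde{s}},\vec{\varphi}_{\star,\tilde{m}\tilde{s}})=a_2\|\lambda_{\star,\tilde{s}}\|_{0,\Gamma_{\mathtt{w}}}^2$ stays on the left when $a_2\ge0$ and is otherwise moved to the right and dominated using \eqref{bund.Cc} and the $L^\infty$-in-time bound just obtained.

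To upgrade the flux bound to $L^2(\mathrm{J};\mathbf{H})$ I would test the second row of \eqref{eq:subp2.a} with $\vec{\psi}=(\psi_j,0)$: this expresses $\div(\brho_{\star,\tilde{n}})$ through $\partial_t\varphi_{\star,\tilde{m}}$ and $\tfrac1\kappa(\brho_{\star,\tilde{n}}\,\bz)$ only, so that a surjectivity/duality argument as in \eqref{eq:f7} gives $\|\div(\brho_{\star,\tilde{n}})\|_{0,4/3;\Omega}\lesssim\|\partial_t\varphi_{\star,\tilde{m}}\|_{0,\Omega}+\|\brho_{\star,\tilde{n}}\|_{0,\Omega}$. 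For the $\partial_t$-estimates (and for the $L^\infty(\mathrm{J};\mathbf{L}^2)$-bound of $\brho_{\star,\tilde{n}}$), I would differentiate the first row of \eqref{eq:subp2.a} in time — crucially, only $\mathscr{A}^C$ and $\mathscr{B}^C$ occur there, neither $\bz$ nor $\chi$, which is exactly why no $\partial_t\bz,\partial_t\chi$ appears in \eqref{eq:as2} or in $\mathcal{N}_C$ — test it with $\bet_i=\brho_{\star,\tilde{n}}$, test the undifferentiated second row with $\vec{\psi}=\partial_t\vec{\varphi}_{\star,\tilde{m}\tilde{s}}$, and add; the $\mathscr{B}^C$-terms cancel again, giving
\[
\tfrac1{2\kappa}\partial_t\|\brho_{\star,\tilde{n}}\|_{0,\Omega}^2+\|\partial_t\varphi_{\star,\tilde{m}}\|_{0,\Omega}^2+\mathscr{C}^C(\vec{\varphi}_{\star,\tilde{m}\tilde{s}},\partial_t\vec{\varphi}_{\star,\tilde{m}\tilde{s}})=\mathscr{O}_1^C(\bz;\brho_{\star,\tilde{n}},\partial_t\vec{\varphi}_{\star,\tilde{m}\tilde{s}})-\mathscr{O}_2^C(\chi;\lambda_{\star,\tilde{s}},\partial_t\vec{\varphi}_{\star,\tilde{m}\tilde{s}})+\mathscr{F}^C(\partial_t\vec{\varphi}_{\star,\tilde{m}\tilde{s}}).
\]
Because $\mathscr{C}^C$ and $\mathscr{F}^C$ are time-independent, $\mathscr{C}^C(\vec{\varphi},\partial_t\vec{\varphi})=\tfrac12\tfrac{d}{dt}\mathscr{C}^C(\vec{\varphi},\vec{\varphi})$ and $\mathscr{F}^C(\partial_t\vec{\varphi})=\tfrac{d}{dt}\mathscr{F}^C(\vec{\varphi})$, so on integrating in time these reduce to endpoint terms controlled by the $L^\infty/L^2$ bounds of the previous step; the $\partial_t\vec{\varphi}_{\star,\tilde{m}\tilde{s}}$-factors left on the right (in particular the $\mathrm{Q}$-component $\partial_t\lambda_{\star,\tilde{s}}$) are first rewritten via the differentiated first row and \eqref{eq:inf.bit} as $\|\partial_t\vec{\varphi}_{\star,\tilde{m}\tilde{s}}\|_{\mathcal{V}}\le\kappa^{-1}\beta_C^{-1}\|\partial_t\brho_{\star,\tilde{n}}\|_{0,\Omega}$ and then absorbed with Young's inequality after invoking \eqref{eq:as2}. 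Finally, the initial-value bound $\|\brho_{\star,\tilde{n}}(0)\|_{0,\Omega}\lesssim\kappa\|\nabla\varphi_0\|_{0,\Omega}$ follows by setting $t=0$ and $\bet_i=\brho_{\star,\tilde{n}}(0)$ in the first row of \eqref{eq:subp2.a}, passing to the limit in $\tilde{m},\tilde{s}$ and using Green's formula with $\varphi_{\star}(0)=\varphi_0\in H^1(\Omega)$, exactly as $\|\bdev(\bsi_{\star,n}(0))\|_{0,\Omega}\le\nu\|\nabla\bu_0\|_{0,\Omega}$ was derived at the end of Lemma~\ref{l_help}. Collecting all bounds and applying Gr\"onwall gives the asserted estimate with $\widetilde{\mathcal{C}}_1=\widetilde{\mathcal{C}}_1(\kappa,\beta_C,c_{\mathscr{O}_2^C},\Omega,c_{\mathscr{F}^C})$.

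I expect the main obstacle to be the $L^2(\mathrm{J};L^2(\Omega)\times\mathrm{Q})$-control of $\partial_t\vec{\varphi}_{\star,\tilde{m}\tilde{s}}$: since the Lagrange multiplier $\lambda_{\star,\tilde{s}}$ enters the system only algebraically, $\partial_t\lambda_{\star,\tilde{s}}$ must be recovered indirectly from the time-differentiated flux equation and the inf-sup inequality, and one must arrange that the membrane forms $\mathscr{C}^C$ and $\mathscr{O}_2^C$ — which live in the fractional space $\mathrm{Q}=H^{1/2}_{00}(\Gamma_{\mathtt{i}}^c)$ and hence require the trace embeddings used in the boundedness lemma — are either rewritten as favourable total time derivatives or absorbed, all with discretization-independent constants. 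Making the choice of the $\epsilon$'s and of the two thresholds in \eqref{eq:as2} close simultaneously for both the energy and the time-derivative estimates is the delicate bookkeeping point.
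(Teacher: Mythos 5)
Your proposal follows the paper's structural template faithfully — the same test functions, the same $\mathscr{A}^C/\mathscr{B}^C$ cancellations, the same inf-sup upgrade $\beta_C\|\vec{\varphi}_{\star,\tilde{m}\tilde{s}}\|_{\mathcal{V}}\le\kappa^{-1}\|\brho_{\star,\tilde{n}}\|_{0,\Omega}$, the same duality step to recover $L^2(\mathrm{J};\mathbf{H})$-control of the flux, and the same $t=0$ Green's-formula trick for the initial datum. You also correctly identify the $\partial_t\lambda_{\star,\tilde{s}}$-recovery as the delicate step, and you correctly note that only $\mathscr{A}^C$ and $\mathscr{B}^C$ appear in the equation that gets time-differentiated (so no $\partial_t\bz,\partial_t\chi$ appears in the final bound, unlike Lemma~\ref{l_help}). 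Up to cosmetic choices (Gr\"onwall versus direct absorption, treating $\mathscr{C}^C(\vec{\varphi},\partial_t\vec{\varphi})$ and $\mathscr{F}^C(\partial_t\vec{\varphi})$ as exact time derivatives versus bounding them with the continuity constants $c_{\mathscr{C}^C},c_{\mathscr{F}^C}$), the first half of your argument matches the paper's.

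There is, however, a genuine gap at precisely the step you flag as delicate. After combining the time-differentiated first row (tested with $\brho_{\star,\tilde{n}}$) and the undifferentiated second row (tested with $\partial_t\vec{\varphi}_{\star,\tilde{m}\tilde{s}}$), the coercive left-hand side contributes only
\begin{equation*}
\frac{1}{2\kappa}\,\partial_t\big\Vert\brho_{\star,\tilde{n}}\big\Vert_{0,\Omega}^{2}\,+\,\big\Vert\partial_t\varphi_{\star,\tilde{m}}\big\Vert_{0,\Omega}^{2}\,;
\end{equation*}
there is no $\Vert\partial_t\brho_{\star,\tilde{n}}\Vert_{0,\Omega}^{2}$ available. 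Your proposal to convert the problematic factors of $\Vert\partial_t\vec{\varphi}_{\star,\tilde{m}\tilde{s}}\Vert_{\mathcal{V}}$ via the differentiated first row and the inf-sup condition into $\kappa^{-1}\beta_C^{-1}\Vert\partial_t\brho_{\star,\tilde{n}}\Vert_{0,\Omega}$ therefore produces a quantity that the energy identity never controls, and the subsequent Young/absorption step has nothing to absorb into. What the paper does instead is to differentiate in time the algebraic identity $\langle\bet_i\cdot\bn,\lambda_{\star,\tilde{s}}\rangle_{\Gamma_{\mathtt{i}}^c}=\langle\bet_i\cdot\bn,\varphi_{\star,\tilde{m}}\rangle_{\Gamma}$ (cf.~\eqref{eq:no.7}), use the partial inf-sup bound \eqref{part2.difBc} for the $\mathrm{Q}$-component, and then apply inverse/trace inequalities to land on $\Vert\partial_t\varphi_{\star,\tilde{m}}\Vert_{0,\Omega}$, which \emph{is} on the left-hand side (see \eqref{nn}). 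This routes the control of $\partial_t\lambda_{\star,\tilde{s}}$ through $\partial_t\varphi_{\star,\tilde{m}}$ rather than through $\partial_t\brho_{\star,\tilde{n}}$, which is exactly what makes the absorption close. Without this substitution your time-derivative estimate does not close, and consequently neither does the $L^2(\mathrm{J};\mathbf{H})$ flux bound that feeds off it.
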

\begin{proof}
	The argument parallels that of Lemma~\ref{l_help}, applied to \eqref{eq:subp2.a}. Choosing $(\bet_i,\vec{\psi}_{j,k}) := (\brho_{\star,\tilde{n}}(t), \vec{\varphi}_{\star,\tilde{m}\tilde{s}}(t))$, summing the equations and employing the coercivity of $\mathscr{A}^C$ (cf. \eqref{eq:ell.a.at}) and the boundedness of $\mathscr{O}_1^C$, $\mathscr{O}_2^C$, and $\mathscr{F}^C$, we deduce
	\begin{equation}\label{eq:j3}
		\begin{array}{c}
\dfrac{1}{2}\partial_t \Vert \varphi_{\star,\tilde{m}}\Vert_{0,\Omega}^{2}+\dfrac{1}{\kappa}\Vert\brho_{\star,\tilde{n}}\Vert_{0,\Omega}^{2}+a_2\Vert\lambda_{\star,\tilde{s}}\Vert_{0,\Gamma_{\mathtt{w}}}^2	\\\, \leq\,
c_{\mathscr{F}^C}\Vert\vec{\varphi}_{\star,\tilde{m}\tilde{s}}\Vert_{\mathcal{V}}
\,+\,\dfrac{1}{\kappa}\,\Vert\bz\Vert_{\mathbf{V}}\Vert\brho_{\star,\tilde{n}}\Vert_{0,\Omega}\,  \Vert \vec{\varphi}_{\star,\tilde{m}\tilde{s}}\Vert_{\mathcal{V}} +  c_{\mathscr{O}_2^C}\Vert\chi\Vert_{\mathrm{Q}}\Vert\vec{\varphi}_{\star,\tilde{m}\tilde{s}}\Vert_{\mathcal{V}}^2 \,.
		\end{array}
	\end{equation} 
Using the inf-sup condition \eqref{eq:inf.bit}, the first equation of \eqref{eq:subp2.a}, and the boundedness of $\mathscr{A}^C$ and $\mathscr{F}^C$, we obtain
	\begin{equation*}
		\begin{array}{c}
			\beta_C\,\Vert\vec{\varphi}_{\star,\tilde{m}\tilde{s}}\Vert_{\mathcal{V}}\,\leq\,	\sup\limits_{\mathbf{0}\neq \bet_i\in \mathbf{H}}\dfrac{\mathscr{B}^{C}(\bet_i, \vec{\varphi}_{\star,\tilde{m}\tilde{s}})}{\Vert \bet_i\Vert_{\mathbf{H}}}
			\,=\
			\sup\limits_{\mathbf{0}\neq \bet\in \mathbf{H}}\dfrac{-\mathscr{A}^{C}(\brho_{\star,\tilde{n}} , \bet_i)}{\Vert \bet_i\Vert_{\mathbf{H}}}\, \leq\, \dfrac{1}{\kappa}\Vert \brho_{\star,\tilde{n}}\Vert_{0,\Omega}\,,
		\end{array}
	\end{equation*}
	which taking square in the above inequality, we have
	\begin{equation}\label{eq:no.4}
		\dfrac{\beta_C^{2}\kappa}{2}\Vert\vec{\varphi}_{\star,\tilde{m}\tilde{s}}\Vert_{\mathcal{V}}^2\,\leq\,\dfrac{1}{2\kappa}\Vert\brho_{\star,\tilde{n}}\Vert_{0,\Omega}^{2}\,.
	\end{equation}
Combining with \eqref{eq:j3}, applying Young’s inequality, integrating over \( [0,t] \), using \eqref{eq:as2}, and choosing \( \epsilon \le \frac{\beta_C^2 \kappa}{12} \), we get
	\begin{equation}\label{eq:l1}
		\begin{array}{c}
			\Vert \varphi_{\star,\tilde{m}}(t)\Vert_{0,\Omega}^{2}+\disp\int_{0}^{t}\Vert\brho_{\star,\tilde{n}}(s)\Vert_{0,\Omega}^{2}\,ds + \disp\int_{0}^{t}\Vert \vec{\varphi}_{\star,\tilde{m}\tilde{s}}(s)\Vert_{\mathcal{V}}^{2}\, ds
			\, \leq\, \widetilde{\mathcal{C}}_1\, \left(\Vert \varphi_{\star,\tilde{m}}(0)\Vert_{0,\Omega}^{2}+\disp\int_{0}^{t}c_{\mathscr{F}^C}^2\right)\,,
		\end{array}
	\end{equation}
	where $ \widetilde{\mathcal{C}}_1 $ is a positive constant, depending on $ \kappa $, $ \beta_C $ and $c_{\mathscr{O}_2^C}$. 
	
Using the second equation of \eqref{eq:subp2.a} and $\div(\mathbf{H}) = \mathrm{V}'$, we deduce a constant $\widehat{C} > 0$ such that
	\begin{equation}\label{eq:k1}
		\begin{array}{c}
			\disp\int_{0}^{t}\Vert\brho_{\star,\tilde{n}}(s)\Vert_{\mathbf{H}}^{2} ds \, \leq\, \widehat{C}\, \bigg\{\Vert \varphi_{\star,\tilde{m}}(0)\Vert_{0,\Omega}^{2}+
			\disp\int_{0}^{t}\left(c_{\mathscr{F}^C}^2 
			+  	\Vert \partial_t \varphi_{\star,\tilde{m}}(s)\Vert_{0,\Omega}^{2}\right)ds
			\bigg\} \,.
		\end{array}
	\end{equation}
To estimate the last term in \eqref{eq:k1}, we time-differentiate the first equation of \eqref{eq:subp2.a}, test with $(\bet_i, (\psi_j, \xi_k)) = (\brho_{\star,\tilde{n}}, (\partial_t\varphi_{\star,\tilde{m}}, \partial_t\lambda_{\star,\tilde{s}}))$, and combine with the second, yielding:
	\begin{equation}\label{eq:k3}
		\begin{array}{c}
			\dfrac{1}{2\kappa}\partial_t\Vert\brho_{\star,\tilde{n}}\Vert_{0,\Omega}^{2}+\Vert\partial_t \varphi_{\star,\tilde{m}}\Vert_{0,\Omega}^{2}\, \leq\, c_{\mathscr{F}^C}\Vert\partial_t\vec{\varphi}_{\star,\tilde{m}\tilde{s}}\Vert_{\mathcal{V}}\\[1ex]
			\, \,+\,\dfrac{1}{\kappa}\Vert\bz\Vert_{\mathbf{V}}\, \Vert\brho_{\star,\tilde{n}}\Vert_{0,\Omega}\, \Vert\partial_t\vec{\varphi}_{\star,\tilde{m}\tilde{s}}\Vert_{\mathcal{V}}+\,\left(c_{\mathscr{C}^C}+c_{\mathscr{O}_2^C}\Vert\chi\Vert_{\mathrm{Q}}\right)\Vert\vec{\varphi}_{\star,\tilde{m}\tilde{s}}\Vert_{\mathcal{V}}\Vert\partial_t\vec{\varphi}_{\star,\tilde{m}\tilde{s}}\Vert_{\mathcal{V}}\,.
		\end{array}
	\end{equation}
Next, to bound $\|\partial_t \vec{\varphi}_{\star,\tilde{m}\tilde{s}}\|_{\mathcal{V}}$ by $\|\partial_t \varphi_{\star,\tilde{m}}\|_{0,\Omega}$, we differentiate in time the following identity
	\begin{equation}\label{eq:no.7}
		\begin{array}{c}
			\langle \bet_i \cdot\bn,
			\lambda_{\star,\tilde{s}}\rangle_{\Gamma_{\mathtt{i}}^c}	\,=\, \langle \bet_i \cdot\bn,\varphi_{\star,\tilde{m}} \rangle_{\Gamma}\,,
		\end{array}
	\end{equation}
	apply a special case of the inf-sup condition \eqref{eq:inf.bit} (cf. \eqref{part2.difBc}) and the inverse inequality, yielding
	\begin{equation*}
		\begin{array}{c}
			\beta_{2,C}\,	\Vert\partial_t \lambda_{\star,\tilde{s}}\Vert_{\mathrm{Q}}\, \leq\, \sup_{\bet_i\in \mathbf{H}}\dfrac{\langle \bet_i \cdot\bn,
				\partial_t\lambda_{\star,\tilde{s}}\rangle_{\Gamma_{\mathtt{i}}^c}}{\Vert\bet_i\Vert_{\mathbf{H}}}\, \leq\, \Vert\partial_t\varphi_{\star,\tilde{m}}\Vert_{1/2,\Gamma}\\
			\, \leq\, |\Omega|^{-1/4}\Vert\partial_t\varphi_{\star,\tilde{m}}\Vert_{L^{2}(\Gamma)}\, \leq\, |\Omega|^{-1/8}\Vert\partial_t\varphi_{\star,\tilde{m}}\Vert_{0,\Omega}\,.
		\end{array}
	\end{equation*}
	The recent result, also gives
	\begin{equation}\label{nn}
		\dfrac{\beta_{2,C}^2|\Omega|^{/4}}{2}	\Vert\partial_t \lambda_{\star,\tilde{s}}\Vert_{\mathrm{Q}}^2\,\le\,\dfrac{1}{2} \Vert\partial_t\varphi_{\star,\tilde{m}}\Vert_{0,\Omega}^2 \,.
	\end{equation}
	
Substituting \eqref{nn} into \eqref{eq:k3}, applying Young's inequality, integrating over time, and using \eqref{eq:as2} yield a constant $\widetilde{C}_1$ such that
	\begin{equation}\label{eq:k4}
		\begin{array}{c}
			\Vert\brho_{\star,\tilde{n}}(t)\Vert_{0,\Omega}^{2}+	\disp\int_{0}^{t}\left(	\Vert\partial_t \varphi_{\star,\tilde{m}}(s)\Vert_{0,\Omega}^{2}+\Vert\partial_t \lambda_{\star,\tilde{s}}\Vert_{\mathrm{Q}}^2\right)\\\, \leq\, \widetilde{C}_1 \, \bigg\{\Vert\brho_{\star,\tilde{n}}(0)\Vert_{0,\Omega}^{2}	\,+\,\disp\int_{0}^{t}\left(c_{\mathscr{F}^C}^2+\Vert\brho_{\star,\tilde{n}}(s)\Vert_{0,\Omega}^{2}+\Vert\vec{\varphi}_{\star,\tilde{m}\tilde{s}}(s)\Vert_{\mathcal{V}}^{2}\right)\bigg\}\,.
		\end{array}
	\end{equation}
	Replacing \eqref{eq:k4} in \eqref{eq:k1} and combining the result with \eqref{eq:l1} yield:
	\begin{equation*}
		\disp\int_{0}^{t} \Vert\brho_{\star,\tilde{n}}(s)\Vert_{\mathbf{H}}^{2}\, ds \, \leq\,  \widehat{C}_2 \left(\Vert \varphi_{\star,\tilde{m}}(0)\Vert_{0,\Omega}^{2}+\Vert\brho_{\star,\tilde{n}}(0)\Vert_{0,\Omega}^{2}+\disp\int_{0}^{t}c_{\mathscr{F}^C}^2 \, ds\right)\,.
	\end{equation*}

	The rest follows as in the proof of Lemma \ref{l_help}.
\end{proof}

We are in position to establish that $ \mathscr{L}^C $ (cf. \eqref{eq:opr2}) is well defined.
\begin{lemma}\label{l_wel.T}
%
%
	For each \( \bz \in L^{\infty}(\mathrm{J};\mathbf{V}) \) and \( \chi \in L^{\infty}(\mathrm{J};\mathrm{Q}) \) satisfying \eqref{eq:as2}, there exists a unique solution \((\brho_{\star}, \vec{\varphi}_\star)\) to \eqref{eq:subp2} with
	$	\brho_{\star}\in L^{2}(\mathrm{J};\mathbf{H})\cap L^{\infty}(\mathrm{J};\mathbf{L}^{2}(\Omega)),\,\,\varphi_{\star}\in L^{\infty}(\mathrm{J};\mathrm{L}^{2}(\Omega))\cap L^{2}(\mathrm{J};\mathrm{V}),\,\,\partial_t \vec{\varphi}_{\star}\in L^{2}(\mathrm{J};\mathrm{L}^{2}(\Omega)\times\mathrm{Q}),\,\,\lambda_{\star}\in L^{2}(\mathrm{J};\mathrm{Q})$.
	Moreover, there exists a constant \( \mathcal{C}_{\mathscr{L}^C} > 0 \), depending on \( \kappa, \beta_C, c_{\mathscr{O}_2^C}, |\Omega| \), such that
	\begin{equation}\label{eq:apr2}
		\begin{array}{c}
			\big\Vert \mathscr{L}^C_1(\bz,\chi)\big\Vert_{L^{2}(\mathrm{J};\mathbf{H})}+\Vert\mathscr{L}^C_1(\bz,\chi)\big\Vert_{L^{\infty}(\mathrm{J};\mathbf{L}^2)}+\big\Vert\mathscr{L}^C_2(\bz,\chi)\big\Vert_{L^{\infty}(\mathrm{J};\mathrm{L}^{2})}
			+\big\Vert\partial_t \vec{\mathscr{L}}_2^C (\bz,\chi)\big\Vert_{L^{2}(\mathrm{J};\mathrm{L}^{2}\times\mathrm{Q})}\\
			\,+\,\big\Vert\vec{\mathscr{L}}^C_{2}(\bz,\varphi)\big\Vert_{L^{2}(\mathrm{J};\mathcal{V})}
			\, \leq\,  \mathcal{C}_{\mathscr{L}^C} \mathcal{N}_C(\varphi_{0},c_{\mathscr{F}^C}) \,.
		\end{array}
	\end{equation}
\end{lemma}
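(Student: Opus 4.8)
The plan is to mirror, almost verbatim, the argument of Lemma~\ref{l_wel.S}, replacing the fluid sub-problem by the transport sub-problem \eqref{eq:subp2} and invoking the uniform a~priori estimate of Lemma~\ref{l_wel.con.pr2} in place of Lemma~\ref{l_help}. First, fix $\bz\in L^{\infty}(\mathrm{J};\mathbf{V})$ and $\chi\in L^{\infty}(\mathrm{J};\mathrm{Q})$ satisfying \eqref{eq:as2}, and consider the Galerkin approximations $(\brho_{\star,\tilde{n}},\vec{\varphi}_{\star,\tilde{m}\tilde{s}})=(\brho_{\star,\tilde{n}},(\varphi_{\star,\tilde{m}},\lambda_{\star,\tilde{s}}))$ defined by \eqref{eq:subp2.a} with respect to the Hilbert bases in \eqref{n1}. (One first notes, as in Lemma~\ref{l_wel.S}, that the bounds of Lemma~\ref{l_wel.con.pr2} depend on $\bz,\chi$ only through the norm constraint \eqref{eq:as2}, so they remain valid for general $\bz,\chi$ and uniformly in $\tilde{n},\tilde{m},\tilde{s}$.) Consequently $\{\brho_{\star,\tilde{n}}\}$ is bounded in $L^{2}(\mathrm{J};\mathbf{H})\cap L^{\infty}(\mathrm{J};\mathbf{L}^{2}(\Omega))$, $\{\varphi_{\star,\tilde{m}}\}$ in $L^{\infty}(\mathrm{J};\mathrm{L}^{2}(\Omega))\cap L^{2}(\mathrm{J};\mathrm{V})$, $\{\lambda_{\star,\tilde{s}}\}$ in $L^{2}(\mathrm{J};\mathrm{Q})$, and $\{\partial_t\vec{\varphi}_{\star,\tilde{m}\tilde{s}}\}$ in $L^{2}(\mathrm{J};\mathrm{L}^{2}(\Omega)\times\mathrm{Q})$; by reflexivity and Banach--Alaoglu we extract subsequences (not relabelled) with weak, resp.\ weak-$*$, limits $\brho_\star$, $\varphi_\star$, $\lambda_\star$ and $\partial_t\vec{\varphi}_\star$ in these spaces.

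Next I would pass to the limit. Take $\bet\in C^{1}(\mathrm{J};\mathbf{H}_{n_0})$ and $\vec{\psi}\in C^{1}(\mathrm{J};\mathrm{V}_{\tilde{m}_0}\times\mathrm{Q}_{\tilde{s}_0})$; for $\tilde{n}\ge n_0$, $\tilde{m}\ge\tilde{m}_0$, $\tilde{s}\ge\tilde{s}_0$, test \eqref{eq:subp2.a} and integrate over $\mathrm{J}$. Since $\mathscr{A}^{C}$, $\mathscr{B}^{C}$, $\mathscr{C}^{C}$ are bounded bilinear forms and, for the \emph{fixed} data $\bz$, $\chi$, the maps $\mathscr{O}_1^{C}(\bz;\cdot,\cdot)$ and $\mathscr{O}_2^{C}(\chi;\cdot,\cdot)$ are linear in their unknown arguments, every term is linear and continuous in the weakly convergent sequences, so the time-integrated form of \eqref{eq:subp2} passes to the limit; density of $\bigcup_{n_0}C^{1}(\mathrm{J};\mathbf{H}_{n_0})$ in $L^{2}(\mathrm{J};\mathbf{H})$ and of $\bigcup C^{1}(\mathrm{J};\mathrm{V}_{\tilde{m}_0}\times\mathrm{Q}_{\tilde{s}_0})$ in $L^{2}(\mathrm{J};\mathcal{V})$ then gives \eqref{eq:subp2} for a.e.\ $t\in\mathrm{J}$. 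The initial condition $\varphi_\star(\cdot,0)=\varphi_0$ is recovered exactly as in Lemma~\ref{l_wel.S}: choose $\psi\in C^{1}(\mathrm{J};\mathrm{V}_{\tilde{m}_0})$ with $\psi(t_F)=0$, integrate the parabolic equation by parts in time at the discrete and limit levels, use $\varphi_{\star,\tilde{m}}(0)\to\varphi_0$ in $\mathrm{L}^{2}(\Omega)$, and compare the resulting identities since $\psi(0)$ is arbitrary.

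For uniqueness, observe that \eqref{eq:subp2} is linear in $(\brho_\star,\vec{\varphi}_\star)$ once $\bz,\chi$ are fixed, so it suffices to show that $\mathscr{F}^{C}\equiv 0$ and $\varphi_0=0$ force $\brho_\star=\mathbf{0}$ and $\vec{\varphi}_\star=\vec{0}$. Testing \eqref{eq:subp2} with $(\bet,\vec{\psi})=(\brho_\star,\vec{\varphi}_\star)$ and proceeding as for \eqref{eq:j3}--\eqref{eq:no.4}: the coercivity of $\mathscr{A}^{C}$ \eqref{eq:ell.a.at} and the inf-sup estimate \eqref{eq:inf.bit} control $\Vert\vec{\varphi}_\star\Vert_{\mathcal{V}}$ by $\Vert\brho_\star\Vert_{0,\Omega}$, while the boundedness \eqref{eq3:bound.c.ct}, \eqref{bund.Cc}, \eqref{bund.O2c} of $\mathscr{O}_1^{C},\mathscr{C}^{C},\mathscr{O}_2^{C}$ together with the smallness condition \eqref{eq:as2} and Young's inequality let one absorb the $\bz,\chi$-dependent terms, yielding $\frac12\partial_t\Vert\varphi_\star\Vert_{0,\Omega}^{2}+c\big(\Vert\brho_\star\Vert_{0,\Omega}^{2}+\Vert\vec{\varphi}_\star\Vert_{\mathcal{V}}^{2}\big)\le 0$; integrating in time with $\varphi_\star(0)=0$ gives $\brho_\star=\mathbf{0}$, $\vec{\varphi}_\star=\vec{0}$ a.e.\ in $\mathrm{J}$. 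Finally, the a~priori bound \eqref{eq:apr2} follows from the uniform estimate of Lemma~\ref{l_wel.con.pr2} by weak and weak-$*$ lower semicontinuity of the norms.

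\textbf{Main obstacle.} The delicate point, beyond the routine weak-compactness bookkeeping, is the passage to the limit in the algebraic (Lagrange-multiplier) component: one must check that the weak limit $\lambda_\star$ actually satisfies the third equation of \eqref{eq:subp2} and that $\partial_t\lambda_\star$ is its weak $L^{2}(\mathrm{J};\mathrm{Q})$-limit. This relies on the inverse-inequality-based bound $\Vert\partial_t\lambda_{\star,\tilde{s}}\Vert_{\mathrm{Q}}\lesssim\Vert\partial_t\varphi_{\star,\tilde{m}}\Vert_{0,\Omega}$ from Lemma~\ref{l_wel.con.pr2} remaining uniform along the discretization, which in turn uses the nested structure of the bases in \eqref{n1}; one must also take care, as in Lemma~\ref{l_wel.S}, that the estimates of Lemma~\ref{l_wel.con.pr2} extend from $(\bz,\chi)\in\mathbf{V}_m\times\mathrm{Q}_{\tilde{s}}$ to general $(\bz,\chi)\in L^{\infty}(\mathrm{J};\mathbf{V})\times L^{\infty}(\mathrm{J};\mathrm{Q})$ satisfying \eqref{eq:as2}.
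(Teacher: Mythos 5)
Your proposal is correct and takes essentially the same approach as the paper: the paper's proof of Lemma~\ref{l_wel.T} consists of the single remark that it follows similarly to Lemma~\ref{l_wel.S}, and your write-up is a faithful expansion of that analogy—Galerkin bounds via Lemma~\ref{l_wel.con.pr2}, weak/weak-$*$ compactness, passage to the limit using that $\mathscr{O}_1^{C}(\bz;\cdot,\cdot)$ and $\mathscr{O}_2^{C}(\chi;\cdot,\cdot)$ are linear in the unknowns once $(\bz,\chi)$ is frozen, recovery of the initial condition by the time-integration-by-parts trick, uniqueness by linearity and the smallness condition \eqref{eq:as2}, and the a~priori bound \eqref{eq:apr2} by weak lower semicontinuity. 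Your flagged "main obstacle" about the Lagrange-multiplier component and the extension from finite-dimensional $(\bz,\chi)$ to general data is a legitimate refinement of the paper's terse argument, not a deviation from it.
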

\begin{proof}
	It follows similarly to the proof of Lemma \ref{l_wel.S}.
\end{proof}
\subsection{Solvability of the fixed-point equation}
Since $\mathscr{L}^F$, $\mathscr{L}^C$, and thus $\mathscr{L}$ are well-defined, we now study solvability of \eqref{eq:fixp}. First, we identify conditions ensuring $\mathscr{L}$ maps a closed ball in $\mathbf{V} \times \mathrm{Q}$ into itself.
Thus, denoting \begin{equation*}
	\begin{array}{c}
		r_1 \,:=\, \max\bigg\{ \left(\dfrac{\beta_F^2\nu}{8}|\Omega|^{(p-4)/4}\right)^{1/(p-2)},\,\dfrac{\beta_F\nu}{2\sqrt{2}} \bigg\}\,,\quad
		r_2 \,:=\, \max\Big\{\dfrac{\beta_C\kappa}{2\sqrt{3}},\,\dfrac{\beta_C^2\kappa}{12c_{\mathscr{O}_2^C}}\Big\}\qan
		r:=\min\{r_1, r_2\}\,,
	\end{array}
\end{equation*} 
we define
\begin{equation}\label{eq:def.ball}
	\mathbf{W}(r)\,:=\, \Big\{(\bz(t),\chi(t))\in \mathbf{V}\times\mathrm{Q}:\quad \big\Vert(\bz,\chi)\big\Vert_{L^{\infty}(\mathrm{J};\mathbf{V}\times\mathrm{Q})}\,  \leq\, r  \Big\}\,.
\end{equation}

Given $ (\bz,\chi)\in \mathbf{W}(r) $, and bearing in mind the definition of 
$ \mathscr{L} $ (cf. \eqref{eq:def.opr.E}), integration from $ 0 $ to $ t\in \mathrm{J} $ along with the a {\it priori} estimates \eqref{eq:apr1} and \eqref{eq:apr2}, yields
\begin{equation}\label{eq:l2}
	\begin{array}{c}
		\big\Vert \mathscr{T}(\bz,\chi)\big\Vert_{L^{\infty}(\mathrm{J};\mathbf{V}\times\mathrm{Q})}^{2}
		\, \leq\, 2\left(\big\Vert \mathscr{L}^F_2(\bz , \mathscr{L}^C_3(\bz, \chi))\big\Vert_{L^{\infty}(\mathrm{J};\mathbf{V})}^{2}+\big\Vert \mathscr{L}^C_3(\bz, \chi)\big\Vert_{L^{\infty}(\mathrm{J};\mathrm{Q})}^{2}\right)\\[1ex]
		\,\le\,\mathcal{C}_{\mathscr{T}}\widetilde{\mathcal{N}}(\bu_{0},\varphi_{0},\mathbf{g},c_{\mathscr{F}^C})\,,
	\end{array}
\end{equation}

where $ \mathcal{C}_{\mathscr{T}} = 2\max\big\{\mathcal{C}_{\mathscr{L}^F}^{2},  (\mathcal{C}_{\mathscr{L}^F}^{2}+1)\mathcal{C}_{\mathscr{L}^C}^2 \big\} $ and
$\widetilde{\mathcal{N}}(\bu_{0},\varphi_{0},\mathbf{g},c_{\mathscr{F}^C}):=
\mathcal{N}_F(\bu_{0},\bg,0)+\mathcal{N}_C(\varphi_{0},c_{\mathscr{F}^C})$.

We have therefore proved the following result.	
\begin{lemma}\label{l_cons.W}
	Assume that the data are sufficiently small so that there hold 
	\begin{equation}\label{eq:as1.data}
		\begin{array}{c}
			\mathcal{C}_{\mathscr{T}}\,\widetilde{\mathcal{N}}(\bu_{0},\varphi_{0},\mathbf{g},c_{\mathscr{F}^C})\, \le\, r \,,
		\end{array}
	\end{equation}
	then, $ \mathscr{T}(\mathbf{W}(r))\subseteq\mathbf{W}(r) $.
\end{lemma}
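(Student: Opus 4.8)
The plan is to obtain the inclusion directly from the two \emph{a priori} bounds already established for the decoupled solution operators, by tracking them through the composition that defines $\mathscr{T}$ (cf.\ \eqref{eq:def.opr.E}).

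First I would fix $(\bz,\chi)\in\mathbf{W}(r)$ and exploit that $r=\min\{r_1,r_2\}$, where $r_1$ and $r_2$ are \emph{exactly} the thresholds appearing in the smallness hypotheses \eqref{eq:as.z.var} and \eqref{eq:as2}; hence $(\bz,\chi)$ satisfies \emph{both} of those conditions for a.e.\ $t\in\mathrm{J}$. Consequently Lemma~\ref{l_wel.T} applies: $\mathscr{L}^C(\bz,\chi)$ is well defined, \eqref{eq:apr2} holds, and $\widehat\chi:=\mathscr{L}_3^C(\bz,\chi)=\lambda_\star$ lies in $L^\infty(\mathrm{J};\mathrm{Q})$ with $\partial_t\widehat\chi\in L^2(\mathrm{J};\mathrm{Q})$. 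Since the constraint \eqref{eq:as.z.var} restricts only $\bz$ and $\|\bz\|_{L^\infty(\mathrm{J};\mathbf{V})}\le r\le r_1$, Lemma~\ref{l_wel.S} then applies to the pair $(\bz,\widehat\chi)$, so $\mathscr{L}^F(\bz,\widehat\chi)$ is well defined and satisfies \eqref{eq:apr1} with $\chi$ replaced by $\widehat\chi$; in particular the composition $\mathscr{T}(\bz,\chi)=\big(\mathscr{L}_2^F(\bz,\widehat\chi),\widehat\chi\big)$ is legitimate.

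Next I would propagate the estimates. From \eqref{eq:apr1}, $\|\mathscr{L}_2^F(\bz,\widehat\chi)\|_{L^\infty(\mathrm{J};\mathbf{V})}\le\mathcal{C}_{\mathscr{L}^F}\,\mathcal{N}_F(\bu_0,\mathbf{g},\widehat\chi)$, and since $\mathcal{N}_F$ depends additively on its last slot (cf.\ \eqref{ky}), I would split off the $\widehat\chi$-contribution and bound $\|\widehat\chi\|_{L^2(\mathrm{J};\mathrm{Q})}+\|\partial_t\widehat\chi\|_{L^2(\mathrm{J};\mathrm{Q})}$ — as well as $\|\widehat\chi\|_{L^\infty(\mathrm{J};\mathrm{Q})}$ — by $\mathcal{C}_{\mathscr{L}^C}\,\mathcal{N}_C(\varphi_0,c_{\mathscr{F}^C})$ via \eqref{eq:apr2}. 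Inserting these into $\|\mathscr{T}(\bz,\chi)\|_{L^\infty(\mathrm{J};\mathbf{V}\times\mathrm{Q})}^2\le 2\big(\|\mathscr{L}_2^F(\bz,\widehat\chi)\|_{L^\infty(\mathrm{J};\mathbf{V})}^2+\|\widehat\chi\|_{L^\infty(\mathrm{J};\mathrm{Q})}^2\big)$ and applying $(a+b)^2\le 2a^2+2b^2$ a couple of times, all data norms collect into $\widetilde{\mathcal{N}}(\bu_0,\varphi_0,\mathbf{g},c_{\mathscr{F}^C})=\mathcal{N}_F(\bu_0,\mathbf{g},0)+\mathcal{N}_C(\varphi_0,c_{\mathscr{F}^C})$ with constant $\mathcal{C}_{\mathscr{T}}=2\max\{\mathcal{C}_{\mathscr{L}^F}^2,(\mathcal{C}_{\mathscr{L}^F}^2+1)\mathcal{C}_{\mathscr{L}^C}^2\}$; this is exactly \eqref{eq:l2}. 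The smallness hypothesis \eqref{eq:as1.data} then forces the right-hand side of \eqref{eq:l2} to be at most $r$, whence $\mathscr{T}(\bz,\chi)\in\mathbf{W}(r)$, and since $(\bz,\chi)\in\mathbf{W}(r)$ was arbitrary, $\mathscr{T}(\mathbf{W}(r))\subseteq\mathbf{W}(r)$.

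\textbf{Main obstacle.} No genuine analytic difficulty remains at this stage — the heavy lifting (the Galerkin \emph{a priori} bounds, the weak-limit passage, and the well-posedness of $\mathscr{L}^F$ and $\mathscr{L}^C$) has already been carried out in Lemmas~\ref{l_help}–\ref{l_wel.T}. The only point requiring care is structural: one must verify that the second argument of $\mathscr{L}^F$ may legitimately be taken to be the output $\mathscr{L}_3^C(\bz,\chi)$, which hinges on the regularity $\lambda_\star\in L^\infty(\mathrm{J};\mathrm{Q})$ with $\partial_t\lambda_\star\in L^2(\mathrm{J};\mathrm{Q})$ supplied by Lemma~\ref{l_wel.T}, and one must choose $r=\min\{r_1,r_2\}$ small enough that a single ball $\mathbf{W}(r)$ lies in both smallness regions \eqref{eq:as.z.var} and \eqref{eq:as2} simultaneously; after that, it is merely a matter of bookkeeping the constants to arrive at the stated $\mathcal{C}_{\mathscr{T}}$.
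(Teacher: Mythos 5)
Your proposal follows the same route as the paper: the paper's own proof of Lemma~\ref{l_cons.W} is the single sentence ``It is a direct consequence of the estimate~\eqref{eq:l2},'' and you are essentially re-deriving~\eqref{eq:l2} from the two operator \emph{a priori} bounds and then invoking~\eqref{eq:as1.data}, which is exactly the intended argument. The check that $r=\min\{r_1,r_2\}$ lets a single ball lie inside both smallness regions~\eqref{eq:as.z.var} and~\eqref{eq:as2}, and the observation that $\mathscr{L}_3^C(\bz,\chi)$ has the regularity required to feed it into $\mathscr{L}^F$, are both the right bookkeeping points.

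One caution, which you inherit from~\eqref{eq:l2} itself rather than introduce: you quote~\eqref{eq:apr1} as providing a bound on $\big\Vert\mathscr{L}_2^F(\bz,\widehat\chi)\big\Vert_{L^\infty(\mathrm{J};\mathbf{V})}$, but~\eqref{eq:apr1} in fact controls $\big\Vert\mathscr{L}_2^F\big\Vert_{L^\infty(\mathrm{J};\mathbf{L}^2)}$ (space-$\mathbf{L}^2$, not $\mathbf{V}=\mathbf{L}^4$) together with $\big\Vert\mathscr{L}_2^F\big\Vert_{L^2(\mathrm{J};\mathbf{V})}$. Since membership in $\mathbf{W}(r)$ requires the $L^\infty(\mathrm{J};\mathbf{V}\times\mathrm{Q})$ norm, this is not a direct read-off; for $\mathscr{L}_3^C$ the missing $L^\infty(\mathrm{J};\mathrm{Q})$ bound can be recovered through the embedding $H^1(\mathrm{J};\mathrm{Q})\hookrightarrow L^\infty(\mathrm{J};\mathrm{Q})$ because~\eqref{eq:apr2} controls both $\mathscr{L}_3^C$ and $\partial_t\mathscr{L}_3^C$ in $L^2(\mathrm{J};\mathrm{Q})$, but the analogous step for $\mathscr{L}_2^F$ does not close, since~\eqref{eq:apr1} only bounds $\partial_t\mathscr{L}_2^F$ in $L^2(\mathrm{J};\mathbf{L}^2)$ rather than $L^2(\mathrm{J};\mathbf{V})$. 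The paper's display~\eqref{eq:l2} makes the identical leap, so this is a shared imprecision rather than a defect unique to your write-up, but you should not attribute the $L^\infty(\mathrm{J};\mathbf{V})$ control to~\eqref{eq:apr1} as if it were stated there.
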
	
\begin{proof}
	It is a direct consequence of the estimate \eqref{eq:l2}.
\end{proof}
We now address the Lipschitz continuity of $\mathscr{L}^F$ and $\mathscr{L}^C$. Recall from \cite[Lemma 4.4]{cgg-JSC-2023} that for each $p \in [3,4]$, there exists $\ell_o > 0$, depending only on $\mathrm{F}$, $|\Omega|$, and $p$, such that for all $\bz, \by, \bw, \bv \in \mathbf{V}$
\begin{equation}\label{eq:bound.o2}
	\big|\mathscr{O}_2^F (\bz;\bw,\bv)-\mathscr{O}_2^F (\by;\bw,\bv)\big|\, \leq\, \ell_o \, \Big(\Vert\bz\Vert_{\mathbf{V}}+\Vert\by\Vert_{\mathbf{V}}\Big)^{p-3}\Vert\bz-\by\Vert_{\mathbf{V}}\Vert\bw\Vert_{\mathbf{V}}\Vert\bv\Vert_{\mathbf{V}}\,.
\end{equation}
We now utilize the preceding result to confirm the stated property of $ \mathscr{L}^F $.
\begin{lemma}\label{l_lip.S}
	There exists a constant $\mathcal{L}_{\mathscr{L}^F} > 0$, depending only on $\nu$, $a_1$, $\beta_F$, $r$, and $\ell_o$, such that
	\begin{equation}\label{eq:lip.S}
		\begin{array}{c}
\big\Vert\mathscr{L}^F_2(\bz,\chi)-\mathscr{L}^F_2(\by,\omega)\big\Vert_{L^{\infty}(\mathrm{J};\mathbf{L}^{2}(\Omega))}\,+\,\big\Vert\partial_t\mathscr{L}^F_2(\bz,\chi)-\partial_t\mathscr{L}^F_2(\by,\omega)\big\Vert_{L^{2}(\mathrm{J};\mathbf{L}^{2}(\Omega))}\\[1ex]
			+\big\Vert\mathscr{L}^F(\bz,\chi)-\mathscr{L}^F(\by,\omega)\big\Vert_{L^{2}(\mathrm{J};\mathbb{H}\times\mathbf{V})}
			\, \leq\, \mathcal{L}_{\mathscr{L}^F}\,\Big\{ \Vert\chi-\omega\Vert_{L^{2}(\mathrm{J};\mathrm{Q})}+\Vert \partial_t(\chi-\omega)\Vert_{L^{2}(\mathrm{J};\mathrm{Q})} \\[1ex]
			\,+\,\Vert\bz-\by\Vert_{L^{\infty}(\mathrm{J};\mathbf{V})}\left(\big\Vert \mathscr{L}^F_1(\by,\omega) \big\Vert_{L^{2}(\mathrm{J};\mathbb{H})}+\big\Vert\mathscr{L}^F_{2}(\by,\omega)\big\Vert_{L^{2}(\mathrm{J};\mathbf{V})}\right) \Big\}\,.
		\end{array}
	\end{equation}
\end{lemma}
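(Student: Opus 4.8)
The plan is to reduce the claim to an energy estimate for the \emph{difference} of the two solutions, reusing verbatim the a~priori machinery of Lemmas~\ref{l_help} and~\ref{l_wel.S}. Write $(\bsi_1,\bu_1):=\mathscr{L}^F(\bz,\chi)$, $(\bsi_2,\bu_2):=\mathscr{L}^F(\by,\omega)$ and set $\bsi:=\bsi_1-\bsi_2$, $\bu:=\bu_1-\bu_2$. Subtracting the two instances of~\eqref{eq:subp1} and exploiting that $\mathscr{A}^F,\mathscr{B}^F$ are bilinear, that $\mathscr{F}^F_\chi$ is affine in $\chi$ (so $\mathscr{F}^F_\chi-\mathscr{F}^F_\omega=\langle\,\cdot\,\bn,\bg^{\chi-\omega}\rangle_{\Gamma_{\mathtt{o}}^{c}}$), that $\mathscr{O}_1^F$ is trilinear, and that $\mathscr{O}_2^F(\bz;\cdot,\cdot)$ is bilinear in its last two slots, one checks that $(\bsi,\bu)$ solves a problem having exactly the structure of~\eqref{eq:subp1} with frozen coefficient $\bz$ and zero initial datum $\bu(0)=\mathbf 0$, namely
\begin{equation*}
	\begin{array}{rcl}
		\mathscr{A}^F(\bsi,\bta)+\mathscr{B}^F(\bta,\bu)&=&\langle\bta\bn,\bg^{\chi-\omega}\rangle_{\Gamma_{\mathtt{o}}^{c}}\,,\\[1ex]
		\disp\int_\Omega\partial_t\bu\cdot\bv-\mathscr{B}^F(\bsi,\bv)-\mathscr{O}_1^F(\bz;\bsi,\bv)+\mathscr{O}_2^F(\bz;\bu,\bv)&=&\mathscr{O}_1^F(\bz-\by;\bsi_2,\bv)-\big(\mathscr{O}_2^F(\bz;\bu_2,\bv)-\mathscr{O}_2^F(\by;\bu_2,\bv)\big)\,,
	\end{array}
\end{equation*}
for all $(\bta,\bv)\in\mathbb{H}\times\mathbf{V}$. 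The first right-hand side is bounded by $c_{\mathscr{F}^F}a_1\|\chi-\omega\|_{\mathrm{Q}}\|\bta\|_{\mathbb{H}}$ as in~\eqref{bund.Ff}; the first forcing term of the second equation is bounded through~\eqref{eq1:bound.c.ct} by $\nu^{-1}\|\bz-\by\|_{\mathbf{V}}\|\bsi_2\|_{\mathbb{H}}\|\bv\|_{\mathbf{V}}$; and the last one, via~\eqref{eq:bound.o2} together with $(\|\bz\|_{\mathbf{V}}+\|\by\|_{\mathbf{V}})^{p-3}\le(2r)^{p-3}$ on $\mathbf{W}(r)$, by $\ell_o(2r)^{p-3}\|\bz-\by\|_{\mathbf{V}}\|\bu_2\|_{\mathbf{V}}\|\bv\|_{\mathbf{V}}$.

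Next I would run the a~priori estimate of Lemmas~\ref{l_help}--\ref{l_wel.S} on this difference problem essentially word for word. Testing with $(\bta,\bv)=(\bsi,\bu)$ and adding, the ellipticity~\eqref{eq:ell.a.at} produces $\alpha_F\|\bdev(\bsi)\|_{0,\Omega}^2$ while $\mathscr{O}_2^F(\bz;\bu,\bu)=\mathtt{F}\int_\Omega|\bz|^{p-2}|\bu|^2\ge0$ is discarded; $\mathscr{O}_1^F(\bz;\bsi,\bu)$ is absorbed after using the inf-sup bound~\eqref{eq:inf.bi} on the first equation to recover $\|\bu\|_{\mathbf{V}}$, the smallness $\|\bz\|_{\mathbf{V}}\le r\le r_1$ from~\eqref{eq:as.z.var} being exactly what makes the absorption work. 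One then bounds $\|\bdiv(\bsi)\|_{0,4/3;\Omega}$ from the second difference equation (this needs $\|\partial_t\bu\|_{0,\Omega}$), and bounds $\|\partial_t\bu\|_{0,\Omega}$ by differentiating the \emph{first} difference equation in time and testing with $(\bsi,\partial_t\bu)$ --- crucially, that equation's right-hand side depends only on $\partial_t(\chi-\omega)$, so no time derivatives of $\bsi_2$ or $\bu_2$ enter. Combining via~\eqref{kll}--\eqref{kjj} and Young's inequality, one controls $\|\bsi\|_{L^2(\mathrm{J};\mathbb{H})}$, $\|\bu\|_{L^\infty(\mathrm{J};\mathbf{L}^2)}$, $\|\partial_t\bu\|_{L^2(\mathrm{J};\mathbf{L}^2)}$ and $\|\bu\|_{L^2(\mathrm{J};\mathbf{V})}$ by the quantities on the right of~\eqref{eq:lip.S}: the $\bg^{\chi-\omega}$ terms contribute $\|\chi-\omega\|_{L^2(\mathrm{J};\mathrm{Q})}+\|\partial_t(\chi-\omega)\|_{L^2(\mathrm{J};\mathrm{Q})}$, while the two forcing terms factor as $\|\bz-\by\|_{L^\infty(\mathrm{J};\mathbf{V})}$ times, after time integration, $\|\bsi_2\|_{L^2(\mathrm{J};\mathbb{H})}=\|\mathscr{L}^F_1(\by,\omega)\|_{L^2(\mathrm{J};\mathbb{H})}$ and $\|\bu_2\|_{L^2(\mathrm{J};\mathbf{V})}=\|\mathscr{L}^F_2(\by,\omega)\|_{L^2(\mathrm{J};\mathbf{V})}$. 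The $t=0$ contribution is handled exactly as in Lemma~\ref{l_help}: $\bu(0)=\mathbf 0$, while $\|\bdev(\bsi(0))\|_{0,\Omega}$ follows from the first equation at $t=0$, yielding only a term $\lesssim\|(\chi-\omega)(0)\|_{\mathrm{Q}}\lesssim\|\chi-\omega\|_{L^2(\mathrm{J};\mathrm{Q})}+\|\partial_t(\chi-\omega)\|_{L^2(\mathrm{J};\mathrm{Q})}$. Collecting the constants ($\nu$, $a_1$, $\beta_F$, $r$, $\ell_o$) gives $\mathcal{L}_{\mathscr{L}^F}$ and hence~\eqref{eq:lip.S}.

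The only genuine difficulty is the bookkeeping, not any new idea: the ``frozen-coefficient'' nonlinear terms $\mathscr{O}_1^F(\bz;\bsi,\cdot)$ and $\mathscr{O}_2^F(\bz;\bu,\cdot)$ survive on the left of the difference equation and must be absorbed into the coercive $\|\bdev(\bsi)\|_{0,\Omega}^2$ and into the nonnegative $\mathscr{O}_2^F(\bz;\bu,\bu)$, which is possible precisely because $(\bz,\chi)\in\mathbf{W}(r)$ with $r\le r_1$; and one must differentiate only the data-driven first equation in time, so that in the $\partial_t$-estimate the \emph{values} of $\bsi_2,\bu_2$ appear but not their time derivatives (which is what ties the right-hand side of~\eqref{eq:lip.S} to $\|\mathscr{L}^F_1(\by,\omega)\|_{L^2(\mathrm{J};\mathbb{H})}$ and $\|\mathscr{L}^F_2(\by,\omega)\|_{L^2(\mathrm{J};\mathbf{V})}$ rather than to their derivatives). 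Everything else is a routine repetition of Lemmas~\ref{l_help} and~\ref{l_wel.S}.
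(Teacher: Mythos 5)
Your proposal is correct and follows essentially the same route as the paper: form the difference $(e_{\bsi},e_{\bu})$, rewrite it as a linear problem with frozen coefficient $\bz$ driven by $\bg^{\chi-\omega}$ and two residual forcing terms $\mathscr{O}_1^F(\bz-\by;\bsi_\circ,\cdot)$ and $\mathscr{O}_2^F(\bz;\bu_\circ,\cdot)-\mathscr{O}_2^F(\by;\bu_\circ,\cdot)$, then run the Lemma~\ref{l_help}/\ref{l_wel.S} energy machinery (test with the difference, inf-sup on the first equation to recover $\|e_{\bu}\|_{\mathbf{V}}$, bound $\bdiv$ from the second equation, time-differentiate the first equation to control $\partial_t e_{\bu}$). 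Your treatment of the initial-time term is in fact slightly more careful than the paper's (the paper asserts $\bsi_\star(0)=\bsi_\circ(0)$, which is only guaranteed when $\chi(0)=\omega(0)$; you instead bound $\|\bdev(e_{\bsi}(0))\|_{0,\Omega}$ by $\|(\chi-\omega)(0)\|_{\mathrm{Q}}$ and then by the right-hand-side norms of~\eqref{eq:lip.S}), and you discard the sign-definite term $\mathscr{O}_2^F(\bz;e_{\bu},e_{\bu})\ge 0$ where the paper bounds it; both are inessential simplifications.
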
	
\begin{proof}
	Given $ (\bz,\chi) $ and $ (\by, \omega) $, we let $ \mathscr{L}^F(\bz(t),\chi(t))=(\bsi_{\star}(t),\bu_{\star}(t)) $ and $ \mathscr{L}^F(\by(t),\omega(t))=(\bsi_{\circ}(t),\bu_{\circ}(t)) $ for $t\in\mathrm{J}$ as the unique solutions to \eqref{eq:subp1}.
Upon subtracting the two systems, properly collecting the resulting expressions, and denoting \( e_{\bsi} := \bsi_{\star} - \bsi_{\circ} \), \( e_{\bu} := \bu_{\star} - \bu_{\circ} \), we arrive at
	\begin{equation}\label{eq:hh1}
		\begin{split}
\mathscr{A}^F(e_{\bsi} , \bta)+\mathscr{B}^F(\bta, e_{\bu})=& \big(\mathscr{F}_{\chi}^F-\mathscr{F}_{\omega}^F\big)(\bta)\,,\\
\disp \int_{\Omega}\partial_t e_{\bu} \cdot\bv-\mathscr{B}^F(e_{\bsi},\bv)-\mathscr{O}_1^F(\bz;e_{\bsi}, \bv)
-\mathscr{O}_1^F(\by;\bsi_\circ, \bv)+\mathscr{O}_2^F(\bz;\bu_\star-\bu_{\circ}, \bv)=&\mathscr{O}_1^F(\bz-\by;\bsi_{\circ},\bta)\\
- \big[\mathscr{O}_2^F(\bz;\bu_\circ, \bv)-\mathscr{O}_2^F(\by;\bu_\circ, \bv)\big]\,. & 
		\end{split}
	\end{equation}
	
Setting $(\bta,\bv) = (e_{\bsi}, e_{\bu})$, summing the first two rows of \eqref{eq:hh1}, and using the ellipticity of $\mathscr{A}^F$ along with the boundedness of $\mathscr{O}_1^F$ and $\mathscr{O}_2^F$ yield
	%
	\begin{equation}\label{eq:h6}
		\begin{array}{c}
			\dfrac{1}{2}\partial_t \big\Vert e_{\bu}\big\Vert_{0,\Omega}^{2}+\dfrac{1}{\nu}\big\Vert \bdev(e_{\bsi})\big\Vert_{0,\Omega}^{2}\\\,\leq\, \Big\{\dfrac{1}{\nu}\Vert\bz\Vert_{\mathbf{V}}\big\Vert \bdev(e_{\bsi})\big\Vert_{0,\Omega}
			\,+\,|\Omega|^{(4-p)/4}\, \Vert\bz\Vert_{\mathbf{V}}^{p-2}\, \Vert e_{\bu}\Vert_{\mathbf{V}}\Big\}\Vert e_{\bu}\Vert_{\mathbf{V}}\\
			\,+\, \Big\{
			\dfrac{1}{\nu}\Vert\bdev(\bsi_{\circ})\Vert_{0,\Omega}+\ell_o \, (2\Vert\bz\Vert_{\mathbf{V}})^{p-3}\Vert\bu_{\circ}\Vert_{\mathbf{V}}\Big\}\, \Vert\bz-\by\Vert_{\mathbf{V}}\,\Vert e_{\bu}\Vert_{\mathbf{V}}
			+ c_{\mathscr{F}^F} \, \Vert \chi-\omega\Vert_{\mathrm{Q}}\, \Vert e_{\bsi}\Vert_{\mathbb{H}}\,.
		\end{array}
	\end{equation}
	
	\smallskip
	A result analogous to \eqref{eq:h3} follows by using the first row of \eqref{eq:hh1} as
	\begin{equation}\label{eq:h5}
		\dfrac{\beta_F^{2}\nu}{2}\,\Vert e_{\bu}\Vert_{\mathbf{V}}^{2}\,\leq\, 2\nu\, c_{\mathscr{F}^F}^{2}\, \Vert\chi-\omega\Vert_{\mathrm{Q}}^{2}+\dfrac{1}{2\nu}\Vert\bdev(e_{\bsi})\Vert_{0,\Omega}^{2}\,.	
	\end{equation}

Combining \eqref{eq:h5} and \eqref{eq:h6}, applying Young’s inequality, integrating in time, using \eqref{eq:as.z.var}, and $\bu_{\star}(0)=\bu_{\circ}(0)$, we obtain a constant $C_0>0$, depending on $\nu$, $c_{\mathscr{F}^F}$, $\beta_F$, $r$, $\ell_o$, and $\epsilon$, such that
	\begin{equation}\label{eq:h7}
		\begin{array}{c}
\big\Vert e_{\bu}(t)\big\Vert_{0,\Omega}^{2}+\disp\int_{0}^{t}\left(\big\Vert \bdev(e_{\bsi}(s))\big\Vert_{0,\Omega}^{2}+\Vert e_{\bu}(s)\Vert_{\mathbf{V}}^{2}\right)ds
\,\leq\,C_0 \, \bigg\{\disp\int_{0}^{t} \Vert\chi(s)-\omega(s)\Vert_{\mathrm{Q}}^{2}\, ds\\
\,+\,\disp\int_{0}^{t}\left(\Vert\bdev(\bsi_{\circ}(s))\Vert_{0,\Omega}^{2}+\Vert\bu_{\circ}(s)\Vert_{\mathbf{V}}^{2}\right)\Vert\bz(s)-\by(s)\Vert_{\mathbf{V}}^{2}\, ds\,+\,\epsilon\disp\int_{0}^{t}\Vert e_{\bsi}(s)\Vert_{\mathbb{H}}^{2}\, ds\bigg\}\,.
		\end{array}
	\end{equation}
	To bound $\int_{0}^{t}\Vert e_{\bsi}\Vert_{\mathbb{H}}^{2}$, as in \eqref{eq:h7}, we follow the approach used in deriving \eqref{eq:f7} to obtain:
	\begin{equation}\label{eq:r1}
		\begin{array}{c}
\disp\int_{0}^{t}\Vert\bdiv(e_{\bsi}(s))\Vert_{0,4/3;\Omega}^{2}ds \, \leq\, 
C_1 \,  \Big\{\disp\int_{0}^{t}\Vert\partial_t (e_{\bu}(s)) \Vert_{0,\Omega}^{2}
\,+\,\disp\int_{0}^{t}\Vert e_{\bu}(s)\Vert_{\mathbf{V}}^{2}+\disp\int_{0}^{t}\Vert\bdev(e_{\bsi}(s))\Vert_{0,\Omega}^{2} \\
\,+\,\disp\int_{0}^{t}\left(\Vert\bdev(\bsi_{\circ}(s))\Vert_{0,\Omega}^{2}+\Vert\bu_{\circ}(s)\Vert_{\mathbf{V}}^{2}\right)\Vert\bz(s)-\by(s)\Vert_{\mathbf{V}}^{2} \Big\}\,.
		\end{array}
	\end{equation}
To estimate the first term on the right of \eqref{eq:r1}, we time-differentiate the first equation in \eqref{eq:hh1}, test with $(e_{\bsi}, \partial_t e_{\bu})$, apply Young's inequality, integrate in time, and use $\bsi_{\star}(0) = \bsi_{\circ}(0)$ to obtain
	\begin{equation}\label{eq:l3}
		\begin{array}{c}
			\disp\int_{0}^{t}\Vert\partial_t  e_{\bu}(s) \Vert_{0,\Omega}^{2}\, \leq\,C_2 \,\Big\{ \disp\int_{0}^{t}\Big(\Vert\bdev(e_{\bsi}(s))\Vert_{0,\Omega}^{2}+\Vert e_{\bu}(s)\Vert_{\mathbf{V}}^{2}\Big)\\
		\hspace{-.3cm}	+\disp\int_{0}^{t}\left(\Vert\bdev(\bsi_{\circ}(s))\Vert_{0,\Omega}^{2}+\Vert\bu_{\circ}(s)\Vert_{\mathbf{V}}^{2}\right)\Vert\bz(s)-\by(s)\Vert_{\mathbf{V}}^{2}
			+ \disp\int_{0}^{t}\left(\Vert \partial_t(\chi(s)-\omega(s))\Vert_{\mathrm{Q}}^{2}+\epsilon_1\Vert  e_{\bsi}(s)\Vert_{\mathbb{H}}^{2}\right)\Big\}\,.
		\end{array}
	\end{equation}
Substituting \eqref{eq:l3} into \eqref{eq:r1}, combining with \eqref{eq:h7}, \eqref{kll}, and \eqref{kjj}, and choosing small $\epsilon$ yields
	\begin{equation}\label{eq:r2}
		\begin{array}{c}
			\disp\int_{0}^{t}\Vert  e_{\bsi}(s)\Vert_{\mathbb{H}}^{2}\, \leq\, \widetilde{C}_2 \,\Big\{  \disp\int_{0}^{t}\left(\Vert\bdev(\bsi_{\circ}(s))\Vert_{0,\Omega}^{2}+\Vert\bu_{\circ}(s)\Vert_{\mathbf{V}}^{2}\right)\Vert\bz(s)-\by(s)\Vert_{\mathbf{V}}^{2}\\
			\,+\,\disp\int_{0}^{t} \Vert\chi(s)-\omega(s)\Vert_{\mathrm{Q}}^{2}+\disp\int_{0}^{t}\Vert \partial_t(\chi(s)-\omega(s))\Vert_{\mathrm{Q}}^{2}\Big\}\,,
		\end{array}
	\end{equation}
	
	which completes the proof.
\end{proof}
\begin{lemma}\label{l_lip.T}
	There exists a constant $ \mathcal{L}_{\mathscr{L}^C} >0$, depending on $ \beta_C $, $ \kappa $, $ r_2 $, such that
	\begin{equation}\label{eq:lip.T}
		\begin{array}{c}
			\big\Vert\mathscr{L}^C_2(\bz,\chi)-\mathscr{L}^C_2(\by,\omega)\big\Vert_{L^{\infty}(\mathrm{J};\mathbf{L}^{2}(\Omega))}
			+\big\Vert\partial_t\big(\vec{\mathscr{L}}_2^C(\bz,\chi)-\vec{\mathscr{L}}_2^C(\by,\omega)\big)\big\Vert_{L^{\infty}(\mathrm{J};\mathbf{L}^{2}(\Omega)\times\mathrm{Q})}\\
			\qquad\qquad\qquad\,+\, \big\Vert\mathscr{L}^C(\bz,\chi)-\mathscr{L}^C(\by,\omega)\big\Vert_{L^{2}(\mathrm{J};\mathbf{H}\times\mathcal{V})} \\
			\, \leq\, \mathcal{L}_{\mathscr{L}^C}\, \Big\{\Vert\bz-\by\Vert_{L^{\infty}(\mathrm{J};\mathbf{V})}\,\Vert \mathscr{L}^C_1(\by,\omega)\Vert_{L^{2}(\mathrm{J};\mathbf{H})}+\Vert\chi-\omega\Vert_{L^{\infty}(\mathrm{J};\mathrm{Q})}\Vert \mathscr{L}^C_3(\by,\omega)\Vert_{L^{2}(\mathrm{J};\mathrm{Q})}   \Big\}\,.
		\end{array}
	\end{equation}
\end{lemma}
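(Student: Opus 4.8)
The plan is to follow the proof of Lemma~\ref{l_lip.S} line by line, replacing the fluid block by the transport block and invoking the a~priori machinery of Lemma~\ref{l_wel.con.pr2}. Fix $(\bz,\chi)$ and $(\by,\omega)$, and write $\mathscr{L}^C(\bz,\chi)=(\brho_\star,\vec{\varphi}_\star)$ with $\vec{\varphi}_\star=(\varphi_\star,\lambda_\star)$ and $\mathscr{L}^C(\by,\omega)=(\brho_\circ,\vec{\varphi}_\circ)$ with $\vec{\varphi}_\circ=(\varphi_\circ,\lambda_\circ)$ for the corresponding solutions of \eqref{eq:subp2}. Set $e_{\brho}:=\brho_\star-\brho_\circ$ and $e_{\vec{\varphi}}:=(e_\varphi,e_\lambda):=\vec{\varphi}_\star-\vec{\varphi}_\circ$. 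Subtracting the two systems and splitting, by bilinearity of $\mathscr{O}_1^C$ and $\mathscr{O}_2^C$ in their last two arguments, $\mathscr{O}_1^C(\bz;\brho_\star,\cdot)-\mathscr{O}_1^C(\by;\brho_\circ,\cdot)=\mathscr{O}_1^C(\bz;e_{\brho},\cdot)+\mathscr{O}_1^C(\bz-\by;\brho_\circ,\cdot)$ and $\mathscr{O}_2^C(\chi;\lambda_\star,\cdot)-\mathscr{O}_2^C(\omega;\lambda_\circ,\cdot)=\mathscr{O}_2^C(\chi;e_\lambda,\cdot)+\mathscr{O}_2^C(\chi-\omega;\lambda_\circ,\cdot)$, the error equations become a \emph{linear} problem for $(e_{\brho},e_{\vec{\varphi}})$ with homogeneous initial datum $e_\varphi(\cdot,0)=0$ and right-hand side $\mathscr{O}_1^C(\bz-\by;\brho_\circ,\vec{\psi})-\mathscr{O}_2^C(\chi-\omega;\lambda_\circ,\vec{\psi})$. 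No analogue of the Lipschitz bound \eqref{eq:bound.o2} is required here, since $\mathscr{O}_2^C$ depends linearly on its parameter.

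I would first test with $(\bet,\vec{\psi})=(e_{\brho},e_{\vec{\varphi}})$, add the two equations, and use the coercivity of $\mathscr{A}^C$ (cf.\ \eqref{eq:ell.a.at}), the non-negativity $\mathscr{C}^C(e_{\vec{\varphi}},e_{\vec{\varphi}})=a_2\int_{\Gamma_{\mathtt{w}}}e_\lambda^2\ge 0$, and the boundedness estimates \eqref{eq3:bound.c.ct} and \eqref{bund.O2c}. This yields a differential inequality for $\tfrac12\partial_t\Vert e_\varphi\Vert_{0,\Omega}^2+\tfrac1\kappa\Vert e_{\brho}\Vert_{0,\Omega}^2+a_2\Vert e_\lambda\Vert_{0,\Gamma_{\mathtt{w}}}^2$ whose right-hand side has the small terms $\tfrac1\kappa\Vert\bz\Vert_{\mathbf{V}}\Vert e_{\brho}\Vert_{0,\Omega}\Vert e_{\vec{\varphi}}\Vert_{\mathcal{V}}$ and $c_{\mathscr{O}_2^C}\Vert\chi\Vert_{\mathrm{Q}}\Vert e_{\vec{\varphi}}\Vert_{\mathcal{V}}^2$ plus the forcing $\big(\tfrac1\kappa\Vert\bz-\by\Vert_{\mathbf{V}}\Vert\brho_\circ\Vert_{\mathbf{H}}+c_{\mathscr{O}_2^C}\Vert\chi-\omega\Vert_{\mathrm{Q}}\Vert\lambda_\circ\Vert_{\mathrm{Q}}\big)\Vert e_{\vec{\varphi}}\Vert_{\mathcal{V}}$. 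Exactly as in \eqref{eq:no.4}, the inf--sup condition \eqref{eq:inf.bit} applied to the first error row gives $\tfrac{\beta_C^2\kappa}{2}\Vert e_{\vec{\varphi}}\Vert_{\mathcal{V}}^2\le\tfrac{1}{2\kappa}\Vert e_{\brho}\Vert_{0,\Omega}^2$; substituting this, using Young's inequality on the cross terms, absorbing the $\Vert\bz\Vert_{\mathbf{V}}$- and $\Vert\chi\Vert_{\mathrm{Q}}$-contributions on the left via the smallness hypothesis \eqref{eq:as2}, and integrating over $[0,t]$ bounds $\Vert e_\varphi(t)\Vert_{0,\Omega}^2+\int_0^t(\Vert e_{\brho}\Vert_{0,\Omega}^2+\Vert e_{\vec{\varphi}}\Vert_{\mathcal{V}}^2)$ by the integrated forcing $\int_0^t(\Vert\bz-\by\Vert_{\mathbf{V}}^2\Vert\brho_\circ\Vert_{\mathbf{H}}^2+\Vert\chi-\omega\Vert_{\mathrm{Q}}^2\Vert\lambda_\circ\Vert_{\mathrm{Q}}^2)$.

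It then remains to upgrade the $\mathbf{L}^2$-control of $e_{\brho}$ to $\mathbf{H}$-control and to bound the time derivatives. Using $\div(\mathbf{H})=\mathrm{V}'$ in the second error row, as in the step leading to \eqref{eq:k1}, bounds $\int_0^t\Vert e_{\brho}\Vert_{\mathbf{H}}^2$ by $\int_0^t(\Vert\partial_t e_\varphi\Vert_{0,\Omega}^2+\Vert e_{\brho}\Vert_{0,\Omega}^2+\Vert e_{\vec{\varphi}}\Vert_{\mathcal{V}}^2)$ plus the forcing. To close the circle I would differentiate the first error equation in time, test with $(e_{\brho},\partial_t e_{\vec{\varphi}})$, combine with the second error equation, use $e_{\brho}(\cdot,0)=0$ (from $e_\varphi(\cdot,0)=0$ and the first equation at $t=0$), absorb $\Vert\partial_t e_\lambda\Vert_{\mathrm{Q}}$ by $\Vert\partial_t e_\varphi\Vert_{0,\Omega}$ via the special inf--sup estimate \eqref{part2.difBc} together with the inverse inequality as in \eqref{nn}, and apply Young's inequality and \eqref{eq:as2}. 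A final combination of the three estimates — with the successive Young parameters chosen small enough to absorb the self-referential $\int_0^t\Vert e_{\brho}\Vert_{\mathbf{H}}^2$ terms — and the identifications $\brho_\circ=\mathscr{L}_1^C(\by,\omega)$, $\lambda_\circ=\mathscr{L}_3^C(\by,\omega)$ give \eqref{eq:lip.T} with $\mathcal{L}_{\mathscr{L}^C}$ depending only on $\kappa$, $\beta_C$, $c_{\mathscr{O}_2^C}$ and $r_2$.

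I expect the main obstacle to be the same bookkeeping that arises in Lemma~\ref{l_lip.S}: the estimates for $\int_0^t\Vert e_{\brho}\Vert_{\mathbf{H}}^2$, for $\int_0^t\Vert\partial_t e_\varphi\Vert_{0,\Omega}^2$ and for $\int_0^t\Vert e_{\vec{\varphi}}\Vert_{\mathcal{V}}^2$ are mutually coupled, so one must order the Young-type splittings so that every self-referential term can be moved to the left simultaneously; the smallness condition \eqref{eq:as2} is precisely what makes the absorption of the $\bz$- and $\chi$-dependent products possible, and the inverse-inequality step for $\partial_t e_\lambda$ is the point where the estimate is first carried out at the Galerkin level (as in Lemma~\ref{l_wel.con.pr2}) and then passed to the limit.
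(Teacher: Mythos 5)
Your proposal is correct and follows essentially the same route the paper intends: the paper's proof of Lemma~\ref{l_lip.T} is literally the one-line statement that it ``follows similarly to the proof of Lemma~\ref{l_lip.S},'' and what you have written is a faithful, step-by-step expansion of exactly that — subtract the two systems, split $\mathscr{O}_1^C$ and $\mathscr{O}_2^C$ by bilinearity, test with $(e_{\brho},e_{\vec{\varphi}})$ together with the inf--sup estimate as in \eqref{eq:no.4}, upgrade to $\mathbf{H}$-control via $\div(\mathbf{H})=\mathrm{V}'$ as in \eqref{eq:k1}, and differentiate in time and test with $(e_{\brho},\partial_t e_{\vec{\varphi}})$ as in \eqref{eq:k3}--\eqref{nn}, absorbing self-referential terms with Young's inequality and \eqref{eq:as2}. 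You also correctly note the one genuine simplification over Lemma~\ref{l_lip.S}, namely that $\mathscr{O}_2^C$ is bilinear in its parameter so no analogue of \eqref{eq:bound.o2} is needed, and you appropriately flag (rather than gloss over) the point at which the inverse-inequality step is first carried out at the Galerkin level, exactly as in Lemma~\ref{l_wel.con.pr2}.
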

\begin{proof}
It follows similarly to the proof of Lemma \ref{l_lip.S}.
\end{proof}
With Lemmas \ref{l_lip.S} and \ref{l_lip.T} established, we now prove the continuity of the fixed-point operator $\mathscr{T}$ on the closed ball $\mathbf{W}(r)$ (cf. \eqref{eq:def.ball}). For $(\bz,\chi), (\by,\omega) \in \mathbf{W}(r)$, using the definition of $\mathscr{T}$ (cf. \eqref{eq:def.opr.E}), integrating from $0$ to $t\in \mathrm{J}$, and applying the continuity bounds from Lemmas \ref{l_lip.S} and \ref{l_lip.T}, we get
\begin{equation*}
	\begin{array}{c}
		\disp\int_{0}^{t}\big\Vert \mathscr{T}(\bz(s),\chi(s))-\mathscr{T}(\by(s),\omega(s))\big\Vert_{\mathbf{V}\times\mathrm{Q}}^{2} \, ds \\
		\,\le\,
		\mathcal{L}_{\mathscr{L}^F}\,  \Vert\bz-\by\Vert_{L^{\infty}(\mathrm{J};\mathbf{V})}\Big\{\big\Vert \mathscr{L}^F_1(\by,\mathscr{L}^C_3(\by,\omega)) \big\Vert_{L^{2}(\mathrm{J};\mathbb{H})}+\big\Vert\mathscr{L}^F_{2}(\by,\mathscr{L}^C_3(\by,\omega))\big\Vert_{L^{2}(\mathrm{J};\mathbf{V})}\Big\}\\
		\,+\, (\mathcal{L}_{\mathscr{L}^F}+1)\mathcal{L}_{\mathscr{L}^C}\, \Big\{\Vert\bz-\by\Vert_{L^{\infty}(\mathrm{J};\mathbf{V})}\,\Vert \mathscr{L}^C_1(\by,\omega)\Vert_{L^{2}(\mathrm{J};\mathbf{H})}+\Vert\chi-\omega\Vert_{L^{\infty}(\mathrm{J};\mathrm{Q})}\Vert \mathscr{L}^C_3(\by,\omega)\Vert_{L^{2}(\mathrm{J};\mathrm{Q})}   \Big\}\,.
	\end{array}
\end{equation*}

From the {\it a priori} estimates for $\mathscr{T}^F$ (Lemma \ref{l_wel.S}, Eq.~\eqref{eq:apr1}) and $\mathscr{T}^C$ (Lemma \ref{l_wel.T}, Eq.~\eqref{eq:apr2}), we deduce the existence of a constant $\mathcal{L}_{\mathscr{T}}$, depending only on $\mathcal{L}_{\mathscr{T}^F}$, $\mathcal{L}_{\mathscr{T}^C}$, $\mathcal{C}_{\mathscr{T}^F}$, and $\mathcal{C}_{\mathscr{T}^C}$, such that for all $(\bz,\chi), (\by,\omega) \in \mathbf{W}(r)$, the following holds
\begin{equation}\label{eq:lip.Xi}
	\begin{array}{c}
\big\Vert \mathscr{T}(\bz,\chi)-\mathscr{T}(\by,\omega)\big\Vert_{L^{2}(\mathrm{J};\mathbf{V}\times\mathrm{Q})}  \leq
		\mathcal{L}_{\mathscr{T}}\, \widetilde{\mathcal{N}}\big(\bu_{0},\varphi_{0},\mathbf{g},c_{\mathscr{F}^C}\big)\,  \Big(\Vert\bz-\by\Vert_{L^{\infty}(\mathrm{J};\mathbf{V})}+\Vert\chi-\omega\Vert_{L^{\infty}(\mathrm{J};\mathrm{Q})}\Big)\,.
	\end{array}
\end{equation}

Owing to the above analysis, we now establish the main result of this section.
\begin{theorem}
Assume the data are small enough to satisfy \eqref{eq:as1.data}, and that also holds
	\begin{equation}\label{eq:as2.data}
		\begin{array}{c}
			\mathcal{L}_{\mathscr{T}}\, \widetilde{\mathcal{N}}\big(\bu_{0},\varphi_{0},\mathbf{g},c_{\mathscr{F}^C}\big)\,\leq\, 1\,.
		\end{array}
	\end{equation}
	
Then, problem \eqref{eq:v1a-eq:v4} admits a unique solution $ (\bsi(t),\bu(t))\in \mathbb{H}\times\mathbf{V} $ and $ (\brho(t),\vec{\varphi}(t))\in \mathbf{H}\times\mathcal{V} $ with $ (\bu(t),\lambda(t))\in \mathbf{W}(r) $ for a.e.\ $ t\in \mathrm{J} $. Furthermore, the following a~{\it priori} estimates hold:
	\begin{subequations}
		\begin{alignat}{2}
			\Vert\bsi\Vert_{L^{2}(\mathrm{J};\mathbb{H})}+\Vert\bsi\Vert_{L^{\infty}(\mathrm{J};\mathbb{L}^2)}+\Vert\bu\Vert_{L^{\infty}(\mathrm{J};\mathbf{L}^{2})}+\Vert\partial_t\bu\Vert_{L^{2}(\mathrm{J};\mathbf{L}^{2})}+\Vert\bu\Vert_{L^{2}(\mathrm{J};\mathbf{V})}
			\leq \mathcal{C}_{1}\,\widetilde{\mathcal{N}}\big(\bu_{0},\varphi_{0},\mathbf{g},c_{\mathscr{F}^C}\big)\,,\label{eq:apr.est1}\\[1ex]
			\Vert \brho\Vert_{L^{2}(\mathrm{J};\mathbf{H})}+\Vert \brho\Vert_{L^{\infty}(\mathrm{J};\mathbf{L}^2)}+\Vert\varphi\Vert_{L^{\infty}(\mathrm{J};\mathrm{L}^{2})}+\Vert\partial_t \vec{\varphi}\Vert_{L^{2}(\mathrm{J};\mathrm{L}^{2}\times\mathrm{Q})}+\Vert\vec{\varphi}\Vert_{L^{2}(\mathrm{J};\mathrm{V}\times\mathrm{Q})}
			\, \leq\,  \mathcal{C}_{2} \mathcal{N}_T(\varphi_{0},c_{\mathscr{F}^C})\,.\label{eq:apr.est2}
		\end{alignat}
	\end{subequations}
\end{theorem}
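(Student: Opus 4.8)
The plan is to apply the Banach fixed-point theorem to the operator $\mathscr{T}$ (cf.\ \eqref{eq:def.opr.E}) on the closed ball $\mathbf{W}(r)$ of \eqref{eq:def.ball}, recalling from Section~\ref{s:fix} that a pair $(\bu,\lambda)$ solves \eqref{eq:fixp} if and only if, together with $\bsi:=\mathscr{L}^F_1(\bu,\lambda)$ and $(\brho,\varphi):=\big(\mathscr{L}^C_1(\bu,\lambda),\mathscr{L}^C_2(\bu,\lambda)\big)$, the quadruple solves \eqref{eq:v1a-eq:v4}. First I would record that $\mathbf{W}(r)$ is a complete metric space and that, under the smallness hypothesis \eqref{eq:as1.data}, Lemma~\ref{l_cons.W} already delivers the self-map property $\mathscr{T}(\mathbf{W}(r))\subseteq\mathbf{W}(r)$. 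Next I would establish that $\mathscr{T}$ is a contraction on $\mathbf{W}(r)$: starting from the Lipschitz bounds of Lemmas~\ref{l_lip.S} and~\ref{l_lip.T}, I would estimate the ``reference-solution'' factors $\|\mathscr{L}^F_1(\by,\omega)\|_{L^2(\mathrm{J};\mathbb{H})}$, $\|\mathscr{L}^F_2(\by,\omega)\|_{L^2(\mathrm{J};\mathbf{V})}$, $\|\mathscr{L}^C_1(\by,\omega)\|_{L^2(\mathrm{J};\mathbf{H})}$, $\|\mathscr{L}^C_3(\by,\omega)\|_{L^2(\mathrm{J};\mathrm{Q})}$ appearing on their right-hand sides by the a~priori estimates \eqref{eq:apr1}--\eqref{eq:apr2}, thereby producing the global multiplier $\widetilde{\mathcal{N}}(\bu_0,\varphi_0,\mathbf{g},c_{\mathscr{F}^C})$ and the bound \eqref{eq:lip.Xi}; the second smallness condition \eqref{eq:as2.data}, i.e.\ $\mathcal{L}_{\mathscr{T}}\,\widetilde{\mathcal{N}}(\bu_0,\varphi_0,\mathbf{g},c_{\mathscr{F}^C})\le 1$ (sharpened to a strict inequality, which costs nothing), then forces the Lipschitz constant of $\mathscr{T}$ to be strictly below one. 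Banach's theorem supplies a unique fixed point $(\bu,\lambda)\in\mathbf{W}(r)$, hence a unique solution $(\bsi,\bu)\in\mathbb{H}\times\mathbf{V}$, $(\brho,\vec{\varphi})\in\mathbf{H}\times\mathcal{V}$ of \eqref{eq:v1a-eq:v4} with $(\bu(t),\lambda(t))\in\mathbf{W}(r)$ for a.e.\ $t\in\mathrm{J}$.

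The a~priori estimates \eqref{eq:apr.est1}--\eqref{eq:apr.est2} would then be obtained by feeding the fixed point back into the operator bounds. Evaluating \eqref{eq:apr2} at $(\bz,\chi)=(\bu,\lambda)$, together with the equivalences \eqref{kkk}, gives \eqref{eq:apr.est2}, whose right-hand side depends only on $\varphi_0$ and $c_{\mathscr{F}^C}$. Evaluating \eqref{eq:apr1} at $(\bz,\chi)=(\bu,\lambda)$ and using that $\lambda=\mathscr{L}^C_3(\bu,\lambda)$ — so that $\|\lambda\|_{L^2(\mathrm{J};\mathrm{Q})}$ and $\|\partial_t\lambda\|_{L^2(\mathrm{J};\mathrm{Q})}$ are themselves controlled by $\mathcal{N}_C(\varphi_0,c_{\mathscr{F}^C})$ through \eqref{eq:apr2} — collapses the right-hand side of \eqref{eq:apr1} to a multiple of $\widetilde{\mathcal{N}}(\bu_0,\varphi_0,\mathbf{g},c_{\mathscr{F}^C})$, yielding \eqref{eq:apr.est1}.

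The hard part will be the bookkeeping in the contraction step, for two reasons. On the one hand, $\mathbf{W}(r)$ is measured in $L^\infty(\mathrm{J};\mathbf{V}\times\mathrm{Q})$, whereas Lemmas~\ref{l_lip.S}--\ref{l_lip.T} control the differences $e_{\bsi}$, $e_{\bu}$ and the analogous $e_{\brho}$, $e_{\vec{\varphi}}$ only in $L^2(\mathrm{J};\mathbb{H}\times\mathbf{V})$, $L^2(\mathrm{J};\mathbf{H}\times\mathcal{V})$, together with $L^\infty(\mathrm{J};\mathbf{L}^2)$ of the primal variables; to close both the self-map and the contraction arguments in the norm of $\mathbf{W}(r)$ one must upgrade these to $L^\infty$-in-time control of the $\mathbf{V}$- and $\mathrm{Q}$-norms of $e_{\bu}$ and $e_\lambda$. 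This is achieved exactly as in the derivations of the a~priori estimates (Lemmas~\ref{l_help} and~\ref{l_wel.con.pr2}): differentiating the first equations of the perturbed subsystems in time, testing with $(e_{\bsi},\partial_t e_{\bu})$ and $(e_{\brho},\partial_t e_{\vec{\varphi}})$, and invoking the inf-sup conditions of Lemma~\ref{l_inf.b.bt} pointwise in $t$, at the price of a $t_F$-dependent constant that the smallness conditions absorb. On the other hand, the chain of estimates is nested: the bound \eqref{eq:apr1} for $\mathscr{L}^F$ legitimately uses the time-norms of $\chi$ only because $\chi=\mathscr{L}^C_3(\bu,\lambda)$ inherits $L^2(\mathrm{J};\mathrm{Q})$-regularity of its time derivative from \eqref{eq:apr2}; one must therefore fix the constants $\mathcal{C}_{\mathscr{T}}$ and $\mathcal{L}_{\mathscr{T}}$ entering \eqref{eq:as1.data}--\eqref{eq:as2.data} so that this mutual dependence of the $F$- and $C$-estimates does not become circular.
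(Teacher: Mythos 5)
Your proposal is correct and follows the same route as the paper: Banach's fixed-point theorem on $\mathbf{W}(r)$, with the self-map property from Lemma~\ref{l_cons.W} and \eqref{eq:as1.data}, the contraction from \eqref{eq:lip.Xi} and \eqref{eq:as2.data}, and the a~priori bounds read off from \eqref{eq:apr1} and \eqref{eq:apr2} at the fixed point. Your closing paragraph also correctly flags a point the paper glosses over: \eqref{eq:lip.Xi} bounds $L^{2}(\mathrm{J})$-differences of $\mathscr{T}$ by $L^{\infty}(\mathrm{J})$-differences of its arguments, so it is not literally a contraction estimate in the $L^{\infty}(\mathrm{J};\mathbf{V}\times\mathrm{Q})$-norm with which $\mathbf{W}(r)$ is equipped, and the $L^{\infty}$-in-time upgrade you sketch (reusing the time-differentiation devices of Lemmas~\ref{l_help} and~\ref{l_wel.con.pr2}) is indeed needed but left implicit in the paper.
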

\begin{proof}
Recalling from Lemma \ref{l_cons.W} that \eqref{eq:as1.data} ensures $\mathscr{T}$ maps $\mathbf{W}(r)$ into itself, and given the equivalence between \eqref{eq:v1a-eq:v4} and \eqref{eq:fixp}, along with the Lipschitz continuity of $\mathscr{T}$ (cf. \eqref{eq:lip.Xi}) and assumption \eqref{eq:as2.data}, the Banach fixed-point theorem guarantees a unique solution $(\bu(t),\lambda(t)) \in \mathbf{W}(r)$ to \eqref{eq:fixp} for a.e.\ $t \in \mathrm{J}$. Moreover, the a~{\it priori} bounds \eqref{eq:apr.est1}--\eqref{eq:apr.est2} follow directly from \eqref{eq:apr1} and \eqref{eq:apr2}.
\end{proof}
\section{Fully-Discrete Scheme}\label{s:4}
This section presents the Galerkin scheme approximating \eqref{eq:v1a-eq:v4} and analyzes its solvability via a discrete fixed-point method.
\subsection{Preliminaries}\label{s.4.1}
Let \( \{\mathcal{T}_h\}_{h>0} \) be a sequence of triangulations of \( \Omega \) into triangles \( T \) with diameters \( h_T \), and define \( h := \max_{T \in \mathcal{T}_h} h_T \) as usual. The induced partition of \( \Gamma_{\mathrm{in}}^{\mathrm{c}} \) is denoted by \( \Gamma_{\mathrm{in}, h}^{\mathrm{c}} \), assumed to have an even number of edges (modified if necessary). The coarser partition \( \Gamma_{\mathrm{in}, 2h}^{\mathrm{c}} \) is formed by merging adjacent edge pairs. For \( k \geq 0 \), let \( \mathrm{P}_k(T) \) be the space of polynomials up to degree \( k \) on \( T \), with vector and tensor counterparts \( \mathbf{P}_k(T) \) and \( \mathbb{P}_k(T) \). The local Raviart–Thomas space of order \( k \) is defined as \( \mathbf{RT}_k(T) := \mathbf{P}_k(T) + \mathrm{P}_k(T) \mathbf{x} \). Using these, we define the finite element spaces for each decoupled problem as follow:
\begin{subequations}
	\begin{alignat}{2}
		\mathbb{H}_{h}&\,:=\, \Big\{
		\bta_h \in \mathbb{H}(\bdiv;\Omega):\quad \bta_{i,h}|_{T}\in\mathbf{RT}_{0}(T),\,\,\,\forall\, i\in \{1,2\},\quad\forall\, T\in\mathcal{T}_{h}\Big\}\cap \mathbb{H}_{\Gamma_{\mathtt{o}}}(\bdiv_{4/3};\Omega)\,,\\
		\mathbf{V}_{h}&\,:=\,\Big\{\bv_{h}\in\mathbf{L}^{2}(\Omega):\quad\bv_{h}|_{T}\in\mathbf{P}_{0}(T)\quad\forall\, T\in\mathcal{T}_{h}\Big\}\,,\\
			\mathbf{H}_{h}&\,:=\,  \Big\{
		\bet_h\in\mathbf{H}(\div;\Omega):\quad \bet_h |_{T}\in \mathbf{RT}_{0}(T)\quad\forall\, T\in\mathcal{T}_h
		\Big\}\,,\\
		\mathrm{V}_h &\,:=\,\Big\{
		\psi_h\in\mathrm{L}^{2}(\Omega):\quad\psi_h |_{T}\in \mathrm{P}_{0}(T)\quad\forall\, T\in \mathcal{T}_h \Big\}\,.
	\end{alignat}
\end{subequations}
%
Similarly as for continuous case, we define the space $\mathcal{V}_h\, :=\,\mathrm{V}_h\times\mathrm{Q}_h$		 and set the notation $		\vec{\varphi}_{h}\, :=\, (\varphi_{h} , \lambda_{h})\,,\,\, \vec{\psi}_{h}\,:=\, (\psi_{h} , \xi_{h}) \,,\,\, \vec{\phi}_{h}\,:=\, (\phi_{h} , \chi_{h})\in \mathcal{V}_{h}$.
Next,  applying the backward Euler method with uniform time steps  $ t_n =n \Delta t$, $ n=1,\cdots,N $ and defining $ f^{n}:=f(\cdot,t_n) $, $ \delta_t f^{n}:=(f^{n}-f^{n-1})/\Delta t $  for a generic function $ f $, we derive the fully discrete scheme:
Given initial data $ (\bu_h^0 , \varphi_h^0)\in \mathbf{V}_h \times\mathrm{V}_h $,
find $ (\bsi_h^{n},\bu_h^{n})\in \mathbb{H}_h \times\mathbf{V}_h$ and $ (\brho_h^{n},\vec{\varphi}_h^{n})\in \mathbf{H}_h  \times\mathcal{V}_h$, for each $ n=1,\cdots,N $, such that 
\begin{equation}\label{eq:fuldis-v1a-eq:v4}
	\begin{split}
		\mathscr{A}^{F}(\bsi_h^n , \bta_h)+\mathscr{B}^{F}(\bta_h, \bu_h^n)&= \mathscr{F}_{\lambda_h^n}^{F}(\bta_h)\,,\\
		\disp \int_{\Omega}\delta_t \bu_h^n \cdot\bv_h-\mathscr{B}^{F}(\bsi_h^n,\bv_h)-\mathscr{O}^{F}_1(\bu_h^n;\bsi_h^n, \bv_h)+\mathscr{O}^{F}_2(\bu_h^n;\bu_h^n, \bv_h)&=0   \,,\\
		\mathscr{A}^{C}(\brho_h^n , \bet_h)+\mathscr{B}^{C}(\bet_h,\vec{\varphi}_h^n)&= 0\,,\\
		\disp \int_{\Omega}\delta_t \varphi_h^n \, \psi_h-	\mathscr{B}^{C}(\brho_h^n,\vec{\psi}_h)+\mathscr{C}^{C}(\vec{\varphi}_h^n, \vec{\psi}_h)-\mathscr{O}^{C}_1 (\bu_h^n;\brho_h^n,\vec{\psi}_h)+\mathscr{O}^{C}_2(\lambda_h^n;\vec{\varphi}_h^n, \vec{\psi}_h)&=\mathscr{F}^C(\vec{\psi}_h) \,,
	\end{split}
\end{equation}
for any $ (\bta_h, \bv_h)\in\mathbb{H}_h \times\mathbf{V}_h$ and $ (\bet_h, \vec{\psi}_h)\in \mathbf{H}_h \times\mathrm{V}_h$.

We adopt the discrete version of the strategy from Sec.~\ref{s:fix} to analyze the solvability of \eqref{eq:fuldis-v1a-eq:v4}, defining the discrete operator \( \mathscr{L}^F_{\mathtt{d}}: \mathbf{V}_{h} \times \mathrm{Q}_{h} \to \mathbb{H}_h \times \mathbf{V}_h \) by
\begin{equation*}
	\mathscr{L}^F_{\mathtt{d}}(\bz_h^{n}, \chi_h^{n})\, =\,\big(\mathscr{L}^F_{1,\mathtt{d}}(\bz_h^{n}, \chi_h^{n}),\, \mathscr{L}^F_{2,\mathtt{d}}(\bz_h^{n}, \chi_h^{n})\big)\, :=\,
	(\bsi_{h,\star}^{n}, \bu_{h,\star}^{n})\,,
\end{equation*}

for each $ 1\leq n \leq N $, and $ (\bz_h^{n}, \chi_h^{n})\in  \mathbf{V}_{h}\times \mathrm{Q}_{h} $, where $ (\bsi_{h,\star}^{n}, \bu_{h,\star}^{n})\in \mathbb{H}_h \times\mathbf{V}_h $ satisfying
\begin{equation}\label{eq:dis.sub1}
	\begin{array}{rcll}
		\mathscr{A}^F(\bsi_{h,\star}^{n} , \bta_h)+\mathscr{B}^F(\bta_h, \bu_{h,\star}^{n})&=& \mathscr{F}^F_{\chi_h^{n}}(\bta_h)\,,\\
		\disp \int_{\Omega}\delta_t \bu_{h,\star}^{n} \cdot\bv_h-\mathscr{B}^F(\bsi_{h,\star}^{n},\bv_h)-\mathscr{O}_1^F(\bz_h^{n};\bsi_{h,\star}^{n}, \bv_h)+\mathscr{O}_2^F(\bz_h^{n};\bu_{h,\star}^{n}, \bv_h)&=&0  \,,
	\end{array}
\end{equation}
for any $ (\bta_h, \bv_h)\in\mathbb{H}_h \times\mathbf{V}_h $, with $\bu_{h,\star}^{0}=\Pcalbf_{0}^{h}\bu_0$.

In addition, we also let $ \mathscr{L}^C_{\mathtt{d}}: \mathbf{V}_{h}\times \mathrm{Q}_{h}\rightarrow  \mathbf{H}_h \times\mathcal{V}_h $ be the discrete operator given by
\begin{equation*}
	\mathscr{L}^C_{\mathtt{d}}(\bz_h^{n}, \chi_h^{n})\, =\, \big(\mathscr{L}^C_{1,\mathtt{d}}(\bz_h^{n}, \chi_h^{n}),\, \mathscr{L}^C_{2,\mathtt{d}}(\bz_h^{n}, \chi_h^{n}),\, \mathscr{L}^C_{3,\mathtt{d}}(\bz_h^{n}, \chi_h^{n})\big)\,:=\,
	(\brho_{h,\star}^{n}, \vec{\varphi}_{h,\star}^{n}) \,,
\end{equation*}

for each $1\leq n\leq N $ and $ (\bz_h^{n}, \chi_h^{n})\in\mathbf{V}_{h}\times \mathrm{Q}_{h} $, where $ (\brho_{h,\star}^{n}, \vec{\varphi}_{h,\star}^{n})\in  \mathbf{H}_h \times\mathcal{V}_h $  satisfying
\begin{equation}\label{eq:dis.sub2}
	\begin{split}
		\mathscr{A}^{C}(\brho_{h,\star}^{n} , \bet_h)+\mathscr{B}^{C}(\bet_h,\vec{\varphi}_{h,\star}^{n})=& \mathscr{F}_1^C(\bet_h)\,,\\
		\disp \int_{\Omega}\delta_t \varphi_{h,\star}^{n} \, \psi_h-	\mathscr{B}^{C}(\brho_{h,\star}^{n},\vec{\psi}_h)+\mathscr{C}^{C}(\lambda_{h,\star}^{n}, \xi_h)-\mathscr{O}^{C}_1 (\bz_h^{n};\brho_{h,\star}^{n},\psi_h)
		+\mathscr{O}^{C}_2(\chi_h^{n};\lambda_{h,\star}^{n}, \xi_h)=&\mathscr{F}_2^C(\xi_h)  \,,
	\end{split}
\end{equation}

for all $ (\bet_h, \vec{\psi}_h)\in \mathbf{H}_h \times\mathcal{V}_h$, with $\varphi_{h,\star}^0 \,=\,\mathcal{P}_0^h(\varphi_{0})$.
\smallskip

Finally, we define $ \mathscr{T}_{\mathtt{d}} :\mathbf{V}_{h}\times\mathrm{Q}_{h}\rightarrow \mathbf{V}_{h}\times\mathrm{Q}_{h}$ as
\begin{equation}\label{eq:def.dis.opr.E}
	\mathscr{T}_{\mathtt{d}}(\bz_h^{n}, \chi_h^{n})\, =\,  \big( \mathscr{L}^F_{2,\mathtt{d}}(\bz_h^{n} , \mathscr{L}^C_{3,\mathtt{d}}(\bz_h^{n}, \chi_h^{n})),\, \mathscr{L}^C_{3,\mathtt{d}}(\bz_h^{n}, \chi_h^{n})\big)\,,
\end{equation}
and realize that solving  \eqref{eq:fuldis-v1a-eq:v4} is equivalent to finding a fixed point of $ 	\mathscr{T}_{\mathtt{d}} $,
i.e., given initial data $ (\bu_h^0 , \varphi_h^0)\in \mathbf{V}_h \times\mathrm{V}_h $, find $ (\bu_{h}^{n},\lambda_{h}^n)\in \mathbf{V}_h\times\mathrm{Q}_h $ such that
\begin{equation}\label{eq:dis.fixp}
	\begin{array}{c}
		\mathscr{T}_{\mathtt{d}}(\bu_h^{n} , \lambda_h^{n})\, =\,(\bu_h^{n} , \lambda_h^{n})\qquad\forall \, 1\leq n\leq N \,.
	\end{array}
\end{equation}
\subsection{Well-posedness of the discrete operators \( \mathscr{L}^F_{\mathtt{d}} \) and \( \mathscr{L}^C_{\mathtt{d}} \)}
Following Sec.~\ref{sec.2.2}, we establish the well-posedness of discrete systems \eqref{eq:dis.sub1} and \eqref{eq:dis.sub2}. 
First, we recall from \cite[Lemma 4.3]{cgo-NMPDE-2021} the existence of a constant \( \beta_{F,\mathtt{d}} > 0 \), independent of \( h \), such that
\begin{equation}\label{eq:dis.inf.b}
	\sup_{\mathbf{0}\neq \bta_h\in \mathbb{H}_h}\dfrac{\mathscr{B}^F(\bta_h, \bv_h)}{\Vert \bta_h\Vert_{\mathbb{H}}}\,\geq\,\beta_{F,\mathtt{d}}\,\Vert \bv_h\Vert_{\mathbf{V}}\qquad\forall\, \bv_h\in \mathbf{V}_h \,.
\end{equation}

The following result establishes the discrete version of \eqref{eq:inf.bit}.
\begin{lemma}
	There exists a constant $\beta_{C,\mathtt{d}}$, independent of $h$, such that
	\begin{equation}\label{eq:dis.inf.bit}
		\sup_{\mathbf{0}\neq \bet_h\in \mathbf{H}_h}\dfrac{\mathscr{B}^C(\bet_h, \vec{\psi}_h)}{\Vert \bet_h\Vert_{\mathbf{H}}}\,\geq\,\beta_{C,\mathtt{d}}\,\Vert \vec{\psi}_h\Vert_{\mathcal{V}}\qquad\forall\, \psi_h\in \mathcal{V}_h \,.
	\end{equation}
\end{lemma}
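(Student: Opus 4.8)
The plan is to mirror the continuous argument of Lemma~\ref{l_inf.b.bt}, establishing the discrete inf-sup condition for $\mathscr{B}^C$ in two pieces and then combining them. Recall that $\mathscr{B}^C(\bet_h,\vec{\psi}_h) = \int_\Omega \psi_h\,\div(\bet_h) + \langle \bet_h\cdot\bn,\xi_h\rangle_{\Gamma_{\mathtt{i}}^c}$, and that $\mathcal{V}_h = \mathrm{V}_h\times\mathrm{Q}_h$ with $\Vert\vec{\psi}_h\Vert_{\mathcal{V}} = \Vert\psi_h\Vert_{\mathrm V}+\Vert\xi_h\Vert_{\mathrm Q}$. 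As in the continuous case, it suffices to bound the supremum from below by testing against two carefully chosen subfamilies: one that controls $\Vert\psi_h\Vert_{\mathrm V}$ by taking $\xi_h=0$, and one that controls $\Vert\xi_h\Vert_{\mathrm Q}$ by taking $\psi_h=0$; a triangle-inequality splitting $\vec{\psi}_h = (\psi_h,0)+(0,\xi_h)$ then yields $\beta_{C,\mathtt d} = \tfrac12\min\{\beta_{1,C,\mathtt d},\beta_{2,C,\mathtt d}\}$.

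For the first piece, given $\psi_h\in\mathrm{V}_h$, I would exploit the fact that the pair $(\mathbf H_h,\mathrm V_h)$ consists of lowest-order Raviart--Thomas elements and piecewise constants, for which the standard divergence-based inf-sup condition $\sup_{\bet_h}\tfrac{\int_\Omega\psi_h\div(\bet_h)}{\Vert\bet_h\Vert_{\mathbf H}}\ge\beta_{1,C,\mathtt d}\Vert\psi_h\Vert_{\mathrm V}$ holds uniformly in $h$: one constructs $\bet_h$ as the Raviart--Thomas interpolant of the $\mathbf H^1$-lifting $\bw$ of $\widetilde\psi_h := |\psi_h|^2\psi_h$ solving $\Delta\bw=\widetilde\psi_h$ in $\Omega$ with $\bw=\mathbf 0$ on $\Gamma_{\mathtt o}^c$ and $(\nabla\bw)\bn=\mathbf 0$ on $\Gamma_{\mathtt o}$, using the commuting-diagram property $\div(\Pi_h\bw)=\mathcal P_0^h\div(\bw)$ of the RT interpolant and the $L^4$-stability estimate — exactly as in \cite[Lemma 3.3]{cgo-NMPDE-2021}; for this subfamily the boundary term $\langle\bet_h\cdot\bn,\xi_h\rangle$ with $\xi_h=0$ vanishes. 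For the second piece, given $\xi_h\in\mathrm Q_h$, I would follow \cite[Theorem 2.1]{bg-NMPDE-2003}: take $\tilde\bet_h$ to be the RT interpolant of $\nabla z$, where $z\in\mathrm H^1(\Omega)$ solves $\Delta z=0$ in $\Omega$, $z=0$ on $\Gamma_{\mathtt o}^c$, $\nabla z\cdot\bn$ prescribed on $\Gamma_{\mathtt o}$ to realize the required normal trace on $\Gamma_{\mathtt i}^c$; since $\Gamma_{\mathtt i}^c = \Gamma_{\mathtt w}\cup\Gamma_{\mathtt o}$, this requires compatibility between the partition $\Gamma_{\mathtt{in},h}^c$ and the coarser $\Gamma_{\mathtt{in},2h}^c$ introduced in Section~\ref{s.4.1} (the even-number-of-edges hypothesis), and the argument produces a constant $\beta_{2,C,\mathtt d}$ independent of $h$.

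The main obstacle is the uniform (in $h$) control of the boundary duality term $\langle\bet_h\cdot\bn,\xi_h\rangle_{\Gamma_{\mathtt i}^c}$, since $\mathrm Q_h$ consists of continuous piecewise linears on $\Gamma_{\mathtt{in},h}^c$ while the normal traces of $\mathbf H_h$-functions restricted to $\Gamma_{\mathtt i}^c$ live on the finer mesh: the $H^{1/2}_{00}$-duality must be handled through the extension operator $E_{0,\Gamma_{\mathtt i}^c}$ and the mesh-compatibility built into the construction of $\Gamma_{\mathtt{in},2h}^c$, ensuring that the discrete lifting $z$ (or its RT interpolant) attains the prescribed trace without an $h$-dependent loss. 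Once the two partial inf-sup estimates are in hand with $h$-independent constants, the combination is routine, and I would close the proof by recording $\beta_{C,\mathtt d}:=\tfrac12\min\{\beta_{1,C,\mathtt d},\beta_{2,C,\mathtt d}\}$.
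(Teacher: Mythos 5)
Your decomposition into two partial inf-sup estimates for the $\psi_h$- and $\xi_h$-components, combined via $\beta_{C,\mathtt{d}}=\tfrac12\min\{\beta_{1,C,\mathtt{d}},\beta_{2,C,\mathtt{d}}\}$, matches the paper's strategy, but the two pieces are established differently. For $\Vert\xi_h\Vert_{\mathrm Q}$ the paper does not construct a discrete lifting: it invokes the existing inf-sup estimate in the \emph{stronger} $\mathbf{H}(\div;\Omega)$-norm from \cite[Lemma~5.2]{gos-IMA-2012}, which already controls $\Vert\psi_h\Vert_{0,\Omega}+\Vert\xi_h\Vert_{\mathrm Q}$, and simply passes to the $\mathbf{H}(\div_{4/3};\Omega)$-norm through $\Vert\bet_h\Vert_{\mathbf H}\le c_\Omega\Vert\bet_h\Vert_{\div;\Omega}$ (H\"older on a bounded domain), discarding the $L^2$-term in $\psi_h$ since it is too weak for the desired $L^4$-bound. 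Your plan of re-deriving this via an RT-interpolated harmonic lifting as in \cite{bg-NMPDE-2003} is viable, and your observation that the paired-edge partition $\Gamma_{\mathtt{in},2h}^{c}$ is essential for the $H^{1/2}_{00}$-duality is precisely the hypothesis under which \cite[Lemma~5.2]{gos-IMA-2012} is proved, but the paper's detour through the stronger norm is shorter. For the $L^4$-inf-sup on $\psi_h$, the paper cites \cite[Lemma~4.1]{cov-Calcolo-2020}, and the construction you outline (solve $\Delta\bw=|\psi_h|^2\psi_h$, RT-interpolate, use the commuting diagram property) is indeed what underlies that cited result. In short, your approach is sound but essentially re-proves the two results the paper invokes directly; the paper's route is more economical, particularly in the $\xi_h$-piece where it leverages the classical $H(\div)$-norm inf-sup rather than rebuilding a discrete lifting.
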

\begin{proof}
First, from \cite[Lemma 5.2]{gos-IMA-2012}, there exists a constant \(\beta_{1,C,\mathtt{d}} > 0\), independent of \(h\), such that
	\begin{equation}\label{l1}
		\sup_{\mathbf{0}\neq \bet_h\in \mathbf{H}_h}\dfrac{\mathscr{B}^C(\bet_h, \vec{\psi}_h)}{\Vert \bet_h\Vert_{\div;\Omega}}\,\geq\,\beta_{1,C,\mathtt{d}}\,\left(\Vert\psi_h\Vert_{0,\Omega}+\Vert\xi_{h}\Vert_{\mathrm{Q}}\right)\quad\forall\, \vec{\psi}_h \in\mathcal{V}_h \,.
	\end{equation}
Proceeding as in~\cite[Sec.~4.4.1]{gors-CMA-2021} and using
$\Vert \bet_h \Vert_{\mathbf{H}} \le c_{\Omega} \Vert \bet_h \Vert_{\div;\Omega}$
for all \(\bet_h \in \mathbf{H}(\div;\Omega)\) with constant \(c_{\Omega}>0\), along with~\eqref{l1}, we deduce that
	\begin{equation}\label{l2}
		\sup_{\mathbf{0}\neq \bet_h\in \mathbf{H}_h}\dfrac{\mathscr{B}^C(\bet_h, \vec{\psi}_h)}{\Vert \bet_h\Vert_{\mathbf{H}}}\,\geq\,\dfrac{\beta_{1,C,\mathtt{d}}}{c_{\Omega}}\,\Vert\xi_{h}\Vert_{\mathrm{Q}}\quad\forall\, \vec{\psi}_h \in\mathcal{V}_h \,.
	\end{equation}
From~\cite[Lemma 4.1]{cov-Calcolo-2020}, there exists a constant \( \beta_{2,C,\mathtt{d}} \), independent of \( h \), such that
	\begin{equation}\label{l3}
		\sup_{\mathbf{0}\neq \bet_h\in \mathbf{H}_h}\dfrac{\mathscr{B}^C(\bet_h, \vec{\psi}_h)}{\Vert \bet_h\Vert_{\mathbf{H}}}\,\geq\,	\sup_{\mathbf{0}\neq \bet_h\in \mathbf{H}_h}\dfrac{\mathscr{B}^C(\bet_h, (\psi_h,0))}{\Vert \bet_h\Vert_{\mathbf{H}}}\,\ge\,\beta_{2,C,\mathtt{d}}\,\Vert\psi_{h}\Vert_{\mathrm{V}}\,.
	\end{equation}
	Therefore, a direct combination of~\eqref{l2} and~\eqref{l3} leads to \eqref{eq:dis.inf.bit} with $ \beta_{C,\mathtt{d}}=\frac{1}{2}\min\{\beta_{1,C,\mathtt{d}}/c_{\Omega}, \, \beta_{2,C,\mathtt{d}}\}$.
\end{proof}
The continuity of the forms in \eqref{eq:fuldis-v1a-eq:v4} and coercivity of \( \mathscr{A}^F \), \( \mathscr{A}^C \) hold on discrete spaces. As in the continuous case (cf. \eqref{n1}-\eqref{n2}), the pairs \( (\bsi_{h,\star}^{n}, \bu_{h,\star}^{n}) \in \mathbb{H}_h \times \mathbf{V}_h \) and \( (\brho_{h,\star}^{n}, \vec{\varphi}_{h,\star}^{n}) \in \mathbf{H}_h \times \mathcal{V}_h \) uniquely solve~\eqref{eq:dis.sub1} and~\eqref{eq:dis.sub2}, so \( \mathscr{L}_{\mathtt{d}}^F \) and \( \mathscr{L}_{\mathtt{d}}^C \) are well-defined.
We now present discrete counterparts of Lemmas~\ref{l_wel.S} and~\ref{l_wel.T}.
\begin{lemma}\label{l_wel.dis.S}
	Assume that for each $ 1\leq n\leq N $, and $ (\bz_h^{n},\chi_h^{n})\in \mathbf{V}_{h}\times \mathrm{Q}_{h} $ satisfy
	\begin{equation}\label{eq:dis.as.z.var}
		\Vert\bz_h^n\Vert_{\mathbf{V}}\,\le\,\max\Big\{ \left(\dfrac{\beta_{F,\mathtt{d}}^2\nu}{8}|\Omega|^{(p-4)/4}\right)^{1/(p-2)},\,\dfrac{\beta_{F,\mathtt{d}}\nu}{2\sqrt{2}} \Big\}\,.
	\end{equation}
	
	Then, there exist the positive constant $ \mathcal{C}_{\mathscr{L}^F,\mathtt{d}} $, depending only on $ \nu $, $ \beta_{F,\mathtt{d}} $, such that there hold
	\begin{equation}\label{eq:dis.apr1}
		\begin{array}{c}
			\Vert\mathscr{L}^F_{2,\mathtt{d}}(\bz_h^n,\chi_h^n)\Vert_{0,\Omega}^{2}+\Vert\bdev(\mathscr{L}^F_{1,\mathtt{d}}(\bz_h^n,\chi_h^n))\Vert_{0,\Omega}^{2}
			\\
			+\Delta t\disp\sum_{j=1}^{n}\Big(\Vert\mathscr{L}^F_{1,\mathtt{d}}(\bz_h^j,\chi_h^j)\Vert_{\mathbb{H}}^{2}+\Vert\delta_t\mathscr{L}^F_{2,\mathtt{d}}(\bz_h^j,\chi_h^j)\Vert_{0,\Omega}^{2}\,+\,\Vert\mathscr{L}^F_{2,\mathtt{d}}(\bz_h^j,\chi_h^j)\Vert_{\mathbf{V}}^{2}\Big)\\
			\,\leq\, \mathcal{C}_{\mathscr{L}^F,\mathtt{d}}\,\Big\{\Vert \bu_{0}\Vert_{\mathbf{H}^{1}(\Omega)}^{2}+\Delta t\disp\sum_{j=1}^{n}\left(\Vert\mathbf{g}^j\Vert_{1/2,\Gamma_{\mathtt{o}}^c}^2+\Vert\chi_h^{j}\Vert_{\mathrm{Q}}^{2}+\Vert\delta_t\mathbf{g}^j\Vert_{1/2,\Gamma_{\mathtt{o}}^c}^2+\Vert\delta_t\chi_h^{j}\Vert_{\mathrm{Q}}^{2}\right) \Big\}\\
			\,:=\, \mathcal{C}_{\mathscr{L}^F,\mathtt{d}}\,\mathcal{N}_{F,\mathtt{d}}(\bu_{0},\mathbf{g},\chi_{h})\,.
		\end{array}
	\end{equation}
\end{lemma}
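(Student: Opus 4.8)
The plan is to transcribe, almost line by line, the proof of Lemma~\ref{l_help} (together with the Galerkin‐limit passage of Lemma~\ref{l_wel.S}) into the fully discrete backward--Euler framework: replace $\partial_t$ by the difference quotient $\delta_t$, the Galerkin subspaces by $\mathbb{H}_h\times\mathbf{V}_h$, and the continuous inf--sup constant $\beta_F$ by its mesh--independent discrete analogue $\beta_{F,\mathtt{d}}$ from \eqref{eq:dis.inf.b}. First I would test \eqref{eq:dis.sub1} with $(\bta_h,\bv_h):=(\bsi_{h,\star}^n,\bu_{h,\star}^n)$ and add the two rows so that the $\mathscr{B}^F$-terms cancel; using the elementary identity $\int_\Omega\delta_t\bu_{h,\star}^n\cdot\bu_{h,\star}^n\geq\tfrac{1}{2\Delta t}\bigl(\Vert\bu_{h,\star}^n\Vert_{0,\Omega}^2-\Vert\bu_{h,\star}^{n-1}\Vert_{0,\Omega}^2\bigr)$, the ellipticity \eqref{eq:ell.a.at} of $\mathscr{A}^F$, the continuity bounds \eqref{eq1:bound.c.ct}--\eqref{eq2:bound.c.ct} for $\mathscr{O}_1^F,\mathscr{O}_2^F$, and the bound \eqref{bund.Ff} for $\mathscr{F}^F_{\chi_h^n}$, one obtains the discrete counterpart of \eqref{eq:j1}. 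The smallness hypothesis \eqref{eq:dis.as.z.var} is precisely what is needed to absorb the terms carrying $\Vert\bz_h^n\Vert_{\mathbf{V}}$ and $\Vert\bz_h^n\Vert_{\mathbf{V}}^{p-2}$ into $\tfrac1\nu\Vert\bdev(\bsi_{h,\star}^n)\Vert_{0,\Omega}^2$ and, after inserting the first row of \eqref{eq:dis.sub1} into the discrete inf--sup estimate for $\mathscr{B}^F$ (the discrete analogue of \eqref{eq:h3}), into $\tfrac{\beta_{F,\mathtt{d}}^2\nu}{2}\Vert\bu_{h,\star}^n\Vert_{\mathbf{V}}^2$. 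Multiplying by $\Delta t$, summing over $n$ and telescoping then yields the discrete analogue of \eqref{eq:f4}: control of $\Vert\bu_{h,\star}^n\Vert_{0,\Omega}^2+\Delta t\sum_j\bigl(\Vert\bdev(\bsi_{h,\star}^j)\Vert_{0,\Omega}^2+\Vert\bu_{h,\star}^j\Vert_{\mathbf{V}}^2\bigr)$ by the data plus a small multiple of $\Delta t\sum_j\Vert\bsi_{h,\star}^j\Vert_{\mathbb{H}}^2$.

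Next I would control the divergence. Since $\bdiv(\mathbb{H}_h)\subseteq\mathbf{V}_h$, the second row of \eqref{eq:dis.sub1} identifies, in the $\mathbf{V}_h$-duality sense, $\bdiv(\bsi_{h,\star}^n)=\delta_t\bu_{h,\star}^n-\tfrac1\nu\bdev(\bsi_{h,\star}^n)\bz_h^n+\mathtt{F}\,|\bz_h^n|^{p-2}\bu_{h,\star}^n$; using H\"older's inequality, the definitions of $\mathscr{O}_1^F,\mathscr{O}_2^F$ (together with \eqref{eq:bound.tr2}) and \eqref{eq:dis.as.z.var}, this gives the analogue of \eqref{eq:f7}, namely $\Delta t\sum_j\Vert\bdiv(\bsi_{h,\star}^j)\Vert_{0,4/3;\Omega}^2$ bounded by $\Delta t\sum_j\bigl(\Vert\bdev(\bsi_{h,\star}^j)\Vert_{0,\Omega}^2+\Vert\bu_{h,\star}^j\Vert_{\mathbf{V}}^2+\Vert\delta_t\bu_{h,\star}^j\Vert_{0,\Omega}^2\bigr)$. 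The $\delta_t\bu_{h,\star}^j$ term is treated exactly as in the derivation of \eqref{eq:f6}: apply $\delta_t$ to the first row of \eqref{eq:dis.sub1}, test with $\bta_h=\bsi_{h,\star}^n$, eliminate $\mathscr{B}^F(\bsi_{h,\star}^n,\delta_t\bu_{h,\star}^n)$ via the second row tested with $\bv_h=\delta_t\bu_{h,\star}^n$, use $\mathscr{A}^F(\delta_t\bsi_{h,\star}^n,\bsi_{h,\star}^n)\geq\tfrac{1}{2\nu\Delta t}\bigl(\Vert\bdev(\bsi_{h,\star}^n)\Vert_{0,\Omega}^2-\Vert\bdev(\bsi_{h,\star}^{n-1})\Vert_{0,\Omega}^2\bigr)$, and then apply Young's inequality, \eqref{eq:dis.as.z.var} and summation in time. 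Substituting back and combining with \eqref{kll}--\eqref{kjj} to pass from $\Vert\bdev(\bsi_{h,\star}^j)\Vert_{0,\Omega}^2+\Vert\bdiv(\bsi_{h,\star}^j)\Vert_{0,4/3;\Omega}^2$ to $\Vert\bsi_{h,\star}^j\Vert_{\mathbb{H}}^2$, then absorbing the residual small terms by choosing the $\epsilon$-parameters appropriately, produces \eqref{eq:dis.apr1}.

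The step I expect to be the main obstacle is the treatment of the initial quantity $\Vert\bdev(\bsi_{h,\star}^0)\Vert_{0,\Omega}$, which appears both when telescoping the $\delta_t\bdev(\bsi_{h,\star}^n)$ contribution and, implicitly, when applying $\delta_t$ to the first row of \eqref{eq:dis.sub1} at $n=1$. As in Lemma~\ref{l_help}, one fixes $\bsi_{h,\star}^0\in\mathbb{H}_h$ as the solution of the steady first equation with datum $\bu_{h,\star}^0=\Pcalbf_0^h\bu_0$, tests it with $\bsi_{h,\star}^0$, and uses $\bdiv(\mathbb{H}_h)\subseteq\mathbf{V}_h$, the $\mathbf{L}^2$-projection property of $\Pcalbf_0^h$, Green's formula and $\bu_0\in\mathbf{H}^1(\Omega)$ to recognize the right-hand side as $\int_\Omega\bdev(\bsi_{h,\star}^0):\boldsymbol{\nabla}\bu_0$, whence $\Vert\bdev(\bsi_{h,\star}^0)\Vert_{0,\Omega}\leq\nu\,\Vert\boldsymbol{\nabla}\bu_0\Vert_{0,\Omega}$; this is then absorbed into the $\Vert\bu_0\Vert_{\mathbf{H}^1(\Omega)}^2$ appearing in $\mathcal{N}_{F,\mathtt{d}}$. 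The delicate bookkeeping is to verify that every constant generated in the Young/absorption steps is independent of $h$, $\Delta t$ and $n$, which holds because the inf--sup constant in \eqref{eq:dis.inf.b} is $h$-independent and the summation-by-parts identities in time introduce no mesh- or step-dependent factors.
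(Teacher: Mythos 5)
Your proof follows the paper's argument essentially line by line: diagonal testing of \eqref{eq:dis.sub1}, the discrete inf-sup \eqref{eq:dis.inf.b}, control of $\bdiv(\bsi_{h,\star}^n)$ through $\bdiv(\mathbb{H}_h)\subseteq\mathbf{V}_h$, estimation of $\delta_t\bu_{h,\star}^n$ by time-differencing the first row and testing with $(\bsi_{h,\star}^n,\delta_t\bu_{h,\star}^n)$, combination with \eqref{kll}--\eqref{kjj}, and the Green's-formula bound $\Vert\bdev(\bsi_{h,\star}^0)\Vert_{0,\Omega}\le\nu\Vert\boldsymbol{\nabla}\bu_0\Vert_{0,\Omega}$ at $n=0$. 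This is precisely the strategy of the paper's proof, so there is nothing substantively different to report.
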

\begin{proof}
Let \( (\bz_h^n, \chi_h^n)\in \mathbf{V}_{h}\times\mathrm{Q} \) for \( n=1,\dots,N \). As in the continuous case, testing \eqref{eq:dis.sub1} with \( (\bta_h , \bv_{h}) := (\bsi_{h,\star}^{n},\bu_{h,\star}^{n}) \in \mathbb{H}_h \times \mathbf{V}_h \) and using the discrete inf-sup condition for \( \mathscr{B}^F \) (cf.~\eqref{eq:dis.inf.b}) yields
	\begin{equation}\label{eq:dis.j1}
		\begin{array}{c}
			\dfrac{1}{2\Delta t}\left(\Vert \bu_{h,\star}^n\Vert_{0,\Omega}^{2}-\Vert \bu_{h,\star}^{n-1}\Vert_{0,\Omega}^{2}\right)\,+\,\dfrac{1}{\nu}\, \Vert \bdev(\bsi_{h,\star}^{n})\Vert_{0,\Omega}^{2}
			\,\leq\, c_{\mathscr{F}^F}\left(\Vert\mathbf{g}^n\Vert_{1/2,\Gamma_{\mathtt{o}}^c}+a_1\Vert\chi_h^n\Vert_{\mathrm{Q}}\right)\Vert\bsi_{h,\star}^{n}\Vert_{\mathbb{H}}
			\\ \,+\,\Big(\dfrac{1}{\nu}\Vert\bz_{h}^n\Vert_{\mathbf{V}}\,\Vert\bdev(\bsi_{h,\star}^{n})\Vert_{0,\Omega}
			\,+\,|\Omega|^{(4-p)/4}\Vert\bz_{h}^n\Vert_{\mathbf{V}}^{p-2}\Vert\bu_{h,\star}^{n}\Vert_{\mathbf{V}}\Big)\Vert\bu_{h,\star}^{n}\Vert_{\mathbf{V}}\,.
		\end{array}
	\end{equation}
	and
	\begin{equation*}
		\dfrac{\beta_{F,\mathtt{d}}^{2}\nu}{2}\,\Vert \bu_{h,\star}^{n}\Vert_{\mathbf{V}}^{2}\,\leq \, 2\nu c_{\mathscr{F}^F}^2\left(\Vert\mathbf{g}^n\Vert_{1/2,\Gamma_{\mathtt{o}}^c}+a_1\Vert\chi_h^n\Vert_{\mathrm{Q}}\right)^{2}+\dfrac{1}{2\nu}\Vert\bdev(\bsi_{h,\star}^{n})\Vert_{0,\Omega}^{2}\,.
	\end{equation*}
Substituting into \eqref{eq:dis.j1}, applying Young’s inequality, summing over \( n \), and using assumption \eqref{eq:dis.as.z.var}, we obtain
	\begin{equation}\label{eq:dis.f4}
		\begin{array}{c}
			\Vert\bu_{h,\star}^{n}\Vert_{0,\Omega}^{2}+\Delta t\disp\sum_{j=1}^{n}\left(\Vert \bdev(\bsi_{h,\star}^{j})\Vert_{0,\Omega}^{2} + \Vert \bu_{h,\star}^{j}\Vert_{\mathbf{V}}^{2}\right)\\
			\,\leq\,\mathcal{C}_1 \, \Big\{\Vert\bu_{0}\Vert_{0,\Omega}^{2}+\Delta t\disp\sum_{j=1}^{n}\left(\Vert\mathbf{g}^j\Vert_{1/2,\Gamma_{\mathtt{o}}^c}+a_1\Vert\chi_h^j\Vert_{\mathrm{Q}}\right)^{2}  +\epsilon\Delta t\disp\sum_{j=1}^{n}\Vert\bsi_{h,\star}^{j}\Vert_{\mathbb{H}}^{2} \Big\}\,,
		\end{array}
	\end{equation}
	where $ \mathcal{C}_1 $ is a positive constant depending on $ \beta_{F,\mathtt{d}} $, $ \nu $, $ \epsilon  $ and $|\Omega|$.
	
Similarly, using the second row of \eqref{eq:dis.sub1} and the inclusion \( \bdiv(\mathbb{H}_h) \subseteq \mathbf{V}_h \) (cf.~\cite[Lemma 3.7]{g-springer-2014}), an analogue of \eqref{eq:f7} holds:
	\begin{equation}\label{eq:dis.f7}
		\begin{array}{c}
			\Delta t\disp\sum_{j=1}^{n}\Vert\bdiv(\bsi_{h,\star}^j)\Vert_{0,4/3;\Omega}^{2} \, \leq\, \widetilde{C}\,\Delta t\sum_{j=1}^{n}\, \Big\{\Vert \bdev(\bsi_{h,\star}^{j})\Vert_{0,\Omega}^{2}\, +\,\Vert \bu_{h,\star}^j\Vert_{\mathbf{V}}^{2}+\Vert\delta_t  \bu_{h,\star}^{j}\Vert_{0,\Omega}^{2}\Big\}\,.
		\end{array}
	\end{equation}
To bound the last term, we follow the approach of \eqref{eq:f6}, using the time-discrete derivative of the first row of \eqref{eq:dis.sub1} with \( (\bta_h,\bv_{h}) = (\bsi_{h,\star}^{n}, \delta_t \bu_{h,\star}^{n}) \), applying Young's inequality, and summing over \( n \), we obtain	
	\begin{equation}\label{eq:dis.f6}
		\begin{array}{c}
			\Delta t\disp\sum_{j=1}^{n}\Vert\delta_t \bu_{h,\star}^{j}\Vert_{0,\Omega}^{2}+ \Vert \bdev(\bsi_{h,\star}^{n})\Vert_{0,\Omega}^{2} \,\leq\,\mathcal{C}_2 \, \Big\{ \Vert \bdev(\bsi_{h,\star}^{0})\Vert_{0,\Omega}^{2}\\
			+\Delta t\disp\sum_{j=1}^{n}\Big(\Vert\delta_t\mathbf{g}^j\Vert_{1/2,\Gamma_{\mathtt{o}}^c}^2+\Vert\delta_t\chi_h^{j}\Vert_{\mathrm{Q}}^{2}+ \epsilon_1 \Vert\bsi_{h,\star}^{j}\Vert_{\mathbb{H}}^{2}\Big)\,+\,\Delta t\disp\sum_{j=1}^{n}\left(\Vert\bdev(\bsi_{h,\star}^{j})\Vert_{0,\Omega}^{2}+\Vert\bu_{h,\star}^{j}\Vert_{\mathbf{V}}^{2}\right)\Big\}\,.
		\end{array}
	\end{equation}
Substituting \eqref{eq:dis.f6} into \eqref{eq:dis.f7} and combining with \eqref{eq:dis.f4} and \eqref{kkk}, we get
	\begin{equation}\label{k1}
		\begin{array}{c}
			\Delta t	\disp\sum_{j=1}^{n}\Vert\bsi_{h,\star}^{j}\Vert_{\mathbb{H}}^{2} \, \leq\,\mathcal{C}_3\,\Big\{\Vert \bdev(\bsi_{h,\star}^{0})\Vert_{0,\Omega}^{2}+\Vert\bu_{0}\Vert_{0,\Omega}^{2}\\
			\,+\, \Delta t\disp\sum_{j=1}^{n}\left(\Vert\mathbf{g}^j\Vert_{1/2,\Gamma_{\mathtt{o}}^c}^2+\Vert\chi_h^{j}\Vert_{\mathrm{Q}}^{2}+\Vert\delta_t\mathbf{g}^j\Vert_{1/2,\Gamma_{\mathtt{o}}^c}^2+\Vert\delta_t\chi_h^{j}\Vert_{\mathrm{Q}}^{2}\right) \Big\}\,.
		\end{array}
	\end{equation}
	Finally, as in \eqref{eq:a3} and \eqref{eq:a4}, testing the first row of \eqref{eq:dis.sub1} at \( n=0 \) with \( \bta_h = \bsi_{h,\star}^{0} \), applying Green's formula, and noting \( \bu_{h,\star}^{0} = \Pcalbf_0^h\bu_{0} \), we bound \( \Vert \bdev(\bsi_{h,\star}^{0})\Vert_{0,\Omega} \) by \( \Vert \bu_{0} \Vert_{\mathbf{H}^1(\Omega)} \). Combining this with \eqref{k1} and \eqref{eq:dis.f4} yields \eqref{eq:dis.apr1}.
\end{proof}
\begin{lemma}\label{l_wel.dis.T}
	Assume that for each $ 1\leq n\leq N $, and $ (\bz_{h}^{n},\chi_h^n)\in \mathbf{V}_h \times\mathrm{Q}_h $ satisfy
	\begin{equation}\label{eq:dis.as2}
		\Vert (\bz_h^n,\chi_h^n)\Vert_{\mathbf{V}\times\mathrm{Q}}\, \leq\, \max\Big\{\dfrac{\beta_{C,\mathtt{d}}\kappa}{2\sqrt{3}},\,\frac{\beta_{C,\mathtt{d}}^2\kappa}{12c_{\mathscr{O}_2^C}}\Big\}\,.
	\end{equation}
	Then, there exists a positive constant $ \mathcal{C}_{\mathscr{L}^C,\mathtt{d}} $
	only on $  \kappa , \beta_{C,\mathtt{d}} , c_{\mathscr{O}_2^C}, |\Omega|$, such that there hold
	\begin{equation}\label{eq:dis.apr2}
		\begin{array}{c}
			\Vert\mathscr{L}^C_{2,\mathtt{d}}(\bz_h^n,\chi_h^n)\Vert_{0,\Omega}^{2}+\Vert\mathscr{L}^C_{1,\mathtt{d}}(\bz_h^n,\chi_h^n)\Vert_{0,\Omega}^{2}+\Delta t \,\disp\sum_{j=1}^{n}
			\bigg(\Vert \mathscr{L}^C_{1,\mathtt{d}}(\bz_h^j,\chi_h^j)\Vert_{\mathbf{H}}^{2}\\
			\,+\,\Vert\delta_t \vec{\mathscr{L}}_{2,\mathtt{d}}^C (\bz_h^j,\chi_h^j)\Vert_{\mathrm{L}^{2}(\Omega)\times\mathrm{Q}}^{2}+\Vert\vec{\mathscr{L}}_{2,\mathtt{d}}^C(\bz_h^j,\varphi_h^j)\Vert_{\mathcal{V}}^{2}\bigg)\\
			\, \leq\,  \mathcal{C}_{\mathscr{L}^C,\mathtt{d}} \Big\{\Vert \varphi(0)\Vert_{\mathrm{H}^{1}(\Omega)}^{2}+\disp\sum\limits_{j=0}^{n}c_{\mathscr{F}^C}^2\Big\}\,:=\,\mathcal{C}_{\mathscr{L}^C,\mathtt{d}}\,\mathcal{N}_{C,\mathtt{d}}(\varphi_{0},c_{\mathscr{F}^C})\,\,.
		\end{array}
	\end{equation}
\end{lemma}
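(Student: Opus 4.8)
This is the fully discrete-in-time counterpart of Lemma~\ref{l_wel.con.pr2}, carried out for the scheme \eqref{eq:dis.sub2} with the backward-Euler quotient $\delta_t$ in place of $\partial_t$, the discrete inf-sup condition \eqref{eq:dis.inf.bit} for $\mathscr{B}^C$ in place of \eqref{eq:inf.bit}, and the inclusion $\div(\mathbf{H}_h)\subseteq\mathrm{V}_h$ in place of $\div(\mathbf{H})=\mathrm{V}'$; the overall organisation also parallels the fluid estimate in Lemma~\ref{l_wel.dis.S}. Fix $1\le n\le N$. First I would test \eqref{eq:dis.sub2} with $(\bet_h,\vec{\psi}_h):=(\brho_{h,\star}^{n},\vec{\varphi}_{h,\star}^{n})$, add the two equations, and use the coercivity of $\mathscr{A}^C$ (cf.~\eqref{eq:ell.a.at}), the boundedness of $\mathscr{O}_1^C$, $\mathscr{O}_2^C$ and $\mathscr{F}^C$ (cf.~\eqref{eq3:bound.c.ct}, \eqref{bund.O2c}, \eqref{bund.Fc}) and the elementary backward-Euler identity
\begin{equation*}
\int_{\Omega}\delta_t\varphi_{h,\star}^{n}\,\varphi_{h,\star}^{n}\,=\,\dfrac{1}{2\Delta t}\Big(\Vert\varphi_{h,\star}^{n}\Vert_{0,\Omega}^{2}-\Vert\varphi_{h,\star}^{n-1}\Vert_{0,\Omega}^{2}\Big)+\dfrac{\Delta t}{2}\Vert\delta_t\varphi_{h,\star}^{n}\Vert_{0,\Omega}^{2}\,\ge\,\dfrac{1}{2\Delta t}\Big(\Vert\varphi_{h,\star}^{n}\Vert_{0,\Omega}^{2}-\Vert\varphi_{h,\star}^{n-1}\Vert_{0,\Omega}^{2}\Big)
\end{equation*}
to reach the discrete analogue of \eqref{eq:j3}.

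Next I would combine the discrete inf-sup bound \eqref{eq:dis.inf.bit}, the first equation of \eqref{eq:dis.sub2} and the boundedness of $\mathscr{A}^C$ to get $\tfrac{\beta_{C,\mathtt{d}}^{2}\kappa}{2}\Vert\vec{\varphi}_{h,\star}^{n}\Vert_{\mathcal{V}}^{2}\le\tfrac{1}{2\kappa}\Vert\brho_{h,\star}^{n}\Vert_{0,\Omega}^{2}$, the direct counterpart of \eqref{eq:no.4}. Inserting this into the previous inequality, applying Young's inequality to the terms carrying the data $\bz_h^{n}$ and $\chi_h^{n}$, summing over the time levels and absorbing the quadratic remainders by means of the smallness assumption \eqref{eq:dis.as2} (a discrete Gr\"onwall step), I would obtain a first estimate bounding $\Vert\varphi_{h,\star}^{n}\Vert_{0,\Omega}^{2}+\Delta t\sum_{j=1}^{n}\big(\Vert\brho_{h,\star}^{j}\Vert_{0,\Omega}^{2}+\Vert\vec{\varphi}_{h,\star}^{j}\Vert_{\mathcal{V}}^{2}\big)$ in terms of the right-hand side of \eqref{eq:dis.apr2}, exactly as in \eqref{eq:l1}.

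To upgrade the flux bound to the full $\mathbf{H}$-norm I would use the second equation of \eqref{eq:dis.sub2} with $\div(\mathbf{H}_h)\subseteq\mathrm{V}_h$, so that $\Vert\div(\brho_{h,\star}^{n})\Vert_{0,4/3;\Omega}$ is controlled by $\Vert\delta_t\varphi_{h,\star}^{n}\Vert_{0,\Omega}$, $\Vert\brho_{h,\star}^{n}\Vert_{0,\Omega}$ and $\Vert\vec{\varphi}_{h,\star}^{n}\Vert_{\mathcal{V}}$, mimicking \eqref{eq:k1}. The quantity $\Vert\delta_t\varphi_{h,\star}^{n}\Vert_{0,\Omega}$ is then handled as in \eqref{eq:k3}: one takes the discrete time-difference of the first equation of \eqref{eq:dis.sub2} (whose right-hand side is time-independent), tests with $(\brho_{h,\star}^{n},(\delta_t\varphi_{h,\star}^{n},\delta_t\lambda_{h,\star}^{n}))$, couples it with the time-differenced second equation, applies Young's inequality together with \eqref{eq:dis.as2}, and sums over $n$. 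To close the loop one needs the discrete counterpart of \eqref{nn}, namely $\Vert\delta_t\vec{\varphi}_{h,\star}^{n}\Vert_{\mathcal{V}}\lesssim\Vert\delta_t\varphi_{h,\star}^{n}\Vert_{0,\Omega}$, which I would derive from the $\mathrm{Q}$-component of the discrete inf-sup bound \eqref{l2} (and \eqref{l3} for the $\mathrm{V}$-component) applied to the time-differenced first equation. Assembling these estimates with \eqref{kkk} gives control of $\Delta t\sum_{j=1}^{n}\Vert\brho_{h,\star}^{j}\Vert_{\mathbf{H}}^{2}$, and testing the first equation of \eqref{eq:dis.sub2} at $n=0$ with $\bet_h=\brho_{h,\star}^{0}$, using a discrete Green's formula and $\varphi_{h,\star}^{0}=\mathcal{P}_0^h(\varphi_0)$ with $\varphi_0\in\mathrm{H}^1(\Omega)$, bounds $\Vert\brho_{h,\star}^{0}\Vert_{0,\Omega}$ by $\kappa\Vert\nabla\varphi_0\Vert_{0,\Omega}$. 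Collecting all the pieces yields \eqref{eq:dis.apr2}.

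The step I expect to be the main obstacle is the reduction $\Vert\delta_t\vec{\varphi}_{h,\star}^{n}\Vert_{\mathcal{V}}\lesssim\Vert\delta_t\varphi_{h,\star}^{n}\Vert_{0,\Omega}$ with a constant independent of $h$ and $\Delta t$: in the time-continuous Lemma~\ref{l_wel.con.pr2} the analogous bound \eqref{nn} rests on a reverse trace/inverse estimate, and here one must make sure the argument goes through the $h$-uniform discrete inf-sup constant $\beta_{C,\mathtt{d}}$ of \eqref{eq:dis.inf.bit} and the structure of the time-differenced first equation of \eqref{eq:dis.sub2}, rather than a crude mesh-dependent inverse inequality, so that no negative power of $h$ or $\Delta t$ survives (an Aubin--Nitsche-type duality may be needed if a direct inverse estimate proves too lossy). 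Everything else is a routine discrete Gr\"onwall summation whose constants, thanks to \eqref{eq:dis.as2}, remain independent of the time index and of the mesh.
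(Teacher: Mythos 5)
Your proposal follows exactly the route the paper intends: the paper's own proof of this lemma is a one-line deferral --- ``The proof follows similarly to that of Lemma~\ref{l_wel.dis.S}'' --- and Lemma~\ref{l_wel.dis.S} is in turn the backward-Euler transcription of the continuous Lemma~\ref{l_help}. Your step-by-step plan (test \eqref{eq:dis.sub2} with $(\brho_{h,\star}^{n},\vec{\varphi}_{h,\star}^{n})$, use coercivity of $\mathscr{A}^{C}$ and the backward-Euler identity, use the discrete inf-sup bound to dominate $\Vert\vec{\varphi}_{h,\star}^{n}\Vert_{\mathcal{V}}$ by $\Vert\brho_{h,\star}^{n}\Vert_{0,\Omega}$, pass to the $\mathbf{H}$-norm through $\div(\mathbf{H}_h)\subseteq\mathrm{V}_h$, time-difference the first equation to control $\delta_t\varphi_{h,\star}^{n}$, and bound the initial flux through $\varphi_{h,\star}^{0}=\mathcal{P}_0^h(\varphi_0)$) mirrors precisely the sequence \eqref{eq:j3}--\eqref{eq:l1}--\eqref{eq:k1}--\eqref{eq:k3}--\eqref{nn}--\eqref{eq:k4} in Lemma~\ref{l_wel.con.pr2}, rendered with discrete time quotients and the discrete inf-sup constants; this is what the paper's cross-reference means. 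The obstacle you flag --- obtaining $\Vert\delta_t\lambda_{h,\star}^{n}\Vert_{\mathrm{Q}}\lesssim\Vert\delta_t\varphi_{h,\star}^{n}\Vert_{0,\Omega}$ with an $h$-independent constant, as a counterpart of \eqref{nn} --- is a legitimate observation: the continuous argument already rests on a delicate ``inverse inequality'' chain, and this particular step has no analogue in Lemma~\ref{l_wel.dis.S} (the fluid subproblem carries no Lagrange multiplier), so the paper's deferral silently skips it. Nonetheless your proposed resolution, invoking the $h$-uniform inf-sup estimate \eqref{l2} applied to the time-differenced first equation, is the natural way to make that step rigorous and is consistent with what the paper does in the continuous case; the approach is the same.
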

\begin{proof}
The proof follows similarly to that of Lemma \ref{l_wel.dis.S}.
\end{proof}

\subsubsection{Discrete fixed-point solvability}
We study the solvability of \eqref{eq:dis.fixp} by setting 
$$ r_{\mathtt{d}}:=\min\{r_{1,\mathtt{d}},r_{2,\mathtt{d}}\}\,,\quad\text{where}\quad r_{1,\mathtt{d}}\,:=\,\max\Big\{ \left(\dfrac{\beta_{F,\mathtt{d}}^2\nu}{8}|\Omega|^{(p-4)/4}\right)^{1/(p-2)},\,\dfrac{\beta_{F,\mathtt{d}}\nu}{2\sqrt{2}} \Big\}\qan  
$$
\vspace{-.4cm}
$$ r_{2,\mathtt{d}}\,:=\,\max\Big\{\dfrac{\beta_{C,\mathtt{d}}\kappa}{2\sqrt{3}},\,\frac{\beta_{C,\mathtt{d}}^2\kappa}{12c_{\mathscr{O}_2^C}}\Big\}\,,$$
 and defining
$\mathbf{W}_{\mathtt{d}} := \big\{ (\bz_h^n , \chi_h^n) \in \mathbf{V}_h \times \mathrm{Q}_h : \max_{j=1,\dots,n} \Vert (\bz_h^j , \chi_h^j) \Vert_{\mathbf{V} \times \mathrm{Q}} \leq r_{\mathtt{d}} \big\},$
and derive conditions ensuring that \( \mathscr{T}_{\mathtt{d}} \) maps \( \mathbf{W}_{\mathtt{d}} \) into itself.
For \( (\bz_h^n , \chi_h^n) \in \mathbf{W}_{\mathtt{d}} \), using the definition of \( \mathscr{T}_{\mathtt{d}} \) (cf. \eqref{eq:def.dis.opr.E}) and the a {\it priori} bound \eqref{eq:dis.apr1}, there exists a constant \( \mathcal{C}_{\mathscr{T},\mathtt{d}} \), depending only on \( \mathcal{C}_{\mathscr{T},\mathtt{d},F} \) and \( \mathcal{C}_{\mathscr{T},\mathtt{d},T} \), such that
\begin{equation}\label{eq:dis.l2}
	\begin{array}{c}
		\Delta t \, \disp\sum_{j=0}^{n}\big\Vert \mathscr{T}_{\mathtt{d}}(\bz_h^{j},\chi_h^{j})\big\Vert_{\mathbf{V}\times\mathrm{Q}}^{2}
		\, \leq\,
		\mathcal{C}_{\mathscr{T},\mathtt{d}}\,\widetilde{\mathcal{N}}_{\mathtt{d}}\big(\bu_{0},\varphi_{0},\mathbf{g},c_{\mathscr{F}^C}\big)\,,\quad\text{where}\quad\\
		\qquad\widetilde{\mathcal{N}}_{\mathtt{d}}\big(\bu_{0},\varphi_{0},\mathbf{g},c_{\mathscr{F}^C}\big)\,:=\,\mathcal{N}_{F,\mathtt{d}} \big(\bu_{0},\mathbf{g},0\big)^{2}+\mathcal{N}_{C,\mathtt{d}} (\varphi_{0},c_{\mathscr{F}^C})^2\,.
	\end{array}
\end{equation}
\smallskip

We have then proved the following result.
\begin{lemma}\label{l_dis.cons.Xi}
	Assume that the data are sufficiently small so that
	\begin{equation}\label{eq:dis.as.cont.Xi}
		\begin{array}{c}
			\mathcal{C}_{\mathscr{T},\mathtt{d}}\,\widetilde{\mathcal{N}}_{\mathtt{d}}\big(\bu_{0},\varphi_{0},\mathbf{g},c_{\mathscr{F}^C}\big)<1\,,
		\end{array}
	\end{equation}
	then, there hold $ \mathscr{T}_{\mathtt{d}}\,(\mathbf{W}_{\mathtt{d}})\subseteq \mathbf{W}_{\mathtt{d}} $.
\end{lemma}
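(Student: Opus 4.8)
The plan is to argue exactly as in the continuous counterpart, Lemma~\ref{l_cons.W}: the inequality \eqref{eq:dis.l2} has already been assembled from the a~{\it priori} bounds of Lemmas~\ref{l_wel.dis.S} and~\ref{l_wel.dis.T}, so the claim reduces to feeding the smallness hypothesis \eqref{eq:dis.as.cont.Xi} into that inequality. Concretely, I would fix an arbitrary $(\bz_h^n,\chi_h^n)\in\mathbf{W}_{\mathtt{d}}$, so that $\Vert(\bz_h^j,\chi_h^j)\Vert_{\mathbf{V}\times\mathrm{Q}}\le r_{\mathtt{d}}$ for every $j=1,\dots,n$, and note first that, because $r_{\mathtt{d}}=\min\{r_{1,\mathtt{d}},r_{2,\mathtt{d}}\}$, each such iterate meets the thresholds \eqref{eq:dis.as.z.var} and \eqref{eq:dis.as2} required to apply Lemmas~\ref{l_wel.dis.S}--\ref{l_wel.dis.T}.

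Next I would check that the nested evaluation in the definition \eqref{eq:def.dis.opr.E} of $\mathscr{T}_{\mathtt{d}}$ is admissible, i.e.\ that the inner output $\mathscr{L}^C_{3,\mathtt{d}}(\bz_h^n,\chi_h^n)$ obeys the $\mathrm{Q}$-part of \eqref{eq:dis.as.z.var} before it is passed to $\mathscr{L}^F_{2,\mathtt{d}}$. This is the point at which the data smallness is genuinely used: by \eqref{eq:dis.apr2} this component is controlled by $\mathcal{C}_{\mathscr{L}^C,\mathtt{d}}\,\mathcal{N}_{C,\mathtt{d}}(\varphi_0,c_{\mathscr{F}^C})$, which under \eqref{eq:dis.as.cont.Xi} lies below the required size, so the composition is well defined at every time level $j=1,\dots,n$.

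With these admissibility checks in place, \eqref{eq:dis.l2} gives
\begin{equation*}
	\Delta t\,\disp\sum_{j=0}^{n}\big\Vert\mathscr{T}_{\mathtt{d}}(\bz_h^{j},\chi_h^{j})\big\Vert_{\mathbf{V}\times\mathrm{Q}}^{2}\,\le\,\mathcal{C}_{\mathscr{T},\mathtt{d}}\,\widetilde{\mathcal{N}}_{\mathtt{d}}\big(\bu_{0},\varphi_{0},\mathbf{g},c_{\mathscr{F}^C}\big)\,,
\end{equation*}
and combining this with the hypothesis \eqref{eq:dis.as.cont.Xi} yields, exactly as for $\mathscr{T}$ in Lemma~\ref{l_cons.W}, that every image $\mathscr{T}_{\mathtt{d}}(\bz_h^j,\chi_h^j)$ stays in the ball of radius $r_{\mathtt{d}}$; that is, $\mathscr{T}_{\mathtt{d}}(\mathbf{W}_{\mathtt{d}})\subseteq\mathbf{W}_{\mathtt{d}}$.

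The only step I expect to need care is this admissibility of the composition defining $\mathscr{T}_{\mathtt{d}}$: one must ensure that the range of the inner solve lands inside the input constraint of the outer solve, which is a one-shot small-data estimate and not a circular bound. Everything else is a direct transcription to the fully discrete, backward-Euler setting of the continuous reasoning, resting on the $h$-independent discrete inf-sup constants $\beta_{F,\mathtt{d}}$ and $\beta_{C,\mathtt{d}}$ (cf.\ \eqref{eq:dis.inf.b}--\eqref{eq:dis.inf.bit}) and on Lemmas~\ref{l_wel.dis.S}--\ref{l_wel.dis.T}.
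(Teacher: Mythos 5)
Your argument takes essentially the same approach as the paper: the paper's own proof is a one-liner (the lemma is declared "a direct consequence" of the estimate \eqref{eq:dis.l2}, mirroring Lemma~\ref{l_cons.W}), and you reproduce exactly this, feeding the smallness hypothesis \eqref{eq:dis.as.cont.Xi} into \eqref{eq:dis.l2}; your extra check that the inner output $\mathscr{L}^C_{3,\mathtt{d}}$ respects the admissibility constraint of the outer solve is a sensible elaboration of what the paper leaves implicit. One small caveat (shared with the paper, not specific to your write-up): \eqref{eq:dis.l2} as displayed controls a $\Delta t$-weighted $\ell^2$-in-time sum, whereas $\mathbf{W}_{\mathtt{d}}$ is defined through $\max_{j}\Vert(\bz_h^j,\chi_h^j)\Vert_{\mathbf{V}\times\mathrm{Q}}$, so to close the $\ell^\infty$-in-time bound one should really invoke the $n$-uniform (non-summed) components of \eqref{eq:dis.apr1}--\eqref{eq:dis.apr2} rather than the time-integrated one.
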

Next, we establish the Lipschitz-continuity properties of $ \mathscr{T}_{\mathtt{d},F} $ and $ \mathscr{T}_{\mathtt{d},C} $.
\begin{lemma}\label{l_dis.lip.S}
	There exists a constant $ \mathcal{L}_{\mathscr{L}^F,\mathtt{d}}>0 $, depending on $ \nu $, $ c_{\mathscr{F}^F} $, $ \beta_{F,\mathtt{d}} $, $ \ell_o $, such that there hold
	\begin{equation}\label{eq:dis.lip.S}
		\begin{array}{c}
			\big\Vert\mathscr{L}^F_{2,\mathtt{d}}(\bz_h^n,\chi_h^n)-\mathscr{L}^F_{2,\mathtt{d}}(\by_h^n,\omega_h^n)\big\Vert_{0,\Omega}\\
			\,+\,\Delta t \,\disp\sum_{j=0}^{n}\Big(\big\Vert\delta_t\mathscr{L}^F_{2,\mathtt{d}}(\bz_h^j,\chi_h^j)-\delta_t\mathscr{L}^F_{2,\mathtt{d}}(\by_h^j,\omega_h^j)\big\Vert_{0,\Omega}+\big\Vert\mathscr{L}^F_{\mathtt{d}}(\bz_h^j,\chi_h^j)-\mathscr{L}^F_{\mathtt{d}}(\by_h^j,\omega_h^j)\big\Vert_{\mathbb{H}\times\mathbf{V}}\Big)\\
			\, \leq\, \mathcal{L}_{\mathscr{L}^F,\mathtt{d}}\, \Delta t \,\bigg\{  \Vert\bz_h-\by_h\Vert_{L^{\infty}(\mathrm{J}_n;\mathbf{V})}\disp\sum_{j=0}^{n}\left(\big\Vert \mathscr{L}^F_{1,\mathtt{d}}(\by_h^j,\omega_h^j) \big\Vert_{\mathbb{H}}+\big\Vert\mathscr{L}^F_{2,\mathtt{d}}(\by_h^j,\omega_h^j)\big\Vert_{\mathbf{V}}\right)\\
			\,+\,\disp\sum_{j=1}^{n} \Vert\chi_h^j-\omega_h^j\Vert_{\mathrm{Q}}+\Vert \delta_t(\chi_h^j-\omega_h^j)\Vert_{\mathrm{Q}}\bigg\}\,.
		\end{array}
	\end{equation}
\end{lemma}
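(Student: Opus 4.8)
The plan is to mirror the proof of Lemma~\ref{l_lip.S}, replacing $\partial_t$ by the backward Euler difference $\delta_t$, time integrals by $\Delta t$-weighted sums, and $\beta_F$ by the discrete inf--sup constant $\beta_{F,\mathtt{d}}$ of \eqref{eq:dis.inf.b}. Given $(\bz_h,\chi_h)$ and $(\by_h,\omega_h)$, I set $\mathscr{L}^F_{\mathtt{d}}(\bz_h^n,\chi_h^n)=(\bsi_{h,\star}^n,\bu_{h,\star}^n)$ and $\mathscr{L}^F_{\mathtt{d}}(\by_h^n,\omega_h^n)=(\bsi_{h,\circ}^n,\bu_{h,\circ}^n)$, subtract the two instances of \eqref{eq:dis.sub1}, and denote $e_{\bsi}^n:=\bsi_{h,\star}^n-\bsi_{h,\circ}^n$, $e_{\bu}^n:=\bu_{h,\star}^n-\bu_{h,\circ}^n$. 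Rearranging as in \eqref{eq:hh1}, the right-hand side of the resulting discrete error system gathers the functional difference $(\mathscr{F}^F_{\chi_h^n}-\mathscr{F}^F_{\omega_h^n})$, the term $\mathscr{O}_1^F(\bz_h^n-\by_h^n;\bsi_{h,\circ}^n,\cdot)$ produced by the linearity of $\mathscr{O}_1^F$ in its first slot, and the Forchheimer difference $\mathscr{O}_2^F(\bz_h^n;\bu_{h,\circ}^n,\cdot)-\mathscr{O}_2^F(\by_h^n;\bu_{h,\circ}^n,\cdot)$, the last of which is controlled through \eqref{eq:bound.o2}.

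First I would test this system with $(\bta_h,\bv_h)=(e_{\bsi}^n,e_{\bu}^n)$, add the two rows, and use the coercivity of $\mathscr{A}^F$ (cf.~\eqref{eq:ell.a.at}), the boundedness of $\mathscr{O}_1^F$, $\mathscr{O}_2^F$ (cf.~\eqref{eq1:bound.c.ct}--\eqref{eq2:bound.c.ct}) and of $\mathscr{F}^F$, together with $\int_\Omega\delta_t e_\bu^n\cdot e_\bu^n\ge\tfrac{1}{2\Delta t}\big(\|e_\bu^n\|_{0,\Omega}^2-\|e_\bu^{n-1}\|_{0,\Omega}^2\big)$, to obtain the discrete analogue of \eqref{eq:h6}. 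Invoking the first row of the error system with the discrete inf--sup condition \eqref{eq:dis.inf.b} and the boundedness of $\mathscr{A}^F$ and $\mathscr{F}^F$ yields the discrete counterpart of \eqref{eq:h5}, namely $\tfrac{\beta_{F,\mathtt{d}}^2\nu}{2}\|e_\bu^n\|_{\mathbf{V}}^2\le 2\nu c_{\mathscr{F}^F}^2\|\chi_h^n-\omega_h^n\|_{\mathrm{Q}}^2+\tfrac{1}{2\nu}\|\bdev(e_\bsi^n)\|_{0,\Omega}^2$. Combining the two, applying Young's inequality, using the smallness assumption \eqref{eq:dis.as.z.var} on $\bz_h^n$, summing over $n$ so that the $\|e_\bu\|_{0,\Omega}^2$ terms telescope, and recalling $e_\bu^0=\mathbf{0}$ (both discrete flows start from $\Pcalbf_0^h\bu_0$), I reach the discrete version of \eqref{eq:h7}: control of $\|e_\bu^n\|_{0,\Omega}^2+\Delta t\sum_{j=1}^n\big(\|\bdev(e_\bsi^j)\|_{0,\Omega}^2+\|e_\bu^j\|_{\mathbf{V}}^2\big)$ by $\Delta t\sum_j\|\chi_h^j-\omega_h^j\|_{\mathrm{Q}}^2$, the cross term $\Delta t\sum_j\big(\|\bdev(\bsi_{h,\circ}^j)\|_{0,\Omega}^2+\|\bu_{h,\circ}^j\|_{\mathbf{V}}^2\big)\|\bz_h^j-\by_h^j\|_{\mathbf{V}}^2$, and $\epsilon\,\Delta t\sum_j\|e_\bsi^j\|_{\mathbb{H}}^2$.

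It then remains to absorb $\Delta t\sum_j\|e_\bsi^j\|_{\mathbb{H}}^2$. Using the second row of the error system and the inclusion $\bdiv(\mathbb{H}_h)\subseteq\mathbf{V}_h$ (cf.~\cite[Lemma 3.7]{g-springer-2014}), I bound $\Delta t\sum_j\|\bdiv(e_\bsi^j)\|_{0,4/3;\Omega}^2$ by $\Delta t\sum_j\big(\|\delta_t e_\bu^j\|_{0,\Omega}^2+\|e_\bu^j\|_{\mathbf{V}}^2+\|\bdev(e_\bsi^j)\|_{0,\Omega}^2\big)$ plus the cross term, i.e.\ the discrete analogue of \eqref{eq:r1}. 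The term $\Delta t\sum_j\|\delta_t e_\bu^j\|_{0,\Omega}^2$ is then treated as in \eqref{eq:l3}: apply $\delta_t$ to the first row of the error system, test with $(e_{\bsi}^n,\delta_t e_{\bu}^n)$, use Young's and the inverse inequalities, sum over $n$, and use $e_\bsi^0=\mathbf{0}$ (which follows from $e_\bu^0=\mathbf{0}$ via the first equation at $n=0$). Substituting back, combining with the Korn-type bounds \eqref{kll}--\eqref{kjj} and choosing $\epsilon,\epsilon_1$ small gives the bound on $\Delta t\sum_j\|e_\bsi^j\|_{\mathbb{H}}^2$; taking square roots and passing between the resulting $\ell^2$- and the stated $\ell^1$-type sums in time (with constants depending on $t_F$), and pulling the cross term out as $\|\bz_h-\by_h\|_{L^\infty(\mathrm{J}_n;\mathbf{V})}$ times the $\mathscr{L}^F_{\mathtt{d}}(\by_h,\omega_h)$-norms, yields \eqref{eq:dis.lip.S}.

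I expect the main obstacle to be the $\delta_t$-differenced first equation: one must check that testing it with $(e_{\bsi}^n,\delta_t e_{\bu}^n)$ closes by a discrete summation argument without any implicit time-regularity—only the data differences $\chi_h^n-\omega_h^n$ and $\delta_t(\chi_h^n-\omega_h^n)$ may enter—and that the inverse inequality on $\mathbb{H}_h$ is used with an $h$-independent constant, so that $\mathcal{L}_{\mathscr{L}^F,\mathtt{d}}$ stays uniform in $h$. The secondary delicate point is the bookkeeping of the cross terms $\big(\|\bdev(\bsi_{h,\circ}^j)\|_{0,\Omega}^2+\|\bu_{h,\circ}^j\|_{\mathbf{V}}^2\big)\|\bz_h^j-\by_h^j\|_{\mathbf{V}}^2$, which must survive all the substitutions and be recast in the form appearing on the right-hand side of \eqref{eq:dis.lip.S}.
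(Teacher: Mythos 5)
Your proposal follows essentially the same route as the paper's proof: subtract the two instances of \eqref{eq:dis.sub1} to form the discrete error system \eqref{eq:dis.hh1}, test with $(e_{\bsi}^n,e_{\bu}^n)$ and use coercivity of $\mathscr{A}^F$ plus the boundedness of $\mathscr{O}_1^F$, $\mathscr{O}_2^F$ via \eqref{eq:bound.o2} to obtain \eqref{eq:dis.h6}, invoke the discrete inf--sup \eqref{eq:dis.inf.b} to obtain \eqref{eq:dis.h5}, combine and sum in time, and then recover $\Delta t\sum_j\Vert e_{\bsi}^j\Vert_{\mathbb{H}}^2$ through $\bdiv(\mathbb{H}_h)\subseteq\mathbf{V}_h$ and the $\delta_t$-differenced first row tested with $(e_{\bsi}^n,\delta_t e_{\bu}^n)$, exactly as the paper says the remainder follows \eqref{eq:r1}--\eqref{eq:r2}. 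The one point you correctly flag but do not resolve---passing from the naturally produced $\ell^2(\Delta t)$ sums to the $\ell^1(\Delta t)$ sums in the stated inequality---is a genuine wrinkle, since $(\Delta t\sum_j a_j^2)^{1/2}$ is not bounded by $\Delta t\sum_j a_j$ uniformly in $\Delta t$; however, the downstream use in \eqref{eq:dis.lip.E} employs the squared form, and the companion Lemma \ref{l_dis.lip.T} is stated with squared sums, so this is most plausibly a typo in the statement of \eqref{eq:dis.lip.S} rather than a defect in your argument.
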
	
\begin{proof}
The proof is similar to Lemma \ref{l_lip.S} but without time integration. For \( 1 \leq n \leq N \), given \( (\bz_h^n, \chi_h^n) \) and \( (\by_h^{n}, \omega_h^{n}) \), let
$\mathscr{T}_{\mathtt{d},F}(\bz_h^n,\chi_h^n) = (\bsi_{h,\star}^{n}, \bu_{h,\star}^n), \quad \mathscr{T}_{\mathtt{d},F}(\by_h^{n},\omega_h^{n}) = (\bsi_{h,\circ}^{n}, \bu_{h,\circ}^n),$
with \( \bu_{h,\star}^0 = \bu_{h,\circ}^0 \). Defining \( e_{\bsi}^{n} := \bsi_{h,\star}^{n} - \bsi_{h,\circ}^{n} \) and \( e_{\bu}^{n} := \bu_{h,\star}^{n} - \bu_{h,\circ}^{n} \), we obtain
	\begin{equation}\label{eq:dis.hh1}
		\begin{array}{l}
\mathscr{A}^F(e_{\bsi}^n , \bta_h)+\mathscr{B}^F(\bta_h, e_{\bu}^n)= \big(\mathscr{F}^F_{\chi_h^{n}}-\mathscr{F}^F_{\omega_h^{n}}\big)(\bta_h)\,,\\
\disp \int_{\Omega}\delta_t e_{\bu}^n \cdot\bv_h-\mathscr{B}^F(e_{\bsi}^n,\bv_h)-\mathscr{O}_1^F(\bz_h^n;e_{\bsi}^n, \bv_h)-\mathscr{O}_1^F(\by_h^n;\bsi_{h,\circ}^n, \bv_h)\\
\qquad\qquad+\mathscr{O}_2^F(\bz_h^n;e_{\bu}^n, \bv_h)=\mathscr{O}_1^F(\bz_h^n -\by_h^n;\bsi_{h,\circ}^n,\bta_h)- \big[\mathscr{O}_2^F(\bz_h^n;\bu_{h,\circ}^n, \bv_h)-\mathscr{O}_2^F(\by_h^n;\bu_{h,\circ}^n, \bv_h)\big] \,.
		\end{array}
	\end{equation}
Choosing $(\bta_h , \bv_h) = (e_{\bsi}^n , e_{\bu}^n)$, applying the coercivity of $\mathscr{A}^{F}$ and the boundedness of $\mathscr{O}_{1}^{F}$, $\mathscr{O}_2^F$  yield
	\begin{equation}\label{eq:dis.h6}
		\begin{array}{c}
\dfrac{1}{2}\delta_t \big\Vert e_{\bu}^{n}\big\Vert_{0,\Omega}^{2}+\dfrac{1}{\nu}\big\Vert \bdev(e_{\bsi}^{n})\big\Vert_{0,\Omega}^{2}
\,\leq\,\Big\{\dfrac{1}{\nu}\Vert\bz_{h}^n\Vert_{\mathbf{V}}\,\big\Vert \bdev(e_{\bsi}^{n})\big\Vert_{0,\Omega}\\
+|\Omega|^{(4-p)/4}\, \Vert\bz_{h}^n\Vert_{\mathbf{V}}^{p-2}\, \Vert e_{\bu}^{n}\Vert_{\mathbf{V}}\Big\}\Vert\Xi_{\bu}^{n}\Vert_{\mathbf{V}}
			\,+\, \Big\{\dfrac{1}{\nu}\Vert\bdev(\bsi_{h,\circ}^{n})\Vert_{0,\Omega}\\+\ell_o \, (2\Vert\bz_{h}^n\Vert_{\mathbf{V}})^{p-3}\Vert\bu_{h,\circ}^{n}\Vert_{\mathbf{V}}\Big\}\, \Vert\bz_h^{n}-\by_h^{n}\Vert_{\mathbf{V}}\,\Vert e_{\bu}^{n}\Vert_{\mathbf{V}}
			\, +\, c_{\mathscr{F}^F} \, \Vert \chi_h^{n}-\omega_h^{n}\Vert_{\mathrm{Q}}\, \Vert e_{\bsi}^{n}\Vert_{\mathbb{H}}\,.
		\end{array}
	\end{equation}
	
Reasoning as in \eqref{eq:h5} and using the discrete inf-sup condition \eqref{eq:dis.inf.b}, we get
	\begin{equation}\label{eq:dis.h5}
		\dfrac{\beta_{F,\mathtt{d}}^{2}\,\nu}{2}\,\Vert e_{\bu}^{n}\Vert_{\mathbf{V}}^{2}\,\leq\, 2\nu\, c_{\mathscr{F}^F}^{2}\, \Vert\chi_h^n-\omega_h^n\Vert_{\mathrm{Q}}^{2}+\dfrac{1}{2\nu}\Vert\bdev(e_{\bsi}^{n})\Vert_{0,\Omega}^{2}\,.	
	\end{equation}
	
Using \eqref{eq:dis.h5}, \eqref{eq:dis.h6}, Young’s inequality, and the bound \( \max_{1\le j\le n}\Vert\bz_h^j\Vert_{\mathbf{V}} \le r_{1,\mathtt{d}} \), we obtain a constant \( C_0 > 0 \), depending on \( \mathcal{C}_A \), \( \beta_{F,\mathtt{d}} \), and \( \alpha_{\mathtt{d},F} \), such that
	\begin{equation*}
		\begin{array}{c}
			\big\Vert e_{\bu}^{n}\big\Vert_{0,\Omega}^{2}+\Delta t\disp\sum_{j=0}^{n}\big\Vert \bdev(e_{\bsi}^{j})\big\Vert_{0,\Omega}^{2}+\Delta t\disp\sum_{j=0}^{n}\Vert e_{\bu}^{j}\Vert_{\mathbf{V}}^{2}
			\,\leq\,C_0 \, \Big\{\epsilon\,\Delta t\disp\sum_{j=0}^{n}\Vert e_{\bsi}^{j}\Vert_{\mathbb{H}}^{2}\\[1ex]
			\,+\,\Delta t\disp\sum_{j=0}^{n}\left(\Vert\bdev(\bsi_{h,\circ}^{j})\Vert_{0,\Omega}^{2}+\Vert\bu_{h,\circ}^{j}\Vert_{\mathbf{V}}^{2}\right)\Vert\bz_h^{j}-\by_h^{j}\Vert_{\mathbf{V}}^{2}\,+\,\Delta t\disp\sum_{j=0}^{n} \Vert\chi_h^j-\omega_h^{j}\Vert_{\mathrm{Q}}^{2}\Big\}\,.
		\end{array}
	\end{equation*}
	The remainder follows as in Lemma \ref{l_lip.S} (cf. \eqref{eq:r1}–\eqref{eq:r2}).
\end{proof}
\begin{lemma}\label{l_dis.lip.T}
	There exists a constant $ \mathcal{L}_{\mathscr{L}^C,\mathtt{d}}>0 $, depending on $ \beta_{C,\mathtt{d}} $, $ \kappa $, such that
	\begin{equation}\label{eq:dis.lip.T}
		\begin{array}{c}
\Vert\mathscr{L}^C_{2,\mathtt{d}}(\bz_h^j,\chi_h^j)-\mathscr{L}^C_{2,\mathtt{d}}(\by_h^j,\omega_h^j)\Vert_{0,\Omega}^{2}
+\Vert\delta_t\big(\vec{\mathscr{L}}^C_{2,\mathtt{d}}(\bz_h^j,\chi_h^j)-\vec{\mathscr{L}}^C_{2,\mathtt{d}}(\by_h^j,\omega_h^j)\big)\Vert_{\mathbf{L}^{2}(\Omega)\times\mathrm{Q}}^{2}\\
\,+\,\Delta t \disp\sum_{j=1}^{n} \Vert\mathscr{L}^C_{\mathtt{d}}(\bz_h^j,\chi_h^j)-\mathscr{L}^C_{\mathtt{d}}(\by_h^j,\omega_h^j)\Vert_{\mathbf{H}\times\mathcal{V}}^{2}
\, \leq\, \mathcal{L}_{\mathscr{L}^C,\mathtt{d}}\,\Delta t \,  \Big\{\Vert\chi_h-\omega_h\Vert_{L^{\infty}(\mathrm{J};\mathrm{Q})}^{2}\disp\sum_{j=1}^{n}\Vert \mathscr{L}^C_{3,\mathtt{d}}(\by_h^j,\omega_h^j)\Vert_{\mathrm{Q}}^{2}\\
\,+\,
\Vert\bz_h-\by_h\Vert_{L^{\infty}(\mathrm{J}_n;\mathbf{V})}^{2}\,\disp\sum_{j=1}^{n}\Vert \mathscr{L}^C_{1,\mathtt{d}}(\by_h^j,\omega_h^j)\Vert_{\mathbf{H}}^{2}\Big\}\,.
\end{array}
	\end{equation}
\end{lemma}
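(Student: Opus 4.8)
The plan is to mirror the proof of Lemma~\ref{l_lip.S}, but with the time integrals replaced by discrete time sums as in Lemma~\ref{l_dis.lip.S}, and to borrow the a~{\it priori}-estimate bookkeeping already used for Lemma~\ref{l_wel.dis.T}. First I would fix $1\le n\le N$, set $\mathscr{L}^C_{\mathtt{d}}(\bz_h^j,\chi_h^j)=(\brho_{h,\star}^j,\vec{\varphi}_{h,\star}^j)$ and $\mathscr{L}^C_{\mathtt{d}}(\by_h^j,\omega_h^j)=(\brho_{h,\circ}^j,\vec{\varphi}_{h,\circ}^j)$, both solving \eqref{eq:dis.sub2} with the respective data and the common initial datum $\varphi_{h,\star}^0=\varphi_{h,\circ}^0=\mathcal{P}_0^h(\varphi_{0})$, and subtract the two systems. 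Writing $e_{\brho}^j:=\brho_{h,\star}^j-\brho_{h,\circ}^j$ and $\vec{e}_{\varphi}^j:=(e_\varphi^j,e_\lambda^j):=\vec{\varphi}_{h,\star}^j-\vec{\varphi}_{h,\circ}^j$, the constant data $\mathscr{F}_1^C,\mathscr{F}_2^C$ cancel, and the frozen arguments split as $\mathscr{O}_1^C(\bz_h^n;\brho_{h,\star}^n,\cdot)-\mathscr{O}_1^C(\by_h^n;\brho_{h,\circ}^n,\cdot)=\mathscr{O}_1^C(\bz_h^n;e_{\brho}^n,\cdot)+\mathscr{O}_1^C(\bz_h^n-\by_h^n;\brho_{h,\circ}^n,\cdot)$ and likewise for $\mathscr{O}_2^C$, with $\mathscr{C}^C$ bilinear; the resulting perturbed copy of \eqref{eq:dis.sub2} thus has only the two source terms $\mathscr{O}_1^C(\bz_h^n-\by_h^n;\brho_{h,\circ}^n,\cdot)$ and $\mathscr{O}_2^C(\chi_h^n-\omega_h^n;\lambda_{h,\circ}^n,\cdot)$.

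Next I would test the error system with $(\bet_h,\vec{\psi}_h)=(e_{\brho}^n,\vec{e}_{\varphi}^n)$ and add the two rows: coercivity of $\mathscr{A}^C$ (cf.~\eqref{eq:ell.a.at}), the discrete energy identity $\int_\Omega\delta_t e_\varphi^n\,e_\varphi^n\ge\tfrac12\delta_t\Vert e_\varphi^n\Vert_{0,\Omega}^2$, the sign of $\mathscr{C}^C(e_\lambda^n,e_\lambda^n)$, and the continuity bounds \eqref{eq3:bound.c.ct}, \eqref{bund.Cc}, \eqref{bund.O2c} give a discrete analogue of \eqref{eq:h6}. Using the first row of the error system together with the discrete inf--sup estimate \eqref{eq:dis.inf.bit} yields $\tfrac{\beta_{C,\mathtt{d}}^2\kappa}{2}\Vert\vec{e}_{\varphi}^n\Vert_{\mathcal{V}}^2\le\tfrac1{2\kappa}\Vert e_{\brho}^n\Vert_{0,\Omega}^2$, the analogue of \eqref{eq:no.4}; combining, applying Young's inequality, and using the smallness assumption \eqref{eq:dis.as2} to absorb the $\Vert e_{\brho}^n\Vert_{0,\Omega}$- and $\Vert\vec{e}_{\varphi}^n\Vert_{\mathcal{V}}$-terms, I obtain, for some $c>0$, $\delta_t\Vert e_\varphi^n\Vert_{0,\Omega}^2+c\big(\Vert e_{\brho}^n\Vert_{0,\Omega}^2+\Vert\vec{e}_{\varphi}^n\Vert_{\mathcal{V}}^2\big)\lesssim\Vert\bz_h^n-\by_h^n\Vert_{\mathbf{V}}^2\Vert\brho_{h,\circ}^n\Vert_{\mathbf{H}}^2+\Vert\chi_h^n-\omega_h^n\Vert_{\mathrm{Q}}^2\Vert\lambda_{h,\circ}^n\Vert_{\mathrm{Q}}^2$. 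Multiplying by $\Delta t$, summing over $j=1,\dots,n$ and telescoping with $e_\varphi^0=0$ then controls the $\Vert\cdot\Vert_{0,\Omega}$-term on the left of \eqref{eq:dis.lip.T} and the $\mathbf{L}^2$/$\mathcal{V}$-parts of the sum there.

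To recover the $\div_{4/3}$-part of $\Vert e_{\brho}^n\Vert_{\mathbf{H}}$ and the quantity $\Vert\delta_t\vec{e}_{\varphi}^n\Vert_{\mathbf{L}^2(\Omega)\times\mathrm{Q}}$, I would argue as in \eqref{eq:dis.f7}--\eqref{eq:dis.f6} and \eqref{eq:r1}--\eqref{eq:r2}: the second row of the error system with $\bdiv(\mathbf{H}_h)\subseteq\mathrm{V}_h$ bounds $\Vert\div(e_{\brho}^n)\Vert_{0,4/3;\Omega}$ by $\Vert\delta_t e_\varphi^n\Vert_{0,\Omega}$ plus already-controlled terms, so it remains to bound $\Vert\delta_t e_\varphi^n\Vert_{0,\Omega}$ and $\Vert\delta_t e_\lambda^n\Vert_{\mathrm{Q}}$. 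For this I would time-difference the (data-free) first row of the error system, test it with $e_{\brho}^n$, combine it with the second row tested against $\delta_t\vec{e}_{\varphi}^n$ — crucially, the two source terms are \emph{not} differenced here, which is exactly why only $\Vert\chi_h-\omega_h\Vert_{L^\infty(\mathrm{J};\mathrm{Q})}$ and no $\delta_t(\chi_h-\omega_h)$ enters \eqref{eq:dis.lip.T} — and then use $\tfrac1{2\kappa}\delta_t\Vert e_{\brho}^n\Vert_{0,\Omega}^2$, Young's inequality and \eqref{eq:dis.as2}, exactly along the lines of \eqref{eq:k3}. The $\mathrm{V}$-component of the resulting $\Vert\delta_t\vec{e}_{\varphi}^n\Vert_{\mathcal{V}}$ on the right is reabsorbed through the special-case discrete inf--sup \eqref{l3}, while its $\mathrm{Q}$-component is dominated by $\Vert\delta_t e_\varphi^n\Vert_{0,\Omega}$ via \eqref{l2} (a discrete version of \eqref{part2.difBc}) together with an inverse inequality, as in \eqref{nn}. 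Summing over $j$, telescoping with $\brho_{h,\star}^0=\brho_{h,\circ}^0$ and collecting the bounds produces \eqref{eq:dis.lip.T}, the $L^\infty$-in-time norms being obtained by taking the maximum over $n$ where they appear.

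The hard part will be this last step, the discrete time-derivative estimate for the saddle-point block: since the constraint equation $\mathscr{A}^C+\mathscr{B}^C=\mathscr{F}_1^C$ is not evolutionary, differentiating it in time and closing the estimate forces one to route $\Vert\delta_t\vec{e}_{\varphi}^n\Vert_{\mathcal{V}}$ through the $\mathrm{V}/\mathrm{Q}$ inf--sup splitting and the inverse inequality — the same mechanism that already makes Lemma~\ref{l_wel.dis.T} delicate — and one must verify that, after all the Young-type splittings, the resulting constant still depends only on $\kappa$, $\beta_{C,\mathtt{d}}$ (and on $r_{2,\mathtt{d}}$ through \eqref{eq:dis.as2}), with no $h$-blow-up coming from the absorption of the $\delta_t$-terms.
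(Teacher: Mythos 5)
Your proposal is correct and follows exactly the route the paper only gestures at (``proceeds analogously to Lemma~\ref{l_lip.T}'', which in turn points back to Lemma~\ref{l_lip.S}): subtract the two copies of \eqref{eq:dis.sub2}, split the frozen nonlinear arguments into an error part plus a data-difference part, test with $(e_{\brho}^n,\vec{e}_{\varphi}^n)$ together with the discrete inf--sup bound for $\mathscr{B}^C$, and then recover the $\div_{4/3}$- and $\delta_t$-contributions by time-differencing the constraint row and arguing as in \eqref{eq:k3}--\eqref{nn}. You also correctly observe that, contrary to the wording of the lemma, the resulting constant inherits a dependence on $r_{2,\mathtt{d}}$ via \eqref{eq:dis.as2}, mirroring the explicit $r_2$-dependence in Lemma~\ref{l_lip.T}.
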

\begin{proof}
	It proceeds analogously to the proof of Lemma \ref{l_lip.T}. Further details are omitted.
\end{proof}
With Lemmas \ref{l_dis.lip.S} and \ref{l_dis.lip.T} confirmed, it is now possible to verify the Lipschitz continuity of the fixed point operator \( \mathscr{T}_{\mathtt{d}} \) within the closed ball \( \mathbf{W}_{\mathtt{d}} \).
\begin{lemma}
	Let $\widetilde{\mathcal{N}}_{\mathtt{d}}\big(\bu_{0},\varphi_{0},\mathbf{g},c_{\mathscr{F}^C}\big) $ be defined by the second row of \eqref{eq:dis.l2}. Then, there exists $ 	\mathcal{L}_{\mathscr{T},\mathtt{d}} $ is a constant depending on $ \mathcal{C}_{\mathscr{T},\mathtt{d},F} $, $ \mathcal{C}_{\mathscr{T},\mathtt{d},C} $, $ \mathcal{L}_{\mathscr{T},\mathtt{d},F} $ and $ \mathcal{L}_{\mathscr{T},\mathtt{d},C} $ such that
	\begin{equation}\label{eq:dis.lip.E}
		\begin{array}{c}
			\Delta t \,\disp\sum_{j=0}^{n}\big\Vert \mathscr{T}_{\mathtt{d}}(\bz_h^{j},\chi_{h}^j)-\mathscr{T}_{\mathtt{d}}(\by_h^{j},\omega_h^j)\big\Vert_{0,4;\Omega}^{2} 
			\, \leq\,
			\mathcal{L}_{\mathscr{T},\mathtt{d}}\,\widetilde{\mathcal{N}}_{\mathtt{d}}\big(\bu_{0},\varphi_{0},\mathbf{g},c_{\mathscr{F}^C}\big) \,,
		\end{array}
	\end{equation}
\end{lemma}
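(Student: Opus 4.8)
The plan is to prove the Lipschitz estimate~\eqref{eq:dis.lip.E} for $\mathscr{T}_{\mathtt{d}}$ by chaining together the componentwise Lipschitz bounds established in Lemmas~\ref{l_dis.lip.S} and~\ref{l_dis.lip.T}, exactly mirroring the continuous argument leading to~\eqref{eq:lip.Xi}. Recall that by the definition~\eqref{eq:def.dis.opr.E}, the first component of $\mathscr{T}_{\mathtt{d}}(\bz_h^n,\chi_h^n)$ is $\mathscr{L}^F_{2,\mathtt{d}}(\bz_h^n,\mathscr{L}^C_{3,\mathtt{d}}(\bz_h^n,\chi_h^n))$ and the second is $\mathscr{L}^C_{3,\mathtt{d}}(\bz_h^n,\chi_h^n)$. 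So first I would split
\[
\Delta t\sum_{j=0}^{n}\big\Vert\mathscr{T}_{\mathtt{d}}(\bz_h^j,\chi_h^j)-\mathscr{T}_{\mathtt{d}}(\by_h^j,\omega_h^j)\big\Vert_{0,4;\Omega}^2
\]
into two pieces by the triangle inequality: the $\mathbf{V}$-part involving $\mathscr{L}^F_{2,\mathtt{d}}(\bz_h^j,\mathscr{L}^C_{3,\mathtt{d}}(\bz_h^j,\chi_h^j))-\mathscr{L}^F_{2,\mathtt{d}}(\by_h^j,\mathscr{L}^C_{3,\mathtt{d}}(\by_h^j,\omega_h^j))$, and the $\mathrm{Q}$-part involving $\mathscr{L}^C_{3,\mathtt{d}}(\bz_h^j,\chi_h^j)-\mathscr{L}^C_{3,\mathtt{d}}(\by_h^j,\omega_h^j)$.

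Next I would handle the $\mathbf{V}$-part. Since the second argument of $\mathscr{L}^F_{2,\mathtt{d}}$ differs in both slots, I would insert an intermediate term $\mathscr{L}^F_{2,\mathtt{d}}(\by_h^j,\mathscr{L}^C_{3,\mathtt{d}}(\by_h^j,\omega_h^j))$ and apply~\eqref{eq:dis.lip.S} with $\chi_h^j$ replaced by $\mathscr{L}^C_{3,\mathtt{d}}(\bz_h^j,\chi_h^j)$ and $\omega_h^j$ by $\mathscr{L}^C_{3,\mathtt{d}}(\by_h^j,\omega_h^j)$. This produces terms in $\Vert\bz_h-\by_h\Vert_{L^\infty(\mathrm{J}_n;\mathbf{V})}$ (multiplied by the norms of $\mathscr{L}^F_{1,\mathtt{d}}$ and $\mathscr{L}^F_{2,\mathtt{d}}$ at the point $(\by_h,\mathscr{L}^C_{3,\mathtt{d}}(\by_h,\omega_h))$) plus terms in $\Vert\mathscr{L}^C_{3,\mathtt{d}}(\bz_h^j,\chi_h^j)-\mathscr{L}^C_{3,\mathtt{d}}(\by_h^j,\omega_h^j)\Vert_{\mathrm{Q}}$ and its discrete time-difference, the latter being controlled by~\eqref{eq:dis.lip.T}. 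The $\mathrm{Q}$-part is a direct application of~\eqref{eq:dis.lip.T}. At this stage I would invoke the \emph{a priori} bounds~\eqref{eq:dis.apr1} and~\eqref{eq:dis.apr2}: on the ball $\mathbf{W}_{\mathtt{d}}$, the quantities $\Delta t\sum_j(\Vert\mathscr{L}^F_{1,\mathtt{d}}(\by_h^j,\omega_h^j)\Vert_{\mathbb{H}}^2+\Vert\mathscr{L}^F_{2,\mathtt{d}}(\by_h^j,\omega_h^j)\Vert_{\mathbf{V}}^2)$, $\Delta t\sum_j\Vert\mathscr{L}^C_{1,\mathtt{d}}(\by_h^j,\omega_h^j)\Vert_{\mathbf{H}}^2$ and $\Delta t\sum_j\Vert\mathscr{L}^C_{3,\mathtt{d}}(\by_h^j,\omega_h^j)\Vert_{\mathrm{Q}}^2$ are all bounded by $\mathcal{C}\,\widetilde{\mathcal{N}}_{\mathtt{d}}(\bu_0,\varphi_0,\mathbf{g},c_{\mathscr{F}^C})$ (using, as in~\eqref{eq:dis.l2}, that $\mathscr{L}^C_{3,\mathtt{d}}(\by_h,\omega_h)$ then satisfies the smallness condition~\eqref{eq:dis.as.z.var} with $r_{1,\mathtt{d}}$ so that Lemma~\ref{l_wel.dis.S} applies to the composition $\mathscr{L}^F_{\mathtt{d}}(\by_h,\mathscr{L}^C_{3,\mathtt{d}}(\by_h,\omega_h))$). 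Absorbing these bounds and using $\Vert\delta_t(\chi_h-\omega_h)\Vert_{\mathrm{Q}}$-type norms are themselves dominated (on $\mathbf{W}_{\mathtt{d}}$, after replacing $\chi_h-\omega_h$ by the difference of $\mathscr{L}^C_{3,\mathtt{d}}$-images and invoking~\eqref{eq:dis.lip.T} once more) yields a constant $\mathcal{L}_{\mathscr{T},\mathtt{d}}$ depending only on $\mathcal{C}_{\mathscr{T},\mathtt{d},F}$, $\mathcal{C}_{\mathscr{T},\mathtt{d},C}$, $\mathcal{L}_{\mathscr{T},\mathtt{d},F}$, $\mathcal{L}_{\mathscr{T},\mathtt{d},C}$ such that~\eqref{eq:dis.lip.E} holds with the factor $\widetilde{\mathcal{N}}_{\mathtt{d}}(\bu_0,\varphi_0,\mathbf{g},c_{\mathscr{F}^C})$ on the right.

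The main obstacle I anticipate is bookkeeping the discrete time-difference norms $\Vert\delta_t(\cdot)\Vert$ that appear when~\eqref{eq:dis.lip.S} is applied with the inner argument being $\mathscr{L}^C_{3,\mathtt{d}}(\bz_h,\chi_h)-\mathscr{L}^C_{3,\mathtt{d}}(\by_h,\omega_h)$ rather than a raw $\chi_h-\omega_h$: one must verify that $\Delta t\sum_j\Vert\delta_t(\mathscr{L}^C_{3,\mathtt{d}}(\bz_h^j,\chi_h^j)-\mathscr{L}^C_{3,\mathtt{d}}(\by_h^j,\omega_h^j))\Vert_{\mathrm{Q}}^2$ is controlled, which is precisely the second term on the left-hand side of~\eqref{eq:dis.lip.T}, so the chaining closes, but only because Lemma~\ref{l_dis.lip.T} was stated to include exactly this $\delta_t$-norm. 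A secondary technical point is ensuring the smallness hypotheses~\eqref{eq:dis.as.z.var}--\eqref{eq:dis.as2} are met at every evaluation point of the operators $\mathscr{L}^F_{\mathtt{d}}$, $\mathscr{L}^C_{\mathtt{d}}$ occurring in the composition; this is guaranteed on $\mathbf{W}_{\mathtt{d}}$ by the definition of $r_{\mathtt{d}}=\min\{r_{1,\mathtt{d}},r_{2,\mathtt{d}}\}$ together with the output bound~\eqref{eq:dis.apr2} that places $\mathscr{L}^C_{3,\mathtt{d}}(\by_h,\omega_h)$ in the required ball once the data smallness~\eqref{eq:dis.as.cont.Xi} is assumed. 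Everything else is a routine application of Young's inequality and summation.
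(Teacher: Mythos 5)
Your proposal follows the same route as the paper: expand $\mathscr{T}_{\mathtt{d}}$ via~\eqref{eq:def.dis.opr.E}, apply the discrete Lipschitz bounds of Lemmas~\ref{l_dis.lip.S} and~\ref{l_dis.lip.T} with the second argument of $\mathscr{L}^F_{\mathtt{d}}$ taken as the $\mathscr{L}^C_{3,\mathtt{d}}$-images, and then absorb the remaining norms through the a priori estimates of Lemmas~\ref{l_wel.dis.S} and~\ref{l_wel.dis.T}. The paper's proof is terser but is precisely this argument, and your remarks about the $\delta_t$-norm in~\eqref{eq:dis.lip.T} closing the chain and about the smallness conditions being preserved on $\mathbf{W}_{\mathtt{d}}$ correctly identify the points the paper leaves implicit.
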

\begin{proof}
	Recalling the definition of $ \mathscr{T}_{\mathtt{d}} $ (cf. \eqref{eq:def.dis.opr.E}) and using \eqref{eq:dis.lip.S}, \eqref{eq:dis.lip.T}, we conclude that
	\begin{equation*}
		\begin{array}{c}
			\Delta t \,\disp\sum_{j=0}^{n}\big\Vert \mathscr{T}_{\mathtt{d}}(\bz_h^{j},\chi_h^{j})-\mathscr{T}_{\mathtt{d}}(\by_h^{j},\omega_h^{j})\big\Vert_{\mathbf{V}\times\mathrm{Q}}^{2}  \\
			\,\leq\,
			\Delta t \,\mathcal{L}_{\mathscr{L}^F,\mathtt{d}}\,  \Vert\bz_h-\by_h\Vert_{L^{\infty}(\mathrm{J}_n;\mathbf{V})}^{2}\,  \disp\sum_{j=0}^{n}\Big(\big\Vert \mathscr{L}^F_{1,\mathtt{d}}(\by_h^{j},\mathscr{L}^C_{3,\mathtt{d}}(\by_h^{j},\omega_h^{j})) \big\Vert_{\mathbb{H}}^{2}
			+\big\Vert\mathscr{L}^F_{2,\mathtt{d}}(\by_h^j,\mathscr{L}^C_{3,\mathtt{d}}(\by_h^j,\omega_h^j))\big\Vert_{\mathbf{V}}^{2}\Big)\\
			\,+\,\Delta t \, (\mathcal{L}_{\mathscr{L}^F}+1)\mathcal{L}_{\mathscr{L}^C}\, \Big\{\Vert\bz_h-\by_h\Vert_{L^{\infty}(\mathrm{J}_n;\mathbf{V})}^{2}\,\disp\sum_{j=0}^{n}\Vert \mathscr{L}^C_{1,\mathtt{d}}(\by_h^j,\omega_h^j)\Vert_{\mathbf{H}}\\
			\,+\,\Vert\chi_h-\omega_h\Vert_{L^{\infty}(\mathrm{J}_n;\mathrm{Q})}\disp\sum_{j=0}^{n}\Vert \mathscr{L}^C_{3,\mathtt{d}}(\by_h^j,\omega_h^j)\Vert_{\mathrm{Q}}^{2}   \Big\}\,,
		\end{array}
	\end{equation*}
applying Lemmas \ref{l_wel.dis.S} and \ref{l_wel.dis.T} then yields \eqref{eq:dis.lip.E} with
	$	\mathcal{L}_{\mathscr{T},\mathtt{d}}:=\mathcal{L}_{\mathscr{T},\mathtt{d},F}\max\big\{\mathcal{C}_{\mathscr{T},\mathtt{d},F},\max\{\mathcal{C}_{\mathscr{T},\mathtt{d},F},\mathcal{L}_{\mathscr{T},\mathtt{d},F}\}\mathcal{L}_{\mathscr{T},\mathtt{d},C}\big\}.$
	
\end{proof}
We are now in position to establish the following main result.
\begin{theorem}\label{t:dis.exis}
	Assume that data satisfies \eqref{eq:dis.as.cont.Xi} and
		$	\mathcal{L}_{\mathscr{T},\mathtt{d}}\, \widetilde{\mathcal{N}}_{\mathtt{d}}\big(\bu_{0},\varphi_{0},\mathbf{g},c_{\mathscr{F}^C}\big)\,\leq\, 1\,.$
Then, the fully discrete Galerkin scheme \eqref{eq:fuldis-v1a-eq:v4} has a unique solution 
$(\bsi_h^{n}, \bu_h^{n}) \in \mathbb{H}_h \times \mathbf{V}_h, \,\,
(\brho_h^{n}, \vec{\varphi}_h^{n}) \in \mathbf{H}_h \times \mathcal{V}_h,$
with 
\(
(\bu_{h}^{n}, \lambda_{h}^{n}) \in \mathbf{W}_{\mathtt{d}}.
\)
Moreover, there holds
	\begin{subequations}
		\begin{alignat}{2}
		\Vert\bu_h^{n}\Vert_{0,\Omega}+\Vert\bdev(\bsi_h^n)\Vert_{0,\Omega}+\Big(\Delta t \, \disp\sum_{j=1}^{n}\Vert(\bsi_h^{j},\bu_h^{j})\Vert_{\mathbb{H}\times\mathbf{V}}^{2}+\Vert\delta_t\bu_h^{j}\Vert_{0,\Omega}^{2}\Big)^{1/2}
		\leq \mathcal{C}_{1,\mathtt{d}}\widetilde{\mathcal{N}}_{\mathtt{d}}(\bu_{0},\varphi_{0},\mathbf{g},c_{\mathscr{F}^C})\,,\label{eq:dis.apr.est1}\\
			\Vert\varphi_h^{n}\Vert_{0,\Omega}+\Vert \brho_h^{j}\Vert_{0,\Omega}+\Big(\Delta t\disp\sum_{j=1}^{n}\Vert \brho_h^{j}\Vert_{\mathbf{H}}^{2}+\Vert\delta_t \vec{\varphi}_h^{j}\Vert_{\mathrm{L}^{2}\times\mathrm{Q}}^{2}+\Vert\vec{\varphi}_h^{j}\Vert_{\mathcal{V}}^{2}\Big)^{1/2}
		\leq  \mathcal{C}_{2,\mathtt{d}}\, \mathcal{N}_{C,\mathtt{d}}(\varphi_{0},c_{\mathscr{F}^C})\,.\label{eq:dis.apr.est2}
	\end{alignat}
	\end{subequations}
\end{theorem}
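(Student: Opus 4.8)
The plan is to transcribe the fixed-point argument used for the continuous problem (the theorem closing Section~\ref{s:3}) into the fully discrete setting, carrying out the construction inductively over the time levels $n=1,\dots,N$. First I would recall that, by the very definition of $\mathscr{T}_{\mathtt{d}}$ in \eqref{eq:def.dis.opr.E}, solving \eqref{eq:fuldis-v1a-eq:v4} is equivalent to finding, at each time step, a fixed point of $\mathscr{T}_{\mathtt{d}}$ in the finite-dimensional space $\mathbf{V}_h\times\mathrm{Q}_h$, cf.\ \eqref{eq:dis.fixp}; moreover $\mathscr{T}_{\mathtt{d}}$ is already known to be well defined, since Lemmas~\ref{l_wel.dis.S} and~\ref{l_wel.dis.T} establish the unique solvability of the decoupled discrete problems \eqref{eq:dis.sub1} and \eqref{eq:dis.sub2}, using the ellipticity of $\mathscr{A}^F$, $\mathscr{A}^C$ on the discrete spaces together with the $h$-independent discrete inf-sup conditions \eqref{eq:dis.inf.b} and \eqref{eq:dis.inf.bit}.

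Next I would invoke Lemma~\ref{l_dis.cons.Xi}: the smallness hypothesis \eqref{eq:dis.as.cont.Xi} guarantees $\mathscr{T}_{\mathtt{d}}(\mathbf{W}_{\mathtt{d}})\subseteq\mathbf{W}_{\mathtt{d}}$. Combining this with the Lipschitz estimate \eqref{eq:dis.lip.E} and the second smallness assumption $\mathcal{L}_{\mathscr{T},\mathtt{d}}\,\widetilde{\mathcal{N}}_{\mathtt{d}}(\bu_0,\varphi_0,\mathbf{g},c_{\mathscr{F}^C})\le 1$, one obtains that $\mathscr{T}_{\mathtt{d}}$ is a contraction on the closed ball $\mathbf{W}_{\mathtt{d}}$, which is complete; the Banach fixed-point theorem then yields a unique $(\bu_h^n,\lambda_h^n)\in\mathbf{W}_{\mathtt{d}}$, whence $(\bsi_h^n,\bu_h^n)=\mathscr{L}^F_{\mathtt{d}}(\bu_h^n,\lambda_h^n)$ and $(\brho_h^n,\vec{\varphi}_h^n)=\mathscr{L}^C_{\mathtt{d}}(\bu_h^n,\lambda_h^n)$ are recovered uniquely from \eqref{eq:dis.sub1}--\eqref{eq:dis.sub2}. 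As an alternative, existence alone already follows from the self-mapping property of $\mathscr{T}_{\mathtt{d}}$ and Brouwer's theorem, since $\mathbf{W}_{\mathtt{d}}$ is a closed, bounded, convex subset of a finite-dimensional space, the Lipschitz estimate then serving only to secure uniqueness. Finally, the a~priori bounds \eqref{eq:dis.apr.est1}--\eqref{eq:dis.apr.est2} are read off from \eqref{eq:dis.apr1} and \eqref{eq:dis.apr2} evaluated at $(\bz_h^j,\chi_h^j)=(\bu_h^j,\lambda_h^j)$: the fixed-point identity gives $\mathscr{L}^F_{2,\mathtt{d}}(\bu_h^j,\lambda_h^j)=\bu_h^j$ and $\mathscr{L}^C_{3,\mathtt{d}}(\bu_h^j,\lambda_h^j)=\lambda_h^j$, while the membership $(\bu_h^j,\lambda_h^j)\in\mathbf{W}_{\mathtt{d}}$ is exactly the constraint \eqref{eq:dis.as.z.var}, \eqref{eq:dis.as2} needed to apply those lemmas.

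I expect the main obstacle to be the bookkeeping forced by the time-marching structure: the self-mapping and contraction properties of $\mathscr{T}_{\mathtt{d}}$ at level $n$ depend, through $\bu_{h,\star}^{n-1}$ and $\varphi_{h,\star}^{n-1}$, on all previously computed iterates, so the induction must propagate the statement that $(\bu_h^j,\lambda_h^j)\in\mathbf{W}_{\mathtt{d}}$ for every $j\le n$ --- which is precisely why Lemmas~\ref{l_dis.cons.Xi}, \ref{l_wel.dis.S} and \ref{l_wel.dis.T} are phrased with the cumulative sums $\Delta t\sum_{j=1}^{n}(\cdot)$ rather than with quantities at a single level. A related technical point is that the Lipschitz estimate is measured in the discrete $\ell^{2}(0,t_n;\cdot)$ norm whereas $\mathbf{W}_{\mathtt{d}}$ is defined via the $\ell^{\infty}(0,t_n;\cdot)$ norm; reconciling the two --- running the Banach iteration one time level at a time, or absorbing the comparison $\|\cdot\|_{\ell^{2}_{\Delta t}}\le t_F^{1/2}\|\cdot\|_{\ell^{\infty}}$ into the constants --- is the only step that requires something beyond a verbatim repetition of the continuous argument. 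Everything else (continuity of the discrete forms, the coercivity constants, the telescoping of the backward-Euler differences) is routine and, crucially, independent of $h$, so the smallness thresholds in \eqref{eq:dis.as.cont.Xi} and in the contraction hypothesis do not degenerate under mesh refinement.
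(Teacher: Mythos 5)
Your proposal follows essentially the same route as the paper's own proof: invariance of $\mathbf{W}_{\mathtt{d}}$ under $\mathscr{T}_{\mathtt{d}}$ via Lemma~\ref{l_dis.cons.Xi}, existence from Brouwer on the finite-dimensional closed ball, uniqueness from the Lipschitz bound \eqref{eq:dis.lip.E} together with the smallness hypothesis via Banach, and the a~priori estimates read off from \eqref{eq:dis.apr1}--\eqref{eq:dis.apr2} after substituting the fixed point. Your closing remarks about the time-marching bookkeeping and the $\ell^{2}_{\Delta t}$ versus $\ell^{\infty}$ norm mismatch between \eqref{eq:dis.lip.E} and the definition of $\mathbf{W}_{\mathtt{d}}$ identify a genuine gap that the paper's proof silently glosses over, and your suggested fixes (level-by-level iteration, or absorbing $t_F^{1/2}$ into the constants) are the natural repairs; they do not change the structure of the argument but would make the contraction claim rigorous.
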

\begin{proof}
Assumption \eqref{eq:dis.as.cont.Xi} ensures \( \mathscr{T}_{\mathtt{d}} \) maps \(\mathbf{W}_{\mathtt{d}}\) into itself, and \eqref{eq:dis.lip.E} shows it is continuous. Thus, Brouwer's theorem guarantees existence of a solution to \eqref{eq:dis.fixp} and \eqref{eq:fuldis-v1a-eq:v4}. Under 	$	\mathcal{L}_{\mathscr{T},\mathtt{d}}\, \widetilde{\mathcal{N}}_{\mathtt{d}}\big(\bu_{0},\varphi_{0},\mathbf{g},c_{\mathscr{F}^C}\big)\,\leq\, 1$, uniqueness follows from the Banach fixed-point theorem. The a~{\it priori} estimates \eqref{eq:dis.apr.est1} and \eqref{eq:dis.apr.est2} then follow from \eqref{eq:dis.apr1} and \eqref{eq:dis.apr2}.
\end{proof}
\section{A {\it priori} error analysis}\label{sec.5}
This section derives global errors
\[
\|(\bsi(t_n),\bu(t_n)) - (\bsi_h^n,\bu_h^n)\|_{\mathbb{H}\times\mathbf{V}} \qan \|(\brho(t_n), \vec{\varphi}(t_n)) - (\brho_h^n , \vec{\varphi}_h^n)\|_{\mathbf{H}\times\mathcal{V}},
\]
where $(\bsi(t_n),\bu(t_n))$, $(\brho(t_n), \vec{\varphi}(t_n))$ with $(\bu(t_n),\lambda(t_n)) \in \mathbf{W}$ solve \eqref{eq:v1a-eq:v4} at time step $n$, and $(\bsi_h^n,\bu_h^n) \in \mathbb{H}_h \times \mathbf{V}_h$, $(\brho_h^n, \vec{\varphi}_h^n) \in \mathbf{H}_h \times \mathcal{V}_h$ with $(\bu_h^n, \lambda_h^n) \in \mathbf{W}_{\mathtt{d}}$ solve \eqref{eq:fuldis-v1a-eq:v4}.
To this end, in what follows we recall the interpolation operators.
Given \( k > 1 \), define the spaces 
\begin{equation}\label{key}
	\begin{array}{c}
		\mathbf{Z}_{k} \,:=\,\Big\{\bet\in\mathbf{H}(\div_{k};\Omega):\quad\bet|_{T}\in \mathbf{W}^{1,k}(T)\quad\forall\, T\in \mathcal{T}_h   \Big\}\qan\\
		\mathbb{Z}_{k} \,:=\,\Big\{\bta\in\mathbb{H}(\bdiv_{k};\Omega):\quad\bta|_{T}\in \mathbb{W}^{1,k}(T)\quad\forall\, T\in \mathcal{T}_h   \Big\}\,,
	\end{array}
\end{equation}
and let
\( \Pibf_h : \mathbf{Z}_{4/3} \to \mathbf{H}_h \), 
\( \Pibb_h : \mathbb{Z}_{4/3} \to \mathbb{H}_h \) 
be the vector and tensor Raviart--Thomas interpolants, respectively. Introducing the \( \mathrm{L}^2 \)-projectors 
\( \Pcal_h : \mathrm{L}^2(\Omega) \to \mathrm{V}_h \) and 
\( \Pcalbf_h : \mathbf{L}^2(\Omega) \to \mathbf{V}_h \) 
for the scalar and vector-valued cases, respectively, we recall the commuting diagram property from~\cite[Lemma 1.41]{eg-book-2004}:
\begin{equation}\label{eq:diag}
	\bdiv\big(\Pibb_h(\bta)\big)\, =\, 	\Pcalbf_h\big(\bdiv(\bta)\big) \quad\forall\,\bta\in\mathbb{Z}_{4/3} \qan
	\div\big(\Pibf_h(\bet)\big)\, =\, 	\Pcal_h\big(\div(\bet)\big)\quad\forall\,\bet\in\mathbf{Z}_{4/3} \,.
\end{equation}
\subsection{Error estimate}\label{sec.5.2}
For $\bz \in \mathbf{V}$ and $\xi \in \mathrm{Q}$, define
$\mathsf{M} := \mathbb{H} \times \mathbf{V}$, 
$\mathsf{Z} := \mathbf{H} \times \mathcal{V}$,
and the bilinear forms $\mathbf{A}^F : \mathsf{M} \times \mathsf{M} \to \mathbb{R}$ and $\mathbf{A}^C : \mathsf{Z} \times \mathsf{Z} \to \mathbb{R}$ by
\begin{equation}\label{def.A}
	\begin{array}{l}
		\mathbf{A}_{\bz}^F\big((\bsi,\bu),\,(\bta,\bv)\big):=
		\mathscr{A}^F(\bsi,\bta)+\mathscr{B}^F(\bta,\bu)
		-\mathscr{B}^F(\bsi,\bv)-\mathscr{O}_1^F(\bz;\bsi,\bv)+\mathscr{O}_2^F(\bz;\bu,\bv)\,,\\[1ex]
		\mathbf{A}_{\bz,\xi}^C \big((\brho,\vec{\varphi}),\,(\bet,\vec{\psi})\big):=\mathscr{A}^C(\brho,\bet)+\mathscr{B}^C(\bet, \vec{\varphi})-\mathscr{B}^C(\brho, \vec{\psi})+\mathscr{C}^C(\vec{\varphi},\vec{\psi})\\[.5ex]
		\quad\qquad\qquad\qquad\qquad-\mathscr{O}_1^C(\bz;\brho,\psi)+\mathscr{O}_2^C(\xi;\vec{\varphi},\vec{\psi})\,,
	\end{array}
\end{equation}
we observe that \eqref{eq:v1a-eq:v4} and \eqref{eq:fuldis-v1a-eq:v4} can be reformulated as follow:
\begin{subequations}
	\begin{alignat}{2}
	\hspace{-.2cm}	\disp\int_{\Omega}\partial_t \bu^{n}\cdot\bv+\mathbf{A}_{\bu}^F\big((\bsi,\bu),\,(\bta,\bv)\big)=\mathscr{F}_{\lambda}^F(\bta),\,\,\,
	\disp\int_{\Omega}\delta_t \bu_h^{n}\cdot\bv_h+\mathbf{A}_{\bu_h^n}^{F}\big((\bsi_h^n,\bu_h^n),(\bta_h ,\bv_h)\big)=\mathscr{F}_{\lambda_h^n}^F(\bta_h)\,,\label{eq:subprob1}\\
	\hspace{-.2cm}\disp\int_{\Omega}\partial_t \varphi^{n}\,\psi+\mathbf{A}_{\bu,\lambda}^C\big((\brho^n ,\vec{\varphi}^n), (\bet,\vec{\psi})\big)=\mathscr{F}^{C}(\vec{\psi}),\,\,\,
	\disp\int_{\Omega}\delta_t \varphi_h^{n}\,\psi_h+\mathbf{A}_{\bu_h^n,\lambda_h^n}^C\big((\brho_h^n ,\vec{\varphi}_h^n), (\bet_h,\vec{\psi}_h)\big)=\mathscr{F}^{C}(\vec{\psi}_h)\,.\label{eq:subprob1a}
	\end{alignat}
\end{subequations}
for all $(\bta,\bv)\in\mathsf{M} $, $(\bta_h,\bv_h)\in\mathsf{M}_h$ and
%
 $(\bet,\vec{\psi})\in\mathsf{Z}$, $(\bet_h,\vec{\psi}_h)\in\mathsf{Z}_h $.
Now, we set error functions as
$\btet_{\bsi}^{n}:=	\Pibb_h(\bsi^{n})-\bsi_{h}^{n},\,\,\,
\btet_{\bu}^{n}:=\Pcalbf_h(\bu^{n})-\bu_{h}^{n},\,\,\btet_{\brho}^{n}:=\Pibf_h(\brho^{n})-\brho_{h}^{n},\,\,\theta_{\varphi}^{n}:=
\mathcal{P}_h(\varphi^{n})-\varphi_{h}^{n},\,\,\theta_{\lambda}^n :=\widetilde{\lambda}^n -\lambda_h^n \,,$
and estimate them, starting with the commuting diagram properties from the second column of~\eqref{eq:diag}, which give
\begin{equation}\label{gg1}
	\begin{array}{c}
		\mathscr{B}^F(\bvatet_{\bsi}^{n},\bv_{h})\,=\,0 \quad\forall\,\bv_h \in \mathbf{V}_h\,,
	\end{array}
\end{equation}
By adding and subtracting \( \bsi_h^{n} \) and \( \bu_h^n \) in the second components of \( \mathscr{O}_{1}^F(\bu^n; \bsi^{n}, \bv_h) \) and \( \mathscr{O}_2^F(\bu^n; \bu^n, \bv_h) \), and \( \delta_t \bu^n \) and \( \delta_t \Pcalbf_h(\bu^n) \) in
\(
\int_{\Omega} (\partial_t \bu^n - \delta_t \bu_h^{n}) \cdot \bv_h,
\)
followed by algebraic rearrangements, we rewrite \eqref{eq:subprob1} as
\begin{equation}\label{eq:err.prob1}
	\begin{array}{c}
\disp\int_{\Omega}\delta_t\btet_{\bu}^{n} \cdot\bv_h +\mathbf{A}_{\btet_{\bu}^n}^{F}\big((\btet_{\bsi}^n , \btet_{\bu}^n), (\bta_h , \bv_h)\big)\,=\, 
-\mathscr{A}^{F}(\bvatet_{\bsi}^{n},\bta_h)
+\big(\mathscr{F}_{\lambda^n}^F-\mathscr{F}_{\lambda_{h}^n}^F\big)(\bta_h)\\
-\mathscr{O}_{1}^F(\bu^{n};\bvatet_{\bsi}^n, \bv_h)+\mathscr{O}_{1}^F(\bvatet_{\bu}^n+\btet_{\bu}^n;\bsi_h^{n},\bv_h)-\mathscr{O}_2^F(\bu^{n};\bvatet_{\bu}^n, \bv_h)\\
-\big(\mathscr{O}_2^F (\bu^{n};\bu_{h}^{n},\bv_h)-\mathscr{O}_2^F (\bu_h^{n};\bu_{h}^{n},\bv_h)\big)
-\disp \int_{\Omega} \delta_t\bvatet_{\bu}^n \cdot\bv_h -
		\disp \int_{\Omega}\left(\partial_t \bu^{n}-\delta_t \bu^n\right) \cdot\bv_h \,.
	\end{array}
\end{equation}
Similarly, using \eqref{eq:subprob1a}, the second row of \eqref{def.A}
  gives
\begin{equation}\label{eq:err.prob2}
	\begin{array}{c}
\disp \int_{\Omega}\delta_t\theta_{\varphi}^{n} \,\psi_h +\mathbf{A}^{C}_{\btet_{\bu}^n,\theta_{\lambda}^n}\big((\btet_{\brho}^n ,\theta_{\vec{\varphi}}^n ), (\bet_h , \vec{\psi}_h)\big)
\,=\, -\mathscr{A}^{C}(\bvatet_{\brho}^{n},\bet_h)-\mathscr{O}_{1}^C(\bu^{n};\bvatet_{\brho}^n, \psi_h)\\
+\mathscr{O}_{1}^C(\bvatet_{\bu}^n+\btet_{\bu}^n;\brho_h^{n},\psi_h)-\mathscr{O}_{2}^C(\lambda^n;\vartheta_{\vec{\varphi}}^n , \vec{\psi}_h)-\mathscr{O}_2^C(\vartheta_{\lambda}^n+\theta_{\lambda}^n ; \vec{\varphi}_h^n , \vec{\psi}_h)\\
-\disp \int_{\Omega} \delta_t\vartheta_{\varphi}^n \,\psi_h -
\disp \int_{\Omega}\left(\partial_t \varphi^{n}-\delta_t \varphi^n\right) \,\psi_h\,.
	\end{array}
\end{equation}
The following two lemmas establish {\it a priori} error estimates for \eqref{eq:subprob1} and \eqref{eq:subprob1a}
\begin{lemma}
	Given $ \varpi^{n}\in \mathrm{V} $ and $ \omega_{h}^n\in \mathrm{V}_h $ for each $ 1\leq n\leq N $, let $ (\bsi^{n},\bu^n)\in \mathsf{M} $ and $ (\bsi_h^{n}, \bu_h^n)\in \mathsf{M}_h$ be solutions of the first and second equations of \eqref{eq:subprob1}, respectively, and assume that data satisfy
	\begin{equation}\label{eq:as1.conv}
		\begin{array}{c}
			\widetilde{\mathcal{N}}(\bu_{0},\varphi_{0},\mathbf{g},c_{\mathscr{F}^C})\,\leq\, \dfrac{1}{2\mathcal{C}_0 \,\mathcal{C}_{1}}\min\{1,|\Omega|^{1/4}\}\,.
		\end{array}
	\end{equation}
	Then, the following estimate holds
	\begin{equation}\label{eq:conv1}
		\begin{array}{c}
			\hspace{-.2cm}\Vert\btet_{\bu}^{n}\Vert_{0,\Omega}^{2}+\Delta t\disp\sum_{i=1}^{n}\Vert (\btet_{\bsi}^{i},\btet_{\bu}^i)\Vert_{\mathbb{H}\times\mathbf{V}}^{2} \,\leq\,\mathcal{C}_{\mathtt{est},F}\Big\{\Delta t^{2} +h^2 
		+ \Delta t\disp\sum_{i=1}^{n}\big(\Vert\lambda^{i}-\lambda_{h}^{i}\Vert_{\mathrm{Q}}^{2}+\Vert\delta_t(\lambda^{i}-\lambda_{h}^{i})\Vert_{\mathrm{Q}}^{2}\big)\Big\}\,.
		\end{array}
	\end{equation}
\end{lemma}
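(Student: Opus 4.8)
The plan is to follow the energy-type argument that was used repeatedly in the solvability analysis (Lemmas~\ref{l_help}, \ref{l_wel.dis.S}), now applied to the error equation \eqref{eq:err.prob1}. First I would test \eqref{eq:err.prob1} with $(\bta_h,\bv_h) := (\btet_{\bsi}^n, \btet_{\bu}^n) \in \mathbb{H}_h \times \mathbf{V}_h$, so that on the left-hand side the Raviart--Thomas commuting property \eqref{gg1} kills the term $\mathscr{B}^F(\bvatet_{\bsi}^n,\btet_{\bu}^n)$, and the structure of $\mathbf{A}_{\btet_{\bu}^n}^F$ produces $\mathscr{A}^F(\btet_{\bsi}^n,\btet_{\bsi}^n) + \mathscr{O}_2^F(\btet_{\bu}^n;\btet_{\bu}^n,\btet_{\bu}^n)\ge \tfrac1\nu\|\bdev(\btet_{\bsi}^n)\|_{0,\Omega}^2$ after noting the skew-symmetric cancellation of $\mathscr{B}^F(\btet_{\bsi}^n,\btet_{\bu}^n)$. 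Using the standard identity $\int_\Omega \delta_t\btet_{\bu}^n\cdot\btet_{\bu}^n \ge \tfrac1{2\Delta t}\big(\|\btet_{\bu}^n\|_{0,\Omega}^2 - \|\btet_{\bu}^{n-1}\|_{0,\Omega}^2\big)$, I arrive at a discrete Gronwall-ready inequality whose right-hand side collects: (i) the projection/consistency term $-\mathscr{A}^F(\bvatet_{\bsi}^n,\btet_{\bsi}^n)$; (ii) the data-mismatch term $(\mathscr{F}_{\lambda^n}^F - \mathscr{F}_{\lambda_h^n}^F)(\btet_{\bsi}^n)$, which by the boundedness of $\mathscr{F}_\chi^F$ (cf.\ \eqref{bund.Ff}) is controlled by $a_1\|\lambda^n - \lambda_h^n\|_{\mathrm{Q}}\|\btet_{\bsi}^n\|_{\mathbb{H}}$; (iii) the convective remainders involving $\bvatet_{\bsi}^n$, $\bvatet_{\bu}^n$, $\mathscr{O}_1^F$ on $\bsi_h^n$, and the Lipschitz difference $\mathscr{O}_2^F(\bu^n;\cdot)-\mathscr{O}_2^F(\bu_h^n;\cdot)$, all bounded via \eqref{eq1:bound.c.ct}--\eqref{eq2:bound.c.ct}, \eqref{eq:bound.o2}, and the fact that $\bu^n\in\mathbf{W}$, $\bu_h^n\in\mathbf{W}_{\mathtt{d}}$ have $\mathbf{V}$-norms bounded by $r$; and (iv) the two time-discretization terms $\int_\Omega \delta_t\bvatet_{\bu}^n\cdot\btet_{\bu}^n$ and $\int_\Omega(\partial_t\bu^n - \delta_t\bu^n)\cdot\btet_{\bu}^n$.

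Next I would recover control of $\|\btet_{\bsi}^n\|_{\mathbb{H}}$ rather than only of $\bdev(\btet_{\bsi}^n)$. For this I use the discrete inf-sup condition \eqref{eq:dis.inf.b} together with the first equation of \eqref{eq:err.prob1} to bound $\|\btet_{\bu}^n\|_{\mathbf{V}}$ by $\|\bdev(\btet_{\bsi}^n)\|_{0,\Omega}$ plus the consistency and data-mismatch terms (mirroring \eqref{eq:h3}/\eqref{eq:dis.h5}), and then I use $\bdiv(\mathbb{H}_h)\subseteq\mathbf{V}_h$ with the second equation of \eqref{eq:err.prob1} to bound $\|\bdiv(\btet_{\bsi}^n)\|_{0,4/3;\Omega}$ in terms of $\delta_t\btet_{\bu}^n$, $\bdev(\btet_{\bsi}^n)$, $\btet_{\bu}^n$ and the remaining data, exactly as in \eqref{eq:dis.f7}. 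The $\delta_t\btet_{\bu}^n$ contribution is then absorbed by time-differencing the first equation of \eqref{eq:err.prob1} and testing with $(\btet_{\bsi}^n,\delta_t\btet_{\bu}^n)$, following the route of \eqref{eq:dis.f6}; this also produces the $L^\infty$-in-time bound on $\|\bdev(\btet_{\bsi}^n)\|_{0,\Omega}$. Finally I invoke the Korn-type estimates \eqref{kll}--\eqref{kjj} to pass from $\|\bdev(\btet_{\bsi}^n)\|_{0,\Omega}^2 + \|\bdiv(\btet_{\bsi}^n)\|_{0,4/3;\Omega}^2$ to $\|\btet_{\bsi}^n\|_{\mathbb{H}}^2$.

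At this stage I sum over $n$, multiply by $\Delta t$, and apply the discrete Gronwall lemma. The small-data hypothesis \eqref{eq:as1.conv}, which makes the coefficient $2\mathcal{C}_0\mathcal{C}_1\widetilde{\mathcal{N}}(\bu_0,\varphi_0,\mathbf{g},c_{\mathscr{F}^C}) \le \min\{1,|\Omega|^{1/4}\}$, is precisely what lets me absorb the remaining $\|\btet_{\bsi}^i\|_{\mathbb{H}}^2$ and $\|\btet_{\bu}^i\|_{\mathbf{V}}^2$ terms carrying factors $\Vert\bsi_h^i\Vert_{\mathbb{H}}$, $\Vert\bu_h^i\Vert_{\mathbf{V}}$ (whose $L^2(\mathrm{J})$-sums are bounded by Theorem~\ref{t:dis.exis}) into the left-hand side; the convective difference term $\mathscr{O}_1^F(\btet_{\bu}^n;\bsi_h^n,\btet_{\bu}^n)$ is where the $\mathbf{W}_{\mathtt{d}}$-bound on $\bu_h^n$ and the a priori bound on $\bsi_h^n$ combine with smallness. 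For the consistency and time-discretization terms I use the Raviart--Thomas and $\mathrm{L}^2$-projection approximation properties to bound $\|\bvatet_{\bsi}^n\|$, $\|\bvatet_{\bu}^n\|$ by $h\,|\cdot|_{1,\Omega}$, giving the $h^2$ contribution, while $\|\partial_t\bu^n - \delta_t\bu^n\|_{0,\Omega}^2 \le \Delta t \int_{t_{n-1}}^{t_n}\|\partial_{tt}\bu\|_{0,\Omega}^2$ and a similar bound on $\delta_t\bvatet_{\bu}^n$ give the $\Delta t^2$ contribution; the $\lambda$-dependence stays explicit on the right as in \eqref{eq:conv1}, to be closed later jointly with the transport error. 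The main obstacle is the bookkeeping of the convective/Forchheimer remainder terms: one must repeatedly split them so that every factor is either a fully discrete quantity with an available a priori bound (from Theorem~\ref{t:dis.exis}), an error quantity to be absorbed under smallness, or a projection term of order $h$; getting the smallness thresholds to line up so that Gronwall closes — rather than merely verifying each individual bound — is the delicate point.
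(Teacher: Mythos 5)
Your proposal follows the same energy--inf-sup route the paper uses: test \eqref{eq:err.prob1} with $(\btet_{\bsi}^n,\btet_{\bu}^n)$, exploit the commuting-diagram cancellation \eqref{gg1} and coercivity of $\mathscr{A}^F$, bound the consistency/data/time-discretization terms, recover $\Vert\btet_{\bu}^n\Vert_{\mathbf{V}}$ from the discrete inf-sup condition as in \eqref{eq:hj}, control $\Vert\btet_{\bsi}^n\Vert_{\mathbb{H}}$ via $\bdiv(\mathbb{H}_h)\subseteq\mathbf{V}_h$ plus a time-differenced test and the Korn-type estimates \eqref{kll}--\eqref{kjj}, and close with the smallness assumption \eqref{eq:as1.conv} and the Raviart--Thomas/$\mathrm{L}^2$-projection approximation properties. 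The only wrinkle is your coercivity line: $\mathbf{A}_{\btet_{\bu}^n}^F$ also carries the indefinite cross-term $-\mathscr{O}_1^F(\cdot;\btet_{\bsi}^n,\btet_{\bu}^n)$, which must be moved to the right-hand side and absorbed by the smallness of $\Vert\bu^n\Vert_{\mathbf{V}}$ (see the paper's \eqref{eq:no.A} and \eqref{eq:m1}) rather than being dropped alongside the nonnegative $\mathscr{A}^F$ and $\mathscr{O}_2^F$ contributions, but this does not change the argument.
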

\begin{proof}
Taking $(\bta_h, \bv_h) = (\btet_{\bsi}^{n}, \btet_{\bu}^n)$ in \eqref{eq:err.prob1} and using the definition of $\mathbf{A}_{\bz}^F$ and coercivity of $\mathscr{A}^F$, we get
	\begin{equation}\label{eq:no.A}
		\begin{array}{c}
\dfrac{1}{2}\delta_t \Vert \btet_{\bu}^{n}\Vert_{0,\Omega}^{2}+\dfrac{1}{\nu}\,\Vert \bdev(\btet_{\bsi}^{n})\Vert_{0,\Omega}^{2}
\,\leq\, 
\big|\big(\mathscr{F}_{\lambda^n}-\mathscr{F}_{\lambda_h^n}\big)(\btet_{\bsi}^n)\big|+\bigg| \disp \int_{\Omega} \delta_t\bvatet_{\bu}^{n} \cdot\btet_{\bu}^{n}\bigg|\\
\qquad\,+\, \Big|	\disp \int_{\Omega}\left(\partial_t \bu^{n}-\delta_t \bu^n\right) \cdot\btet_{\bu}^{n}\Big|+\big| \mathscr{A}^{F}(\bvatet_{\bsi}^{n},\btet_{\bsi}^{n}) \big|\,+\, 
\big|\mathscr{O}_{1}^F(\bu^{n};\bvatet_{\bsi}^n +\btet_{\bsi}^n,\btet_{\bu}^{n})\big|\\
+\big|\mathscr{O}_{1}^F(\bvatet_{\bu}^n+\btet_{\bu}^n;\bsi_h^{n},\btet_{\bu}^{n})\big|
+\big|\mathscr{O}_2^F(\bu^{n};\bvatet_{\bu}^n+\btet_{\bu}^n, \btet_{\bu}^{n})\big|+\big|\big(\mathscr{O}_2^F (\bu^{n};\bu_{h}^{n},\btet_{\bu}^{n})-\mathscr{O}_2^F (\bu_h^{n};\bu_{h}^{n},\btet_{\bu}^{n})\big)\big|\,.
\end{array}
\end{equation}
Now, we bound the first term on right-hand side of \eqref{eq:no.A} using the continuity of $\mathscr{F}^F$ (cf. \eqref{bund.Ff}) as
		\begin{equation}\label{eq:no.B}
			\big|\big(\mathscr{F}_{\lambda^n}-\mathscr{F}_{\lambda_h^n}\big)(\btet_{\bsi}^n)\big|\, \leq\, c_{\mathscr{F}^F}a_1\Vert\lambda^n-\lambda_h^n\Vert_{\mathrm{Q}}\,\Vert\btet_{\bsi}^n\Vert_{\mathbb{H}} \,.
		\end{equation}
The second and third terms are estimated as in standard finite elements (see \cite{t-Sprin-1984}):
\begin{subequations}
\begin{alignat}{2}
\bigg| \disp \int_{\Omega} \delta_t\bvatet_{\bu}^{n} \cdot\btet_{\bu}^{n}\bigg|\, \leq\,	\Delta t^{-1/2}\Vert \partial_t\bvatet_{\bu}\Vert_{L^{2}(\mathrm{J}_{n},\mathbf{L}^{2}(\Omega))}\, \Vert \btet_{\bu}^{n}\Vert_{0,\Omega}\,,\label{eq:no.C1}\\
	\bigg|	\disp \int_{\Omega}\left(\partial_t \bu^{n}-\delta_t \bu^n\right) \cdot\btet_{\bu}^{n}\bigg|
\, \leq\, 	
\Delta t^{1/2}\left(\disp\int_{t_{n-1}}^{t_n}\Vert \partial_{tt}\bu(s)\Vert_{0,\Omega}^{2}\, ds\right)^{1/2} \,\big\Vert \btet_{\bu}^{n}\big\Vert_{0,\Omega}\,.\label{eq:no.C}
\end{alignat}
\end{subequations}
		Continuity of $\mathscr{A}^{F}$ gives
		\begin{equation}\label{eq:no.Ea}
					\big|\mathscr{A}^{F}(\bvatet_{\bsi}^{n},\btet_{\bsi}^{n})\big|\, \leq \, \dfrac{1}{\nu}\Vert\bvatet_{\bsi}^{n}\Vert_{0,\Omega}\,\Vert \bdev(\btet_{\bsi}^{n})\Vert_{0,\Omega}\,.
		\end{equation} 	
		Using the boundedness of $\mathscr{O}_1^F$ (see \eqref{eq1:bound.c.ct}),  \eqref{eq2:bound.c.ct} and \eqref{eq:bound.o2}, we obtain
		\begin{equation}\label{eq:no.D}
			\begin{array}{c}
 \big|\mathscr{O}_{1}^F(\bu^{n};\bvatet_{\bsi}^n +\btet_{\bsi}^n,\btet_{\bu}^{n})\big|+\big|\mathscr{O}_{1}^F(\bvatet_{\bu}^n+\btet_{\bu}^n;\bsi_h^{n},\btet_{\bu}^{n})\big|\\
\hspace{-.8cm}\,\le\, \dfrac{1}{\nu}\Big\{
\Vert\bu^n\Vert_{\mathbf{V}}\big(\Vert\bdev(\bvatet_{\bsi}^n) \Vert_{0,\Omega}+\Vert\bdev(\btet_{\bsi}^n)\Vert_{0,\Omega}\big)\,+\,\big(\Vert\bvatet_{\bu}^n\Vert_{\mathbf{V}}+\Vert\btet_{\bu}^n\Vert_{\mathbf{V}}\big)\Vert\bdev(\bsi_h^n)\Vert_{0,\Omega}\Big\}\Vert\btet_{\bu}^{n}\Vert_{\mathbf{V}}\,.
			\end{array}
		\end{equation}
\begin{equation}\label{eq:no.D1}
\begin{array}{c}
\big|\mathscr{O}_2^F(\bu^{n};\bvatet_{\bu}^n+\btet_{\bu}^n, \btet_{\bu}^{n})\big|+\big|\big(\mathscr{O}_2^F (\bu^{n};\bu_{h}^{n},\btet_{\bu}^{n})-\mathscr{O}_2^F (\bu_h^{n};\bu_{h}^{n},\btet_{\bu}^{n})\big)\big|\\
\hspace{-.25cm}\,\le\, \Big(|\Omega|^{(4-p)/4}\Vert\bu^n\Vert_{\mathbf{V}}^{p-2}\,\big(\Vert\bvatet_{\bu}^n\Vert_{\mathbf{V}}+\Vert\btet_{\bu}^n\Vert_{\mathbf{V}}\big)
+\ell_o \big(\Vert\bu^n\Vert_{\mathbf{V}}+\Vert\bu_h^n\Vert_{\mathbf{V}}\big)^{p-3}\Vert\bu^n-\bu_h^n\Vert_{\mathbf{V}}\Vert\bu_{h}^{n}\Vert_{\mathbf{V}}\Big)\Vert\btet_{\bu}^{n}\Vert_{\mathbf{V}}\,.
			\end{array}
		\end{equation}	
	Consequently, replacing the estimates \eqref{eq:no.B}-\eqref{eq:no.D1} back into \eqref{eq:no.A}, implies
	\begin{equation}\label{eq:m1}
		\begin{array}{c}
			\dfrac{1}{2}\delta_t\big\Vert\btet_{\bu}^{n}\big\Vert_{0,\Omega}^{2}+\dfrac{1}{\nu}\,\big\Vert\bdev(\btet_{\bsi}^{n})\big\Vert_{0,\Omega}^{2}\,\le\, a_1 c_{\mathscr{F}^F}\big\Vert\lambda^{n}-\lambda_h^{n}\big\Vert_{\mathrm{Q}}\,\big\Vert\btet_{\bsi}^{n}\big\Vert_{\mathbb{H}}\\[1ex]
			\, +\, \dfrac{1}{\nu} \Big( \Vert\bdev(\bvatet_{\bsi}^{n})\Vert_{0,\Omega}+\Vert\bu^{n}\Vert_{\mathbf{V}}\big\Vert\btet_{\bu}^{n}\big\Vert_{\mathbf{V}}\Big)\, \big\Vert \bdev(\btet_{\bsi}^{n})\big\Vert_{0,\Omega}\\[1ex]
			\,+\,\dfrac{1}{\nu}\Vert\bu_h^{n}\Vert_{\mathbf{V}}\, \Vert\bdev(\bvatet_{\bsi}^{n})\Vert_{0,\Omega}\Vert\btet_{\bu}^n\Vert_{\mathbf{V}}
			\,+\, \Big\{   
			\dfrac{1}{\nu}\Vert\bdev(\bsi_h^n)\Vert_{0,\Omega}\\[1ex]
			\,+\,|\Omega|^{(4-p)/4}\Vert\bu^n\Vert_{\mathbf{V}}^{p-2}+\ell_o \big(\Vert\bu^n\Vert_{\mathbf{V}}+\Vert\bu_h^n\Vert_{\mathbf{V}}\big)^{p-3}\Vert\bu_h^n\Vert_{\mathbf{V}}\Big\}\big(\Vert\bvatet_{\bu}^n\Vert_{\mathbf{V}}+\Vert\btet_{\bu}^n\Vert_{\mathbf{V}}\big)\Vert\btet_{\bu}^n\Vert_{\mathbf{V}}\\[1ex]
			\,+\,\Big(\Delta t^{-1/2}\Vert \partial_t\bvatet_{\bu}\Vert_{L^{2}(\mathrm{J}_{n},\mathbf{L}^{2}(\Omega))} +\Delta t^{1/2}\Vert\partial_{tt}\bu\Vert_{L^{2}(\mathrm{J}_n;\mathbf{L}^{2}(\Omega))}\Big)\,\Vert \btet_{\bu}^{n}\Vert_{0,\Omega}\,.
		\end{array}
	\end{equation}
Using the discrete inf-sup condition of $\mathscr{B}^{F}$ and \eqref{eq:err.prob1} with $\bv_{h}=\mathbf{0}$, we obtain a result similar to \eqref{eq:dis.h5} as:	
	\begin{equation}\label{eq:hj}
		\dfrac{\beta_{F,\mathtt{d}}^{2}\,\nu}{2}\,\Vert \btet_{\bu}^{n}\Vert_{0,4;\Omega}^{2}\,\leq\, \dfrac{1}{2\nu}\,\big(\Vert\bdev(\btet_{\bsi}^{n})\Vert_{0,\Omega}^{2}+\bdev(\bvatet_{\bsi}^{n})\Vert_{0,\Omega}^{2}\big)+2\nu a_1^2 c_{\mathscr{F}^F}^2 \Vert\lambda^n -\lambda_h^n\Vert_{\mathrm{Q}}^2 \,.
	\end{equation}
	
Using \eqref{eq:hj}, Young's inequality, and bounding \( \|\bu^n\|_{\mathbf{V}} \), \( \|\bu_h^n\|_{\mathbf{V}} \) by \( r_{1} \), \( r_{1,\mathtt{d}} \), summing over \( n \), with \( \btet_{\bu}^0 = 0 \), and multiplying by \( 2\Delta t \), we get a constant \( \mathcal{C}_0 > 0 \) depending on \( \mathcal{C}_A \), \( \ell_{A_F} \), \( \beta_{F,\mathtt{d}} \), and \( \alpha_{\mathtt{d},F} \) such that
	\begin{equation}\label{eq:m2}
		\begin{array}{c}
			\Vert\btet_{\bu}^{n}\Vert_{0,\Omega}^{2}+\Delta t\disp\sum_{j=1}^{n}\Vert\bdev(\btet_{\bsi}^{j})\Vert_{0,\Omega}^{2}+\Delta t \,\disp\sum_{j=1}^{n}\Vert \btet_{\bu}^{j}\Vert_{\mathbf{V}}^{2}\\
			\, \leq\,
			\mathcal{C}_{0}\Big(
			\Delta t\disp\sum_{j=1}^{n}\Vert\lambda^{j}-\lambda_h^{j}\Vert_{\mathrm{Q}}^{2}+\Big(\Delta t\disp\sum_{j=1}^{n}\Vert\btet_{\bu}^{j}\Vert_{\mathbf{V}}^{2}\Big)\,\big(\Vert\bdev(\bsi_h)\Vert_{L^{\infty}(\mathrm{J} ;\mathbb{L}^{2}(\Omega))}+\Vert\bu_h\Vert_{L^{\infty}(\mathrm{J};\mathbf{V})}\big)\\
			\,+\,\Delta t\disp\sum_{j=1}^{n} \big(\Vert\bvatet_{\bsi}^j\Vert_{0,\Omega}^{2}
			+\Vert\bvatet_{\bu}^j\Vert_{\mathbf{V}}^{2}+\epsilon\Vert\btet_{\bsi}^j\Vert_{\mathbb{H}}^2\big)
			+\Vert \partial_t\bvatet_{\bu}\Vert_{L^{2}([0,t_n];\mathbf{L}^{2}(\Omega))}^{2} +\Delta t^{2} \,\Vert\partial_{tt}\bu\Vert_{L^{2}([0,t_n];\mathbf{L}^{2}(\Omega))}^{2}
			\Big)\,.
		\end{array}
	\end{equation}
	
Applying the \emph{a priori} estimate \eqref{eq:dis.apr.est1}, assumption \eqref{eq:as1.conv}, and the bounds for \(\Vert\btet_{\bsi}^n\Vert_{\mathbb{H}}\) from \eqref{eq:r1}-\eqref{eq:r2} (cf. Lemma~\ref{l_lip.S}), along with the approximation properties of \(\Pibb_h\) \cite[Lemma 4.1]{cgo-NMPDE-2021} and the \(\mathrm{L}^2\)-projector \(\Pcalbf_h\) \cite[Prop.~1.135]{eg-book-2004}, we conclude \eqref{eq:conv1}, completing the proof.
\end{proof}
\begin{lemma}
	Given $ \bu^{n}\in \mathbf{V} $ and $ \bu_{h}^{n}\in \mathbf{V}_h $ for each $ 1\leq n\leq N $, let $ (\brho^{n},\vec{\varphi}^n)\in \mathsf{Z} $ and $ (\brho_h^{n},\vec{\varphi}_h^n)\in \mathsf{Z}_h $ be the solutions of the first and second equations of \eqref{eq:subprob1a}, respectively, and assume that data satisfy
	\begin{equation}\label{eq:as2.conv}
		\begin{array}{c}
			\mathcal{N}_{C}(\varphi_{0},c_{\mathscr{F}^C})\,\leq\,\dfrac{1}{2\widetilde{C}_0 \, \mathcal{C}_{2}}\min\big\{1,|\Omega|^{1/8}\beta_{2,C,\mathtt{d}}\big\}\,.
		\end{array}
	\end{equation}
	Then, the following estimate holds
	\begin{equation}\label{eq:conv2a}
		\begin{array}{c}
			\Vert\theta_{\varphi}^{n}\Vert_{0,\Omega}^{2}+\Delta t\disp\sum_{j=1}^{n}\left(\Vert\btet_{\brho}^{j}\Vert_{0,\Omega}^{2}+\Vert \theta_{\vec{\varphi}}^{j}\Vert_{\mathcal{V}}^{2}\right)
			\, \leq\,
			\mathcal{C}_{\mathtt{est},C}\Big\{\Delta t^{2}+h^2+\Big(\Delta t\disp\sum_{j=1}^{n}\Vert\btet_{\bu}^{j}\Vert_{\mathbf{V}}^{2}\Big)\,\Vert\brho\Vert_{L^{\infty}(\mathrm{J};\mathbf{L}^{2}(\Omega))}\Big\}\,.
		\end{array}
	\end{equation}
\end{lemma}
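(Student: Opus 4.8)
The plan is to mirror, for the transport equations, the argument just carried out for the fluid subproblem. First I would test the error equation \eqref{eq:err.prob2} with $(\bet_h,\vec{\psi}_h)=(\btet_{\brho}^n,\theta_{\vec{\varphi}}^n)$. Invoking the definition of $\mathbf{A}_{\bz,\xi}^C$ in \eqref{def.A}, the coercivity of $\mathscr{A}^C$ (cf. \eqref{eq:ell.a.at}), and the positivity $a_2>0$ of the term $\mathscr{C}^C(\theta_{\vec{\varphi}}^n,\theta_{\vec{\varphi}}^n)=a_2\|\theta_\lambda^n\|_{0,\Gamma_{\mathtt{w}}}^2$, I would obtain the basic energy identity
\[
\tfrac12\delta_t\|\theta_\varphi^n\|_{0,\Omega}^2+\tfrac1\kappa\|\btet_{\brho}^n\|_{0,\Omega}^2+a_2\|\theta_\lambda^n\|_{0,\Gamma_{\mathtt{w}}}^2
\]
bounded above by the sum of: the consistency term $\mathscr{A}^C(\bvatet_{\brho}^n,\btet_{\brho}^n)$ bounded by $\frac1\kappa\|\bvatet_{\brho}^n\|_{0,\Omega}\|\btet_{\brho}^n\|_{0,\Omega}$; the two convective terms $\mathscr{O}_1^C$, controlled via \eqref{eq3:bound.c.ct} by $\frac1\kappa(\|\bvatet_{\brho}^n\|_{\mathbf{H}}+\|\btet_{\brho}^n\|_{\mathbf{H}})\|\bu^n\|_{\mathbf V}\|\theta_\varphi^n\|_{\mathrm V}$ and by $\frac1\kappa\|\brho_h^n\|_{\mathbf H}(\|\bvatet_{\bu}^n\|_{\mathbf V}+\|\btet_{\bu}^n\|_{\mathbf V})\|\theta_\varphi^n\|_{\mathrm V}$; the two boundary nonlinear terms $\mathscr{O}_2^C$, bounded via \eqref{bund.O2c} by $c_{\mathscr{O}_2^C}(\|\lambda^n\|_{\mathrm Q}+\|\lambda_h^n\|_{\mathrm Q})\|\theta_{\vec\varphi}^n\|_{\mathcal V}^2$ plus a $\|\vec\varphi_h^n\|_{\mathcal V}$-weighted term absorbing $\vartheta_\lambda^n+\theta_\lambda^n$; and the two time-consistency terms $\int_\Omega\delta_t\vartheta_\varphi^n\psi_h$ and $\int_\Omega(\partial_t\varphi^n-\delta_t\varphi^n)\psi_h$, handled exactly as in \eqref{eq:no.C1}--\eqref{eq:no.C} by $\Delta t^{-1/2}\|\partial_t\vartheta_\varphi\|_{L^2(\mathrm J_n;\mathrm L^2)}\|\theta_\varphi^n\|_{0,\Omega}$ and $\Delta t^{1/2}(\int_{t_{n-1}}^{t_n}\|\partial_{tt}\varphi\|_{0,\Omega}^2)^{1/2}\|\theta_\varphi^n\|_{0,\Omega}$.

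Next I would bring in the $\mathcal{V}$-norm of $\theta_{\vec\varphi}^n$ via the discrete inf-sup condition \eqref{eq:dis.inf.bit} for $\mathscr{B}^C$ applied to the error equation \eqref{eq:err.prob2} with $\psi_h=0$: this yields, in analogy with \eqref{eq:hj},
\[
\tfrac{\beta_{C,\mathtt d}^2\kappa}{2}\|\theta_{\vec\varphi}^n\|_{\mathcal V}^2\le\tfrac1{2\kappa}\big(\|\btet_{\brho}^n\|_{0,\Omega}^2+\|\bvatet_{\brho}^n\|_{0,\Omega}^2\big)+\text{(lower-order terms)}.
\]
Combining this with the energy identity, applying Young's inequality, and using the smallness assumption \eqref{eq:as2.conv} together with the \emph{a priori} bounds $\|\lambda^n\|_{\mathrm Q}\le r_2$, $\|\lambda_h^n\|_{\mathrm Q}\le r_{2,\mathtt d}$ to absorb the quadratic $\mathscr{O}_2^C$ contributions into the left-hand side, I would sum over $n$ from $1$ to the current index (with $\theta_\varphi^0=0$ since $\varphi_{h,\star}^0=\mathcal P_h(\varphi_0)$) and multiply by $2\Delta t$. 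This produces a bound on $\|\theta_\varphi^n\|_{0,\Omega}^2+\Delta t\sum_j(\|\btet_{\brho}^j\|_{0,\Omega}^2+\|\theta_{\vec\varphi}^j\|_{\mathcal V}^2)$ in terms of $\Delta t\sum_j\|\btet_{\bu}^j\|_{\mathbf V}^2$ (weighted by $\|\brho_h\|_{L^\infty(\mathrm J;\mathbf L^2)}$, itself bounded by $\|\brho\|_{L^\infty(\mathrm J;\mathbf L^2)}$ plus interpolation error via \eqref{eq:dis.apr.est2}), of the projection errors $\|\bvatet_{\brho}^j\|$, $\|\bvatet_{\varphi}^j\|$, $\|\partial_t\bvatet_\varphi\|$, of $\epsilon\,\Delta t\sum_j\|\btet_{\brho}^j\|_{\mathbf H}^2$, and of $\Delta t^2\|\partial_{tt}\varphi\|^2$.

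To close, I would follow the $\mathbf H$-norm recovery of $\btet_{\brho}^n$ exactly as in the proof of Lemma~\ref{l_wel.con.pr2} (using the second row of \eqref{eq:err.prob2}, $\div(\mathbf H_h)\subseteq\mathrm V_h$, and the time-differentiated first equation with the inverse-inequality bound \eqref{nn} for $\|\partial_t\theta_\lambda^n\|_{\mathrm Q}$), which controls $\Delta t\sum_j\|\btet_{\brho}^j\|_{\mathbf H}^2$ and $\Delta t\sum_j\|\delta_t\theta_\varphi^j\|_{0,\Omega}^2$, allowing the $\epsilon$-term to be absorbed for $\epsilon$ small. Finally, the approximation properties of the Raviart--Thomas interpolant $\Pibf_h$ and of the $\mathrm L^2$-projector $\mathcal P_h$ (cf. \cite[Lemma 1.41]{eg-book-2004}, \cite[Prop.~1.135]{eg-book-2004}) bound every term $\|\bvatet_{\brho}^j\|$, $\|\bvatet_\varphi^j\|$, $\|\partial_t\bvatet_\varphi\|$ by $\mathcal O(h)$, and the backward-Euler consistency term by $\mathcal O(\Delta t)$, yielding \eqref{eq:conv2a}. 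The main obstacle I anticipate is the bookkeeping needed to show that the coupling term $\mathscr{O}_1^C(\bvatet_{\bu}^n+\btet_{\bu}^n;\brho_h^n,\psi_h)$ contributes only $\Delta t\sum_j\|\btet_{\bu}^j\|_{\mathbf V}^2$ (weighted by a bounded factor) and not an uncontrolled power of the discrete state; this requires careful use of the \emph{a priori} bound $\|\brho_h^n\|_{\mathbf H}\le\mathcal C_{2,\mathtt d}\mathcal N_{C,\mathtt d}$, since the term is bilinear in the as-yet-unestimated $\btet_{\bu}^n$ and $\theta_\varphi^n$ and must be split by Young's inequality in a way that does not destroy the absorption structure.
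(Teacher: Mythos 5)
Your plan mirrors the paper's proof almost step for step: test the error identity~\eqref{eq:err.prob2} with $(\btet_{\brho}^n,\theta_{\vec{\varphi}}^n)$, exploit coercivity of $\mathscr{A}^C$ and the discrete inf-sup condition for $\mathscr{B}^C$ with $\vec{\psi}_h=(0,0)$, handle the backward-Euler consistency terms as in~\eqref{eq:no.C1}--\eqref{eq:no.C}, use Young's inequality with the smallness bounds $r_2$, $r_{2,\mathtt d}$ for the a priori data, sum over $n$ with $\theta_\varphi^0=0$, and finish with the approximation properties of $\Pibf_h$, $\Pcal_h$ and the Lagrange-multiplier interpolant together with~\eqref{eq:apr.est2}, \eqref{eq:dis.apr.est2} and~\eqref{eq:as2.conv}. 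This is the same route as the paper's proof of the lemma.

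One genuine (if non-fatal) difference worth flagging: you bound the convective term $\mathscr{O}_1^C$ through its $\mathbf{H}$-norm statement~\eqref{eq3:bound.c.ct}, i.e.\ by $\tfrac1\kappa(\|\bvatet_{\brho}^n\|_{\mathbf H}+\|\btet_{\brho}^n\|_{\mathbf H})\|\bu^n\|_{\mathbf V}\|\theta_\varphi^n\|_{\mathrm V}$, and this forces you to recover $\Delta t\sum_j\|\btet_{\brho}^j\|_{\mathbf H}^2$ by mimicking the $\div$-identity and time-differentiated arguments of Lemma~\ref{l_wel.con.pr2}. That machinery is unnecessary here: since $\mathscr{O}_1^C(\bz;\bet,\psi)=\tfrac1\kappa\int_\Omega(\bet\cdot\bz)\psi$ involves only the $L^2$ part of $\bet$ and not its divergence, the sharper estimate~\eqref{wel3} gives $\tfrac1\kappa\|\bu^n\|_{\mathbf V}\big(\|\bvatet_{\brho}^n\|_{0,\Omega}+\|\btet_{\brho}^n\|_{0,\Omega}\big)\|\theta_\varphi^n\|_{\mathrm V}$, exactly as the paper uses in~\eqref{eq:m1.}. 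With this bound the $\|\btet_{\brho}^n\|_{0,\Omega}^2$ term produced by Young is absorbed directly by the coercivity of $\mathscr{A}^C$ on the left, and no $\mathbf{H}$-norm recovery step is needed at all --- note the conclusion~\eqref{eq:conv2a} only claims control of $\|\btet_{\brho}^j\|_{0,\Omega}^2$, not $\|\btet_{\brho}^j\|_{\mathbf H}^2$. Your version would still close (it essentially duplicates the stability argument of Lemma~\ref{l_wel.con.pr2} at the error level), but it carries a longer bookkeeping chain than the paper's, which sidesteps it with the tighter $L^2$ bound.
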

\begin{proof}
Choosing \( (\bet_h , \vec{\psi}_h) := (\btet_{\brho}^n , \theta_{\vec{\varphi}}^n) \) in \eqref{eq:err.prob2} and applying the definition of \( \mathbf{A}^{C} \) and coercivity of \( \mathscr{A}^C \), we obtain
	\begin{equation}\label{eq:no.A.2}
		\begin{array}{c}
\dfrac{1}{2}\delta_t \big\Vert \theta_{\varphi}^{n}\big\Vert_{0,\Omega}^{2}+\dfrac{1}{\kappa}\,\big\Vert \btet_{\brho}^{n}\big\Vert_{0,\Omega}^{2}
\,\leq\, \bigg| \disp \int_{\Omega} \delta_t\vartheta_{\varphi}^{n} \,\theta_{\varphi}^{n}\bigg|
+ \Big|	\disp \int_{\Omega}\left(\partial_t \varphi^{n}-\delta_t \varphi^n\right) \,\theta_{\varphi}^{n}\Big|\\	\qquad\,+\,\big|\mathscr{A}^C(\bvatet_{\brho}^{n},\btet_{\brho}^{n})\big|+\big|\mathscr{C}^C(\theta_{\vec{\varphi}}^n +\vartheta_{\vec{\varphi}}^n, \theta_{\vec{\varphi}}^n)+\big|\mathscr{O}_{1}^C(\bu^{n};\bvatet_{\brho}^n+\btet_{\brho}^n, \theta_{\varphi}^n)\\
+\big|\mathscr{O}_{1}^C(\bvatet_{\bu}^n+\btet_{\bu}^n;\brho_h^{n},\theta_{\varphi}^n)\big|+\big|\mathscr{O}_{2}^C(\lambda^n;\vartheta_{\vec{\varphi}}^n +\theta_{\vec{\varphi}}^n, \theta_{\vec{\varphi}}^n)\big|+\big|\mathscr{O}_2^C(\vartheta_{\lambda}^n+\theta_{\lambda}^n ; \vec{\varphi}_h^n , \theta_{\vec{\varphi}}^n)\big|\,.
		\end{array}
	\end{equation}
	Similarly as for the derivation of \eqref{eq:m1}, we obtain
	\begin{equation}\label{eq:m1.}
		\begin{array}{c}
			\dfrac{1}{2}\delta_t\big\Vert\theta_{\varphi}^{n}\big\Vert_{0,\Omega}^{2}+\dfrac{1}{\kappa}\,\big\Vert\btet_{\brho}^{n}\big\Vert_{0,\Omega}^{2}\\
			\, \le\, 
			\dfrac{1}{\kappa}\Vert\bu^n\Vert_{\mathbf{V}}\left(\Vert\bvatet_{\brho}^n\Vert_{0,\Omega}+\Vert\btet_{\brho}^n\Vert_{0,\Omega}\right)\Vert\theta_{\varphi}^n\Vert_{\mathrm{V}}+c_{\mathscr{O}_2^C}\Vert\lambda^n\Vert_{\mathrm{Q}}\left(\Vert\vartheta_{\vec{\varphi}}^n\Vert_{\mathcal{V}}+\Vert\theta_{\vec{\varphi}}^n\Vert_{\mathcal{V}}\right)\Vert\theta_{\vec{\varphi}}^n\Vert_{\mathcal{V}}\\
			\,+\,\dfrac{1}{\kappa}\Vert\brho_{h}^n\Vert_{0,\Omega}\left(\Vert\bvatet_{\bu}^n\Vert_{\mathbf{V}}+\Vert\btet_{\bu}^n\Vert_{\mathbf{V}}\right)\Vert\theta_{\varphi}^n\Vert_{\mathrm{V}}+c_{\mathscr{O}_2^C}\Vert\vec{\varphi}_h^n\Vert_{\mathcal{V}}\left(\Vert\vartheta_{\lambda}^n\Vert_{\mathrm{Q}}+\Vert\theta_{\lambda}^n\Vert_{\mathrm{Q}}\right)\Vert\theta_{\vec{\varphi}}^n\Vert_{\mathcal{V}}\\
			\,+\, \Big(\Delta t^{-1/2}\Vert \partial_t\vartheta_{\varphi}\Vert_{L^{2}(\mathrm{J}_{n},\mathrm{L}^{2}(\Omega))} +\Delta t^{1/2}\Vert\partial_{tt}\varphi\Vert_{L^{2}(\mathrm{J}_n;\mathrm{L}^{2}(\Omega))}\Big)\Vert\theta_{\varphi}^n\Vert_{0,\Omega}\,.
		\end{array}
	\end{equation}
	Analogously to \eqref{eq:hj}, applying the discrete inf-sup condition for \( \mathscr{B}^C\) and \eqref{eq:err.prob2} with \( \vec{\psi} = (0,0) \), we deduce
	\begin{equation*}
		\dfrac{\kappa\beta_{C,\mathtt{d}}^2}{4}\Vert\theta_{\vec{\varphi}}^n\Vert_{\mathcal{V}}^2\,\le\, \dfrac{1}{2\kappa}\left(\Vert\btet_{\brho}^n\Vert_{0,\Omega}^2+\Vert\bvatet_{\brho}^n\Vert_{0,\Omega}^2\right)\,,
	\end{equation*}
which, substituting into \eqref{eq:m1.}, applying Young's inequality, bounding \( \|\bu^n\|_{\mathbf{V}} \), \( \|\lambda^n\|_{\mathrm{Q}} \) by \( r_2 \), summing over \( n \), and using \( \theta_{\varphi}^0 = 0 \), we obtain a constant \( \widetilde{\mathcal{C}}_0 > 0 \), depending only on \( \kappa \), \( c_{\mathscr{O}_2^C} \), and \( \beta_{C,\mathtt{d}} \), such that
	\begin{equation}\label{eq:m2.}
		\begin{array}{c}
			\Vert\theta_{\varphi}^{n}\Vert_{0,\Omega}^{2}+\Delta t\disp\sum_{j=1}^{n}\left(\Vert\btet_{\brho}^{j}\Vert_{0,\Omega}^{2}+\Vert \theta_{\vec{\varphi}}^{j}\Vert_{\mathcal{V}}^{2}\right)
			\, \leq\,
			\widetilde{	\mathcal{C}}_{0}\Big\{
			\left(\Delta t\disp\sum_{j=1}^{n}\left(\Vert\btet_{\bu}^{j}\Vert_{\mathbf{V}}^{2}+
			\Vert\theta_{\varphi}^{j}\Vert_{\mathrm{V}}^{2}\right)\right)\,\Vert\brho\Vert_{L^{\infty}(\mathrm{J};\mathbf{L}^{2}(\Omega))}\\
			+\left(\Delta t\sum_{j=1}^{n}\Vert\theta_{\vec{\varphi}}^n\Vert_{\mathcal{V}}^2\right)\Vert\vec{\varphi}_h\Vert_{L^{\infty}(\mathrm{J}_n;\mathcal{V})}
			\,+\,\Delta t\disp\sum\limits_{j=1}^n\left(\Vert\bvatet_{\brho}^j\Vert_{0,\Omega}^2+\Vert\bvatet_{\bu}^j\Vert_{\mathbf{V}}^2+\Vert\vartheta_{\vec{\varphi}}^j\Vert_{\mathcal{V}}^2\right)\\
			\,+\,\Vert \partial_t\vartheta_{\varphi}\Vert_{L^{2}([0,t_n];\mathrm{L}^{2}(\Omega))}^{2} +\Delta t^{2} \,\Vert\partial_{tt}\varphi\Vert_{L^{2}([0,t_n];\mathrm{L}^{2}(\Omega))}^{2}\Big\}\,.
		\end{array}
	\end{equation}
Hence, combining the \emph{a priori} bounds \eqref{eq:apr.est2} and \eqref{eq:dis.apr.est2}, the assumption \eqref{eq:as2.conv}, and the approximation properties of \( \Pibf_h \) \cite[Lemma 3.1]{cmo-ETNA-2018}, \( \Pcal_h \) \cite[Proposition 1.135]{eg-book-2004}, and the Lagrange multiplier interpolation \cite[Section 5.4]{gos-IMA-2012}, yields \eqref{eq:conv2a}, completing the proof.
\end{proof}
To complete the error analysis, we combine \eqref{eq:conv1} and \eqref{eq:conv2a} by multiplying the former by $\frac{1}{2\mathcal{C}_{\mathtt{est},F}}$, adding it to \eqref{eq:conv2a}, and bounding \( \Vert\brho\Vert_{L^{\infty}(\mathrm{J};\mathbf{L}^{2}(\Omega))} \) using \eqref{eq:apr.est2}. This yields constants \( \mathcal{C}_{\mathtt{st}} \) and \( \mathcal{D}_{\mathtt{st}} \), depending only on \( \mathcal{C}_{\mathtt{est},F} \), \( \mathcal{C}_{\mathtt{est},C} \), and \( \mathcal{C}_{2} \) (cf. \eqref{eq:apr.est2}), such that
\begin{equation}\label{eq:fin.est}
	\begin{array}{c}
		\Vert\btet_{\bu}^{n}\Vert_{0,\Omega}^{2}+\Vert\theta_{\varphi}^{n}\Vert_{0,\Omega}^{2}+\Delta t\disp\sum_{j=1}^{n}\left(\Vert (\btet_{\bsi}^{j},\btet_{\bu}^j)\Vert_{\mathbb{H}\times\mathbf{V}}^{2}+\Vert (\btet_{\brho}^{j},\theta_{\vec{\varphi}}^j)\Vert_{\mathbf{H}\times\mathcal{V}}^{2}\right)\\
		\,\leq\,\mathcal{C}_{\mathtt{st}}\big(\Delta t^{2} +h^2\big)\,+\, \mathcal{D}_{\mathtt{st}}\mathcal{N}_{T}(\varphi_{0},\varphi_{D})\,\Delta t\disp\sum_{j=1}^{n}\Vert\btet_{\bu}^{j}\Vert_{\mathbf{V}}^{2}
		\,.
	\end{array}
\end{equation}
Consequently, we are now in a position to state the main result of this section.
\begin{theorem}\label{theorem-rates-of-convergence}
	Assume that the data satisfy \eqref{eq:as1.conv}, \eqref{eq:as2.conv} and
		$\mathcal{D}_{\mathtt{st}}\mathcal{N}_{T}(\varphi_{0},\varphi_{D})\, \leq\, \dfrac{1}{2}$.
	Then, letting $ \mathcal{C}:=2\mathcal{C}_{\mathtt{st}} $, there holds
	\begin{equation*}
		\Vert\btet_{\bu}^{n}\Vert_{0,\Omega}^{2}+\Vert\theta_{\varphi}^{n}\Vert_{0,\Omega}^{2}+\Delta t\disp\sum_{j=1}^{n}\left(\Vert (\btet_{\bsi}^{j},\btet_{\bu}^j)\Vert_{\mathbb{H}\times\mathbf{V}}^{2}+\Vert (\btet_{\brho}^{j},\theta_{\vec{\varphi}}^j)\Vert_{\mathbf{H}\times\mathcal{V}}^{2}\right) \,\leq\,\mathcal{C}\,\big(\Delta t^{2} +h^{2}\big) \,.
	\end{equation*}
\end{theorem}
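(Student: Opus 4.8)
The plan is to obtain Theorem~\ref{theorem-rates-of-convergence} directly from the consolidated bound \eqref{eq:fin.est}, which already collects the two component estimates \eqref{eq:conv1} and \eqref{eq:conv2a} derived in the preceding lemmas after the weighted combination. First I would record that the hypotheses \eqref{eq:as1.conv} and \eqref{eq:as2.conv} are precisely the smallness conditions under which those two lemmas—and hence \eqref{eq:fin.est}—are valid, so nothing further is needed to make \eqref{eq:fin.est} available. The only remaining ingredient is the third hypothesis, $\mathcal{D}_{\mathtt{st}}\mathcal{N}_{T}(\varphi_{0},\varphi_{D}) \le \tfrac12$, whose sole purpose is to let the residual term $\mathcal{D}_{\mathtt{st}}\mathcal{N}_{T}(\varphi_{0},\varphi_{D})\,\Delta t\sum_{j=1}^{n}\Vert\btet_{\bu}^{j}\Vert_{\mathbf{V}}^{2}$ on the right of \eqref{eq:fin.est} be absorbed into the left-hand side.

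Concretely, I would observe that
\[
\Delta t\sum_{j=1}^{n}\Vert\btet_{\bu}^{j}\Vert_{\mathbf{V}}^{2}\,\le\,\Delta t\sum_{j=1}^{n}\Vert(\btet_{\bsi}^{j},\btet_{\bu}^j)\Vert_{\mathbb{H}\times\mathbf{V}}^{2},
\]
so the residual term is controlled by a piece already present on the left of \eqref{eq:fin.est}. Subtracting $\mathcal{D}_{\mathtt{st}}\mathcal{N}_{T}(\varphi_{0},\varphi_{D})\,\Delta t\sum_{j=1}^{n}\Vert\btet_{\bu}^{j}\Vert_{\mathbf{V}}^{2}$ from both sides and using $\mathcal{D}_{\mathtt{st}}\mathcal{N}_{T}(\varphi_{0},\varphi_{D})\le\tfrac12$, each summand on the left obeys $\Vert(\btet_{\bsi}^{j},\btet_{\bu}^j)\Vert_{\mathbb{H}\times\mathbf{V}}^{2}-\mathcal{D}_{\mathtt{st}}\mathcal{N}_{T}(\varphi_{0},\varphi_{D})\Vert\btet_{\bu}^{j}\Vert_{\mathbf{V}}^{2}\ge\tfrac12\Vert(\btet_{\bsi}^{j},\btet_{\bu}^j)\Vert_{\mathbb{H}\times\mathbf{V}}^{2}$, so the left-hand side remains bounded below by $\tfrac12$ times the full error functional
\[
\Vert\btet_{\bu}^{n}\Vert_{0,\Omega}^{2}+\Vert\theta_{\varphi}^{n}\Vert_{0,\Omega}^{2}+\Delta t\sum_{j=1}^{n}\big(\Vert (\btet_{\bsi}^{j},\btet_{\bu}^j)\Vert_{\mathbb{H}\times\mathbf{V}}^{2}+\Vert (\btet_{\brho}^{j},\theta_{\vec{\varphi}}^j)\Vert_{\mathbf{H}\times\mathcal{V}}^{2}\big),
\]
while the right-hand side is still $\mathcal{C}_{\mathtt{st}}(\Delta t^{2}+h^{2})$. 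Multiplying through by $2$ yields the claimed bound with $\mathcal{C}:=2\mathcal{C}_{\mathtt{st}}$, which is a direct absorption argument rather than a discrete Grönwall step, precisely because the offending quantity $\Delta t\sum_{j}\Vert\btet_{\bu}^{j}\Vert_{\mathbf{V}}^{2}$ appears summed on both sides (not merely through its terminal value).

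Since the theorem itself thus collapses to this one-line absorption, the genuine effort—and the main obstacle—sits upstream, in establishing \eqref{eq:conv1} and \eqref{eq:conv2a} and then combining them into \eqref{eq:fin.est}. That is where one must treat the trilinear convective forms $\mathscr{O}_1^F,\mathscr{O}_2^F,\mathscr{O}_1^C,\mathscr{O}_2^C$ by the add-and-subtract manipulations leading to \eqref{eq:err.prob1}–\eqref{eq:err.prob2}, control the $p$-type drag contribution through the local Lipschitz bound \eqref{eq:bound.o2}, recover control of $\Vert\btet_{\bsi}^{n}\Vert_{\mathbb{H}}$ and $\Vert\btet_{\brho}^{n}\Vert_{\mathbf{H}}$ by the device used in \eqref{eq:r1}–\eqref{eq:r2}, and, crucially, arrange the cross term $\Vert\btet_{\bu}^{j}\Vert_{\mathbf{V}}$ feeding from the fluid estimate into the transport estimate so that it is reabsorbed after the weighted sum producing \eqref{eq:fin.est}. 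Throughout, the a~priori bounds \eqref{eq:apr.est1}–\eqref{eq:apr.est2} and \eqref{eq:dis.apr.est1}–\eqref{eq:dis.apr.est2} furnish uniform control of the solution norms, while the $O(\Delta t)$ consistency error of backward Euler and the $O(h)$ approximation properties of $\Pibb_h$, $\Pibf_h$, $\Pcalbf_h$, $\Pcal_h$ are exactly what generate the $\Delta t^{2}+h^{2}$ on the right; the theorem above then simply finalizes that chain.
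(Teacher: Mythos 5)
Your proposal is correct and is exactly the argument the paper uses: the theorem follows from \eqref{eq:fin.est} by absorbing the residual term $\mathcal{D}_{\mathtt{st}}\mathcal{N}_{T}(\varphi_{0},\varphi_{D})\,\Delta t\sum_{j=1}^{n}\Vert\btet_{\bu}^{j}\Vert_{\mathbf{V}}^{2}$ into the left-hand side under the smallness assumption. Your additional remark that this is a direct absorption (not a Gr\"onwall step) because the offending sum appears intact on both sides is an accurate reading of why no further iteration is needed.
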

\begin{proof}
	It suffices to employ $\mathcal{D}_{\mathtt{st}}\mathcal{N}_{T}(\varphi_{0},\varphi_{D})\, \leq\, \dfrac{1}{2}$ in \eqref{eq:fin.est}.
\end{proof}	
\section{Numerical results}\label{sec.7}
We present two test cases to evaluate the method. The first, with a manufactured solution, confirms the convergence rates of Theorem~\ref{theorem-rates-of-convergence}; the second, inspired by reverse osmosis, lacks an exact solution. Both are implemented in $\mathtt{FreeFem}$++, using the decoupling strategy from Section~\ref{s:4}. Iterations stop when the global \( \ell^2 \)-norm of the update falls below a set tolerance. Errors are defined by
${\tt e}(\bsi) := \|\bsi - \bsi_h\|_{\mathbb{H}},\;
{\tt e}(\bu) := \|\bu - \bu_h\|_{\mathbf{V}},\;
{\tt e}(p) := \|p - p_h\|_{0,\Omega},\;
{\tt e}(\brho) := \|\brho - \brho_h\|_{\mathbf{H}},\;
{\tt e}(\varphi) := \|\varphi - \varphi_h\|_{\mathrm{V}},\;
{\tt e}(\lambda) := \|\lambda - \lambda_h\|_{\mathrm{Q}}.$
The experimental convergence rate ${\tt r}(\star)$ for each $\star \in \{\bsi,\bu,p,\brho,\varphi,\lambda\}$ is computed in the standard way.

\subsection{Example 1: Validation}
In our first example, 
we consider the parameters $ \nu,\kappa\,=\, 0.1 $, $ p\,=\,3 $, $ a_0 = 0.5 $, $ a_1 =2 $, the domain $ \Omega =(0,1)^2 $ and the final time $ t_F =0.5  $. In addition, we choose the data so that the exact solution is given by $\bu=[	-\sin(t)\cos(\pi x)\sin(\pi y),	\sin(t)\sin(\pi x)\cos(\pi y)]^{t}$, $p \,=\, \sin(t)(x-1)\exp(y)$ and $\varphi\, = \, \exp(-t)\sin(x)\sin(y)$.
defined over the computational domain $ \textbf{x} \,:=\, (x , y)^{\mathtt{t}}\in\Omega:=(0,1)^{2} $. 
Table \ref{Tab1} summarizes the convergence history of the fully mixed finite element method \eqref{eq:fuldis-v1a-eq:v4}, computed using successively refined meshes. As predicted by Theorem \ref{theorem-rates-of-convergence}, all unknowns converge at the expected rate of $O(h)$
\subsection{Example 2: Spacer-Filled Channel with Physical Parameters}
We consider a 2D narrow horizontal channel with two parallel membrane walls spaced at $h = 0.7221$, filled with spacers in submerged and zigzag configurations. The channel length is $L = 8.6652$, and the spacer diameter $d$ satisfies $d/h = 0.5$, with inter-filament spacing three times the channel height. The domain is discretized using triangular meshes of size $0.0138$, refined near membranes and spacers where velocity and mass fraction gradients are highest.
The model and discretization parameters are taken as
$\nu \,=\, 0.8, \quad \mathtt{F}\,=\, 3,\,\, \kappa \,=\, 10^{-3}, \,\, a_0 \,=\, 2\times 10^{-2}, \,\, a_1 \,=\, 1.8\times 10^{4}$,
$\varphi_{\mathtt{in}}\,=\, 6\times 10^{-10},\,\, 
\bu_{\mathtt{in}}\,=\,\left(10(4y/\mathrm{h}-4(y/\mathrm{h})^2),\, (a_0-a_1 \,\varphi_{\mathtt{in}})(2y/\mathrm{h}-1)\right)^{\mathtt{t}}, \,\,\Delta t \,=\, 10^{-2}$.
Fig.~\ref{fig3} shows the velocity magnitude at $t = 1$ for both submerged and zigzag configurations, consistent with \cite{askkd-JMS-2015}. Concentration contours at various times are shown in Figs.~\ref{fig4} and \ref{fig5} for submerged and zigzag spacers, respectively. In the zigzag case (Fig.~\ref{fig5}), transverse filaments disturb and compress the concentration boundary layer alternately on both membranes. In contrast, Fig.~\ref{fig4} shows that the submerged configuration leaves the boundary layer largely undisturbed due to the filaments' lack of membrane attachment.
\begin{table}[t!]
	\caption{Example 1, history of spatial convergence using $ \Delta t \,=\, h $.}\label{Tab1}		
	\centerline{
		{\scriptsize\begin{tabular}{|ccccccccccc|}			
				\hline\hline
				$h$ & $\mathtt{e}(\boldsymbol{\sigma})$ & ${\tt r}(\boldsymbol{\sigma})$ & 
				$\mathtt{e}(\mathbf{u})$ & ${\tt r}(\mathbf{u})$ & 
				$\mathtt{e}(\boldsymbol{\rho})$ & ${\tt r}(\boldsymbol{\rho})$ & $\mathtt{e}(\varphi)$ &
				${\tt r}(\varphi) $ & $ \mathtt{e}(\lambda) $ & $ {\tt r}(\lambda) $\\
				\hline\hline
				{}&  {}& {}& {}& {}& {}& {}& {}& {}& {}&{}\\
				{$ 1/2^{3} $}&  {1.3591}& {-}& {0.0859}& {-}& {0.0371}& {-}& {0.0080}& {-}& {0.0025}&{-}\\[1mm]
				{$ 1/2^{4} $}&  {0.7080}& {0.940}& {0.0420}& {1.032}& {0.0160}& {1.209}& {0.0036}& {1.126}& {0.0010}& {1.264}\\[1mm]	
				
				{$ 1/2^{5} $}& {0.3694}& {0.982}& {0.0212}& {1.030}& {0.0073}& {1.180}& {0.0016}& {1.178}& {0.0005}& {1.125}\\[1mm]
				
				{$ 1/2^{6} $}& {0.1873}& {0.975}& {0.0106}& {0.993}& {0.0035}& {1.049}& {0.0008}& {1.013}& {0.0002}& {1.065}\\[1mm]		
				\hline
	\end{tabular}}}
\end{table}
\begin{figure}[t!]
	\centering
	\includegraphics[width=.8\textwidth]{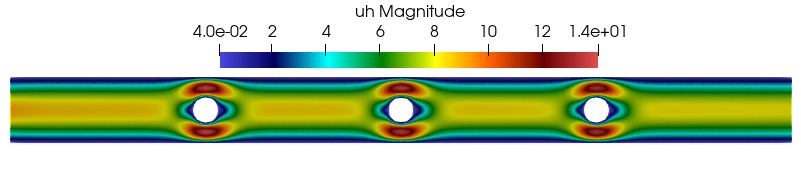}\\[-.3cm]
	\includegraphics[width=.8\textwidth]{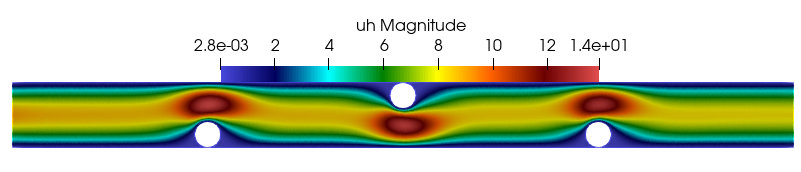}
		\vspace{-.5cm}
	\caption{Velocity magnitude contour in the feed channel with submerged (top) and zigzag (down) spacer for time $ t=1 $.}\label{fig3}
\end{figure}

\begin{figure}[t!]
	\centering
	\includegraphics[width=.8\textwidth]{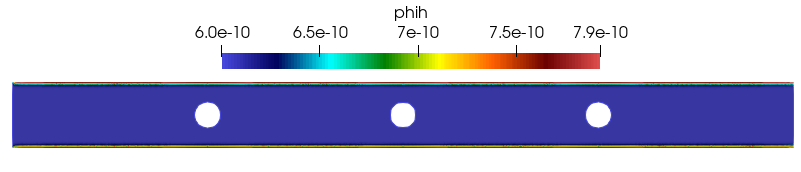}\\[-.2cm]
	\includegraphics[width=.8\textwidth]{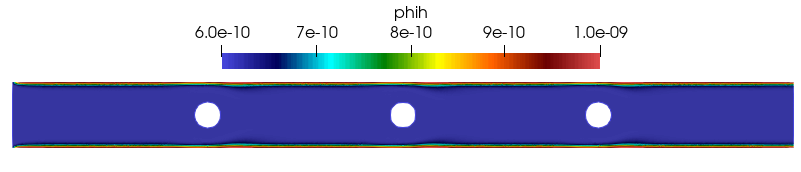}\\[-.2cm]
	\includegraphics[width=.8\textwidth]{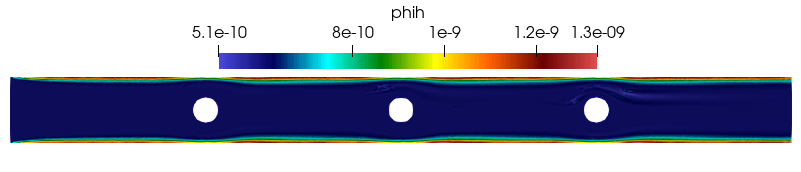}
		\vspace{-.5cm}
	\caption{Salt concentration profiles in the feed channel with submerged spacer for times $ t \,=\, 0.2 $, $ 0.5 $, $ 1 $ (from top to down).}\label{fig4}
\end{figure}

\begin{figure}[t!]
	\centering
	\includegraphics[width=.8\textwidth]{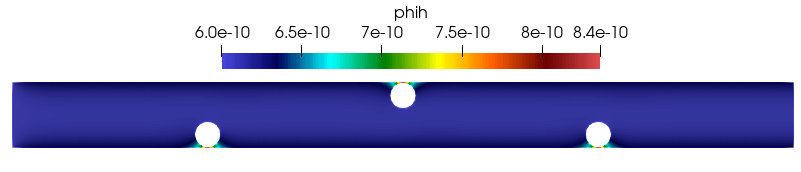}\\[-.2cm]
	\includegraphics[width=.8\textwidth]{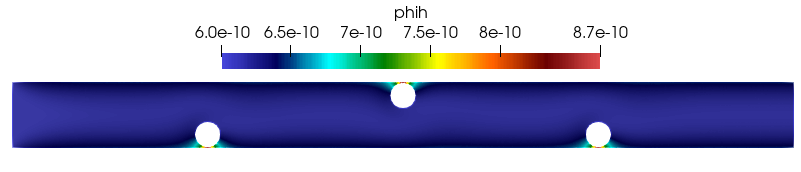}\\[-.2cm]
	\includegraphics[width=.8\textwidth]{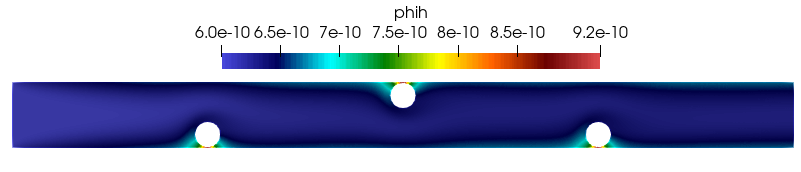}
	\vspace{-.5cm}
	\caption{Salt concentration profiles in the feed channel with zigzag spacer for times $ t \,=\, 0.2 $, $ 0.5 $, $ 1 $ (from top to down)}\label{fig5}
\end{figure}

\bibliographystyle{siam}
\bibliography{bibliography}

\end{document}